\pgfplotsset{compat=1.17}
\definecolor{hanblue}{rgb}{0.27, 0.42, 0.81}
\DeclarePairedDelimiter{\norm}{\lVert}{\rVert}
\newcommand{\Om}{\Omega}
\DeclareMathOperator{\dom}{\operatorname{dom}}
\DeclareMathOperator{\supp}{supp}
\DeclareMathOperator*{\argmin}{arg\,min}
\DeclareMathOperator*{\argmax}{arg\,max}
\DeclareMathOperator*{\esssup}{ess\,sup}
 \newcommand{\nor}[1]{\left\| #1 \right\|}
\newcommand{\G}{\mathcal{G}}
\newcommand{\N}{\mathbb{N}}
\newcommand{\R}{\mathbb{R}}
\newcommand{\I}{\mathcal{I}}
\newcommand{\de}{\mathrm{d}}
\newcommand{\Ext}[1]{\operatorname{Ext}(#1)}
\newcommand{\M}{\mathcal{M}}
\newcommand{\K}{\mathcal{K}}
\newcommand{\Cc}{\mathcal{C}}
\newcommand{\Bb}{\mathcal{B}}
\newcommand{\optu}{\bar{u}}
\newcommand{\mnorm}[1]{\|#1\|_{\mathcal{M}}}
\newcommand{\cnorm}[1]{\|#1\|_{\mathcal{C}}}
\newcommand{\ynorm}[1]{\|#1\|_{Y}}
\newcommand{\tr}[1]{tr(C(1)^{-1})}
\newcommand{\eps}{\varepsilon}
\newcommand{\vertiii}[1]{{\left\vert\kern-0.25ex\left\vert\kern-0.25ex\left\vert #1
    \right\vert\kern-0.25ex\right\vert\kern-0.25ex\right\vert}}
\newcommand{\sm}[2]{\llangle #1,#2 \rrangle}
\newcommand{\e}{\varepsilon}
\newcommand{\f}{\varphi}
\newcommand{\zak}{%
  \mathbin{\vrule height 1.6ex depth 0pt width
0.13ex\vrule height 0.13ex depth 0pt width 1.3ex}
}    %
\newcommand{\weak}{\rightharpoonup}   %
\newcommand{\weakstar}{\stackrel{*}{\rightharpoonup}}  %
\DeclareMathOperator{\ext}{Ext} %
\DeclareRobustCommand{\rchi}{{\mathpalette\irchi\relax}}
\newcommand{\irchi}[2]{\raisebox{\depth}{$#1\chi$}}
  \let\div\relax
  \DeclareMathOperator*{\div}{div}
\DeclareFontFamily{OMX}{MnSymbolE}{}
\DeclareSymbolFont{MnLargeSymbols}{OMX}{MnSymbolE}{m}{n}
\DeclareFontShape{OMX}{MnSymbolE}{m}{n}{
    <-6>  MnSymbolE5
   <6-7>  MnSymbolE6
   <7-8>  MnSymbolE7
   <8-9>  MnSymbolE8
   <9-10> MnSymbolE9
  <10-12> MnSymbolE10
  <12->   MnSymbolE12
}{}
\DeclareFontShape{OMX}{MnSymbolE}{b}{n}{
    <-6>  MnSymbolE-Bold5
   <6-7>  MnSymbolE-Bold6
   <7-8>  MnSymbolE-Bold7
   <8-9>  MnSymbolE-Bold8
   <9-10> MnSymbolE-Bold9
  <10-12> MnSymbolE-Bold10
  <12->   MnSymbolE-Bold12
}{}
\let\llangle\@undefined
\let\rrangle\@undefined
\DeclareMathDelimiter{\llangle}{\mathopen}%
                     {MnLargeSymbols}{'164}{MnLargeSymbols}{'164}
\DeclareMathDelimiter{\rrangle}{\mathclose}%
                     {MnLargeSymbols}{'171}{MnLargeSymbols}{'171}
\newcommand*\rel@kern[1]{\kern#1\dimexpr\macc@kerna}
\newcommand*\widebar[1]{%
  \begingroup
  \def\mathaccent##1##2{%
    \rel@kern{0.8}%
    \overline{\rel@kern{-0.8}\macc@nucleus\rel@kern{0.2}}%
    \rel@kern{-0.2}%
  }%
  \macc@depth\@ne
  \let\math@bgroup\@empty \let\math@egroup\macc@set@skewchar
  \mathsurround\z@ \frozen@everymath{\mathgroup\macc@group\relax}%
  \macc@set@skewchar\relax
  \let\mathaccentV\macc@nested@a
  \macc@nested@a\relax111{#1}%
  \endgroup
}
\numberwithin{equation}{section}
\definecolor{darkred}{rgb}{.7,0,0}
\definecolor{green}{rgb}{0,0.7,0}
\theoremstyle{plain}
\newtheorem{theorem}{Theorem}[section]
\newtheorem{lemma}[theorem]{Lemma}
\newtheorem{proposition}[theorem]{Proposition}
\theoremstyle{definition} 
\newtheorem{assumption}[theorem]{Assumption}
\newtheorem{definition}[theorem]{Definition}
\newtheorem{remark}[theorem]{Remark}
\begin{document}
\title[Fully-corrective GCG methods]{Asymptotic linear convergence of fully--corrective generalized conditional gradient methods}

\pagestyle{myheadings}

 \author[K. Bredies]{Kristian Bredies}
\author[M. Carioni]{Marcello Carioni} 
\author[S. Fanzon]{Silvio Fanzon}
\author[D. Walter]{Daniel Walter}

 \address[Kristian Bredies, Silvio Fanzon]{University of Graz, Institute of Mathematics and Scientific Computing, Heinrichstra\ss e 36, 8010 Graz, Austria}

 \address[Marcello Carioni]{Department of Applied Mathematics, University of Twente, 7500AE Enschede, The Netherlands}

\address[Daniel Walter]{Johann Radon Institute for Compuational and Applied Mathematics, Altenberger Stra\ss e 69, 4040 Linz}

\email[Kristian Bredies]{Kristian.Bredies@uni-graz.at}
\email[Marcello Carioni]{m.c.carioni@utwente.nl}
\email[Silvio Fanzon]{Silvio.Fanzon@uni-graz.at} 
 \email[Daniel Walter]{daniel.walter@oeaw.ac.at}

\begin{abstract}
\small{ %
We propose \emph{a fully-corrective generalized conditional gradient method} (FC-GCG) for the minimization of the sum of a smooth, convex loss function and a convex one-homogeneous regularizer over a Banach space. The algorithm relies on the mutual update of a finite set~$\mathcal{A}_k$ of extremal points of the unit ball of the regularizer and of an iterate~$u_k \in \operatorname{cone}(\mathcal{A}_k)$. Each iteration requires the solution of one linear problem to update~$\mathcal{A}_k$ and of one finite dimensional convex minimization problem to update the iterate. Under standard hypotheses on the minimization problem we show that the algorithm converges sublinearly to a solution. Subsequently, imposing additional assumptions on the associated dual variables, this is improved to a linear rate of convergence. The proof of both results relies on two key observations: First, we prove the equivalence of the considered problem to the minimization of a lifted functional over a particular space of Radon measures using Choquet's theorem. Second, the FC-GCG algorithm is connected to a \emph{Primal-Dual-Active-point Method} (PDAP) on the lifted problem for which we finally derive the desired convergence rates.

  \vskip .3truecm \noindent Key words: non-smooth optimization, conditional gradient method, sparsity, Choquet's theorem.
 
  \vskip.1truecm \noindent 2020 Mathematics Subject Classification:
   65J22, %
   65K05,   %
   52A41,  %
   46G12, %
	46A55. %

}
\end{abstract}
 
\maketitle
\section{Introduction} \label{sec:intro}
This paper is concerned with the analysis of an efficient solution algorithm for minimization problems in composite form
\begin{align} \label{def:minprob}
\inf_{u\in \M} J(u),\quad J(u) \coloneqq  F(Ku)+ \G(u)  \tag{$\mathcal{P}_{\M}$}
\end{align}
over a Banach space~$\M$. Here,~the forward operator $K$ maps continuously from~$\M$ into a Hilbert space~$Y$ of observations, not necessarily finite dimensional, and~$F$ denotes a smooth convex loss function. The second part of the objective functional is constituted by a convex but possibly non-smooth functional~$\mathcal{G}$ which promotes desired structural properties. We refer, e.g., to the sparsifying property of total variation regularization or the staircasing effect of bounded variation penalties. The observation that certain structural features of minimizers can be brought forth by a suitable choice of the functional~$\G$ and of the space~$\M$, has made the analysis~of problems of the form \eqref{def:minprob} a flourishing topic in the context of optimal control, inverse problems, compressed sensing and machine learning. As a consequence, the interest for such models has also sparked the demand for efficient solution algorithms. Since many of the structural features of~\eqref{def:minprob} are tightly linked to properties of the underlying, possibly infinite dimensional Banach space, a particular focus in this context lies on \textit{function space methods}, i.e., algorithms solving~\eqref{def:minprob} without discretizing~$\M$. This is a challenging task for a variety of reasons: On the one hand, it requires algorithms that can handle the non-smoothness of the objective functional~$J$. On the other hand, it forces to work directly on the Banach space~$\M$ which usually lacks ``nice'' properties such as reflexivity or uniform convexity. Efficient algorithms have been developed, for instance, for inverse problems regularized with $\ell^p$ penalties \cite{iteratedshrinkage, daubechies}, inverse problems in the space of measures regularized with the total variation \cite{boyd, pikka, Denoyelle_2019} and dynamic inverse problems with optimal transport regularizers \cite{DGCG, bredies2020optimal, superposition, extremal}. 

\subsection{Contribution \& related work}
In many interesting applications it is meaningful to assume that~$\M$ is given as~the topological dual of a separable Banach space~$\Cc$, and~$K$ is the adjoint of a ``predual'' operator~$K_* \colon Y \to \Cc$,~$(K_*)^*=K$. We refer to Section~\ref{sec:examples} for a few examples. In this case a simple approach to computing a minimizer to~\eqref{def:minprob} is constituted by~\textit{generalized conditional gradient} (GCG) algorithms. Assuming that the solution set of~\eqref{def:minprob} is bounded by some~$M>0$, this method updates the iterate~$u_k$ by computing the dual variable~$p_k=-K^*\nabla F(Ku_k)$ and setting
\begin{align*}
v_k \in \argmin_{\mnorm{v}\leq M} \left \lbrack -\langle  p_k, v \rangle+ \G(v)\right \rbrack,~
u_{k+1}=u_k+ s_k (v_k-u_k) \,,
\end{align*}
where~$s_k \in (0,1)$ is chosen according to some stepsize rule. If~$\mathcal{G}(u)=I_{A}(u)$ is the indicator function of a compact convex set~$A \subset \M$, then the iteration scheme reduces to the \textit{Frank-Wolfe} (FW) algorithm for constrained minimization~\cite{frank,demyanov,dunnopen}.

In the present paper we focus on convex positively homogeneous functionals~$\G$ with compact sublevel sets, where compactness is intended with respect to the weak* topology induced by $\Cc$. The compactness assumption is equivalent to
$\G$ having weak* closed and norm bounded sublevels. In particular, these assumptions on $\G$ encompass the important case of norm regularization~$\mathcal{G}(u)=\mnorm{u}$, and also allow for seminorm penalties if the problem is posed in a suitable quotient space~\cite{bc}. More in general, our setting covers the case of  all~\textit{gauge functions} of the form
 \[
 \kappa_\mathcal{A}(u):=\inf \left\{ \rho \geq 0 \; | \; u \in \rho A     \right\}  \quad \text{where } \,\, \rho A = \{\rho v \; | \; v \in A\}\,,  
 \]
 and~$\mathcal{A} \subset \mathcal{M}$ is a weak* compact and convex set. This type of functional is of particular relevance in machine learning applications \cite{chandrasekaran,yu}.

 In this general setting, GCG methods benefit from several desirable properties. For example, they only rely on the repeated solution of (partially) linearized problems that, in some interesting cases, can be solved analytically. Additionally, the descent direction can be chosen to satisfy~$v_k= M_k \hat{v}_k$ where~$M_k \geq 0$ is a scaling factor and~$\widehat{v}_k$ is an~\textit{extremal point} of the ``unit ball''~$B=\{\,u\; |\;\mathcal{G}(u) \leq 1\,\}$ of the regularizer. We denote by~$\operatorname{Ext}(B)$ the set of such points. Thus, the above mentioned partially linearized problems can be solved, equivalently, in the set~$\operatorname{Ext}(B)$, reducing considerably the complexity of each iteration of the algorithm. As a further consequence, initializing the algorithm by $u_0=0$ and selecting descent directions~$v_k$ that are extremal points of $B$, the iterate~$u_k$ exhibits \textit{k-sparsity}, i.e., it is contained in the conic hull of at most~$k$ points in $\operatorname{Ext}(B)$. The connection between the structure enhancing properties of a functional~$\mathcal{G}$ and the set of extremal points of~$B$ has been recently studied in~\cite{bc} and~\cite{boyer}. In this context, a central result is given by~\textit{convex representer theorems}. Loosely speaking, these state that problems of the form~\eqref{def:minprob} admit solutions~$\bar{u}$ which are contained in the conic hull of at most~$\operatorname{dim} Y$ extremal points.

While the FW method~\cite{dunnnonsingular,dunnimplicit,freund, jaggiwolfe}, and its many variants such as \textit{away-step} FW~\cite{guelat,lacosteaway}, or \textit{fully-corrective} FW~\cite{holloway, hearn, hohenbalken}, have received a lot of attention, GCG algorithms for general non-smooth functionals~$\mathcal{G}$ are less frequently studied. We refer, e.g., to~\cite{DGCG, brediesgeneral, sunsafe, yu} as well as~\cite[Chapter 6]{daniel_thesis} which all prove a global sublinear~$\mathcal{O}(1/k)$ rate of convergence of~$J(u_k)$ towards the minimum value. Note that this rate is known to be optimal~\cite{canon}. Moreover, the absence of steps that remove extremal points from~$u_k$  often leads to clustering phenomena in practice. Thus, despite its various advantages, these shortcomings of GCG methods limit their practical utility.

The main contribution of the present work is the analysis of a~\textit{fully-corrective generalized conditional gradient method} (FC-GCG) for~\eqref{def:minprob}. This relies on the mutual update of a sequence of sparse iterates~$u_k$ and of a sequence of finite, ordered~\textit{active sets}
\[
\mathcal{A}^u_k= \left\{\,u^k_i \in \operatorname{Ext}(B) \;|\;i=1,\dots,N_k\,\right\}\,,  \,\, N_k \in \N\,,
\]
of extremal points, which constitute the \textit{atoms} representing the sparse iterate $u_k$. Given the current iterate~$u_k$ and active set~$\mathcal{A}_k$, the proposed method first enlarges~$\mathcal{A}_k$ by setting~$N^+_k=N_k +1$ and
\begin{align} \label{eq:updateAkintro}
\mathcal{A}^{u,+}_k= \mathcal{A}^u_k \cup \{\widehat{v}_k\}\,,\quad~\widehat{v}_k \in \argmax_{v \in \operatorname{Ext}(B)}\, \langle p_k, v \rangle\,.
\end{align}
Subsequently, the new iterate iterate~$u_{k+1}$ is found by solving the subproblem
\begin{align} \label{eq:subprobintro}
\min_{u \in \operatorname{cone}(\mathcal{A}^{u,+}_k)} \left \lbrack F(Ku) +\kappa_{\mathcal{A}^{u,+}_k}(u) \right \rbrack,
\end{align}
where the minimization occurs over the cone spanned by~$\mathcal{A}^{u,+}_k$, and~$\G$ is replaced by the gauge function associated with~$\mathcal{A}^{u,+}_k$, which in this case \cite{bonsall, chandrasekaran} simplifies to
\begin{align*}
\kappa_{\mathcal{A}^{u,+}_k}(u)= \min_{\lambda \in \R_+^{N^+_k}}\left.\left\{\,\sum^{N^+_k}_{i=1} \lambda_i\;\right\vert\;u= \lambda_{N^+_k } \widehat{v}_k+\sum^{N_k}_{i=1} \lambda_i u^k_i\,\right\}.
\end{align*}
Finally, $\mathcal{A}^{u,+}_k$ is pruned by removing the extremal points for which the weight is set to zero by \eqref{eq:subprobintro}, obtaining the next active set~$\mathcal{A}^u_{k+1}$. 

As we will see, the proposed method combines the advantages of GCG methods, e.g., its global convergence, with an improved convergence behavior and sparser iterates. 
More in detail, we first prove a global sublinear convergence rate~$\mathcal{O}(1/k)$ for~$J(u_k)$ towards the minimum of~\eqref{def:minprob}, see Theorem~\ref{thm:convofcondu}. This is achieved
under mild assumptions, see~\ref{ass:func1}-\ref{ass:func3} discussed in Section \ref{sec:equivalence}. These are quite standard and are, in particular, sufficient for the well-posedness of~\eqref{def:minprob}, as shown in Proposition \ref{prop:existu}.
Subsequently, in Theorem~\ref{thm:fastconvpreview},
we show that~$J(u_k)$ converges asymptotically at a linear rate of~$\mathcal{O}(\zeta^k)$,~$\zeta \in(0,1)$, provided that the optimal dual variable and the loss in~\eqref{def:minprob} meet certain structural requirements, see Assumptions~\ref{ass:fast:F1}-\ref{ass:fast:F5} in Section \ref{subsec:nondegen}. Specifically, these assumptions imply that the minimizer to~\eqref{def:minprob} is unique and sparse, i.e., of the form~$\bar{u}=\sum^N_{i=1} \bar{\lambda}_i \bar{u}_i$ for a finite number of extremal points~$\bar{u}_i \in \operatorname{Ext}(B)$ and coefficients~$\bar{\lambda}_i >0$. More crucially, we also require the existence of a ``distance function''~$g$ defined on $\operatorname{Ext}(B)$ such that the forward operator~$K$ and the optimal dual variable~$\bar{p}=-K_* \nabla F(K\bar{u})$ associated to $\bar{u}$ satisfy, respectively, the following Lipschitz and quadratic growth conditions
\begin{equation} \label{eq:intro_B5}
\ynorm{K(u -\bar{u}_i)} \lesssim  g(u,\bar{u}_i), \qquad 1-\langle \bar{p}, u  \rangle \gtrsim  g(u,\bar{u}_i)^2 \,,
\end{equation}
for all~$i=1,\dots,N$, and extremal points $u$ in the weak* vicinity of $\bar{u}_i$, respectively.

The proofs of the convergence results of Theorems~\ref{thm:convofcondu},~\ref{thm:fastconvpreview} rely on a novel lifting strategy, discussed in Section~\ref{sec:lifting}, which allows us to
recast \eqref{def:minprob} into an equivalent minimization
problem on $M^+(\mathcal{B})$, the space of positive Radon measures on the set $\mathcal{B} := \overline{\operatorname{Ext}(B)}^*$. 
Specifically, this is achieved by making three key 
observations:
First, using Choquet's Theorem~\cite[Page 14]{phelps}, we argue the existence of a surjective mapping~$\mathcal{I}$ from $M^+(\mathcal{B})$ to the domain of $\mathcal{G}$ with
\begin{align*}
\langle p, \mathcal{I}(\mu) \rangle= \int_{\mathcal{B}} \langle p,v \rangle~\mathrm{d} \mu(v)\,, \,\, \text{ for all } \, p \in \Cc,~\mu \in M^+(\mathcal{B})\,.
\end{align*}
Second, we consider the auxiliary problem
\begin{align}
\label{def:sparseprob} \tag{$\mathcal{P}_{M^+}$}
\inf_{\mu \in M^+(\mathcal{B})} \left \lbrack F(\mathcal{K}\mu) +\|\mu\|_{M(\mathcal{B})} \right \rbrack\,,
\end{align}
where the forward operator~$K$ is replaced by a lifted model~$\mathcal{K}\colon M^+(\mathcal{B}) \to Y$ satisfying~$\mathcal{K}\mu=K \mathcal{I}(\mu)$ for all~$\mu \in M^+(\mathcal{B})$.
As it turns out, see Theorem~\ref{prop:equivalence}, the above minimization problems are equivalent, in the sense that~$\bar{u}= \mathcal{I}(\bar{\mu})$ solves~\eqref{def:minprob} if and only if~$\bar{\mu}$ minimizes in~\eqref{def:sparseprob}. 
As a third key ingredient, such equivalence allows to interpret the iteration scheme introduced in~\eqref{eq:updateAkintro}--\eqref{eq:subprobintro} as one step of an~\textit{exchange algorithm} or a \textit{Primal-Dual-Active-Point method} (PDAP), cf. Algorithm~\ref{alg:pdap}, applied to~\eqref{def:sparseprob}. 
These methods are fully-corrective variants of the generalized conditional gradient method for minimization problems over spaces of Borel measures introduced in~\cite{pikka}, and their linear convergence has been recently proven in the finite-dimensional case, i.e., for measures supported on a compact subset of the euclidean space, see~\cite{flinthlinear,pieper}. 
Combining these three key observations, and carefully extending the techniques and results of~\cite{pieper} to the present setting, 
we are able to conclude the asymptotic linear convergence of our algorithm. 

If we interpret~\eqref{def:minprob} as a \textit{sparse dictionary learning problem}, in which the dictionary is given by~$\operatorname{Ext}(B)$, our FC-GCG method can also be linked to a~\textit{fully-corrective greedy selection} method,~\cite{shalev}, or an~\textit{accelerated gradient boosting} algorithm,~\cite{wang}. In this context, a similar lifting approach was implicitly used in~\cite{matrixnormzhang} to derive a fully-corrective GCG method for matrix-regularization problems which eventually converges at a global linear rate, albeit under very restrictive conditions. More in detail, expressed in the notation of the present paper, the authors require the strong convexity of~$F \circ K$, as well as that~$\Ext{B}$ is a finite set. For a related result in settings with finitely many atoms we also point out~\cite{shalev}. In contrast, the present manuscript is geared towards potentially compact forward operators~$K$, i.e.~$F \circ K$ is not strongly convex, as well as regularizers whose sublevel sets admit infinitely many extremal points. These additional difficulties will be circumvented by a careful localization of~$\mathcal{A}^{u,+}_k$ around the optimal extremal points, as well as by exploiting the assumed quadratic growth behavior in~\eqref{eq:intro_B5}. While the main contribution of the present work is clearly given by the FC-GCG method, the lifting strategy of Section~\ref{sec:lifting} may be of independent interest, and could also be applicable beyond the analysis of efficient solution algorithms.

In this paper, we mainly focus on the theoretical aspects of the FC-GCG method and, in particular, its convergence. Naturally, from the practical standpoint, the applicability of our method relies on the knowledge 
of the set $\operatorname{Ext}(B)$, as well as on the efficient solution of the constrained linear problem in~\eqref{eq:updateAkintro}. Especially the computational cost of the latter greatly varies between different instances of Problem~\eqref{def:minprob}.
To illustrate the working of our method and the role of assumptions \ref{ass:fast:F1}-\ref{ass:fast:F5} we present in Section \ref{sec:examples} several examples of applications where our algorithm can be implemented. For the first three examples we provide a natural and easy to verify set of assumptions that imply \ref{ass:fast:F1}--\ref{ass:fast:F5}, and for all of them we discuss the computational burden of computing a solution for the constrained linear problem in~\eqref{eq:updateAkintro}. We first consider the problem of identifying the initial source of a heat equation from given temperature measurements, see Section~\ref{subsec:guideex}.
For this example we also demonstrate numerically the expected linear convergence of Algorithm \ref{alg:gcg}, discussing the stopping criterion and the advantages of our algorithm compared to classical GCG methods. Then, in Section~\ref{sec:rank-one}, we 
consider the trace regularization of linear operators from a Hilbert space into itself, which favours the reconstruction of rank-one operators. In Section~\ref{subsec:mineffort}, we discuss minimum effort problems, where the regularization enforced by the supremum norm favours binary solutions.
Finally, in Section \ref{sec:dynamic_inverse}, we briefly deal with the optimal transport regularization of dynamic inverse problems \cite{bredies2020optimal}, showing that the algorithm introduced in \cite{DGCG} is, in some instances, a particular case of Algorithm~\ref{alg:gcg}. %
For this example the verification of the hypotheses \ref{ass:fast:F1}-\ref{ass:fast:F5}, necessary to ensure fast convergence, is non-trivial and is left to future work.

\subsection{Outline}
The paper is structured as follows. In Section \ref{sec:equivalence} we formulate the minimization problem \eqref{def:minprob} we are interested to solve. We further 
make a basic set of Assumptions~\ref{ass:func1}-\ref{ass:func3}, under which we 
prove its well-posedness, see Proposition~\ref{prop:existu}. In Section~\ref{sec:condgrad} we introduce the FC-GCG algorithm, discussing well-posedness of its steps and providing a stopping criterion. In addition we introduce sufficient conditions for fast 
convergence \ref{ass:fast:F1}-\ref{ass:fast:F5}, and state the sublinear global and linear local convergence results in Theorems \ref{thm:convofcondu}, \ref{thm:fastconvpreview} respectively. These results, together with the assumptions, are discussed for specific examples in Section~\ref{sec:examples}. We then pass to the convergence proofs for the FC-GCG algorithm. These are broken into three parts:
First, in Section \ref{sec:lifting}, we introduce a ``lifting'' of the minimization problem \eqref{def:minprob} to the space of Radon measures $M^+(\mathcal{B})$ by virtue of Choquet's theorem. We then prove the equivalence of \eqref{def:minprob} to the auxiliary problem \eqref{def:sparseprob}, see Theorem \ref{prop:equivalence}. Second, we propose an extension of the PDAP algorithm to compute solutions to \eqref{def:sparseprob} in Section~\ref{subsec:PDAP}. It turns out that PDAP and FC-GCG are equivalent given the correct interpretation, see Theorem \ref{thm:eqofpdapandgcg}. This equivalence will be extensively used in Section \ref{sec:convergence} to carry out the proofs of the main convergence statements. Finally, the Appendix contains some auxiliary results, as well as of proofs omitted in the main body of the paper.

\section{The minimization problem} \label{sec:equivalence}
In this section we introduce the minimization problem we are concerned with solving, and prove its well-posedness under suitable assumptions. We start by establishing some notations. Throughout the paper $\Cc$ denotes a separable Banach space with norm~$\cnorm{\cdot}$ and topological dual space~$\M \simeq \Cc^*$. We denote the duality pairing between $p \in \Cc$ and $u \in \M$ by~$\langle p , u \rangle$. The space~$\M$ is equipped with the canonical dual norm
\begin{align*}
\mnorm{u} := \sup_{\cnorm{p}\leq 1} \langle p,u  \rangle \quad \text{ for all } \,\, u \in \M \,.
\end{align*}
Let~$\G \colon \M \to [0,\infty]$ be a convex, weak* lower semi-continuous and positively one-homogeneous functional, that is $\G(\lambda u)=\lambda \G(u)$ for all $\lambda \geq 0$. Let $K\colon \M \to Y$ be a linear weak*-to-weak continuous operator, mapping to a given Hilbert space~$Y$, and~$F \colon Y \to \R$ be a convex mapping. %
The inner product and induced norm on~$Y$ will be denoted by~$(\cdot,\cdot)_Y$ and~$\|\cdot\|_Y$, respectively. Our interest lies in efficient solution algorithms for problems of the form~\eqref{def:minprob}, which we remind the reader is of the form
\[
\inf_{u\in \M} J(u), \quad J(u):=  F(Ku)+ \G(u) \,.
\]

\begin{remark}
Notice that the weak*-to-weak continuity of the linear operator $K : \mathcal{M} \rightarrow Y$ implies existence of a linear continuous operator $K_* : Y \rightarrow \mathcal{C}$ that is the pre-adjoint of $K$, i.e.,
\begin{align*}
\langle K_* y,u \rangle =(Ku, y)_Y  \quad \text{ for all } \,\,y \in Y,~u \in \M\,.
\end{align*}
See, for example, \cite[Remark 3.2]{pikka}. Moreover, the existence of a continuous pre-adjoint $K_*$ implies the strong-to-strong continuity of the operator $K$.
\end{remark}

Throughout the paper we moreover require the following basic assumptions.
 
\begin{assumption} \label{ass:functions}
Assume that:
\begin{enumerate}[label=\textnormal{(A\arabic*)}]
\item \label{ass:func1} $F$ is bounded from below, strictly convex and Fr\'echet differentiable on~$Y$. Its gradient $\nabla F \colon Y \to Y$ is Lipschitz continuous on compact sets.
\item \label{ass:func2} The sublevel set
\begin{align*}
S^-(\G,\alpha):= \left\{\,u \in \M\;|\; \G(u)\leq \alpha\, \right\}
\end{align*}
is weak* compact for every~$\alpha \geq 0$.
\item \label{ass:func3} The operator $K : \mathcal{M} \rightarrow Y$ is sequentially weak*-to-strong continuous in 
\[
{\rm dom}(\G) := \{ u \in \M \;|\;\G(u) <\infty\,\} \,,
\]
namely, for every sequence $\{u_k\}_k $ in ${\rm dom}(\G)$ such that $u_k  \weakstar u$ for some~$u \in \M$, it holds that $Ku_k \to Ku$ in $Y$.
\end{enumerate}
\end{assumption}
Note that Assumption~\ref{ass:func2} is equivalent to ask that the sublevel set $S^-(\G,\alpha)$ is closed and norm bounded for every $\alpha \geq 0$. 
The above conditions guarantee the existence of minimizers to \eqref{def:minprob}.

\begin{proposition} \label{prop:existu}
Assume~\ref{ass:func1}-\ref{ass:func3}. Then there exists at least one minimizer to~\eqref{def:minprob}. Moreover~$\optu \in \M$ is a solution to~\eqref{def:minprob} if and only if~$\bar{p}=-K_* \nabla F(K\optu)\in \Cc$ satisfies
\begin{align} \label{eq:subdiff}
\langle \bar{p}, \bar{u} \rangle= \G(\optu)\,, \quad ~\max_{v \in S^-(\mathcal{G}, 1)} \langle \bar{p},v \rangle \leq 1\,.
\end{align}
Finally, if~$\optu_1, \optu_2 \in \M$ are two solutions to~\eqref{def:minprob}, then~$K\optu_1=K\optu_2$.
\end{proposition}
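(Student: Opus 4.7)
The plan is the standard direct method. I take a minimizing sequence $\{u_k\}_k \subset \M$, noting that $\G(0)=0$ by one-homogeneity, so $J(0) = F(0) < \infty$ and such a sequence exists with $J(u_k)$ bounded above. Since $F$ is bounded below by \ref{ass:func1}, this bound forces $\G(u_k) \leq \alpha$ for some $\alpha \geq 0$, hence $\{u_k\}_k$ lies in the weak* compact set $S^-(\G,\alpha)$ from \ref{ass:func2}. Separability of $\Cc$ yields weak* metrizability on norm-bounded subsets of $\M$, hence sequential weak* compactness, so along a subsequence $u_k \weakstar \optu$. Assumption \ref{ass:func3} gives $Ku_k \to K\optu$ in $Y$, and continuity of $F$ then ensures $F(Ku_k) \to F(K\optu)$, while weak* lower semicontinuity of $\G$ yields $\G(\optu) \leq \liminf_k \G(u_k)$. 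Adding these two facts produces $J(\optu) \leq \liminf_k J(u_k) = \inf J$, so $\optu$ is a minimizer.

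\textbf{Optimality conditions.} My strategy is to recast minimality as the subgradient condition $\optp \in \partial \G(\optu)$ in the predual pairing $\langle \Cc, \M \rangle$, then translate it via one-homogeneity. Setting $\optp := -K_*\nabla F(K\optu) \in \Cc$, convexity of $F$ yields the linearized lower bound
\begin{equation*}
F(Ku) \geq F(K\optu) - \langle \optp, u - \optu \rangle \qquad \text{for all } u \in \M\,.
\end{equation*}
For sufficiency, combining this with the subgradient inequality $\G(u) \geq \G(\optu) + \langle \optp, u - \optu \rangle$ produces $J(u) \geq J(\optu)$ at once. For necessity, probing minimality along the segment $\optu + t(v-\optu)$, dividing by $t>0$, and passing to the limit $t \downarrow 0$ combines the Fr\'echet derivative of $F \circ K$ (whose predual representative is precisely $-\optp$) with the convex directional derivative of $\G$ at $\optu$ to recover the subgradient inequality with test element $\optp$. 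Finally, testing the subgradient inequality at $v = 0$ and $v = 2\optu$ with $\G(0) = 0$ and $\G(2\optu) = 2\G(\optu)$ delivers the first identity in \eqref{eq:subdiff}, and applying it to arbitrary $v \in \M$ together with positive one-homogeneity of $\G$ yields the second; these manipulations are reversible.

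\textbf{Uniqueness of $K\optu$.} Given two minimizers $\optu_1, \optu_2$, convexity implies that $\tfrac{1}{2}(\optu_1+\optu_2)$ is also a minimizer. If $K\optu_1 \neq K\optu_2$, strict convexity of $F$ on $Y$ from \ref{ass:func1} forces $F\bigl(K \tfrac{\optu_1+\optu_2}{2}\bigr) < \tfrac{1}{2}\bigl(F(K\optu_1) + F(K\optu_2)\bigr)$; combining with convexity of $\G$ produces $J\bigl(\tfrac{\optu_1+\optu_2}{2}\bigr) < \tfrac{1}{2}\bigl(J(\optu_1) + J(\optu_2)\bigr) = \inf J$, the desired contradiction.

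The main subtlety I anticipate is ensuring that the optimality condition admits a representative $\optp \in \Cc$ rather than only in $\M^* \simeq \Cc^{**}$. This is where the weak*-to-weak continuity of $K$, hence the existence of the predual $K_*\colon Y \to \Cc$, is crucial, together with weak* lower semicontinuity of $\G$: the former supplies a predual representative for the Fr\'echet derivative of $F \circ K$, and the latter, via the Fenchel--Moreau identity $\G = \G^{**}$ in the $\langle \Cc, \M \rangle$ pairing, legitimises testing the subgradient inequality against elements of $\Cc$.
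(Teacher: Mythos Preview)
Your proof is correct and follows essentially the same approach as the paper's: direct method for existence, reduction of optimality to the subgradient inclusion $\bar p\in\partial\G(\bar u)$ and its reformulation via one-homogeneity, and strict convexity of $F$ for uniqueness of $K\bar u$. The only minor difference is that for existence you invoke \ref{ass:func3} to get $Ku_k\to K\bar u$ strongly, whereas the paper uses only the standing weak*-to-weak continuity of $K$ together with convexity and continuity of $F$ to obtain weak* lower semicontinuity of $F\circ K$; your route is slightly less economical but equally valid since the minimizing sequence lies in $\dom(\G)$.
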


\begin{proof}
The existence of a minimizer follows by the direct method of calculus of variations. Indeed, the sublevel sets of $J$ are weak* compact thanks to Assumption~\ref{ass:func1} and Assumption~\ref{ass:func2}. Moreover $J$ is weak* lower semicontinuous, since $\mathcal{G}$ is weak* lower semicontinuous, $K$ is weak*-to-weak continuous and $F$ is convex and continuous (Assumption~\ref{ass:func1}).
In order to show \eqref{eq:subdiff}, notice
that the function~$f(u)=F(Ku)$ is G\^{a}teaux-differentiable with
$f'(u)(v)= \langle K_* \nabla F(Ku), v \rangle$ for all $v \in \mathcal{M}$.
Hence, see e.g.~\cite[Proposition~6.3]{daniel_thesis},~$\optu \in \mathcal{M}$ is a solution to~\eqref{def:minprob} if and only if 
\begin{align*}
\langle \bar{p},u- \bar{u} \rangle+ \mathcal{G}(\bar{u}) \leq \mathcal{G}(u) \,\, \text{ for all } \, u \in \M.
\end{align*}
This variational inequality holds if and only if
\begin{align*} 
\langle \bar{p}, \bar{u} \rangle= \G(\optu), \quad \langle \bar{p},v \rangle \leq \G(v) \,\, \text{ for all } \, v \in \M \,,
\end{align*}
which is equivalent to~\eqref{eq:subdiff}, thanks to the one-homogeneity of $\G$. Last, the identity $K\optu_1=K\optu_2$ for two solutions~$\optu_1,\optu_2$ of~\eqref{def:minprob} follows from the strict convexity of~$F$.
\end{proof}
For the remainder of the paper we refer to
\begin{equation} \label{eq:unique_dual}
\bar{y} \coloneqq K\bar{u} \in Y\,, \quad \bar{p}:=-K_* \nabla F(K\optu)\in \Cc \,,
\end{equation}
as the (unique) optimal observation and dual variable for~\eqref{def:minprob}, respectively, where $\bar{u}$ is any minimizer to \eqref{def:minprob}. Set $B = S^-(\mathcal{G}, 1)$. In the following we further require the notion of an~\textit{extremal point} of~$B$.
\begin{definition}
An element~$u \in B$ is called an extremal point of~$B$ if there are~\textit{no}~$u_1,u_2 \in B$ and~$s\in(0,1)$ with~$u=(1-s)u_1+su_2$. The set of all extremal points of $B$ is denoted by~$\Ext B$.
\end{definition}
Since~$B$ is weak* compact, non empty and convex thanks to Assumptions~\ref{ass:func1}-\ref{ass:func3}, by the Krein-Milman Theorem we infer that $ \operatorname{Ext}(B) \neq \emptyset$ and for every~$u \in \dom(\G)$ there is~$\{u_k\}_k$ in $\operatorname{cone}(\operatorname{Ext}(B))$  with~$u_k \weakstar u$. Set~$\mathcal{B}:=\overline{\Ext{B}}^*$. Since the predual space~$\Cc$ is separable and~$\mathcal{B}$ is weak* compact, there exists a metric~$d_{\mathcal{B}}$ metrizing the weak* convergence on $\Bb$, that is, for all sequences $\{u_k\}_k$ in $\Bb$ and $u \in \Bb$ we have
\begin{equation} \label{eq:metric_db}
u_k \weakstar u \,\, \text{ if and only if } \,\, \lim_{k \rightarrow \infty} d_{\mathcal{B}}(u_k,u)=0 \,.
\end{equation}
In particular, we have that $(\mathcal{B},d_{\mathcal{B}})$ is a compact separable metric space. 
\begin{remark}
Let us mention that all the results in this paper still hold  under slightly weaker conditions on the functional~$F$. In particular, the strict convexity of~$F$ can be replaced by assuming~$Ku_1=Ku_2$ for all solutions~$u_1,u_2$ of~$\eqref{def:minprob}$. Moreover, all results also apply to convex, weakly lower semicontinuous functionals~$F\colon Y \to \R \cup \{+\infty\}$ for which~$\dom(F)$ is open. In this case,~$F$ is required to be smooth on~$\dom(F)$ and~$\nabla F \colon \dom (F) \to Y$ is assumed to be Lipschitz continuous on compact subsets of~$\dom(F)$. For more details, we refer the interested reader to \cite{pieper,daniel_thesis}.
\end{remark}

\section{A numerical minimization algorithm} \label{sec:condgrad}

This section concerns the development of an implementable and efficient solution algorithm for the minimization problem \eqref{def:minprob}, namely, the~\textit{fully-corrective generalized conditional gradient method} (FC-GCG). As anticipated in the introduction, FC-GCG
comprises the two basic steps at~\eqref{eq:updateAkintro} and~\eqref{eq:subprobintro}, which we 
now describe in more detail.
For this purpose, recall that every~$u \in \dom(\mathcal{G})$ can be approximated, in the weak* topology of $\M$,  by a sequence in~$\operatorname{cone}(\operatorname{Ext}(B))$ thanks to Krein-Milman's Theorem. The considered method exploits this observation by alternating between the update of a finite set
\begin{align*}
\mathcal{A}^u_k= \{u^k_i\}^{N_k}_{i=1} \subset  \operatorname{Ext}(B),
\end{align*}
the so-called~\textit{active set}, and of an iterate
\begin{align*}
u_k \in \operatorname{cone}(\mathcal{A}^u_k)= \left\{\,u=\sum^{N_k}_{i=1} \lambda^k_i u^k_i\;|\;\lambda^k \in \R^{N_k}_+\,\right\}\,,
\end{align*}
where~$\R_+=[0,+\infty)$. Given the current active set~$\mathcal{A}^u_k=\{u^k_i\}^{N_k}_{i=1}$ with~$u^k_i \in \operatorname{Ext}(B)$ and the iterate~$u_k \in \operatorname{cone}(\mathcal{A}^u_k)$, we compute the dual variable~$p_k= -K_* \nabla F(Ku_k) \in \Cc$ and update
\begin{align}\label{eq:descr_insertion}
\mathcal{A}^{u,+}_k= \mathcal{A}^u_k \cup \{\widehat{v}^u_k\}, \quad ~\widehat{v}^u_k \in \argmax_{v \in \operatorname{Ext}(B)} \,\langle p_k, v \rangle\,.
\end{align}
The above step requires the minimization of a linear functional over the not necessarily compact set of extremal points. 
Remarkably, this problem is well-posed, as shown in Lemma \ref{lem:existoflinearized} in Appendix~\ref{append:A1}.
Setting~$N^+_k=N_k +1$ and~$u^{k}_{N^+_k}=\widehat{v}^u_k$, we introduce the gauge function associated with~$\mathcal{A}^{u,+}_k$ as 
\begin{align*}
\kappa_{\mathcal{A}^{u,+}_k}(u) := \min_{\lambda \in \R_+^{N^+_k}}\left\{ \left.\,\sum^{N^+_k}_{i=1} \lambda_i\;\right\vert\;u= \sum^{N^+_k}_{i=1} \lambda_i u_i^k \, \right\}.
\end{align*} 
Note that~$\kappa_{\mathcal{A}^{u,+}_k}(u)$ is well-defined on~$\operatorname{cone}(\mathcal{A}^{u,+}_k)$, as the minimum is 
achieved due to lower semicontinuity of the~$\ell^1$ norm and closedness of~$\R_+$.
Subsequently, the next iterate~$u_{k+1}$ is found by solving the subproblem
\begin{align} \label{eq:subprobalgo}
\min_{u \in \operatorname{cone}(\mathcal{A}^{u,+}_k)} \left \lbrack F(Ku) +\kappa_{\mathcal{A}^{u,+}_k}(u) \right \rbrack,
\end{align}
where the search for the minimizer is restricted to the cone spanned by~$\mathcal{A}^{u,+}_k$ and the possibly complicated regularizer~$\G$ is replaced by the easier-to-handle gauge function of the finite set $\mathcal{A}^{u,+}_k$.  We point out that the objective functional in~\eqref{eq:subprobalgo} constitutes an upper bound on~$J$, i.e., there holds
\begin{align*}
J(u) \leq F(Ku) +\kappa_{\mathcal{A}^{u,+}_k}(u) \quad \text{for all}~u \in \operatorname{cone}(\mathcal{A}^{u,+}_k)
\end{align*}
due to the convexity and one-homogeneity of~$\mathcal{G}$. Apart from that, Problems~\eqref{def:minprob} and~\eqref{eq:subprobalgo} share the same basic structure of minimizing the sum of a smooth fidelity term and a gauge-like regularizer. In particular, the existence result and the necessary first-order conditions in Proposition~\ref{prop:existu} also apply to~\eqref{eq:subprobalgo}. However, as shown in Proposition \ref{prop:eqgauge} in Appendix~\ref{append:A1}, the big advantage of~\eqref{eq:subprobalgo} is that a solution can be computed as 
\begin{equation} \label{eq:new_iterate}
u_{k+1}= \sum^{N^+_k}_{i=1}\lambda^{k+1}_i u^k_i \,,
\end{equation}
where $\lambda^{k+1} \in \R_+^{N^+_k}$ solves the following finite dimensional optimization problem:
\begin{align} \label{eq:subprobcoeffs}
\min_{ \lambda \in \R_+^{N^+_k}} \left \lbrack F\left(\sum^{N^+_k}_{i=1} \lambda_i K u^k_i\right)+ \sum^{N^+_k}_{i=1} \lambda_i \right \rbrack\,.
\end{align}
Thus, in practice, we first determine a minimizer~$\lambda^{k+1} \in \R^{N^+_k}$ to~\eqref{eq:subprobcoeffs}. We remark that~\eqref{eq:subprobcoeffs} constitutes a finite dimensional non-smooth convex minimization problem, which can be efficiently solved by proximal methods or generalized Newton algorithms provided that~$F$ is sufficiently smooth.
Once this is accomplished, the new iterate~$u_{k+1}$ is defined according to~\eqref{eq:new_iterate}. As a final step, we truncate the active set~$\mathcal{A}^{u,+}_k$ by removing all extremal points that were assigned a zero weight by the optimization 
procedure~\eqref{eq:subprobcoeffs}, i.e., we set
\begin{align*}
\mathcal{A}^u_{k+1}= \mathcal{A}^{u,+}_k \setminus \{\,u^k_i\;|\;\lambda^{k+1}_i=0,~i=1,\dots, N^+_k\,\}
\end{align*}
and increment~$k$ by one. The method is summarized in Algorithm~\ref{alg:gcg} below.

\begin{algorithm}[tbh]\caption{FC-GCG for~\eqref{def:minprob}}
\begin{algorithmic} \label{alg:gcg}
\STATE 1. Let~$u_0= \sum^{N_0}_{i=1} \lambda^0_i u^0_i$,~$\lambda^0_i >0$,~$\mathcal{A}^u_0= \{u^0_i\}^{N_0}_{i=1}\subset \Ext B$.
\FOR{$k=0,1,2,\dots$}
\STATE 2. Given~$\mathcal{A}^u_k=\{u^k_i\}^{N_k}_{i=1} \subset \operatorname{Ext}(B) $ and~$u_k \in \operatorname{cone}(\mathcal{A}^u_k)$, calculate $p_k$ and $\widehat{v}^u_k$ with
\begin{align*}
p_k=-K_*\nabla F(Ku_k) \,, \quad \widehat{v}^u_k \in \argmax_{v\in \operatorname{Ext}(B)}\,\langle p_k,v\rangle \,.
\end{align*}
\IF{$\langle p_k,\widehat{v}^u_k \rangle\leq 1~\text{and}~k \geq 1$
}
\STATE 3. Terminate with~$\optu= u_k$ a minimizer to~\eqref{def:minprob}.
\ENDIF
\STATE 4. Update~$N^+_k = N_k +1$,~$u^k_{N^+_k} =\widehat{v}^u_k$ and~$\mathcal{A}^{u,+}_k= \mathcal{A}^u_k \cup \{\widehat{v}^u_k\}$.
\STATE 5. Determine~$\lambda^{k+1}$ with
\[
\lambda^{k+1} \in \argmin_{ \lambda \in \R_+^{N^+_k}} \left \lbrack F\left(\sum^{N^+_k}_{i=1} \lambda_i K u^k_i\right)+ \sum^{N^+_k}_{i=1} \lambda_i \right \rbrack\,,
\]
and set~$u_{k+1}= \sum^{N^+_k}_{i=1} \lambda^{k+1}_i u^k_i$.
\STATE 6. Update
\begin{align*}
\mathcal{A}^u_{k+1}= \mathcal{A}^{u,+}_k \setminus \{\,u^k_i\;|\;\lambda^{k+1}_i=0,~i=1,\dots, N^+_k\,\} 
\end{align*}
and set~$N_{k+1}=\# \mathcal{A}^u_{k+1}$.
\ENDFOR
\end{algorithmic}
\end{algorithm}

\subsection{Stopping condition}
Note that Algorithm~\ref{alg:gcg} terminates at iteration~$k\geq 1$ if 
\begin{equation} \label{eq:stopping_rev}
\max_{v \in \Ext B} \langle p_k, v \rangle \leq 1 \,.
\end{equation}
This is justified by the fact that, in this case, the current iterate $u^k$ is a minimizer to~\eqref{def:minprob}, as shown
in Proposition~\ref{prop:stopping} below. In particular, in this situation, the algorithm converges in a finite number
of iterations.

\begin{proposition}\label{prop:stopping}
Suppose~\ref{ass:func1}-\ref{ass:func3} in Assumption~\ref{ass:functions} hold. 
Let~$u_k \in \operatorname{cone}(\mathcal{A}^u_k)$ be generated by Algorithm~\ref{alg:gcg}, with~$k\geq 1$. Set~$p_k =-K_* \nabla F(Ku_k) $ and assume \eqref{eq:stopping_rev}. Then~$u_k$ solves~\eqref{def:minprob}.
\end{proposition}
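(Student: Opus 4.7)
The plan is to verify the two optimality conditions from Proposition~\ref{prop:existu} at $u_k$, namely
\[
\langle p_k, u_k \rangle = \G(u_k) \quad \text{and} \quad \max_{v \in B} \langle p_k, v \rangle \leq 1\,,
\]
where $B = S^-(\G,1)$.

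The global upper bound on $\langle p_k, \cdot \rangle$ I would obtain via Bauer's maximum principle: since $p_k \in \Cc$, the map $v \mapsto \langle p_k, v \rangle$ is weak$^*$-continuous, linear and hence convex, while $B$ is weak$^*$-compact and convex, so the maximum over $B$ is attained at an extremal point. The stopping criterion~\eqref{eq:stopping_rev}, together with the definition of $\widehat v^u_k$, then yields $\max_{v \in B} \langle p_k, v \rangle = \sup_{v \in \Ext B} \langle p_k, v \rangle = \langle p_k, \widehat v^u_k \rangle \leq 1$.

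The identity $\langle p_k, u_k \rangle = \G(u_k)$ is the real content of the statement and would be established by exploiting the assumption $k \geq 1$: the iterate $u_k$ was produced at the previous iteration as a minimizer of~\eqref{eq:subprobcoeffs}, so $u_k = \sum_{i=1}^{N^+_{k-1}} \lambda^k_i u^{k-1}_i$ with $\lambda^k \in \R_+^{N^+_{k-1}}$ optimal. Writing down the KKT conditions of the smooth convex finite-dimensional problem~\eqref{eq:subprobcoeffs} and recalling that $p_k = -K_* \nabla F(K u_k)$, I get $\langle p_k, u^{k-1}_i \rangle \leq 1$ for all $i$, with equality whenever $\lambda^k_i > 0$. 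Multiplying by $\lambda^k_i \geq 0$ and summing yields
\[
\langle p_k, u_k \rangle = \sum_{i=1}^{N^+_{k-1}} \lambda^k_i\,.
\]

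To close the argument I compare this quantity with $\G(u_k)$ in two directions. Since each atom $u^{k-1}_i$ belongs to $\Ext B \subset B$, convexity and positive one-homogeneity of $\G$ give $\G(u_k) \leq \sum_i \lambda^k_i \G(u^{k-1}_i) \leq \sum_i \lambda^k_i = \langle p_k, u_k \rangle$. Conversely, the global bound combined with one-homogeneity implies $\langle p_k, u_k \rangle \leq \G(u_k)$: for $\G(u_k)>0$ this is immediate by evaluating at $u_k/\G(u_k) \in B$, and for $\G(u_k)=0$ one uses that $\lambda u_k \in B$ for every $\lambda \geq 0$ to force $\langle p_k, u_k \rangle \leq 0$, which is compatible with $\sum_i \lambda^k_i \geq 0$ only if both vanish. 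Equality throughout yields $\langle p_k, u_k \rangle = \G(u_k)$, and Proposition~\ref{prop:existu} then concludes that $u_k$ solves~\eqref{def:minprob}. I do not foresee a serious obstacle; the only mild subtleties are the passage from $\Ext B$ to all of $B$ in the global bound (handled by Bauer's principle) and the degenerate case $\G(u_k)=0$ (handled by one-homogeneity).
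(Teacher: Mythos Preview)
Your proposal is correct and follows essentially the same route as the paper: verify the two optimality conditions of Proposition~\ref{prop:existu}, pass from $\Ext B$ to $B$ via a maximum-on-extremals argument (the paper packages this as Lemma~\ref{lem:existoflinearized}, you invoke Bauer directly), and extract $\langle p_k,u_k\rangle=\sum_i\lambda^k_i$ from the KKT conditions of the previous subproblem together with the sandwich $\G(u_k)\leq\sum_i\lambda^k_i$ and $\langle p_k,u_k\rangle\leq\G(u_k)$. Your treatment of the degenerate case $\G(u_k)=0$ is slightly different from the paper's (which instead argues $\G(u_k)=0\Rightarrow u_k=0$ via~\ref{ass:func2}), but equally valid.
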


\begin{proof}
Thanks to Proposition \ref{prop:existu} and Lemma \ref{lem:existoflinearized},~$u_k$ solves~\eqref{def:minprob} 
if and only if
\begin{align}\label{eq:opppcond}
\langle p_k, u_k \rangle= \mathcal{G}(u_k)\,,\quad 
\max_{v \in  B} \langle p_k, v \rangle = \max_{v \in \Ext B} \langle p_k, v \rangle \leq 1\,.
\end{align}
The second condition in \eqref{eq:opppcond} is satisfied by assumption, so we are only left to check the first.
If~$u_k=0$ this is trivial, since~$\G(0)=0$.
Therefore assume~$u_k \neq 0$. This implies 
$\G(u_k) \neq 0$. Indeed, if by contradiction 
$\G(u_k) = 0$, then by one-homogeneity of $\G$ we would obtain 
$\lambda u_k \in S^-(\G,0)$ for all $\lambda \geq 0$. But then  
$S^-(\G,0)$ would not be norm bounded, contradicting~\ref{ass:func2}. Thus $\G(u_k) \neq 0$. Since~$u_k/\mathcal{G}(u_k) \in B$, from the second condition in
\eqref{eq:opppcond} we get
\begin{align} \label{eq:optestimate}
\langle p_k, u_k \rangle \leq \mathcal{G}(u_k).
\end{align}
By construction $u_k=\sum^{N_k}_{i=1} \lambda^k_i u^k_i$, with $u_i^k \in \operatorname{Ext}(B)$ and~$\lambda^k \in \R_+^{N_k}$ a minimizer of
\begin{align*}
\min_{ \lambda \in \R_+^{N_k}} \left \lbrack F\left(\sum^{N_k}_{i=1} \lambda_i K u^k_i\right)+ \sum^{N_k}_{i=1} \lambda_i \right \rbrack.
\end{align*}
Note that since~$\lambda^k$ is a minimizer to the above problem, we have
\begin{align} \label{eq:optimlocal}
\langle p_k,u_k \rangle= \sum^{N_k}_{i=1} \lambda^k_i \langle p_k, u^k_i \rangle= \sum^{N_k}_{i=1} \lambda^k_i\,, \quad \langle p_k, u^k_i \rangle \leq 1 \,,
\end{align}
for~$i=1,\dots,N_k$.
As $u_k=\sum^{N_k}_{i=1} \lambda^k_i u^k_i$ with $u_i^k \in \operatorname{Ext}(B)$, by convexity and one-homogeneity of~$\G$ we estimate
$\G(u_k) \leq \sum^{N_k}_{i=1} \lambda^k_i$. 
Therefore $\langle p_k,u_k \rangle = \G(u_k)$ by  \eqref{eq:optestimate}-\eqref{eq:optimlocal}, ending the proof.
\end{proof}

\begin{remark}
Note that Algorithm~\ref{alg:gcg} terminates also in case~$\widehat{v}^u_k \in \mathcal{A}^u_k$ for some $k \geq 1$, i.e., if the algorithm cannot find a new point to insert. 
Indeed in this situation we have 
$\langle p_k,\widehat{v}^u_k \rangle \leq 1$
by the optimality conditions at~\eqref{eq:optimlocal}. Therefore
the stopping condition \eqref{eq:stopping_rev} is satisfied.
\end{remark}

\subsection{Worst-case convergence rates}
The main contribution of the present manuscript is the derivation of convergence results for the sequence of~\textit{residuals}
\begin{align} \label{def:residualu}
r_J(u_k)=J(u_k)- \min_{u \in \M} J(u)
\end{align}
associated to the iterates generated by Algorithm~\ref{alg:gcg}. This is a challenging task for a variety of reasons. For example the space~$\mathcal{M}$ is, in general, not reflexive. Moreover the functional $J$ lacks useful properties, such as smoothness or strict convexity. 
Classical approaches \cite{dunnimplicit, dunnnonsingular, auslender} provide sublinear convergence rates for GCG methods defined in Banach spaces, however very few linear convergence results are available in the literature and these results are usually limited to specific cases.
In this section, we state the convergence results anticipated in the introduction, and we detail the additional assumptions which are needed to prove them. Their proofs, which are rather technical, are then postponed to Sections~\ref{subsec:worstpdap} and~\ref{subsec:fastconvproof}.
We start with the following sublinear convergence result, which holds under the basic Assumptions~\ref{ass:func1}-\ref{ass:func3}.
\begin{theorem} \label{thm:convofcondu}
Let \ref{ass:func1}-\ref{ass:func3} in Assumption~\ref{ass:functions} hold. Then, Algorithm~\ref{alg:gcg} either terminates after a finite number of steps, with~$\bar{u}=u_k$ a minimizer to~\eqref{def:minprob}, or there is a constant $c>0$ such that
\begin{equation} \label{thm:convofcond:1}
r_J(u_k) \leq c \, \frac{1}{k+1} \quad \text{ for all } \,\, k \in \N\,,
\end{equation}
where $r_J$ is defined at \eqref{def:residualu}. Moreover, in this case, the sequence~$\{u_k\}_{k}$ admits at least one weak* accumulation point and each of such point is a solution to~\eqref{def:minprob}. If the solution~$\bar{u}$ to~\eqref{def:minprob} is unique, then we have~$u_k \weakstar \bar{u}$ in~$\M$ for the whole sequence. 
\end{theorem}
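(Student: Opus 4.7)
The plan is to derive an $\mathcal{O}(1/k)$ descent recursion of the form $r_J(u_{k+1}) \leq r_J(u_k) - c\min\{r_J(u_k),\, r_J(u_k)^2\}$ and then obtain the accumulation-point statements from the weak$^*$ compactness of the sublevel sets of $\G$. The argument splits into four steps and, apart from some bookkeeping, only uses what has already been set up in Sections~\ref{sec:equivalence} and~\ref{sec:condgrad}.

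\emph{Step 1: uniform boundedness.} Since $(\lambda^{k}, 0) \in \R_+^{N_k^+}$ is feasible in the subproblem~\eqref{eq:subprobcoeffs}, one has $J(u_{k+1}) \leq F(Ku_k) + \sum_i \lambda_i^k$, and combining this with $\G(u_k) \leq \sum_i \lambda_i^k$ gives $J(u_{k+1}) \leq J(u_k) + (\sum_i \lambda_i^k - \G(u_k))$; by optimality of $\lambda^k$ this additional term can in fact be shown to be zero, yielding monotonicity of $J(u_k)$. In particular $\sup_k \G(u_k) < \infty$, so $\{u_k\}$ remains in a weak$^*$ compact sublevel set of $\G$ and, by~\ref{ass:func3}, $\{Ku_k\} \cup K(B)$ lies in a compact subset of $Y$ on which $\nabla F$ is Lipschitz with some constant $L$.

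\emph{Step 2: primal--dual gap.} By convexity of $F \circ K$, for any $\bar u \in \argmin J$,
\[
r_J(u_k) \leq \langle p_k, \bar u - u_k\rangle + \G(u_k) - \G(\bar u).
\]
From~\eqref{eq:optimlocal} we have $\langle p_k, u_k\rangle = \sum_i \lambda_i^k \geq \G(u_k)$, while the Bauer maximum principle applied to the linear functional $\langle p_k, \cdot \rangle$ on the weak$^*$ compact convex set $B$ yields $\langle p_k, \bar u\rangle \leq \G(\bar u)\langle p_k, \hat v^u_k\rangle$. Combining these gives
\[
r_J(u_k) \leq \G(\bar u)\big(\langle p_k, \hat v^u_k\rangle - 1\big) =: \G(\bar u)\,\Phi_k,
\]
so the ``stopping gap'' satisfies $\Phi_k \geq r_J(u_k)/\G(\bar u) \geq 0$.

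\emph{Step 3: descent via full correction.} For every $s \in [0,1]$ and $M := \G(\bar u)$, the point $w_k(s) := u_k + s(M\hat v^u_k - u_k)$ lies in $\operatorname{cone}(\mathcal{A}_k^{u,+})$ with $\kappa_{\mathcal{A}_k^{u,+}}(w_k(s)) \leq (1-s)\G(u_k) + sM$. Since $u_{k+1}$ minimizes the upper bound on $J$ over this cone and $\nabla F$ is Lipschitz on the relevant bounded set (Step~1), a descent-lemma estimate gives
\[
J(u_{k+1}) \leq F(Ku_k) - s\langle p_k, M\hat v^u_k - u_k\rangle + \tfrac{L}{2}s^2\|K(M\hat v^u_k - u_k)\|_Y^2 + (1-s)\G(u_k) + sM.
\]
Subtracting $J(\bar u)$, invoking $\langle p_k, u_k\rangle \geq \G(u_k)$ once more, and setting $D := \sup_k \|K(M\hat v^u_k - u_k)\|_Y < \infty$ yields $r_J(u_{k+1}) \leq r_J(u_k) - sM\Phi_k + \tfrac{L}{2}s^2 D^2$. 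Using Step~2 and optimizing over $s\in[0,1]$ produces the announced quadratic recursion, from which the rate $r_J(u_k) \leq c/(k+1)$ follows by the usual induction on the reciprocal.

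\emph{Step 4: subsequential limits.} Since $\Cc$ is separable, the weak$^*$ topology on the sublevel set containing $\{u_k\}$ is metrizable; hence every subsequence has a weak$^*$ convergent sub-subsequence $u_{k_j} \weakstar u^\infty$. By~\ref{ass:func3} we get $Ku_{k_j} \to Ku^\infty$, so $F(Ku_{k_j}) \to F(Ku^\infty)$, and weak$^*$ lower semicontinuity of $\G$ together with $r_J(u_k) \to 0$ gives $J(u^\infty) = \min J$, i.e.\ $u^\infty \in \argmin J$. If the minimizer is unique, the standard subsequence-of-subsequences argument upgrades weak$^*$ subsequential convergence to convergence of the whole sequence. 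The only real obstacle is the uniform bound $D < \infty$ in Step~3; this rests on Step~1 and on the fact that $K$ maps weak$^*$ compact sets of $\operatorname{dom}(\G)$ into bounded subsets of $Y$, which is a direct consequence of~\ref{ass:func3}.
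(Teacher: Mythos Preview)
There is a genuine gap, and it sits exactly at the point the paper singles out as the obstacle. In Step~1 you assert that ``by optimality of $\lambda^k$ this additional term can in fact be shown to be zero'', i.e.\ that $\sum_i \lambda_i^k = \G(u_k)$. This is false in general: optimality of $\lambda^k$ for the finite-dimensional subproblem only gives $\langle p_k,u_k\rangle = \sum_i\lambda_i^k$ (that is~\eqref{eq:optimlocal}), and by convexity and one-homogeneity one has $\G(u_k)\le \kappa_{\mathcal{A}_k}(u_k)=\sum_i\lambda_i^k$ with strict inequality possible. The paper states this explicitly in the remark following Theorem~\ref{thm:convofcond}: since in general only $\G(u_k)\le\kappa_{\mathcal{A}_k}(u_k)$ holds, the residuals $r_J(u_k)$ are \emph{not} necessarily monotone.

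The same error resurfaces in Step~3, where you bound $\kappa_{\mathcal{A}^{u,+}_k}(w_k(s))$ by $(1-s)\G(u_k)+sM$. The correct bound is $(1-s)\sum_i\lambda_i^k + sM$, and if you carry this through, the claimed recursion acquires an extra nonnegative term:
\[
r_J(u_{k+1}) \le r_J(u_k) + \Bigl[\textstyle\sum_i\lambda_i^k - \G(u_k)\Bigr] - sM\Phi_k + \tfrac{L}{2}s^2 D^2,
\]
and the bracket is not controlled by $\Phi_k$ or by $r_J(u_k)$. So the induction on $r_J$ does not close.

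The fix is to run your entire argument on the surrogate residual $\tilde r(k) := F(Ku_k)+\sum_i\lambda_i^k - \min J$ instead of on $r_J(u_k)$. This quantity \emph{is} monotone (because $u_k$ with coefficients $\lambda^k$ is feasible in the next subproblem), your Step~2 yields $\tilde r(k)\le M\Phi_k$ directly from $\langle p_k,u_k\rangle=\sum_i\lambda_i^k$, and your Step~3 gives the clean recursion $\tilde r(k+1)\le \tilde r(k)-sM\Phi_k+\tfrac{L}{2}s^2D^2$. One then concludes via $r_J(u_k)\le \tilde r(k)$. This is precisely the paper's strategy: the lifting of Section~\ref{sec:lifting} identifies $\tilde r(k)$ with $r_j(\mu_k)$ for the associated measure $\mu_k=\sum_i\lambda_i^k\delta_{u_i^k}$, proves the sublinear rate for $r_j(\mu_k)$ (Theorem~\ref{thm:convofcond}), and transfers it via $r_J(u_k)\le r_j(\mu_k)$ (Theorem~\ref{thm:eqofpdapandgcg}). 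Your direct approach would work once this correction is made, and would then be essentially the lifting argument written without the measure-theoretic language.
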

\subsection{Non-degeneracy and fast convergence} \label{subsec:nondegen}
While Theorem~\ref{thm:convofcondu} proves the convergence of Algorithm~\ref{alg:gcg}, the provided, slow, sublinear rate of convergence does not match the computed results of Section~\ref{subsec:guideex}. Motivated by this gap between theory and numerical observations, we argue the asymptotic linear convergence of~$r_J(u_k)$ provided that certain structural assumptions on problem~\eqref{def:minprob} are satisfied. First, in addition to Assumptions~\ref{ass:func1}-\ref{ass:func3}, we require that the solution set of the linear problem
\[
\max_{v \in \Bb}\, \langle \bar p,v \rangle
\]
consists of a finite number of extremal points $\bar{u}_1, \ldots, \bar{u}_N \in \ext(B)$, where $\bar{p}$ denotes the unique dual variable of \eqref{def:minprob}, see \eqref{eq:unique_dual}. Moreover, we ask that the restriction of the operator~$K$ into the span of~$\bar{\mathcal{A}}\coloneqq \{\bar{u}_i\}^N_{i=1}$ is injective. Such assumptions ensure the uniqueness of  the minimizer to \eqref{def:minprob}, denoted by $\bar u$, see Proposition \ref{prop:uniqueu} below. Additionally, we ask that $F$ is strongly convex around the unique optimal observation $\bar{y}$, see \eqref{eq:unique_dual}.  This set of assumptions is summarized below.
\begin{assumption}{\textit{(Uniqueness and strong convexity)}} \label{ass:fastconv1}
\newcounter{count}
\begin{enumerate}[label=\textnormal{(B\arabic*)}]
\item \label{ass:fast:F1} The map $F \colon Y \to \R$ is strictly convex and strongly convex around the unique optimal observation~$\bar{y}$, i.e., there  exists a neighborhood~$\mathcal{N}(\bar{y})$ of~$\bar{y}$ and~$\theta>0$ such that
\begin{align*}
(\nabla F(y_1)-\nabla F(y_2),y_1-y_2 )_Y \geq \theta \ynorm{y_1-y_2}^2 \,, \,\, \text{ for all } \,\,  y_1,y_2 \in \mathcal{N}(\bar{y})\,,
\end{align*}
\item \label{ass:fast:F2} There is~$N>0$ and a finite collection of extremal points~$ \bar{\mathcal{A}}:=\{\bar{u}_i\}^N_{i=1} \subset \operatorname{Ext}(B)$ such that the unique dual variable~$\bar{p}=-K_* \nabla F(\bar{y}) \in \mathcal{C}$ satisfies
\[
\argmax_{v \in \Bb}\, \langle \bar p,v \rangle = \left \{\,v \in \mathcal{B}\; | \;\langle \bar{p},v \rangle=1\,\right\}= \{\bar{u}_i\}^N_{i=1} \,,
\]
\item \label{ass:fast:F3} The set~$\{K\bar{u}_i\}^N_{i=1}\subset Y$ is linearly independent.
\setcounter{count}{\value{enumi}}
\end{enumerate}
\end{assumption}
We now check that the above assumptions imply uniqueness of solutions to \eqref{def:minprob}. 
\begin{proposition} \label{prop:uniqueu}
Let Assumptions~\ref{ass:func1}-\ref{ass:func3} and \ref{ass:fast:F2}-\ref{ass:fast:F3} hold. Then \eqref{def:minprob} admits a unique solution $\bar u$, which is of the form
\begin{equation} \label{eq:sparse_min}
\bar{u}=\sum^N_{i=1}\bar{\lambda}_i\bar{u}_i\,,
\end{equation}
for some $\bar{\lambda}_i\geq 0$, where $\{\bar{u}_i\}_{i=1}^N$ are the points from \ref{ass:fast:F2}.
\end{proposition}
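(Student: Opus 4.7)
The plan is to combine existence (already provided by Proposition~\ref{prop:existu}) with the first-order optimality condition \eqref{eq:subdiff} to force any minimizer to lie in the convex cone generated by $\{\bar u_1,\dots,\bar u_N\}$, and then to invoke~\ref{ass:fast:F3} together with the fact that two minimizers produce the same observation $K\bar u$ to get uniqueness of the coefficients. The crux is a face-type argument on the weak* compact set $B = S^-(\mathcal{G},1)$, exploiting~\ref{ass:fast:F2}.

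For the existence of the representation, let $\bar u$ be an arbitrary minimizer and set $\bar p = -K_*\nabla F(K\bar u)\in \mathcal{C}$. First I would handle the trivial case $\mathcal{G}(\bar u) = 0$: one-homogeneity yields $\lambda \bar u \in S^-(\mathcal{G},0)$ for every $\lambda\geq 0$, and weak* compactness of $S^-(\mathcal{G},0)$ from~\ref{ass:func2} implies norm-boundedness, which is only possible if $\bar u = 0$, so $\bar u = \sum_i \bar\lambda_i \bar u_i$ with all $\bar\lambda_i = 0$. Assume henceforth $\mathcal{G}(\bar u) > 0$ and set $\bar w := \bar u/\mathcal{G}(\bar u) \in B$, so that \eqref{eq:subdiff} gives $\langle \bar p, \bar w\rangle = 1$. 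Introduce the exposed face
\[
F := \{\, v \in B : \langle \bar p, v\rangle = 1\,\},
\]
which is nonempty, convex, and weak* compact (it is the intersection of the weak* compact set $B$ with the weak* closed hyperplane $\{\langle \bar p,\cdot\rangle = 1\}$, noting that $\bar p\in\mathcal{C}$ makes the functional weak* continuous). The key observation is that $\operatorname{Ext}(F)\subset \operatorname{Ext}(B)$: if $v\in F$ decomposes as $v = \tfrac12(u_1+u_2)$ with $u_1,u_2\in B$, then $\langle \bar p,u_j\rangle \leq 1$ forces $\langle \bar p,u_j\rangle = 1$, i.e.\ $u_j\in F$, so extremality in $F$ implies extremality in $B$. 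Consequently $\operatorname{Ext}(F)\subset\operatorname{Ext}(B)\subset \mathcal{B}$ and each element of $\operatorname{Ext}(F)$ saturates $\langle\bar p,\cdot\rangle = 1$; by~\ref{ass:fast:F2} this yields $\operatorname{Ext}(F)\subset\{\bar u_1,\dots,\bar u_N\}$. The Krein--Milman theorem applied to the weak* compact convex set $F$ then gives
\[
F \;=\; \overline{\operatorname{conv}}^{*}(\operatorname{Ext}(F)) \;\subset\; \operatorname{conv}\{\bar u_1,\dots,\bar u_N\},
\]
where the last set is already weak* closed since the convex hull of finitely many points in the Hausdorff weak* topology is compact. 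Writing $\bar w = \sum_i \mu_i \bar u_i$ with $\mu_i\geq 0$, $\sum_i\mu_i = 1$ and scaling by $\mathcal{G}(\bar u)$ gives~\eqref{eq:sparse_min} with $\bar\lambda_i := \mathcal{G}(\bar u)\mu_i \geq 0$.

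For uniqueness, let $\bar u^{(1)}$ and $\bar u^{(2)}$ be two minimizers. Proposition~\ref{prop:existu} yields $K\bar u^{(1)} = K\bar u^{(2)}$, and by the first part each admits a decomposition $\bar u^{(j)} = \sum_{i=1}^N \bar\lambda_i^{(j)}\bar u_i$, so that
\[
\sum_{i=1}^N \bigl(\bar\lambda_i^{(1)} - \bar\lambda_i^{(2)}\bigr) K\bar u_i \;=\; 0.
\]
Assumption~\ref{ass:fast:F3} (linear independence of $\{K\bar u_i\}_{i=1}^N$ in $Y$) gives $\bar\lambda_i^{(1)} = \bar\lambda_i^{(2)}$ for every $i$, and hence $\bar u^{(1)} = \bar u^{(2)}$.

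The main obstacle is Step~1: assumption~\ref{ass:fast:F2} characterises the maximizers of $\langle\bar p,\cdot\rangle$ only among points of $\mathcal{B} = \overline{\operatorname{Ext}(B)}^*$, whereas one needs to control \emph{all} points of $B$ at which $\langle\bar p,\cdot\rangle = 1$, including non-extremal ones. The face-theoretic argument combined with Krein--Milman is precisely the bridge: extreme points of the face $F$ automatically lie in $\operatorname{Ext}(B)\subset\mathcal{B}$, where~\ref{ass:fast:F2} can be applied, and then the compactness of $F$ allows us to recover every element of $F$ as a convex combination of the $\bar u_i$. All remaining steps are routine given the assumptions.
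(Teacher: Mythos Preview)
Your proof is correct and follows essentially the same route as the paper: normalize $\bar u$ to $\bar w\in B$ with $\langle\bar p,\bar w\rangle=1$, identify the face $F=\{v\in B:\langle\bar p,v\rangle=1\}$, show its extreme points are extreme in $B$ and hence lie in $\{\bar u_1,\dots,\bar u_N\}$ by~\ref{ass:fast:F2}, apply Krein--Milman, and conclude uniqueness from $K\bar u^{(1)}=K\bar u^{(2)}$ and~\ref{ass:fast:F3}. The paper packages the face/Krein--Milman step into the statement $\argmax_{v\in B}\langle\bar p,v\rangle=\operatorname{conv}(\bar{\mathcal{A}})$, deferring the argument to Lemma~\ref{lem:existoflinearized}, whereas you spell it out directly; otherwise the two proofs coincide.
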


\begin{proof}
Note that \eqref{def:minprob} admits at least one solution $\bar u$ thanks to Proposition~\ref{prop:existu}. We first claim that $\bar u \in \operatorname{cone}(\bar{\mathcal{A}})$, i.e., there exist~$\bar{\lambda}_i \geq 0$ such that~$\optu=\sum^N_{i=1} \bar{\lambda}_i \bar{u}_i$. If $\bar u =0$ the claim is trivial. Hence, suppose $\bar u \neq 0$. By the assumptions on $\G$ we conclude $\G(\bar u ) > 0$. Recalling the first order optimality conditions from Proposition~\ref{prop:existu}, we then infer $\langle \bar p, \bar v \rangle = 1$ with $\bar v:=\bar u / \G(\bar u) \in B$. Moreover, by Assumption~\ref{ass:fast:F2}, and 
arguing similarly to the proof of Lemma~\ref{lem:existoflinearized}, we get
\begin{align*}
\argmax_{v \in B} \ \langle \bar{p},v \rangle=\operatorname{conv}(\bar{\mathcal{A}}) \,, \qquad \max_{v \in B} \ \langle \bar{p},v \rangle = 1\,.
\end{align*}
 Therefore $\bar v \in \operatorname{conv}(\bar{\mathcal{A}})$, showing our claim that $\bar u \in \operatorname{cone}(\bar{\mathcal{A}})$.
Now assume that $\optu_1, \optu_2$ minimize in \eqref{def:minprob}. Then 
$\optu_1=\sum^N_{i=1}\bar{ \lambda}_i \bar{u}_i$ and  $\optu_2=\sum^N_{i=1}\bar{ \gamma}_i \bar{u}_i$ for some $\bar{\lambda}_i, \bar{\gamma}_i \geq 0$. By Proposition \ref{prop:existu} we have~$K \optu_1=K\optu_2$. 
From \ref{ass:fast:F3} and linearity of $K$ we then conclude $\bar{\lambda}_i = \bar{\gamma}_i$, so that $\optu_1=\optu_2$. Thus \eqref{def:minprob} has a unique solution, and such a solution is of the form \eqref{eq:sparse_min}.
\end{proof}

In the next set of assumptions we suppose~\textit{strict complementarity} for the minimizer $\bar u$, i.e.,
\begin{align*}
\bar{u} \not \in \operatorname{cone}\left(\bar{\mathcal{A}}\setminus \{\bar{u}_i\}\right)  \,, \quad \text{for every }\,\,i=1, \dots,N\,,
\end{align*}
or, equivalently,~$\bar{\lambda}_i >0$ for all~$i=1,\dots,N$. The final assumption concerns the existence of a ``distance function''~$g$ such that~$K$ is Lipschitz continuous and the linear functional $u \mapsto \langle \bar{p},u \rangle$ grows quadratically, both with respect to~$g$ and in the vicinity of~$\bar{u}_i \in \bar{\mathcal{A}}$. Of course, the particular form of~$g$ depends on the space~$\M$ and the functional~$\G$ and thus it has to be constructed on a case-by-case basis. We give an example in Section~\ref{subsec:guideex}. This set of assumptions is summarized below.
\begin{assumption}{\textit{(Non-degeneracy)}} \label{ass:fastconv2}
\begin{enumerate}[label=\textnormal{(B\arabic*)}]
 \setcounter{enumi}{\value{count}}
\item \label{ass:fast:F4} The unique minimizer~$\bar{u}$ of \eqref{def:minprob} in \eqref{eq:sparse_min} is such that $\bar{\lambda}_i>0$ for all $i = 1, \ldots,N$.
\item \label{ass:fast:F5} There exists a function~$g\colon \ext(B)\times \ext(B)\to [0,\infty)$, positive constants~$\tau,\kappa >0$ and pairwise disjoint $d_\Bb$-closed neighborhoods $\{\bar{U}_i\}_{i=1}^N$ of $\{\bar u_i\}_{i=1}^N$, such that
\begin{equation} \label{eq:quadgrowth}
\ynorm{K(u -\bar{u}_i)} \leq \tau g(u,\bar{u}_i) \, , \qquad 1-\langle \bar{p}, u  \rangle \geq \kappa g(u,\bar{u}_i)^2 \,,%
\end{equation}
for all~$ u \in U_i$ and $i=1,\dots,N$, where $U_i:=\bar{U}_i \cap \ext (B)$ and $d_\Bb$ is the metric at \eqref{eq:metric_db}. 
\end{enumerate}
\end{assumption}

\begin{remark} \label{rem:wlog}
Note that, without loss of generality, we can always choose $\{\bar{U}_i\}_{i=1}^N$ for which there exists a constant $\sigma > 0$ such that 
\begin{equation} \label{eq:isolmax}
\langle \bar{p}, u \rangle \leq 1-\sigma \quad \text{ for all } \,\, u \in \Bb \setminus \bar{U}_i,\  i = 1,\ldots, N\,,
\end{equation}
due to Assumption \ref{ass:fast:F2} and the $d_\Bb$-continuity of $u\mapsto \langle \bar{p}, u \rangle$.
\end{remark}

We can finally state the main convergence result of the paper. For its proof we refer to Section~\ref{subsec:fastconvproof}.
\begin{theorem} \label{thm:fastconvpreview}
Let Assumptions \ref{ass:func1}-\ref{ass:func3} and \ref{ass:fast:F1}-\ref{ass:fast:F5} hold. Then Algorithm~\ref{alg:gcg} either terminates after a finite number of steps, with~$\bar{u}=u_k$ the unique minimizer to~\eqref{def:minprob}, or there exists a constant $c>0$ and~$\zeta \in [3/4,1)$ such that
\begin{equation} \label{thm:convofcond:1rate}
r_J(u_k) \leq c \, \zeta^k 
\end{equation}
for all~$k\in\N$ sufficiently large. In this case there holds~$u_k \weakstar \optu$ in~$\M$.
\end{theorem}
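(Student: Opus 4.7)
The plan is to transfer the analysis to the lifted problem \eqref{def:sparseprob} on $M^+(\Bb)$ and exploit the equivalence between FC-GCG and PDAP announced in Theorem~\ref{thm:eqofpdapandgcg}. That reduces matters to establishing asymptotic linear convergence of PDAP in the present function space setting, which we would obtain by adapting the local contraction analysis of \cite{pieper}, developed there for measures on compact subsets of $\mathbb{R}^d$, to the compact metric space $(\Bb, d_\Bb)$. The dichotomy between finite termination and the asymptotic estimate follows from the stopping criterion \eqref{eq:stopping_rev} and Proposition~\ref{prop:stopping}; the weak* convergence $u_k \weakstar \bar u$ in the non-terminating case is already a consequence of Theorem~\ref{thm:convofcondu} together with the uniqueness statement in Proposition~\ref{prop:uniqueu}.

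First I would set up the \emph{localization phase}. From Theorem~\ref{thm:convofcondu} we have $u_k \weakstar \bar u$ and $r_J(u_k) \to 0$; Assumption~\ref{ass:func3} then yields $Ku_k \to \bar y$ in $Y$, and the local Lipschitz continuity of $\nabla F$ from \ref{ass:func1} gives $p_k = -K_*\nabla F(Ku_k) \to \bar p$ in $\mathcal{C}$. Combining $\langle p_k, \widehat v_k^u\rangle \geq \langle p_k, \bar u_i\rangle \to 1$ with the uniform separation \eqref{eq:isolmax} from Remark~\ref{rem:wlog}, one obtains that for all sufficiently large $k$ every newly inserted atom $\widehat v_k^u$ lies in exactly one of the sets $U_i$ of \ref{ass:fast:F5}. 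An analogous argument, using that $\mathcal{G}(u_k) \to \mathcal{G}(\bar u)$ and the optimality conditions \eqref{eq:optimlocal}, would show that after finitely many iterations the whole active set $\mathcal{A}_k^u$ is contained in $\bigcup_{i=1}^N U_i$ and decomposes into $N$ \emph{clusters} $\mathcal{A}_{k,i}^u \subset U_i$.

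Next, I would quantify the one-step descent. Using \ref{ass:fast:F5}, for $v \in U_i$ one has the pair of bounds
\begin{equation*}
1 - \langle p_k, v \rangle \,\geq\, \kappa\, g(v,\bar u_i)^2 \;-\; \|p_k - \bar p\|_{\mathcal{C}}, \qquad \|K(v - \bar u_i)\|_Y \,\leq\, \tau\, g(v,\bar u_i),
\end{equation*}
so the ``geometric size'' of each cluster is controlled by the dual residual. Combining this with strong convexity of $F$ around $\bar y$ from \ref{ass:fast:F1}, the linear independence of $\{K\bar u_i\}_{i=1}^N$ from \ref{ass:fast:F3}, and strict complementarity \ref{ass:fast:F4}, one obtains a quadratic lower bound of the primal-dual gap in terms of the cluster sizes plus the weight error $\sum_i |\sum_{u \in \mathcal{A}_{k,i}^u} \lambda^k - \bar\lambda_i|$. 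Together with the classical quadratic upper bound on $r_J$ from the descent lemma for the smooth part $F \circ K$, this would produce the desired contraction factor $\zeta \in [3/4, 1)$ per iteration, along the lines of \cite[Sec.~4]{pieper}.

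The main obstacle will be the rigorous treatment of the clustering step. One has to show that the cardinalities $\#\mathcal{A}_k^u$ remain uniformly bounded (otherwise the constants in the contraction depend on $k$), and that the fully--corrective subproblem \eqref{eq:subprobcoeffs} drives the weights of spurious atoms inside each cluster toward zero fast enough for the truncation in Step~6 of Algorithm~\ref{alg:gcg} to eventually leave exactly $N$ atoms, one per $\bar u_i$. Equally delicate is the interplay between the geometry of $(\Bb, d_\Bb)$, which only controls weak* convergence, and the ``distance'' $g$, which may fail to be a metric but carries the quadratic growth; the whole argument hinges on combining $d_\Bb$ for compactness and convergence arguments with $g$ for the quantitative contraction, all while relying on \ref{ass:fast:F3} to decouple the different clusters through $K$.
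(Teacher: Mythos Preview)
Your high-level route---lift to $M^+(\Bb)$, prove asymptotic linear convergence for PDAP, and transfer back via Theorem~\ref{thm:eqofpdapandgcg} and the inequality $r_J(u_k)\le r_j(\mu_k)$---matches the paper exactly, and the localization step (uniform convergence $P_k\to\bar P$ together with the separation \eqref{eq:isolmax} forcing $\mathcal A_k^+\subset\bigcup_i U_i$) is correct. Two of your concrete ingredients, however, do not go through.

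First, the ``main obstacle'' you identify---uniformly bounding $\#\mathcal A_k^u$ and showing that Step~6 eventually leaves exactly $N$ atoms---is neither needed nor established; the contraction argument is oblivious to $N_k$. What is actually done is to construct a \emph{surrogate} $\widehat\mu_k\in M^+(\mathcal A_k^+)$ by collapsing only the cluster containing the freshly inserted point, i.e.\ replacing $\mu_k\zak\bar U_{\hat\imath_k}$ by $\mu_k(\bar U_{\hat\imath_k})\,\delta_{\widehat v_k}$ and leaving all other clusters untouched, and to compare $\mu_{k+1}$ with $\mu_k+s(\widehat\mu_k-\mu_k)$. The resulting constants are uniform in $N_k$ because $\|\K(\widehat\mu_k-\mu_k)\|_Y$ is controlled through the weighted quantity $\int_{U_{\hat\imath_k}} g(v,\bar u_{\hat\imath_k})\,\de\mu_k$, bounded by $r_j(\mu_k)^{1/2}$ via integration of the second inequality in \eqref{eq:quadgrowth} against $\mu_k$ and Jensen. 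Second, your displayed bound $1-\langle p_k,v\rangle\ge\kappa\,g(v,\bar u_i)^2-\|p_k-\bar p\|_{\Cc}$ is too crude at the new point: since $\langle p_k,\widehat v_k\rangle\ge 1$ it only gives $g(\widehat v_k,\bar u_{\hat\imath_k})\lesssim\|p_k-\bar p\|_{\Cc}^{1/2}\lesssim r_j(\mu_k)^{1/4}$, hence $\|\K(\widehat\mu_k-\mu_k)\|_Y^2\lesssim r_j(\mu_k)^{1/2}$, and the descent inequality then yields a sublinear rate rather than a linear one. The sharp bound $g(\widehat v_k,\bar u_{\hat\imath_k})\lesssim r_j(\mu_k)^{1/2}$ comes from a different estimate: write $\kappa g^2\le 1-\bar P(\widehat v_k)\le\bigl[\bar P(\bar u_{\hat\imath_k})-\bar P(\widehat v_k)\bigr]+\bigl[P_k(\widehat v_k)-P_k(\bar u_{\hat\imath_k})\bigr]=(\nabla F(\bar y)-\nabla F(y_k),K(\widehat v_k-\bar u_{\hat\imath_k}))_Y\le\tau\,\|\nabla F(\bar y)-\nabla F(y_k)\|_Y\,g$, using \emph{both} inequalities in \ref{ass:fast:F5} together with the maximality of $P_k$ at $\widehat v_k$, and divide by $g$. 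This is exactly where ``along the lines of \cite{pieper}'' breaks down: the Euclidean argument there derives the analogous estimate from a \emph{perturbed} quadratic growth of $P_k$ around $\widehat v_k$, which relies on $C^2$-regularity of the dual variable and is unavailable on the abstract metric space $(\Bb,d_{\Bb})$.
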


\begin{remark} \label{rem:detailsonassumptions}
Before ending this section, we briefly comment on Assumptions~\ref{ass:fast:F1}-\ref{ass:fast:F5}. First, we note that Assumption~\ref{ass:fast:F3} is trivially fulfilled if~$K$ is injective, as seen for example in Section~\ref{subsec:guideex}. Moreover, Assumptions~\ref{ass:fast:F3},~\ref{ass:fast:F4}, as well as the uniqueness of the minimizer to~\eqref{def:minprob}, are not fully independent. Indeed, if~$\{K\bar{u}_i\}^N_{i=1}$ is linearly dependent and there is a solution~$\bar{u}=\sum^{N}_{i=1} \bar{\lambda}_i \bar{u}_i $ with~$\bar{\lambda}_i >0$, then there exists a second solution~$\tilde{u} \neq \bar{u}$ with~$\tilde{u}=\sum^{N}_{i=1} \tilde{\lambda}_i \bar{u}_i$,~$\tilde{\lambda}_i\geq 0$, and equality holds for at least one index, cf. also~\cite[Section~3.2]{pieper}. 
Second, it is worthwhile to further discuss the quadratic growth condition in Assumption~\ref{ass:fast:F5} and relate it to more well-known concepts in the literature. For this purpose, recall that the necessary and sufficient optimality conditions for \eqref{def:minprob} are given by the variational subgradient inequality
\begin{align*}
    \langle \bar{p}, u-\bar{u}\rangle+ \mathcal{G}(\bar{u}) \leq \mathcal{G}({u}) \quad \text{for all}~u \in \mathcal{M}\,.
\end{align*}
Due to the one-homogeneity of~$\mathcal{G}$, this can be equivalently reformulated as
\begin{align*}
\langle \bar{p}, \bar{u} \rangle = \mathcal{G}(\bar{u}), \quad 1 \geq \langle \bar{p}, u \rangle  \quad \text{for all}~u \in B\,,    
\end{align*}
see Proposition~\ref{prop:existu}. By applying Lemma~\ref{lem:existoflinearized}, this is equivalent to
\begin{align*}
    \langle \bar{p}, \bar{u} \rangle = \mathcal{G}(\bar{u}), \quad 1 \geq \langle \bar{p}, u \rangle  \quad \text{for all}~u \in \operatorname{Ext}(B).    
\end{align*}
Finally, due to Assumption~\ref{ass:fast:F2}, we arrive at
\begin{align*}
    \langle \bar{p}, \bar{u} \rangle = \mathcal{G}(\bar{u}), \quad 0 \geq \langle \bar{p}, u -\bar{u}_i \rangle  \quad \text{for all}~u \in \Ext{B}.
\end{align*}
In particular, this implies that~$\bar{p} \in \partial I_{\Ext{B}}(\bar{u}_i) $ where~$I_{\Ext{B}}(\bar{u}_i)$ denotes the subdifferential of the nonconvex indicator function of~$\Ext{B}$ at~$\bar{u}_i$. In this context, Assumption~\ref{ass:fast:F5} implies the locally strengthened condition 
\begin{align*}
    \langle \bar{p}, \bar{u} \rangle = \mathcal{G}(\bar{u}), \quad 0 \geq \langle \bar{p}, u- \bar{u}_i \rangle+ \kappa g(u, \bar{u}_i)^2  \quad \text{whenever}~u \in U_i\,.
\end{align*}
This is very reminiscent of the concept of~\textit{strongly metric subregular subdifferentials} in convex optimization which plays a vital role for the derivation of fast convergence rates for proximal point methods~\cite{artacho}, and ``vanilla'' generalized conditional gradient methods~\cite{kunischgcg}.   

We point out that, in the general case, we are not aware of possibly stronger, but more intuitive, structural assumptions on~$\bar{p}$ as well as~$\Ext{B}$ which eventually ensure Assumptions~\ref{ass:fast:F2} and~\ref{ass:fast:F5}. However, for particular instances such a characterization is indeed possible, see Section~\ref{sec:examples} and the examples and references therein. 
\end{remark}

\section{Examples}\label{sec:examples}
Summarizing the previous sections, we see that the practical application and the convergence analysis of Algorithm~\ref{alg:gcg} rest on three pillars: First, a characterization of the set of extremal points $\Ext{B}$, second, an efficient method for Step 2 in Algorithm~\ref{alg:gcg} and, third, the derivation of sufficient structural assumptions to ensure Assumptions~\ref{ass:fast:F1}-\ref{ass:fast:F5}. In this section, we outline this program for three examples, namely, sparse initial value identification, trace regularization, and minimum effort problems, see Sections~\ref{subsec:guideex},~\ref{sec:rank-one},~\ref{subsec:mineffort}, respectively. In all these cases, a particular focus is put on the computation of~$\widehat{v}^u_k$ and the verification of Assumption~\ref{ass:fast:F5}. For the sake of brevity, we decided to strike a balance between practically interesting settings and problems for which the characterization of~$\Ext{B}$ and the derivation of Assumption~\ref{ass:fast:F5} can be done in a concise manner. %
As a main take-away message, these examples suggest that there is no general ``recipe'' for the resolution of Step 2 in Algorithm~\ref{alg:gcg}. Quite the reverse, the method of choice for computing~$\widehat{v}^u_k$ as well as the associated computational burden strongly depend on the example at hand. 

We also stress that Algorithm~\ref{alg:gcg} is applicable to far more complex problems, in which characterizing extremal points and deriving quadratic growth conditions could also get much more convoluted. One such problem, namely the optimal transport regularization of dynamic inverse problems, is briefly teased in Section~\ref{sec:dynamic_inverse}, and will be the subject of a follow-up work. %
Other examples are given by works~\cite{krnorm,IglWal22}, in which the authors apply the program outlined above to certain regularizers given by certain infimal convolutions.

\subsection{Sparse source identification} \label{subsec:guideex}

Let us consider the inverse problem of identifying the initial source of a heat equation on a convex polygonal spatial domain~$\Omega \subset \R^2$ from distributed temperature measurements~$y_d$ at a given final time~$T>0$. Our particular interest lies in the recovery of sparse sources
\[
u^\dagger = \sum^N_{i=1} \lambda^\dagger_i \delta_{x^\dagger_i}\,,
\]
given as a linear combination of finitely many point measures, where the coefficients~$\lambda^\dagger_i \in \R$, the positions~$x^\dagger_i \in \Omega$, and the number~$N \in \N$ of points are all assumed to be unknown. Taking the ill-posedness of the described inverse problem into account we follow~\cite{pikka,leyk} and consider the convex Tikhonov-regularized problem 
\begin{align} \label{def:inverseheat}
\min_{u \in M(\Omega),y} \left \lbrack \frac{1}{2} \|y(T)-y_d\|^2_{L^2(\Omega)}+ \beta \|u\|_{M(\Omega)} \right \rbrack \,.
\end{align}
Here $M(\Omega)$ denotes the space of Borel measures on the open set~$\Omega$, $y$ is a scalar function defined on $[0,1] \times \Om$ with $y(t):=y(t)(\cdot)$,~$y_d \in L^2(\Omega)$ is a given desired state, and the pair $(y,u)$ satisfies, in the sense of distributions, the heat equation
\begin{align} \label{eq:heat}
\partial_t y-\bigtriangleup y=0~\text{in}~(0,T)\times \Omega,~y=0~ \text{in}~(0,T)\times \partial \Omega,~y(0)=u~\text{in}~\Omega\,.
\end{align}
The~\textit{a priori} assumption on the sparsity of the unknown source is encoded in the choice of the regularizer, defined as the total variation norm of~$u$ with~$\beta>0$. 
To fit~\eqref{def:inverseheat} into the setting of~\eqref{def:minprob} we set~$\mathcal{C}=C_0(\Omega)$, the space of continuous functions vanishing on~$\partial \Omega$, and we equip it with the canonical supremum norm
\begin{align*}
\|p\|_\mathcal{C}= \max_{x \in \Omega} |p(x)| \quad \text{for all}~ p \in C_0(\Omega) \,.
\end{align*}
This makes $\mathcal{C}$ a Banach space. According to the Riesz-Markov-Kakutani theorem we have~$\Cc^* \simeq \M$ for~$\M=M(\Omega)$. Moreover, define~$Y=L^2(\Omega)$,~$F=(1/2)\|\cdot-y_d\|^2_{L^2(\Omega)}$ and~$\mathcal{G}= \beta \|\cdot\|_{M(\Omega)}$. Of course, these functionals satisfy \ref{ass:func1} and \ref{ass:func2} in Assumption \ref{ass:functions}. Finally, we replace the PDE constraint by introducing a~\textit{source-to-observation} operator~$K \colon M(\Omega) \to L^2(\Omega)$ mapping a measure $u \in M(\Omega)$ to $y(T)$, where $y$ solves \eqref{eq:heat}. It is readily verified that~$K$ is injective, thanks to a priori estimates for weak solutions to \eqref{eq:heat} \cite[Lemma 2.2]{zuazua}, as well as weak*-to-strong continuous~\cite[Lemma 2.3]{zuazua}. Hence,~\eqref{def:inverseheat} admits a unique solution and \ref{ass:func3} in Assumption~\ref{ass:functions} is satisfied. Moreover, $K$ is the adjoint of the operator~$K_* \colon L^2(\Omega) \to C_0(\Omega)$ defined by~$K_* \varphi:=z(0)$, where the pair~$(z,\f)$ satisfies, in the sense of distributions, the backwards heat equation
\begin{align} \label{eq:heatadjoint}
\partial_t z + \bigtriangleup z= 0 ~\text{in}~(0,T)\times \Omega,~z=0~ \text{in}~(0,T)\times \partial \Omega,~z(T)=\varphi~\text{in}~\Omega \,.
\end{align}
For more details we refer to~\cite{leyk, zuazua}. Note that $K_*$ is well-defined as $z(0) \in C_0(\Omega)$, due to parabolic regularity estimates. For sake of completeness, this is justified in Lemma \ref{lem:regadjointheat} in the appendix.  \\
The next lemma characterizes the set of extremal points of the unit ball of the regularizer $\G$, that is, of the set~$B=S^{-}(\beta \|\cdot\|_{M(\Omega)},1 )$.
\begin{lemma}\label{lem:exttotalvariation}
We have
\begin{equation}
\ext(B) = \{\sigma \beta^{-1} \delta_x \, \colon \, x \in \Omega,\, \sigma \in \{-1,1\} \}\,.
\end{equation}
Moreover, $\overline{\ext(B)}^* = \ext(B) \cup \{0\}$.
\end{lemma}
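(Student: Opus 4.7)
My plan is to prove the two assertions in sequence, treating this as the standard characterization of the extremal points of the unit ball of $M(\Omega)$, rescaled by $\beta^{-1}$.

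\medskip

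\emph{First claim.} The strategy is (a) verify that each $\sigma\beta^{-1}\delta_x$ with $x\in\Omega$ and $\sigma\in\{-1,1\}$ is extremal, then (b) show every other $u\in B$ admits a nontrivial convex decomposition inside $B$. For (a), if $\sigma\beta^{-1}\delta_x = (1-s)u_1 + s u_2$ with $u_1,u_2\in B$ and $s\in(0,1)$, evaluating both sides on the Borel set $\{x\}$ and estimating yields
\begin{equation*}
\beta^{-1} = |(1-s) u_1(\{x\}) + s u_2(\{x\})| \leq (1-s)\|u_1\|_M + s\|u_2\|_M \leq \beta^{-1}\,,
\end{equation*}
so equality throughout forces $|u_i(\{x\})| = \|u_i\|_M = \beta^{-1}$ with a common sign $\sigma$, whence $u_i = \sigma\beta^{-1}\delta_x$.

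\medskip

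\emph{For (b)}, I would split along the Jordan decomposition $u = u^+ - u^-$ into four cases: the degenerate case $u=0$, which is the midpoint of $\pm\beta^{-1}\delta_x$; the interior case $u\neq 0$ with $\|u\|_M<\beta^{-1}$, handled by dilation $u = \tfrac{1}{2}((1-\varepsilon)u) + \tfrac{1}{2}((1+\varepsilon)u)$ for small $\varepsilon>0$; the mixed-sign boundary case $\|u\|_M = \beta^{-1}$ with $u^+, u^-\neq 0$, where I convex-combine the normalized positive and negative parts with weights $\alpha = \beta u^+(\Omega)$ and $1-\alpha = \beta u^-(\Omega)$; and the constant-sign case, WLOG $u\geq 0$ with $u(\Omega)=\beta^{-1}$, but $u$ not concentrated at a single point. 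In that last case I pick a Borel set $A\subset\Omega$ with $0<u(A)<\beta^{-1}$ and split $u = \alpha(u|_A/\alpha) + (1-\alpha)(u|_{\Omega\setminus A}/(1-\alpha))$ with $\alpha=\beta u(A)$, observing that the two normalized pieces lie in $B$ and are distinct since their supports are disjoint.

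\medskip

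\emph{Second claim.} The inclusion $\ext(B) \subset \overline{\ext(B)}^*$ is immediate; to include $0$, I would pick any sequence $x_n\in\Omega$ with $x_n \to \bar x\in\partial\Omega$, and use that every $\varphi\in C_0(\Omega)$ vanishes at $\bar x$ together with $\varphi(x_n)\to\varphi(\bar x)=0$ to conclude $\beta^{-1}\delta_{x_n}\weakstar 0$. For the reverse inclusion, I would use weak-star metrizability of $B$ (Theorem/Lemma earlier in the paper, thanks to separability of $\mathcal{C}=C_0(\Omega)$) to reduce to sequences $u_n=\sigma_n\beta^{-1}\delta_{x_n}\weakstar u$. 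After passing to a subsequence on which $\sigma_n\equiv\sigma$ is constant and $x_n\to \bar x\in\overline\Omega$ (using compactness of $\overline\Omega$, since $\Omega$ is a bounded convex polygon), the limit is $\sigma\beta^{-1}\delta_{\bar x}\in\ext(B)$ when $\bar x\in\Omega$, and $0$ when $\bar x\in\partial\Omega$.

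\medskip

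The main obstacle is the bookkeeping in case (b) when $\|u\|_M=\beta^{-1}$: the two factors in any candidate decomposition must lie exactly in $B$ (not a larger ball) and be genuinely distinct, so each sub-case requires a tailored normalization. Everything else is routine given the Krein-Milman/weak-star framework already set up in the paper.
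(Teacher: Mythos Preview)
Your proposal is correct. For the first assertion the paper simply cites the characterization as well-known (referring to \cite[Proposition~4.1]{bc}) rather than giving a proof, so your case-by-case argument is more detailed than what the paper does; for the second assertion your approach---showing $0$ is a weak* limit via $x_n\to\partial\Omega$, then handling the reverse inclusion by extracting a subsequence with $\sigma_n\equiv\sigma$ and $x_n\to\bar x\in\overline\Omega$---is essentially identical to the paper's.
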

\begin{proof}
The characterization of $\ext(B)$ is well-known \cite[Proposition 4.1]{bc}.
As for the second claim, note that $0 \in \overline{\ext(B)}^*$: indeed one can take $\{x_k\}_k$ in  $\Om$ such that $x_k \to x$ with $x \in \partial \Omega$, so that $u_k:=\beta^{-1} \delta_{x_k} \weakstar 0$. Thus $\ext(B) \cup \{0\} \subset \overline{\ext(B)}^*$. For the opposite  inclusion, assume given a sequence $u_k=\sigma_k \beta^{-1} \delta_{x_k}$ in $\ext(B)$, such that $u_k \weakstar u$. Then, up to subsequences, $\sigma_k \rightarrow \sigma \in \{-1,1\}$ and $x_k \rightarrow x \in \overline{\Om}$. Hence, if $x\in \partial \Omega$ then $u = 0$, while if $x \in \Omega$ then $u=\sigma \beta^{-1} \delta_{x} \in \ext(B)$.
\end{proof}
Moreover, applying Proposition \ref{prop:existu} and the characterization of $K_*$, we immediately deduce the optimality conditions for \eqref{def:inverseheat}. 
\begin{proposition}\label{prop:optheat}
Let~$\bar{u} \in M(\Omega)$ be given and denote by~$\bar{z}$ the solution to~\eqref{eq:heatadjoint} for~$\varphi=y_d-\bar{y}$,~$\bar{y}=K\bar{u}$. 
 Then~$\bar{u}$ is a minimizer of~\eqref{def:inverseheat} if and only if
\begin{align*}
\langle \bar{z}(0), \bar{u} \rangle= \beta \|\bar{u}\|_{M(\Omega)}, \quad \|\bar{z}(0)\|_\mathcal{C} \leq \beta\,.
\end{align*} 
This implies
\begin{align*}
\supp \bar{u} \subset \left\{\,x \in \Omega \;|\;|\bar{z}(0)(x)|=\beta\,\right\}.
\end{align*}
\end{proposition}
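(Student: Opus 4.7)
The plan is to apply the general first-order optimality conditions from Proposition~\ref{prop:existu} directly to the specific setting of~\eqref{def:inverseheat}, with the only real work being the identification of the abstract dual variable $\bar{p}$ with $\bar{z}(0)$.

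First, I compute the gradient of $F = (1/2)\|\cdot-y_d\|^2_{L^2(\Omega)}$, obtaining $\nabla F(\bar{y}) = \bar{y}-y_d \in Y$. Plugging into the formula $\bar{p}= -K_*\nabla F(K\bar{u})$ yields $\bar{p}= K_*(y_d-\bar{y})$. By the characterization of the pre-adjoint $K_*$ through the backward heat equation~\eqref{eq:heatadjoint}, applied with $\varphi = y_d-\bar{y}$, this is precisely $\bar{z}(0) \in C_0(\Omega)$. (Well-definedness of $K_*\varphi$ as an element of $\mathcal{C}$ is granted by Lemma~\ref{lem:regadjointheat}.)

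Next, I translate the two conditions in~\eqref{eq:subdiff}. The equality $\langle \bar{p},\bar{u}\rangle = \G(\bar{u})$ becomes immediately $\langle \bar{z}(0),\bar{u}\rangle = \beta\|\bar{u}\|_{M(\Omega)}$. For the second condition, I observe that $S^-(\G,1) = \{v \in M(\Omega)\,:\, \|v\|_{M(\Omega)}\leq 1/\beta\}$, so
\[
\max_{v \in S^-(\G,1)} \langle \bar{p},v\rangle = \frac{1}{\beta} \sup_{\|v\|_{M(\Omega)}\leq 1}\langle \bar{z}(0),v\rangle = \frac{1}{\beta}\|\bar{z}(0)\|_{\mathcal{C}},
\]
using that $\mathcal{C}^*\simeq M(\Omega)$ isometrically. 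Requiring this to be $\leq 1$ produces $\|\bar{z}(0)\|_\mathcal{C}\leq \beta$, completing the equivalence.

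For the claimed support inclusion, I use Hahn--Jordan to decompose $\bar{u}=\bar{u}^+-\bar{u}^-$ with $\|\bar{u}\|_{M(\Omega)} = \bar{u}^+(\Omega)+\bar{u}^-(\Omega)$. The bound $\|\bar{z}(0)\|_\mathcal{C}\leq \beta$ yields $-\beta \leq \bar{z}(0)(x)\leq \beta$ pointwise, and hence
\[
\langle \bar{z}(0),\bar{u}\rangle = \int_\Omega \bar{z}(0)\,\de\bar{u}^+ - \int_\Omega \bar{z}(0)\,\de\bar{u}^- \leq \beta\bigl(\bar{u}^+(\Omega)+\bar{u}^-(\Omega)\bigr) = \beta\|\bar{u}\|_{M(\Omega)}.
\]
Equality together with continuity of $\bar{z}(0)$ forces $\bar{z}(0)=\beta$ on $\supp \bar{u}^+$ and $\bar{z}(0)=-\beta$ on $\supp\bar{u}^-$, so $|\bar{z}(0)|=\beta$ on $\supp \bar{u}$, as claimed. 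I expect no serious obstacle: the only nontrivial point is identifying $\bar{p}=\bar{z}(0)$, which is a direct consequence of the construction of $K_*$, while the support statement is a routine Hahn--Jordan argument.
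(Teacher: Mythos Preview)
Your proof is correct and follows exactly the approach the paper indicates: the paper does not give a detailed proof but simply states that the result follows by ``applying Proposition~\ref{prop:existu} and the characterization of~$K_*$,'' which is precisely what you do. Your explicit Hahn--Jordan argument for the support inclusion is a welcome elaboration of a step the paper leaves entirely to the reader.
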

Thanks to the characterization of extremal points presented in Lemma~\ref{lem:exttotalvariation}, the FC-GCG method presented in Algorithm~\ref{alg:gcg} for solving \eqref{def:inverseheat} generates a sequence of iterates 
\[
u_k=\sum^{N_k}_{i=1} \lambda^k_i u_i^k \,, \quad u_i^k = \sigma^k_i \beta^{-1} \delta_{x^k_i}\,, \quad \sigma^k_i \in \{-1,1\}\,, \quad x^k_i \in \Omega \,, \quad \lambda_i^k \geq 0\,,
\]
as well as an associated sequence of active sets $\mathcal{A}^u_k=\{u^k_i\}^{N_k}_{i=1}$. Moreover, let~$z_k$ denote the solution of~\eqref{eq:heatadjoint} for~$\varphi=y_d-y_k(T)$,~$y_k(T)=Ku_k$. We now claim that the new candidate extremal point in the iteration~$k$ of Algorithm~\ref{alg:gcg} can be chosen as
\begin{align} \label{eq:candidatemeasures}
\widehat{v}^u_k = \operatorname{sign}(z_k(0) (\hat{x}_k)) \beta^{-1}\delta_{\hat{x}_k}~\text{ where }~\hat{x}_k \in \argmax_{x \in \Omega}|z_k(0)(x)|,
\end{align}
i.e.,~Step 2 in Algorithm~\ref{alg:gcg} is equivalent to computing a global extremum of a continuous function.
This is verified in the following proposition.
\begin{proposition}
Let~$\widehat{v}^u_{k}$ be defined as in~\eqref{eq:candidatemeasures}.
There holds~$z_k(0)=-K_*\nabla F(Ku_k)$ as well as
\begin{align*}
\langle z_k (0), \widehat{v}^u_k \rangle= \max_{v \in \mathcal{B}}\, \langle z_k(0),v \rangle\,.
\end{align*} 
\end{proposition}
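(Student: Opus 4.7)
The plan is to establish the two claims separately, both by direct verification using the formulas for $F$, $K_*$, and the characterization of $\mathcal{B}$ provided by Lemma~\ref{lem:exttotalvariation}.

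For the first identity, I would compute $\nabla F$ explicitly. Since $F(y) = (1/2)\|y - y_d\|_{L^2(\Omega)}^2$, a classical calculation gives $\nabla F(y) = y - y_d \in L^2(\Omega)$. Hence
\[
-K_*\nabla F(Ku_k) = K_*\!\left(y_d - y_k(T)\right).
\]
By the definition of $K_*$ following \eqref{eq:heatadjoint}, $K_*\varphi = z(0)$ where $z$ solves the backwards heat equation with terminal datum $\varphi$. Applying this with $\varphi = y_d - y_k(T)$, which is precisely the terminal datum used to define $z_k$, one immediately obtains $-K_*\nabla F(Ku_k) = z_k(0)$. The only mild point to check is well-definedness of $z_k(0)$ as an element of $\mathcal{C} = C_0(\Omega)$, but this is guaranteed by Lemma~\ref{lem:regadjointheat} in the appendix.

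For the maximization identity, I would use Lemma~\ref{lem:exttotalvariation}, which gives
\[
\mathcal{B} = \overline{\Ext(B)}^{*} = \{\sigma \beta^{-1} \delta_x \colon x \in \Omega,\ \sigma \in \{-1,1\}\} \cup \{0\}.
\]
Hence every $v \in \mathcal{B}$ is either zero or of the form $\sigma \beta^{-1} \delta_x$, and the duality pairing reduces to
\[
\langle z_k(0), v\rangle = \sigma \beta^{-1} z_k(0)(x), \quad \text{or } 0.
\]
Maximizing first over the sign $\sigma \in \{-1,1\}$ gives $\beta^{-1}|z_k(0)(x)|$, which is nonnegative and hence dominates the contribution from $v=0$. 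Thus
\[
\max_{v \in \mathcal{B}} \langle z_k(0), v\rangle = \beta^{-1} \sup_{x \in \Omega} |z_k(0)(x)|.
\]

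It remains to show that the supremum on the right is attained. Since $z_k(0) \in C_0(\Omega)$, the function $|z_k(0)|$ extends continuously to $\overline{\Omega}$ by zero on $\partial\Omega$ and therefore attains its maximum on the compact set $\overline{\Omega}$. If the maximum is strictly positive, it is achieved at some interior point $\hat{x}_k \in \Omega$, since $|z_k(0)|$ vanishes on $\partial\Omega$; if the maximum is zero then $z_k(0) \equiv 0$, and any choice of $\hat{x}_k \in \Omega$ (or $v = 0 \in \mathcal{B}$) trivially attains the value $0$. In either case, choosing $\hat{v}^u_k$ as in \eqref{eq:candidatemeasures} with sign $\sigma = \operatorname{sign}(z_k(0)(\hat{x}_k))$ yields $\langle z_k(0), \hat{v}^u_k \rangle = \beta^{-1}|z_k(0)(\hat{x}_k)| = \max_{v \in \mathcal{B}}\langle z_k(0), v\rangle$, completing the proof. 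No genuine obstacle arises beyond this routine compactness/continuity argument; the whole proof amounts to stitching together the explicit form of $\nabla F$, the definition of $K_*$ via \eqref{eq:heatadjoint}, and Lemma~\ref{lem:exttotalvariation}.
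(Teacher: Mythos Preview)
Your proof is correct and follows essentially the same approach as the paper. The only cosmetic difference is that for the upper bound on $\langle z_k(0),v\rangle$ over $\mathcal{B}$, the paper invokes the norm duality $\langle z_k(0),v\rangle \le \|z_k(0)\|_{\mathcal{C}}\|v\|_{\mathcal{M}} \le \|z_k(0)\|_{\mathcal{C}}/\beta$ directly, whereas you instead enumerate the elements of $\mathcal{B}$ via Lemma~\ref{lem:exttotalvariation} and maximize over sign and position; both lead to the same value $\beta^{-1}\|z_k(0)\|_{\mathcal{C}}$, and the attainment argument is identical.
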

\begin{proof}
We directly get~$z_k(0)=K_*(y_k(T)-y_d)=-K_*\nabla F(Ku_k)$ from the characterization of~$K_*$. The remaining statement follows directly from
\begin{align*}
\langle z_k(0),v \rangle \leq \cnorm{z_k(0)} \mnorm{v} \leq \cnorm{z_k(0)}/\beta \quad \text{for all}~ v \in \mathcal{B}
\end{align*}
as well as
\begin{equation*}
\langle z_k(0),\widehat{v}^u_k \rangle= \operatorname{sign}(z_k(0)(\hat{x}_k)) \beta^{-1} z_k(0)(\hat{x}_k)= \cnorm{z_k(0)}/\beta\,.\qedhere
\end{equation*}
\end{proof}

Thus, for sparsity examples, solving \eqref{eq:descr_insertion} amounts to computing an extremum of the continuous function $z_k(0) = -K_* \nabla F(Ku_k)$, where $u_k$ is the iterate generated by Algorithm \ref{alg:gcg}. Since $|z_k(0)|$ is, in general, non-concave, this optimization task could be non-trivial. However, since the spatial domain $\Omega$ is low dimensional in this example, it is possible to resort to heuristic strategies to approximate the extremum of $z_k(0)$. In particular, a standard widely accepted strategy \cite{pikka, boyd}, consists in discretizing the domain $\Omega$ using a uniform grid $\{x^h\}_{h = 1, \ldots N^2}$ and performing local searches around $x^h$ using gradient descent methods. Then the extremum of $z_k(0)$ can be estimated as 
\begin{align}
    \argmax_{y^h : \, h = 1, \ldots, N^2} z_k(0)(y^h)\,,
\end{align}
where $y^h$ is the outcome of the local search around the point of the grid $x^h$. Moreover, a practical implementation of Algorithm~\ref{alg:gcg} for this particular problem also entails a discretization of the heat equation, e.g. by piecewise polynomial and continuous finite elements. In this case, the computation of the new point~$\widebar{x}_k$ becomes trivial, see  Section~\ref{subsubsec:numex}.

We now discuss the non-degeneracy conditions. Denoting by $\bar u \in M(\Omega)$ the minimizer of \eqref{def:inverseheat}, we propose a natural and easy to verify set of assumptions for $\bar z(0)$ that implies our general non-degeneracy assumptions from Section~\ref{subsec:nondegen} for a suitable choice of $g$. More precisely, this new set of assumptions on $\bar z(0)$ will imply Assumption \ref{ass:fast:F2}, \ref{ass:fast:F3} and \ref{ass:fast:F5}. We remark that Assumption \ref{ass:fast:F4} still needs to be assumed to ensure the fast convergence; however we decide not to state it in the following set of assumptions as, for this specific example, it would be formulated exactly as in \ref{ass:fast:F4}. Moreover, its verification can be done straightforwardly looking at the structure of the unique minimizer of \eqref{def:inverseheat}.
We finally remind that $\bar{z}$ is the solution to~\eqref{eq:heatadjoint} for~$\varphi=y_d-\bar{y}$,~$\bar{y}=K\bar{u}$ and therefore by the characterization of $K_*$ it holds that ~$\bar z(0)=-K_*\nabla F(K \bar u)$.
The following structural assumptions are made, see also~\cite{leyk,pieper}. 

\setcounter{enumi}{\value{count}}
\begin{enumerate}[label=\textnormal{(C\arabic*)}]
\item \label{ass:guidingG1} There are~$\bar{x}_i \in \Omega$,~$i=1,\dots,N$,~$N>0$, such that
\begin{align} \label{finitesuppheat}
\left\{\,x \in \Omega \;|\;|\bar{z}(0)(x)|=\beta\,\right\}=\{\bar{x}_i\}^N_{i=1}.
\end{align}
\item  \label{ass:guidingG2}  There exists~$\gamma >0$ such that for all~$i=1,\dots, N$ we have
\begin{align*}
\operatorname{sign}(\bar{z}(0)(\bar{x}_i))(\delta x,\nabla^2 \bar{z}(0)(\bar{x}_i) \delta x)_{\R^2} \leq -\gamma |\delta x|^2 \,\, \text{ for all } \, \delta x \in \R^2.
\end{align*}
\end{enumerate}
Regarding \ref{ass:guidingG2} recall that~$\bar{z}(0)$ is at least two times continuously differentiable in the vicinity of~$\bar{x}_i,~i=1,\dots,N$, see Lemma~\ref{lem:regadjointheat}. 
Loosely speaking, the additional requirements  in Assumptions~\ref{ass:guidingG1}-\ref{ass:guidingG2} state that~$\bar{z}(0)$ only admits a finite number of global minima/maxima and its curvature around them does not degenerate. The latter corresponds to a second order sufficient optimality condition for the global extrema of~$\bar{z}(0)$.
We now prove that \ref{ass:guidingG1} and \ref{ass:guidingG2} imply \ref{ass:fast:F2}, \ref{ass:fast:F3} and \ref{ass:fast:F5}.
First we show that~$\ref{ass:guidingG1}$ guarantees Assumptions~\ref{ass:fast:F2}-\ref{ass:fast:F3}. 
For this purpose, set $\bar{u}_i=\operatorname{sign}(\bar{z}(0)(\bar{x}_i))\beta^{-1}\delta_{\bar{x}_i} \in \ext(B)$ for every $i=1,\dots,N$.
\begin{proposition}
Let Assumption $\ref{ass:guidingG1}$ hold.
Then, we have
\begin{align*}
\argmax_{v \in \overline{\ext(B)}^*} \langle \bar{z}(0), v \rangle= \left\{\bar{u}_i\right\}^N_{i=1}. 
\end{align*}
Moreover, the set~$\{K \bar{u}_i\}^N_{i=1}$ is linearly independent.
\end{proposition}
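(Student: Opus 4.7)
The plan is to handle the two claims separately, leveraging the explicit description of $\overline{\ext(B)}^*$ from Lemma~\ref{lem:exttotalvariation} for the first claim, and the already-established injectivity of $K$ for the second.

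For the argmax identification, I would start from the fact that $\overline{\ext(B)}^* = \{\sigma\beta^{-1}\delta_x : x\in\Omega,\ \sigma\in\{-1,1\}\}\cup\{0\}$, so testing $\bar z(0)$ against a generic element of this set reduces to evaluating a continuous function pointwise: $\langle \bar z(0),\sigma\beta^{-1}\delta_x\rangle = \sigma\beta^{-1}\bar z(0)(x)$. The first-order optimality condition from Proposition~\ref{prop:optheat} gives $\|\bar z(0)\|_{\mathcal{C}}\leq\beta$, hence this pairing is at most $1$, with equality if and only if $|\bar z(0)(x)|=\beta$ and $\sigma=\operatorname{sgn}(\bar z(0)(x))$. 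Assumption~\ref{ass:guidingG1} states that the set where $|\bar z(0)|$ saturates equals precisely $\{\bar x_i\}_{i=1}^N$, so the maximizers are exactly $\bar u_i = \operatorname{sgn}(\bar z(0)(\bar x_i))\beta^{-1}\delta_{\bar x_i}$. Finally, since $N\geq 1$ forces the supremum to equal $1$, the zero element (which yields value $0$) is excluded from the argmax.

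For the linear independence of $\{K\bar u_i\}_{i=1}^N$, I would exploit linearity of $K$: any linear relation $\sum_i c_i K\bar u_i = 0$ becomes $K\bigl(\sum_i c_i\bar u_i\bigr)=0$. The injectivity of the source-to-observation map $K\colon M(\Omega)\to L^2(\Omega)$, already recorded in the discussion following the definition of $K$, then forces $\sum_i c_i\sigma_i\beta^{-1}\delta_{\bar x_i}=0$ in $M(\Omega)$. Since the $\bar x_i$ are distinct points in $\Omega$, the Dirac measures $\{\delta_{\bar x_i}\}_{i=1}^N$ are linearly independent (they have pairwise disjoint supports), so all coefficients vanish.

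Overall, both statements follow quickly once the right ingredients are invoked; the only subtle point is making sure that $\overline{\ext(B)}^*$ is used instead of $\ext(B)$ in the argmax (so that the added limit point $0$ must be ruled out), and that is immediate from $N>0$ in~\ref{ass:guidingG1}. I do not anticipate a genuine obstacle, as the reductions are essentially bookkeeping on top of Lemma~\ref{lem:exttotalvariation}, Proposition~\ref{prop:optheat}, and the injectivity of $K$ established via the backward heat equation estimates.
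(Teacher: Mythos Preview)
Your proposal is correct and follows essentially the same approach as the paper: reduce the pairing to a pointwise evaluation via Lemma~\ref{lem:exttotalvariation}, use Proposition~\ref{prop:optheat} together with Assumption~\ref{ass:guidingG1} to identify the maximizers, and invoke injectivity of $K$ for linear independence. You are slightly more explicit than the paper in ruling out the zero element and in spelling out why distinct Diracs are linearly independent, but the argument is the same.
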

\begin{proof}
First recall that every~$v \in \Ext B $ is of the form~$v= \sigma \beta^{-1} \delta_x$ for some~$\sigma \in \{-1,1\},~x \in \Omega$. Moreover, we have
\begin{align*}
\langle \bar{z}(0), \sigma \beta^{-1} \delta_x \rangle= (\sigma/\beta) \bar{z}(0)(x) \leq \|\bar{z}(0)\|_\mathcal{C} /\beta=1,
\end{align*}
see Proposition~\ref{prop:optheat} and~\eqref{finitesuppheat},
with equality if and only if~$|\bar{z}(0)(x)|=\|\bar{z}(0)\|_\mathcal{C}$,~$\sigma= \operatorname{sign}(\bar{z}(0)(x))$. Hence, the claimed statement follows from~\eqref{finitesuppheat} and Lemma \ref{lem:exttotalvariation}. Finally, the linear independence of~$\{K \bar{u}_i\}^N_{i=1}$ follows from the injectivity of~$K$.
\end{proof}
Next we address Assumption~\ref{ass:fast:F5}. For every subdomain $\Omega_0$ with $\bar \Omega_0 \subset \Omega$ define the quantities
\begin{equation}
\|\psi\|_{\operatorname{Lip}(\Omega_0)} := \sup_{x,y \in \Omega_0, x \neq y} \frac{|\psi(x) - \psi(y)|}{|x-y|}
\end{equation}
for $\psi \in C_0(\Omega)$ and
\begin{equation}
\|K_*\|_{Y,\operatorname{Lip}(\Omega_0)} := \sup_{\|y\|_Y \leq 1} \|K_*y\|_{\operatorname{Lip}(\Omega_0)}\,.
\end{equation}
Note that $\|K_*\|_{Y,\operatorname{Lip}(\Omega_0)} < \infty$, due to Lemma~\ref{lem:regadjointheat}. The next lemma shows that Assumptions~\ref{ass:guidingG1}-\ref{ass:guidingG2} imply the quadratic growth of~$\beta-|\bar{z}(0)|$ around~$\{\bar{x}_i\}^N_{i=1}$. The proof can be found in Section \ref{app:lem:quadgrowthmeasure} in the appendix. 
\begin{lemma} \label{lem:quadgrowthmeasure}
Let Assumption~\ref{ass:guidingG1}-\ref{ass:guidingG2} hold and fix an index~$i=1,\dots,N$. Then, there exists~$R>0$ such that~$\bar{B}_R(\bar{x}_i)\subset \Omega$,~$\operatorname{sign}(\bar{z}(0)(x))=\operatorname{sign}(\bar{z}(0)(\bar{x}_i))$ for all~$x \in B_R(\bar{x}_i)$ and
\begin{align}
&\beta-|\bar{z}(0)(x)| \geq (\gamma/4)|x-\bar{x}_i|^2 \quad \text{for all}~ x \in B_R(\bar{x}_i)\,, \label{eq:heatest1}\\
\medskip
&\ynorm{K(\delta_{x}-\delta_{\bar{x}_i})} \leq \|K_*\|_{Y, \operatorname{Lip}(B_R(\bar{x}_i))} |x-\bar{x}_i|, \quad \text{for all}~ x \in B_R(\bar{x}_i)\,.\label{eq:heatest2}
\end{align}
\end{lemma}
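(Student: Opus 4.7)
The plan is to combine the smoothness of $\bar z(0)$ near the maximizers $\bar x_i$ (provided by the parabolic regularity result Lemma~\ref{lem:regadjointheat}) with a second-order Taylor expansion controlled by Assumption~\ref{ass:guidingG2}. For the Lipschitz bound on $K$, the plan is to dualize and invoke the finiteness of $\|K_*\|_{Y,\operatorname{Lip}(\Omega_0)}$ on compactly contained subdomains.

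\emph{Choice of $R$.} First I would fix $R_1>0$ so small that $\bar B_{R_1}(\bar x_i)\subset \Omega$; this is possible since $\bar x_i\in \Omega$ and $\Omega$ is open. Since $|\bar z(0)(\bar x_i)|=\beta>0$ by \ref{ass:guidingG1} and $\bar z(0)\in C_0(\Omega)$, continuity then yields $R_2\in(0,R_1]$ such that $\operatorname{sign}(\bar z(0)(x))=\operatorname{sign}(\bar z(0)(\bar x_i))=:\sigma_i$ for every $x\in B_{R_2}(\bar x_i)$. In particular, on this ball $|\bar z(0)(x)| = \sigma_i \bar z(0)(x)$.

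\emph{Quadratic upper bound~\eqref{eq:heatest1}.} By Lemma~\ref{lem:regadjointheat}, the function $\bar z(0)$ is at least $C^2$ in a neighborhood of $\bar x_i$. Since $\bar x_i$ is an interior global extremum of $\sigma_i\bar z(0)$, Fermat's rule gives $\nabla \bar z(0)(\bar x_i)=0$, so the Taylor expansion reads
\begin{equation*}
\sigma_i\bar z(0)(x) \;=\; \beta + \tfrac{1}{2}\sigma_i\bigl(x-\bar x_i,\nabla^2\bar z(0)(\bar x_i)(x-\bar x_i)\bigr)_{\R^2} + r(x),
\end{equation*}
with $r(x)/|x-\bar x_i|^2\to 0$ as $x\to\bar x_i$. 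Applying Assumption~\ref{ass:guidingG2} to the quadratic term and shrinking $R\in(0,R_2]$ so that $|r(x)|\le (\gamma/4)|x-\bar x_i|^2$ on $B_R(\bar x_i)$, I obtain
\begin{equation*}
|\bar z(0)(x)| \;=\; \sigma_i\bar z(0)(x) \;\le\; \beta - \tfrac{\gamma}{2}|x-\bar x_i|^2 + \tfrac{\gamma}{4}|x-\bar x_i|^2 \;=\; \beta - \tfrac{\gamma}{4}|x-\bar x_i|^2,
\end{equation*}
which is the first claim.

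\emph{Lipschitz upper bound~\eqref{eq:heatest2}.} Dualizing the $Y$-norm and using the pre-adjoint relation $\langle K_*y,u\rangle = (Ku,y)_Y$, I would write
\begin{equation*}
\|K(\delta_x-\delta_{\bar x_i})\|_Y \;=\; \sup_{\|y\|_Y\le 1}\bigl\langle K_*y,\,\delta_x-\delta_{\bar x_i}\bigr\rangle \;=\; \sup_{\|y\|_Y\le 1}\bigl(K_*y(x)-K_*y(\bar x_i)\bigr).
\end{equation*}
Since $B_R(\bar x_i)\subset \Omega$ compactly and $K_*y$ is Lipschitz there by Lemma~\ref{lem:regadjointheat}, each difference is bounded by $\|K_*y\|_{\operatorname{Lip}(B_R(\bar x_i))}\,|x-\bar x_i|$, and taking the supremum over $\|y\|_Y\le 1$ gives the constant $\|K_*\|_{Y,\operatorname{Lip}(B_R(\bar x_i))}$, which is finite. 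This yields~\eqref{eq:heatest2}.

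\emph{Anticipated obstacle.} The only delicate point is quantitative control of the Taylor remainder $r(x)$ uniformly in a neighborhood, and ensuring that the chosen $R$ is simultaneously compatible with $\bar B_R(\bar x_i)\subset \Omega$, the sign preservation, and the dominance of the quadratic term over the remainder; this is handled by taking $R$ as the minimum of the three separately obtained radii. Everything else is a direct consequence of the $C^2$-regularity of $\bar z(0)$ (from Lemma~\ref{lem:regadjointheat}) and of standard Lipschitz estimates for the pre-adjoint $K_*$ on compact subdomains.
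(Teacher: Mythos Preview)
Your proposal is correct and follows essentially the same approach as the paper: a second-order Taylor expansion at $\bar x_i$ using $\nabla \bar z(0)(\bar x_i)=0$ combined with Assumption~\ref{ass:guidingG2} for~\eqref{eq:heatest1}, and dualization via $K_*$ for~\eqref{eq:heatest2}. The only cosmetic difference is that the paper uses the Lagrange form of the remainder (evaluating $\nabla^2 \bar z(0)$ at an intermediate point and invoking its continuity), whereas you use the Peano form and absorb the $o(|x-\bar x_i|^2)$ term by shrinking $R$; both are equally valid.
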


Now, given arbitrary extremal points~$\sigma_1 \beta^{-1} \delta_{x_1}, \sigma_2 \beta^{-1} \delta_{x_2} \in \Ext B$, with $~\sigma_1,\sigma_2 \in \{-1,+1\}$ and $x_1,x_2 \in \Omega$, define the distance function $g\colon \ext(B)\times \ext(B)\to [0,\infty)$ by 
\begin{align}
g(u_1,u_2)= |\sigma_1-\sigma_2|+ |x_1-x_2|\,.
\end{align}
Such $g$ will be the one verifying Assumption~\ref{ass:fast:F5}.
In the next lemma we show that the weak* convergence  in~$M(\Omega)$ of a sequence of extremal points to $\bar u_i$ is equivalent to convergence with respect to~$g$. The proof can be found in Section \ref{app:lem:equivalenceofweaktog} in the appendix.
\begin{lemma} \label{lem:equivalenceofweaktog}
Consider a sequence~$\{u_k\}_{k}$ in~$ \Ext B$, i.e.,~$u_k= \sigma_k \beta^{-1} \delta_{x_k}$, for~$\sigma_k \in \{-1,+1\}$ and~$x_k \in \Omega$. Then, there holds
\begin{align*}
u_k \weakstar \bar{u}_i ~\text{if and only if}~ \lim_{k \rightarrow \infty} g(u_k,\bar{u}_i)=0\,.
\end{align*}
In particular, if~$u_k \weakstar \bar{u}_i$, then~$\sigma_k= \operatorname{sign}(\bar{z}(0)(\bar{x}_i))$ for all~$k \in \N$ large enough.
\end{lemma}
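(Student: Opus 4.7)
The plan is to establish the two implications separately by tracking the pair $(\sigma_k, x_k) \in \{-1,+1\}\times\Omega$. For brevity write $\bar u_i = \bar\sigma_i \beta^{-1} \delta_{\bar x_i}$ with $\bar\sigma_i := \operatorname{sign}(\bar z(0)(\bar x_i)) \in \{-1,+1\}$.

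For the direction ``$g(u_k, \bar u_i) \to 0 \Rightarrow u_k \weakstar \bar u_i$'', note that $|\sigma_k - \bar\sigma_i| \to 0$ combined with $\sigma_k,\bar\sigma_i \in \{-1,+1\}$ (a discrete set) forces $\sigma_k = \bar\sigma_i$ for all $k$ large enough, while simultaneously $x_k \to \bar x_i$ in $\Omega$. Then for any test function $\varphi \in C_0(\Omega)$, continuity of $\varphi$ gives $\langle \varphi, u_k\rangle = \sigma_k\beta^{-1}\varphi(x_k) \to \bar\sigma_i \beta^{-1} \varphi(\bar x_i) = \langle \varphi, \bar u_i\rangle$, which is precisely weak* convergence in $M(\Omega) = C_0(\Omega)^*$. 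This same observation also yields the ``in particular'' statement at the end of the lemma.

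For the converse, I would use a subsequence principle. Given any subsequence of $\{(\sigma_k, x_k)\}$, the compactness of $\{-1,+1\}\times\overline{\Omega}$ allows extraction of a further subsequence with $\sigma_{k_j} \to \sigma \in \{-1,+1\}$ and $x_{k_j} \to x \in \overline{\Omega}$. The crux, and the only real obstacle, is to exclude the boundary case $x \in \partial\Omega$: if this were to occur, then every $\varphi \in C_0(\Omega)$ would satisfy $\varphi(x_{k_j}) \to \varphi(x) = 0$ (crucially using that functions in $\mathcal{C} = C_0(\Omega)$ vanish on $\partial\Omega$), forcing $u_{k_j} \weakstar 0$ and contradicting $u_{k_j} \weakstar \bar u_i \neq 0$. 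Once $x \in \Omega$ is secured, the same computation as in the easy direction identifies the weak* limit of the extracted subsequence as $\sigma\beta^{-1}\delta_x$, which by uniqueness of weak* limits must equal $\bar\sigma_i\beta^{-1}\delta_{\bar x_i}$, forcing $\sigma = \bar\sigma_i$ and $x = \bar x_i$. Since every subsequence admits a further subsequence converging to $(\bar\sigma_i, \bar x_i)$, the whole sequence $(\sigma_k, x_k)$ converges to $(\bar\sigma_i, \bar x_i)$, which is $g(u_k, \bar u_i) \to 0$.
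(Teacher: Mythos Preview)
Your proof is correct and follows essentially the same approach as the paper's: both directions are handled identically, with the converse using compactness of $\{-1,+1\}\times\overline{\Omega}$, ruling out $x\in\partial\Omega$ via the vanishing of $C_0(\Omega)$ functions on the boundary, and concluding by the subsequence principle. The only minor remark is that the ``in particular'' clause has hypothesis $u_k\weakstar\bar u_i$, so it relies on the converse implication (to first get $g(u_k,\bar u_i)\to 0$) together with your discrete-set observation; the paper makes this explicit at the very end of its proof.
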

Finally we combine the previous observations to conclude Assumption~\ref{ass:fast:F5}. 
\begin{proposition}
Let~$R>0$ be chosen according to Lemma~\ref{lem:quadgrowthmeasure}. Then, there is a~$d_{\mathcal{B}}$-neighbourhood $\bar{U}_i$ of~$\bar{u}_i$ which satisfies
\begin{align*}
U_i \coloneqq \bar{U}_i \cap \Ext B \subset \left\{\,\sigma \beta^{-1}\delta_x \;|\;\sigma=\operatorname{sign}(\bar{z}(0)(\bar{x}_i)),~x \in B_R(\bar{x}_i)\,\right\}
\end{align*}
as well as
\[
\begin{gathered}
\|K(u-\bar{u}_i)\|_{L^2} \leq (\|K_*\|_{Y, \operatorname{Lip}(B_R(\bar{x}_i))}/\beta) \, g(u, \bar{u}_i) \\ \langle \bar{z}(0),\bar{u}_i-u \rangle \geq (\gamma/(4 \beta)) \, g(u, \bar{u}_i)^2	
\end{gathered}
\]
for every  $u \in U_i$ and for every $i=1, \ldots, N$.
\end{proposition}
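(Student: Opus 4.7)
The plan is to build the neighborhood $\bar{U}_i$ using Lemma~\ref{lem:equivalenceofweaktog}, and then to verify the two estimates on $U_i$ by direct computation using Lemma~\ref{lem:quadgrowthmeasure}. The result essentially packages the two preceding lemmas into the exact form needed for Assumption~\ref{ass:fast:F5}.

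First I would construct $\bar{U}_i$. Consider the set
\[
W_i \coloneqq \left\{\,\sigma \beta^{-1}\delta_x \;|\;\sigma=\operatorname{sign}(\bar{z}(0)(\bar{x}_i)),~x \in B_R(\bar{x}_i)\,\right\} \cup \{\bar u_i\},
\]
so that the desired inclusion is $\bar{U}_i \cap \Ext(B) \subseteq W_i$. Suppose by contradiction that no $d_{\mathcal{B}}$-neighbourhood of $\bar u_i$ is contained in $W_i \cup (\mathcal{B}\setminus \Ext(B))$. Then we can extract a sequence $\{u_k\}_k \subset \Ext(B)\setminus W_i$ with $d_{\mathcal{B}}(u_k,\bar u_i)\to 0$, i.e.\ $u_k \weakstar \bar u_i$. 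Writing $u_k = \sigma_k \beta^{-1}\delta_{x_k}$, Lemma~\ref{lem:equivalenceofweaktog} yields $\sigma_k = \operatorname{sign}(\bar{z}(0)(\bar{x}_i))$ and $x_k \to \bar x_i$ for all $k$ large, so eventually $u_k \in W_i$, a contradiction. Hence a suitable $d_{\mathcal{B}}$-closed neighbourhood $\bar U_i$ exists, which can moreover be chosen small enough to comply with \eqref{eq:isolmax} and to be disjoint from $\bar U_j$ for $j\neq i$.

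Next I would verify the Lipschitz estimate. Fix $u = \sigma \beta^{-1}\delta_x \in U_i$, so by construction $\sigma = \operatorname{sign}(\bar z(0)(\bar x_i))$ and $x \in B_R(\bar x_i)$. Since $\bar u_i = \sigma \beta^{-1}\delta_{\bar x_i}$ with the \emph{same} sign $\sigma$, we have $u - \bar u_i = \sigma \beta^{-1}(\delta_x - \delta_{\bar x_i})$, whence by \eqref{eq:heatest2}
\[
\|K(u-\bar u_i)\|_{L^2} = \beta^{-1}\|K(\delta_x - \delta_{\bar x_i})\|_{L^2} \leq \beta^{-1}\|K_*\|_{Y,\operatorname{Lip}(B_R(\bar x_i))}|x-\bar x_i|.
\]
Because $\sigma$ agrees on $u$ and $\bar u_i$, the definition of $g$ gives $g(u,\bar u_i) = |x-\bar x_i|$, and the bound follows.

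Finally I would verify the quadratic growth estimate. With the same decomposition,
\[
\langle \bar z(0), \bar u_i - u \rangle = \sigma\beta^{-1}\bigl(\bar z(0)(\bar x_i) - \bar z(0)(x)\bigr).
\]
Since $|\bar z(0)(\bar x_i)| = \beta$ and $\sigma = \operatorname{sign}(\bar z(0)(\bar x_i))$, we have $\sigma\bar z(0)(\bar x_i) = \beta$. By Lemma~\ref{lem:quadgrowthmeasure} the sign of $\bar z(0)$ is constant on $B_R(\bar x_i)$, so $\sigma \bar z(0)(x) = |\bar z(0)(x)|$, and applying \eqref{eq:heatest1} we obtain
\[
\langle \bar z(0), \bar u_i - u \rangle = \beta^{-1}\bigl(\beta - |\bar z(0)(x)|\bigr) \geq \frac{\gamma}{4\beta}|x-\bar x_i|^2 = \frac{\gamma}{4\beta}\,g(u,\bar u_i)^2,
\]
which is the claim. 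The only delicate step is the construction of $\bar U_i$, where one must be careful that the ``contamination'' by points of $\mathcal{B}\setminus\Ext(B)$ (in particular the point $0\in\overline{\Ext(B)}^*$ from Lemma~\ref{lem:exttotalvariation}) does not obstruct the argument — this is handled by taking $\bar U_i$ small enough that $0\notin \bar U_i$, which is possible since $\bar u_i \neq 0$.
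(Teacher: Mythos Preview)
Your proof is correct and follows essentially the same approach as the paper: the neighbourhood $\bar U_i$ is constructed by a contradiction argument via Lemma~\ref{lem:equivalenceofweaktog}, and the two estimates are then read off from Lemma~\ref{lem:quadgrowthmeasure} using that $g(u,\bar u_i)=|x-\bar x_i|$ for $u\in U_i$. Your version is slightly more detailed (explicitly handling $0\in\mathcal B\setminus\Ext(B)$ and the disjointness of the $\bar U_i$), but the argument is the same.
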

\begin{proof}
The statement on the existence of a~$d_\mathcal{B}$-neighbourhood~$\bar{U}_i$ with the stated properties follows immediately from Lemma~\ref{lem:equivalenceofweaktog}. In fact, if such a neighbourhood does not exist, then there exists a sequence~$\{u_k\}_k$ in~$\Ext B$,~i.e., $u_k= \sigma_k \beta^{-1} \delta_{x_k}$, with~$u_k \weakstar \bar{u}_i$ as well as~$\sigma_k \neq \operatorname{sign}(\bar{z}(0)(\bar{x}_i))$  or~$x_k \not \in B_R(\bar{x}_i)$. This contradicts~$\lim_{k \rightarrow \infty} g(u_k, \bar{u}_i)=0$. The remaining statements are also readily verified using Lemma~\ref{lem:quadgrowthmeasure} noting that for every~$u= \sigma \beta^{-1} \delta_x \in U_i$, we have~$g(u,\bar{u}_i)=|x-\bar{x}_i|$,~$\langle \bar{z}(0), u \rangle= |\bar{z}(0)(x)|/\beta $ and~$\langle \bar{z}(0),\bar{u}_i \rangle=1$.
\end{proof}

\subsubsection{Numerical experiment} \label{subsubsec:numex}
We close this section with a numerical experiment showing the effectiveness of Algorithm \ref{alg:gcg} in a concrete setting. For this purpose, set~$\Omega=(0,1)^2$ and~$T=0.1$. Moreover, fix~$u^\dagger=25 \delta_{x_1}-10\delta_{x_2}$, where~$x_1=(0.75,0.75)$ and~$x_2=(0.25,0.25)$, as well as~$y_d=Ku^\dagger+ \zeta$ where~$\zeta$ is a noise term with~$\|\zeta\|_{L^2(\Omega)}/\|Ku^\dagger\|_{L^2(\Omega)}\approx 0.1$. The regularization parameter is chosen as~$\beta=0.001$. The heat equation is discretized using a dg(0)cg(1) scheme on a temporal grid with stepsize~$\delta=0.001$ and a uniform triangulation of~$\Omega$ with grid size~$h=1/128$ and nodes~$\{x^h_i\}^{N_h}_{i=1}$. For the adjoint equation, a conforming discretization scheme is considered. All computations were carried out in Matlab 2019 on a notebook with~$32$ GB RAM and an Intel\textregistered Core\texttrademark ~i7-10870H CPU@2.20 GHz. 
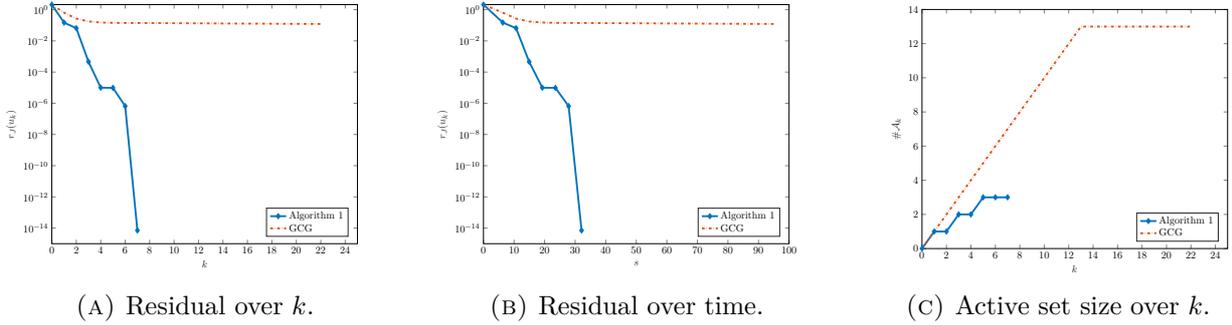
\begin{figure}[t!]
\begin{subfigure}[t]{.31\linewidth}
\centering
\scalebox{.35}{%
%
%
\definecolor{mycolor1}{rgb}{0.00000,0.44700,0.74100}%
\definecolor{mycolor2}{rgb}{0.85000,0.32500,0.09800}%
\begin{tikzpicture}

\begin{axis}[%
width=4.521in,
height=3.566in,
at={(0.758in,0.481in)},
scale only axis,
xmin=0,
xmax=25,
xlabel style={font=\color{white!15!black}},
xlabel={$k$},
ymode=log,
ymin=1e-15,
ymax=2.10832950982808,
yminorticks=true,
ylabel style={font=\color{white!15!black}},
ylabel={$r_J(u_k)$},
axis background/.style={fill=white},
legend style={at={(0.97,0.03)}, anchor=south east, legend cell align=left, align=left, draw=white!15!black}
]
\addplot [color=mycolor1, line width=2.0pt, mark=diamond, mark options={solid, mycolor1}]
  table[row sep=crcr]{%
0	2.10832950982809\\
1	0.149088700654287\\
2	0.0653573536294753\\
3	0.00045885891312108\\
4	9.98220022215197e-06\\
5	9.72615968179216e-06\\
6	6.57560524033069e-07\\
7	7.08658248948355e-15\\
};
\addlegendentry{Algorithm 1}

\addplot [color=mycolor2, dashdotted, line width=2.0pt]
  table[row sep=crcr]{%
0	2.10832950982809\\
1	0.638027412321589\\
2	0.268892675488535\\
3	0.175414199456924\\
4	0.150661458097042\\
5	0.143365771858187\\
7	0.139959320639744\\
14	0.130609635202745\\
22	0.12092646296188\\
};
\addlegendentry{GCG}

\end{axis}

\begin{axis}[%
width=5.833in,
height=4.375in,
at={(0in,0in)},
scale only axis,
xmin=0,
xmax=1,
ymin=0,
ymax=1,
axis line style={draw=none},
ticks=none,
axis x line*=bottom,
axis y line*=left
]
\end{axis}
\end{tikzpicture}%
}%
\caption{Residual over $k$.}
\label{fig:residualk}
\end{subfigure}
\quad
\begin{subfigure}[t]{.31\linewidth}
\centering
\scalebox{.35}{%
\centering
%
%
\definecolor{mycolor1}{rgb}{0.00000,0.44700,0.74100}%
\definecolor{mycolor2}{rgb}{0.85000,0.32500,0.09800}%
\begin{tikzpicture}

\begin{axis}[%
width=4.521in,
height=3.566in,
at={(0.758in,0.481in)},
scale only axis,
xmin=0,
xmax=100,
xlabel style={font=\color{white!15!black}},
xlabel={$s$},
ymode=log,
ymin=1e-15,
ymax=2.10832950982808,
yminorticks=true,
ylabel style={font=\color{white!15!black}},
ylabel={$r_J(u_k)$},
axis background/.style={fill=white},
legend style={at={(0.97,0.03)}, anchor=south east, legend cell align=left, align=left, draw=white!15!black}
]
\addplot [color=mycolor1, line width=2.0pt, mark=diamond, mark options={solid, mycolor1}]
  table[row sep=crcr]{%
0.000552399999996567	2.10832950982809\\
6.3138105	0.149088700654287\\
10.6612504	0.0653573536294758\\
14.9518766	0.000458858913121078\\
19.2530354	9.98220022215201e-06\\
23.5446021	9.72615968179208e-06\\
27.872096	6.57560524033064e-07\\
32.1023323	7.08658248948355e-15\\
};
\addlegendentry{Algorithm 1}

\addplot [color=mycolor2, dashdotted, line width=2.0pt]
  table[row sep=crcr]{%
0.00119599999999309	2.10832950982805\\
6.37494580000001	0.638027412321584\\
10.6335267	0.26889267548854\\
14.8600594	0.175414199456925\\
19.1056026	0.150661458097042\\
23.3568465	0.143365771858187\\
31.9192837	0.139959320639744\\
61.5736379	0.130609635202746\\
95.6039629	0.120926462961881\\
};
\addlegendentry{GCG}

\end{axis}

\begin{axis}[%
width=5.833in,
height=4.375in,
at={(0in,0in)},
scale only axis,
xmin=0,
xmax=1,
ymin=0,
ymax=1,
axis line style={draw=none},
ticks=none,
axis x line*=bottom,
axis y line*=left
]
\end{axis}
\end{tikzpicture}%
}%
\caption{Residual over time.}
\label{fig:residuals}
\end{subfigure}
\quad
\begin{subfigure}[t]{.31\linewidth}
\centering
\scalebox{.35}{%
\centering
%
%
\definecolor{mycolor1}{rgb}{0.00000,0.44700,0.74100}%
\definecolor{mycolor2}{rgb}{0.85000,0.32500,0.09800}%
\begin{tikzpicture}

\begin{axis}[%
width=4.521in,
height=3.566in,
at={(0.758in,0.481in)},
scale only axis,
xmin=0,
xmax=25,
xlabel style={font=\color{white!15!black}},
xlabel={$k$},
ymin=0,
ymax=14,
ylabel style={font=\color{white!15!black}},
ylabel={ $\# \mathcal{A}_k $},
axis background/.style={fill=white},
legend style={at={(0.97,0.03)}, anchor=south east, legend cell align=left, align=left, draw=white!15!black}
]
\addplot [color=mycolor1, line width=2.0pt, mark=diamond, mark options={solid, mycolor1}]
  table[row sep=crcr]{%
0	0\\
1	1\\
2	1\\
3	2\\
4	2\\
5	3\\
6	3\\
7	3\\
};
\addlegendentry{Algorithm 1}

\addplot [color=mycolor2, dashdotted, line width=2.0pt]
  table[row sep=crcr]{%
0	0\\
13	13\\
22	13\\
};
\addlegendentry{GCG}

\end{axis}
\end{tikzpicture}%
}%
\caption{Active set size over~$k$.}
\label{fig:support}
\end{subfigure}
\caption{Convergence behaviour of relevant quantities.}
\end{figure}
In Figure~\ref{fig:residualk}, we report on the convergence history of the residuals~$r_J(u_k)=J(u_k)-\min_{u \in M(\Omega)} J(u)$ associated with a sequence~$\{u_k\}_k$ generated by Algorithm~\ref{alg:gcg} starting from $u_0=0$ and~$\mathcal{A}_0=\emptyset$. Due to the dg(0)cg(1) scheme used in the discretization of the state equation, the dual variable~$z_k(0)$ is now a piecewise linear and continuous function on the spatial grid. As a consequence,~$|z_k(0)|$ achieves its global maximum in a gridpoint and the new candidate~$\widehat{x}_k$ can be cheaply computed as
\begin{align*}
    \widehat{x}_k \in \argmax_{x \in \{x^h_i\}^{N_h}_{i=1}} |z_k(0)|\,.
\end{align*}

In each iteration, the finite dimensional subproblem~\eqref{eq:subprobalgo} is solved using a semismooth Newton method. Moreover, we  plot the size of the support of~$u_k$ in dependence of~$k$ in Figure~\ref{fig:support}. By construction, this corresponds to the number of Dirac deltas in the active set~$\mathcal{A}^u_k$. In order to highlight the practical efficiency of Algorithm~\ref{alg:gcg}, we also include a comparison to the iterates generated by a generalized conditional gradient method (GCG) given by
\begin{align*}
u_{k+1}=(1-s_k) u_k+ s_k v_k \ \ ~\text{where} \ \ ~v_k= \begin{cases} M_0 \widehat{v}^u_k & \text{if } \, \|z_k(0)\|_{\Cc} \geq \beta \,,\\
 0 & \text{else.}
 \end{cases}
\end{align*}
Here~$\widehat{v}^u_k$ is chosen as before,~$M_0=J(0)/\beta$ and~$s_k \in (0,1)$ is an explicitly given stepsize as described in~\cite{pikka}. Both methods were run for a maximum of 100 iterations or until~$r_J(u_k)\leq 10^{-12}$. As expected, the GCG update exhibits the typical sublinear convergence behaviour of conditional gradient methods. In particular, after 200 iterations the residual is still of magnitude~$r_J(u_k) \approx 5 \times 10^{-2}$. In contrast, we observe a vastly improved rate of convergence for Algorithm~\ref{alg:gcg}. The stopping criterion is met after~$7$ iterations. Moreover, while the support size of~$u_k$ in GCG strictly increases in the first 13 iterations, Algorithm~\ref{alg:gcg} removes Dirac deltas which are assigned a zero coefficient. This leads to smaller active sets and thus sparser iterates. Both observations are testament to the practical efficiency of Algorithm~\ref{alg:gcg}. Finally, for a fair comparison, we also plot the residual as a function of the computational time (in seconds) for~$k=1,\dots,20$. This is done to acknowledge the vastly different computational cost of the update steps in both methods, i.e., forming a convex combination in GCG and the full resolution of a finite-dimensional minimization problem in~Algorithm~\ref{alg:gcg}. As we can see, the additional computational effort of fully resolving~\eqref{eq:subprobalgo} is outweighed by its practical utility. More in detail, Algorithm~\ref{alg:gcg} converges after around 30s while GCG fails to decrease~$r_J(u_k)$ below~$10^{-1}$ in the considered time frame.

\subsection{Rank-one matrix reconstruction by trace regularization} \label{sec:rank-one}
Let~$H$ be a separable Hilbert space with norm~$\|\cdot\|_H$ induced by an inner product~$(\cdot,\cdot)_H$. In the following, to simplify notation, we will focus on infinite dimensional Hilbert spaces, the finite dimensional case,~i.e.~$H \simeq \R^n$, follows by the same arguments. Denote by~$K(H)$ the space of bounded, linear, compact and self-adjoint operators from~$H$ into itself which we equip with the standard operator norm. An operator~$\mathcal U \in K(H)$ is called positive, denoted by~$\mathcal U\geq 0$, if
\begin{align*}
    (h,\mathcal Uh)_H \geq 0 \quad \text{for all}~h \in H\,.
\end{align*}
Moreover, let~$\{h_i\}_{i \in \N}$ be an orthonormal basis (ONB) of~$H$. For an operator~$ \mathcal U \in K(H)$ we formally define its trace as
\begin{align*}
    \operatorname{Tr}(\mathcal U)= \sum_{i \in \mathbb{N}} (h_i,\mathcal Uh_i)_H\,.
\end{align*}
An operator~$\mathcal U \in K(H) $ is called~\textit{trace-class} if~$\operatorname{Tr}(|\mathcal U|)< \infty$, where~$|\mathcal U|$ is the unique positive square root of~$\mathcal U^2$, and~\textit{Hilbert-Schmidt} (HS) if~$\operatorname{Tr}(\mathcal U^2) < \infty$. The set of all Hilbert-Schmidt operators together with the norm~$\|\mathcal U\|_{\text{HS}}=(\mathcal U,\mathcal U)^{1/2}_{\text{HS}}$ induced by the inner product 
\begin{align*}
   (\mathcal U_1,\mathcal U_2)_{\text{HS}}= \operatorname{Tr}(\mathcal U_1 \mathcal U_2) \quad \text{for all}~\mathcal U_1,\mathcal U_2 \in \operatorname{HS}(H)
\end{align*}
forms a Hilbert space~$\text{HS}(H)$. Moreover,
the space of all trace-class operators, denoted by~$T(H)$, forms a Banach space when equipped with the nuclear norm~$\|\mathcal U\|_{\text{T}}=\operatorname{Tr}(|\mathcal U|)$. In this case, we also have~$K(H)^* \simeq T(H)$ where the duality pairing is realized by
\begin{align*}
    \langle \mathcal U_1, \mathcal U_2  \rangle= \operatorname{Tr}(\mathcal U_1 \mathcal U_2) \quad \text{for all}~\mathcal U_1 \in K(H),~\mathcal U_2 \in T(H)\,.
\end{align*}
With these prerequisites, consider  
\begin{align} \label{def:rankproblem}
\min_{\mathcal U \in T(H),~\mathcal U \geq 0} \ \left \lbrack F(K\mathcal U)+ \beta \operatorname{Tr}(\mathcal U) \right \rbrack
\end{align}
where~$\beta>0$ and~$K \colon \text{HS}(H) \to Y $ is weak-to-strong continuous. Problems of this type appear, e.g., as convex relaxations of quadratic inverse problems, see~\cite{candesphase,brediesphase}, since they are known to favour solutions with rank one. As before, we start by computing the extremal points of
\begin{align*}
    B= \left\{\,\mathcal U\in T(H) \;|\; \beta \operatorname{Tr}(\mathcal U)\leq 1,~\mathcal U \geq 0\,\right\}\,.
\end{align*}
For this purpose, given~$h \in H$, we introduce the associated rank one operator~$\mathcal  U_h \coloneqq h \otimes h$ by~$\mathcal U_h h_1= (h,h_1)_H h$ for all~$h_1 \in H$. Note that~$\mathcal U_h \in T(H) $ with~$\operatorname{Tr}(\mathcal U_h)=\|h\|^2_H$. Moreover, we recall that for every~$\mathcal U \in K(H)$,~$\mathcal U \geq 0$, there exists an ONB~$\{h^{\mathcal {U}}_i\}_i \subset H$ as well as~$\{\sigma^{\mathcal{U}}_i\}_i \subset \R$ with~$\sigma^{\mathcal{U}}_i \geq \sigma^{\mathcal{U}}_{i+1} \geq 0 $,~$i\in\N$, and
\begin{align*}
    \mathcal{U} h^\mathcal{U}_i= \sigma^\mathcal{U}_i h^\mathcal{U}_i, \quad \mathcal{U}=\sum_{i \in \N} \sigma^\mathcal{U}_i h^\mathcal{U}_i \otimes h^\mathcal{U}_i\,.  
\end{align*}
We arrive at the following characterization.
\begin{lemma}\label{lem:exttrace}
We have
\begin{equation}
\ext(B) = \{\, \beta^{-1} h \otimes h\;|\; \|h\|_H=1\,\} \cup \{0 \},
\end{equation}
as well as
\begin{equation}
\mathcal{B} = \{\, \beta^{-1} h \otimes h\;|\;\|h\|_H \leq 1\,\}\,.
\end{equation}
\end{lemma}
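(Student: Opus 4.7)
The plan is to establish the two claims separately, first characterizing $\ext(B)$ by a spectral argument and then computing the weak* closure using compactness of trace-class operators.

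For the characterization of $\ext(B)$, I would first check that the claimed candidates are actually extremal. That $0$ is extremal follows from the fact that if $0 = (1-s)\mathcal{U}_1 + s\mathcal{U}_2$ with $\mathcal{U}_i \geq 0$ and $s\in(0,1)$, then $(h,\mathcal{U}_ih)_H = 0$ for every $h$, so positivity forces $\mathcal{U}_1=\mathcal{U}_2=0$. For $\mathcal{V} = \beta^{-1} h\otimes h$ with $\|h\|_H=1$, suppose $\mathcal{V} = (1-s)\mathcal{U}_1 + s\mathcal{U}_2$ with $\mathcal{U}_i\in B$ and $s\in(0,1)$. The key observation is that the $\mathcal{U}_i$ must then have range in $\operatorname{span}(h)$: for any $w\perp h$, positivity and $(w,\mathcal{V}w)_H = 0$ yields $(w,\mathcal{U}_i w)_H = 0$, whence $\mathcal{U}_i w=0$ by a Cauchy--Schwarz argument for positive forms. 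Thus $\mathcal{U}_i = \alpha_i\, h\otimes h$ with $\alpha_i\in[0,\beta^{-1}]$ (the upper bound coming from the trace constraint), and since $(1-s)\alpha_1+s\alpha_2 = \beta^{-1}$, both $\alpha_i$ must equal $\beta^{-1}$, giving $\mathcal{U}_1=\mathcal{U}_2=\mathcal{V}$.

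For the converse, let $\mathcal{U}\in\ext(B)\setminus\{0\}$ with spectral decomposition $\mathcal{U}=\sum_i \sigma_i\, h_i\otimes h_i$, $\sigma_i\geq 0$. If $\operatorname{Tr}(\mathcal{U})<\beta^{-1}$, then setting $s = \beta\operatorname{Tr}(\mathcal{U})\in(0,1)$ yields the decomposition $\mathcal{U} = (1-s)\cdot 0 + s\cdot(\mathcal{U}/s)$ with $\mathcal{U}/s\in B$ distinct from $\mathcal{U}$, a contradiction. Hence $\operatorname{Tr}(\mathcal{U})=\beta^{-1}$. If two eigenvalues $\sigma_1,\sigma_2$ were strictly positive, then for small $\varepsilon>0$ the operators $\mathcal{U}_\pm = \mathcal{U} \pm \varepsilon(h_1\otimes h_1 - h_2\otimes h_2)$ are positive, lie in $B$, differ from $\mathcal{U}$, and satisfy $\mathcal{U}=\tfrac{1}{2}(\mathcal{U}_+ + \mathcal{U}_-)$, again contradicting extremality. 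Therefore only one eigenvalue is nonzero, and the trace condition forces it to be $\beta^{-1}$, so $\mathcal{U}=\beta^{-1} h\otimes h$ with $\|h\|_H=1$.

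For the second claim, I would verify both inclusions. For $\overline{\ext(B)}^* \subset \{\beta^{-1} h\otimes h:\|h\|_H\leq 1\}$, take a sequence $\beta^{-1} h_n\otimes h_n\in\ext(B)$ (with $\|h_n\|_H\in\{0,1\}$) converging weak* to some $\mathcal{U}\in T(H)$; by reflexivity of $H$ a subsequence satisfies $h_n\rightharpoonup h$ with $\|h\|_H\leq 1$, and for every $V\in K(H)$ compactness yields $Vh_n\to Vh$ strongly, so
\[
\operatorname{Tr}\bigl((h_n\otimes h_n)V\bigr) = (h_n,Vh_n)_H \to (h,Vh)_H = \operatorname{Tr}\bigl((h\otimes h)V\bigr),
\]
whence $\mathcal{U}=\beta^{-1} h\otimes h$. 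For the reverse inclusion, given $h\in H$ with $\|h\|_H\leq 1$, I would pick an orthonormal sequence $\{e_n\}$ with $e_n\perp h$ (available because $H$ is infinite dimensional) and set $h_n = h + \sqrt{1-\|h\|_H^2}\,e_n$; then $\|h_n\|_H=1$, $h_n\rightharpoonup h$ by Bessel's inequality, and the same compactness argument gives $\beta^{-1} h_n\otimes h_n \weakstar \beta^{-1} h\otimes h$. The main subtlety is the weak* continuity of $h\mapsto h\otimes h$ on bounded sets, which relies crucially on the compactness of the test operators in $K(H)$, together with the infinite-dimensional hypothesis to realize intermediate norms $\|h\|_H<1$ as weak limits of unit vectors.
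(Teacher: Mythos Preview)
Your proof is correct. For the second assertion (the weak* closure), your argument is essentially the same as the paper's: both use that compact operators upgrade weak to strong convergence to get $(h_n,Vh_n)_H\to(h,Vh)_H$, and both exploit infinite dimensionality to realize any $h$ with $\|h\|_H\leq 1$ as a weak limit of unit vectors; your construction $h_n=h+\sqrt{1-\|h\|_H^2}\,e_n$ is just a bit more explicit.

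For the first assertion, however, your route differs from the paper's. To show that $\beta^{-1}h\otimes h$ is extremal, the paper evaluates the quadratic form of a putative convex decomposition at $h$, obtains $1=(1-s)\sum_i\beta\sigma^{\mathcal U_1}_i(h^{\mathcal U_1}_i,h)_H^2+s\sum_i\beta\sigma^{\mathcal U_2}_i(h^{\mathcal U_2}_i,h)_H^2$, and then argues via the trace bound that each spectral sum must equal one, eventually pinning down a single eigenvector equal to $\pm h$. Your kernel argument---$(w,\mathcal U_iw)_H=0$ for $w\perp h$ forces $\mathcal U_iw=0$ by Cauchy--Schwarz for the positive form---is shorter and avoids the spectral bookkeeping. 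For the converse, the paper splits off the leading eigenvalue, writing a rank-$\geq 2$ operator as $(\beta\sigma_1)\,\beta^{-1}h_1\otimes h_1+(1-\beta\sigma_1)\,\mathcal U_2$, whereas you use the trace-preserving perturbation $\mathcal U_\pm=\mathcal U\pm\varepsilon(h_1\otimes h_1-h_2\otimes h_2)$. Both are standard; your perturbation is arguably more transparent since positivity and the trace constraint are immediate, while the paper's split makes the convex coefficients explicit but requires checking that the residual piece lies in $B$.
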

A proof of Lemma \ref{lem:exttrace} can be found in the appendix, c.f. Section \ref{app:lem:exttrace}. A consequence of Lemma \ref{lem:exttrace} is that minimizers to~\eqref{def:rankproblem} always exhibit finite rank and share their eigenfunctions with the dual variable. Additionally, the next proposition provides optimality conditions for the problem \eqref{def:rankproblem}. The proof can be found in the appendix, c.f. Section \ref{app:prop:opttrace}.

\begin{proposition}\label{prop:opttrace}
Let~$\bar{\mathcal{U}} \in T(H)$,~$\bar{\mathcal{U}}  \geq 0$, be given and set
\begin{align*}
    \bar{P}=-K_*\nabla F(K \bar{\mathcal{U}}) \in K(H)\,.
\end{align*}
Then~$\bar{\mathcal{U}}$ is a minimizer of~\eqref{def:rankproblem} if and only if
\begin{align} \label{eq:optimalitytrace}
    \sigma^{\bar{P}}_1 \leq \beta, \quad \operatorname{Tr}( \bar{P} \bar{\mathcal{U}} )= \beta \operatorname{Tr}(\bar{\mathcal{U}})\,. 
\end{align}
In particular, if~$\sigma^{\bar{P}}_1 = \beta$ and~$\bar{N} \geq 1$ is the smallest index with~$\sigma^{\bar{P}}_{\bar{N}+1} < \sigma^{\bar{P}}_1$, this implies that a minimizer~$\bar{\mathcal{U}}$ of~\eqref{def:rankproblem} is of the form
\begin{align} \label{eq:representationtrace}
    \bar{\mathcal{U}}= \sum^{\bar{N}}_{i=1} \bar{\lambda}_i h^{\bar{P}}_i \otimes h^{\bar{P}}_i \quad \text{for some}~\bar{\lambda}_i \geq 0\,.
\end{align}
\end{proposition}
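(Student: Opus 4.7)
The plan is to invoke the general first-order conditions from Proposition \ref{prop:existu} with the choice $\mathcal{C}=K(H)$, $\mathcal{M}=T(H)$, and $\mathcal{G}(\mathcal{U})=\beta\operatorname{Tr}(\mathcal{U})$ if $\mathcal{U}\geq 0$, $+\infty$ otherwise, and then translate the two abstract conditions into spectral-theoretic statements. First, one checks that this $\mathcal{G}$ is convex, positively one-homogeneous, and weak* lower semicontinuous, and that its sublevel sets (which, intersected with the cone of positive operators, are norm-bounded in trace norm, hence weak* compact in $T(H)\simeq K(H)^*$) satisfy \ref{ass:func2}. Proposition \ref{prop:existu} then yields that $\bar{\mathcal{U}}$ is optimal iff $\bar{P}=-K_*\nabla F(K\bar{\mathcal{U}})$ satisfies $\langle \bar{P},\bar{\mathcal{U}}\rangle=\mathcal{G}(\bar{\mathcal{U}})$ and $\max_{v\in B}\langle \bar{P},v\rangle\leq 1$. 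Using $\langle \bar{P},\bar{\mathcal{U}}\rangle=\operatorname{Tr}(\bar{P}\bar{\mathcal{U}})$, the first condition becomes $\operatorname{Tr}(\bar{P}\bar{\mathcal{U}})=\beta\operatorname{Tr}(\bar{\mathcal{U}})$.

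For the second condition, Lemma \ref{lem:existoflinearized} reduces the supremum over $B$ to a supremum over $\mathrm{Ext}(B)$, and by Lemma \ref{lem:exttrace} the latter equals
\[
\max_{v\in \mathrm{Ext}(B)}\langle \bar{P},v\rangle=\max_{\|h\|_H=1}\beta^{-1}(h,\bar{P}h)_H=\beta^{-1}\sigma_1^{\bar{P}},
\]
where the final identity is the Rayleigh quotient characterization of the largest eigenvalue of the compact self-adjoint operator $\bar{P}$. Hence the condition $\max_{v\in B}\langle \bar{P},v\rangle\leq 1$ is equivalent to $\sigma_1^{\bar{P}}\leq \beta$, establishing \eqref{eq:optimalitytrace}.

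For the representation \eqref{eq:representationtrace}, assume $\sigma_1^{\bar{P}}=\beta$. Use the spectral decomposition $\bar{\mathcal{U}}=\sum_j \mu_j f_j\otimes f_j$ with $\mu_j>0$ and $\{f_j\}$ orthonormal, so that $\operatorname{Tr}(\bar{P}\bar{\mathcal{U}})=\sum_j \mu_j(f_j,\bar{P}f_j)_H$. Combining with $\operatorname{Tr}(\bar{P}\bar{\mathcal{U}})=\beta\operatorname{Tr}(\bar{\mathcal{U}})=\sigma_1^{\bar{P}}\sum_j\mu_j$ and the upper bound $(f_j,\bar{P}f_j)_H\leq \sigma_1^{\bar{P}}$, one forces $(f_j,\bar{P}f_j)_H=\sigma_1^{\bar{P}}$ for every $j$. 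Expanding $f_j$ in the eigenbasis of $\bar{P}$ and using that $\sigma_{\bar{N}+1}^{\bar{P}}<\sigma_1^{\bar{P}}$ shows each $f_j$ must lie in the $\bar{N}$-dimensional top eigenspace $\mathrm{span}\{h_1^{\bar{P}},\dots,h_{\bar{N}}^{\bar{P}}\}$. Thus $\bar{\mathcal{U}}$ has range in this finite-dimensional space and at most $\bar{N}$ nonzero eigenvalues; by completing $\{f_j\}$ to an ONB of the top eigenspace and exploiting the freedom to choose $h_1^{\bar{P}},\dots,h_{\bar{N}}^{\bar{P}}$ as any ONB of this eigenspace, we arrive at the claimed form $\bar{\mathcal{U}}=\sum_{i=1}^{\bar{N}}\bar{\lambda}_i h_i^{\bar{P}}\otimes h_i^{\bar{P}}$ with $\bar{\lambda}_i\geq 0$.

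The only nontrivial obstacle is the careful handling of the eigenbasis in the last step: the basis $\{h_i^{\bar{P}}\}_{i\leq \bar{N}}$ is not canonical because of the multiplicity $\bar{N}$ of the top eigenvalue, and one must use precisely this freedom to simultaneously diagonalize $\bar{\mathcal{U}}$. The remaining verifications (weak* compactness of sublevels of $\mathcal{G}$, Rayleigh quotient, spectral decomposition of a positive trace-class operator) are standard and routine.
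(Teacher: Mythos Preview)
Your proposal is correct and follows essentially the same approach as the paper: invoking Proposition~\ref{prop:existu} together with the extremal-point characterization and the Rayleigh-quotient identity $\sigma^{\bar P}_1=\sup_{\|h\|_H=1}(h,\bar P h)_H$ for the optimality conditions, and then using the spectral decomposition of $\bar{\mathcal U}$ to force each eigenvector with positive eigenvalue into the top eigenspace of $\bar P$, finishing by a change of basis within that eigenspace. The paper is slightly terser on the final step (it just writes ``possibly after a change of basis''), whereas you spell out the simultaneous-diagonalization argument explicitly, but the substance is the same.
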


Now, denote by~$\mathcal{U}_k$ the~$k$-th iterate in Algorithm~\ref{alg:pdap} and let~$P_k$ be the corresponding dual variable. Then we immediately obtain 
\begin{align*}
    \argmax_{ \mathcal{V} \in \Ext B } \ \operatorname{Tr}(P_k \mathcal{V})= \left\{\,\beta^{-1} h \otimes h \;|\;P_k h=\sigma^{P_k}_1 h,~\|h\|_H=1 \,\right\}\,.
\end{align*}
In particular, selecting the new candidate point~$\widehat{\mathcal{V}}^{U}_k$ in Step 2 of Algorithm~\ref{alg:gcg} can be realized by computing one eigenfunction for the leading eigenvalue of~$P_k$ and~$\mathcal{U}_k$ has at most rank~$k$. Regarding the fast convergence of Algorithm~\ref{alg:pdap}, we recall that, in the best case, problem~\eqref{def:rankproblem} produces rank one solutions. In the following, we make a slightly stronger assumption. More in detail, denoting by~$\bar{P}$ the unique optimal dual variable for~\eqref{def:rankproblem}, i.e., there holds~$\bar{P}=-K_*\nabla F(K\bar{\mathcal{U}})$ for every minimizer~$\bar{\mathcal{U}}$ of~\eqref{def:rankproblem}, we assume that:
\begin{enumerate}[label=\textnormal{(D\arabic*)}]
    \item \label{ass:rankone} The first eigenvalue of~$\bar{P}$ is simple, i.e., $\beta=\sigma^{\bar{P}}_1 > \sigma^{\bar{P}}_2$\,.
\end{enumerate}
On the one hand, invoking~\eqref{eq:representationtrace}, this implies that the solution to~\eqref{def:rankproblem} has at most rank one. On the other hand, it also ensures the unique solvability of the linear problem induced by~$\bar{P}$ as well as a quadratic growth behavior with respect to the Hilbert-Schmidt norm as the next theorem shows. The proof can be found in the appendix, c.f. Section \ref{app:thm:rankprobquad}.
\begin{theorem}
\label{thm:rankprobquad}
Let Assumption~\ref{ass:rankone} hold. Then the unique optimal dual variable~$\bar{P}$ of~\eqref{def:rankproblem} satisfies
\begin{align} \label{eq:linearminoverrank}
    \argmax_{\mathcal{V} \in \Ext B} \ \langle \bar{P}, \mathcal{V} \rangle=\argmax_{\mathcal{V} \in B} \ \langle \bar{P}, \mathcal{V} \rangle= \{\beta^{-1} h^{\bar{P}}_1 \otimes h^{\bar{P}}_1\}\,.
\end{align}
In particular, this implies that the solution of~\eqref{def:rankproblem} is unique and of the form~$\bar{\mathcal{U}}=\bar{\lambda} h^{\bar{P}}_1 \otimes h^{\bar{P}}_1$,~$\bar{\lambda}\geq 0$. 
Moreover, setting~$\bar{\mathcal{U}}_1=\beta^{-1} h^{\bar{P}}_1 \otimes h^{\bar{P}}_1$, there are~$\tau, \kappa >0$ with
    \begin{gather} \label{eq:quadgrowthrank}
        \ynorm{K(\mathcal{U}-\bar{\mathcal{U}}_1)} \leq \tau \|\mathcal{U}-\bar{\mathcal{U}}_1\|_{\operatorname{HS}}, \\  1-\langle \bar{P}, \mathcal{U} \rangle  \geq \kappa \|\mathcal{U}-\bar{\mathcal{U}}_1\|^2_{\operatorname{HS}} \,,
    \end{gather}
    for all~~$\mathcal{U} \in B$ with $\operatorname{Tr}(\mathcal{U})=\beta^{-1}$.
\end{theorem}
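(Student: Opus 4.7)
The plan is to prove the three claims in succession, using the spectral decomposition of operators in $B$ together with the spectral gap provided by \ref{ass:rankone}.

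\emph{Linear maximization.} For the identity \eqref{eq:linearminoverrank}, take $\mathcal{U}\in B$ and write it in its own eigenbasis as $\mathcal{U}=\sum_{i}\sigma_i^{\mathcal{U}}\, h_i^{\mathcal{U}}\otimes h_i^{\mathcal{U}}$, with $\sigma_i^{\mathcal{U}}\geq 0$ and $\sum_i \sigma_i^{\mathcal{U}}\leq \beta^{-1}$. The Rayleigh bound $(h_i^{\mathcal{U}},\bar{P}h_i^{\mathcal{U}})_H\leq \sigma_1^{\bar{P}}=\beta$ then yields
\[
\langle \bar{P},\mathcal{U}\rangle=\sum_i \sigma_i^{\mathcal{U}}(h_i^{\mathcal{U}},\bar{P}h_i^{\mathcal{U}})_H\leq \beta\sum_i \sigma_i^{\mathcal{U}}\leq 1.
\]
Equality forces $\sum_i\sigma_i^{\mathcal{U}}=\beta^{-1}$ and $(h_i^{\mathcal{U}},\bar{P}h_i^{\mathcal{U}})_H=\beta$ for every $i$ with $\sigma_i^{\mathcal{U}}>0$. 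Since \ref{ass:rankone} makes the $\beta$-eigenspace of $\bar{P}$ one-dimensional, each such $h_i^{\mathcal{U}}$ must be collinear with $h_1^{\bar{P}}$; orthonormality then leaves only a single surviving index, so $\mathcal{U}=\beta^{-1}h_1^{\bar{P}}\otimes h_1^{\bar{P}}$. As this operator already belongs to $\ext(B)$ by Lemma \ref{lem:exttrace}, both arg-maxima in \eqref{eq:linearminoverrank} coincide.

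\emph{Uniqueness of the minimizer.} Under \ref{ass:rankone}, Proposition \ref{prop:opttrace} applies with $\bar{N}=1$, so every minimizer of \eqref{def:rankproblem} must have the form $\bar{\mathcal{U}}=\bar{\lambda}\, h_1^{\bar{P}}\otimes h_1^{\bar{P}}$ with $\bar{\lambda}\geq 0$. Restricted to this ray, \eqref{def:rankproblem} reduces to the one-dimensional convex problem
\[
\min_{\bar{\lambda}\geq 0}\; F\bigl(\bar{\lambda}\,K(h_1^{\bar{P}}\otimes h_1^{\bar{P}})\bigr)+\beta\bar{\lambda},
\]
which is strictly convex when $K(h_1^{\bar{P}}\otimes h_1^{\bar{P}})\neq 0$ (by strict convexity of $F$) and otherwise has unique minimizer $\bar{\lambda}=0$. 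Hence $\bar{\lambda}$, and therefore $\bar{\mathcal{U}}$, is unique.

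\emph{Lipschitz and quadratic growth.} The Lipschitz inequality is immediate: the weak-to-strong continuous linear map $K\colon\operatorname{HS}(H)\to Y$ is bounded, so $\tau=\|K\|_{\operatorname{HS}\to Y}$ suffices. The main obstacle is the quadratic growth of $1-\langle \bar{P},\cdot\rangle$. I would work in the orthonormal eigenbasis $\{h_j^{\bar{P}}\}$ and set $V_{jk}\coloneqq (h_j^{\bar{P}},(\mathcal{U}-\bar{\mathcal{U}}_1)h_k^{\bar{P}})_H$. Positive semidefiniteness of $\mathcal{U}$ together with $\operatorname{Tr}(\mathcal{U})=\beta^{-1}=\operatorname{Tr}(\bar{\mathcal{U}}_1)$ give $V_{11}\leq 0$, $V_{jj}\geq 0$ for $j\geq 2$, and $\sum_{j\geq 2} V_{jj}=-V_{11}$. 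Expanding
\[
1-\langle \bar{P},\mathcal{U}\rangle = -\sum_j \sigma_j^{\bar{P}}V_{jj}=-\beta V_{11}-\sum_{j\geq 2}\sigma_j^{\bar{P}}V_{jj}\geq (\beta-\sigma_2^{\bar{P}})(-V_{11})
\]
exploits the spectral gap and reduces the task to controlling $\|\mathcal{U}-\bar{\mathcal{U}}_1\|_{\operatorname{HS}}^2=\sum_{j,k}|V_{jk}|^2$ by a constant multiple of $-V_{11}$. Here the PSD structure of $\mathcal{U}$ is crucial: each $2\times 2$ principal submatrix is positive semidefinite, giving $|U_{jk}|^2\leq U_{jj}U_{kk}$ whenever $j\neq k$. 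Summing the diagonal terms (bounded via $(\sum_{j\geq 2}V_{jj})^2= V_{11}^2$), the $(1,k)$ off-diagonals (bounded by $U_{11}(-V_{11})$), and the remaining off-diagonals (again by $V_{11}^2$), and invoking $U_{jj}\leq \beta^{-1}$ throughout, would yield $\|\mathcal{U}-\bar{\mathcal{U}}_1\|_{\operatorname{HS}}^2\leq c\beta^{-1}(-V_{11})$ for an absolute constant $c>0$. Combining the two estimates produces a $\kappa$ proportional to $\beta(\beta-\sigma_2^{\bar{P}})$, closing the proof.
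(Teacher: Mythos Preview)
Your treatment of~\eqref{eq:linearminoverrank}, the uniqueness of the minimizer, and the Lipschitz bound is correct and essentially coincides with the paper (which dispatches these points in one sentence by invoking Assumption~\ref{ass:rankone}, the assumptions on~$K$, and strict convexity of~$F$).

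For the quadratic growth, both you and the paper reduce to the same intermediate inequality
\[
1-\langle \bar{P},\mathcal{U}\rangle \;\geq\; (\beta-\sigma_2^{\bar{P}})\bigl(\beta^{-1}-(h_1^{\bar{P}},\mathcal{U}h_1^{\bar{P}})_H\bigr),
\]
so the only difference lies in how you pass from the right-hand side to~$\|\mathcal{U}-\bar{\mathcal{U}}_1\|_{\operatorname{HS}}^2$. Your route via entrywise PSD $2\times 2$ minor bounds~$|U_{jk}|^2\le U_{jj}U_{kk}$ is correct (with the minor caveat that you implicitly use $-V_{11}\le\beta^{-1}$ to turn $V_{11}^2$ into a linear bound), and it yields a constant of order~$\beta(\beta-\sigma_2^{\bar{P}})$ as you claim. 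The paper, following~\cite{garber}, does this in one line: since~$\mathcal{U}\geq 0$ one has $\|\mathcal{U}\|_{\operatorname{HS}}\le\operatorname{Tr}(\mathcal{U})$, whence
\[
\|\mathcal{U}-\bar{\mathcal{U}}_1\|_{\operatorname{HS}}^2 \;\le\; \operatorname{Tr}(\mathcal{U})^2+\operatorname{Tr}(\bar{\mathcal{U}}_1)^2-2(\mathcal{U},\bar{\mathcal{U}}_1)_{\operatorname{HS}} \;=\; 2\beta^{-1}\bigl(\beta^{-1}-(h_1^{\bar{P}},\mathcal{U}h_1^{\bar{P}})_H\bigr),
\]
giving the explicit constant~$\kappa=\beta(\beta-\sigma_2^{\bar{P}})/2$. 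Your block-by-block estimate is more hands-on and loses a small constant, but it is perfectly valid; the paper's trace inequality simply packages the same information more efficiently.
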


Hence, Assumption~\ref{ass:rankone} implies Assumptions~\ref{ass:fast:F2} and~\ref{ass:fast:F5} for~$g(\cdot,\cdot)=\|\cdot-\cdot\|_{\text{HS}}$ and~$U_1=\Ext{B} \cap \mathcal{B}$. As a consequence, if~$F$ is strongly convex in the sense of Assumption~\ref{ass:fast:F1} as well as~$K\bar{\mathcal{U}}_1 \neq 0$ and~$\bar{\lambda}>0$, then Algorithm~\ref{alg:gcg} converges with an asymptotically linear rate.

\begin{remark} \label{rem:trace}
For Problem~\ref{def:rankproblem} and~$H \simeq \R^n$, Algorithm~\ref{alg:gcg} can be interpreted as a particular instance of a totally corrective conditional gradient method proposed for general matrix problems in~\cite{matrixnormzhang}. However, the presented linear convergence analysis in this earlier work does not cover trace-regularized problems since, even in the finite dimensional case,~$\operatorname{Ext}(B)$ is uncountable. Also in the finite dimensional setting, we want to mention~\cite{garber} where the author presents a conditional gradient method for the trace-constrained setting, i.e.,
\begin{align*}
    \min_{\mathcal{U} \in \R^{n \times n}}  \ F(K\mathcal{U}) \, \quad \text{s.t. } \,\,\, \mathcal{U} = \mathcal{U}^\top,~\operatorname{Tr}(\mathcal{U})\leq \alpha,~\mathcal{U} \geq 0\,,
\end{align*}
which eventually converges linearly under assumptions similar to~\ref{ass:rankone}. From this point of view, the results of this section can be seen as a first step to bridging the gap between the trace-regularized and the trace-constrained setting.
\end{remark}

\subsection{Minimum effort problems}\label{subsec:mineffort}
Let~$\Omega \subset \R^d$ be open and bounded, with $d \in \N$, $d \geq 1$. Consider the space~$\M:=L^\infty(\Omega)$ of essentially bounded Lebesgue measurable functions on~$\Omega$, which we equip with the usual essential supremum norm
\begin{align*}
\|u\|_{\infty}:= \esssup_{x \in \Om} |u(x)| \quad \text{ for all } \,\,  u\in L^\infty(\Omega)\,.
\end{align*}
We consider~\textit{minimum effort problems}, that is,
\begin{align} \label{def:propmineff}
\min_{u \in L^\infty(\Omega)} J(u)\,, \,\, J(u):=F(Ku)+ \alpha \|u\|_{\infty} \,, %
\end{align}
where $\alpha>0$ is a fixed parameter, see~\cite{clasonminimum}. This type of regularizer favors~\textit{binary} solutions, i.e., functions~$\bar{u}$ with~$\bar{u}(x) \in \{- \bar{\lambda}, \bar{\lambda}\}$ for a.e.~$x\in\Omega$ and some~$\bar{\lambda}\geq 0$. Problems of this form appear, e.g., in the optimal maneuvering of spacecrafts~\cite{burns}. Due to the nonsmoothness of the~$L^\infty$-norm, previous solution approaches~\cite{KKmineff,clasonminimum} were, e.g., based on a regularized semi-smooth Newton method for a bilinear reformulation of~\eqref{def:propmineff}. We now show that~FC-GCG yields a simple algorithm which eventually solves~\eqref{def:propmineff} without regularizing and/or reformulating the problem. For this purpose, we first note that~$L^1(\Omega)^* \simeq \mathcal{M}$ where~$L^1(\Omega)$ is equipped with the canonical norm~$\|\cdot\|_{1}$ and where the duality pairing is realized by
\begin{align*}
    \langle p,u \rangle= \int_{\Omega} p(x) u(x)~\mathrm{d}x \quad \text{for all}~p \in L^1(\Omega),~u \in L^\infty(\Omega)\,.
\end{align*}
If $K : L^\infty(\Omega) \rightarrow Y$ is a linear weak*-to-weak continuous operator and $F : Y \rightarrow \R$ is a convex fidelity satisfying \ref{ass:func1}, one can easily verify that the minimization problem \eqref{def:propmineff} satisfies Assumptions \ref{ass:func1}-\ref{ass:func3}.
As in the previous examples, we now characterize the extremal points of the unit ball of the regularizer, i.e., the set
\[
B:=\{\,u \in L^{\infty}(\Om) \;|\;\alpha \|u\|_\infty \leq 1\,\}\,.
\]
Subsequently, we use this characterization to describe minimizers of~\eqref{def:propmineff}. These are the subjects of the following Propositions~\ref{prop:extremalsoflinf}, ~\ref{prop:Optimalitymineffort}, the proofs of which can be found in Sections~\ref{app:prop:extremalsoflinf},~\ref{app:prop:Optimalitymineffort} of the Appendix, respectively.
\begin{proposition} \label{prop:extremalsoflinf}
For any $\alpha>0$ we have
\begin{equation} \label{lem:extremalsoflinf:1}
\operatorname{Ext}(B)= \left\{\,u\in L^\infty(\Omega)\, \colon \, \alpha|u|=1 \,\, \text{ a.e. in } \, \Om \,\right\}\,.
\end{equation}
Moreover there holds~$\Bb = B$.
\end{proposition}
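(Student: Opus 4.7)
For the characterization of $\operatorname{Ext}(B)$ I would prove both inclusions separately. For $(\supseteq)$, assume $\alpha|u|=1$ a.e.\ and write $u=(v+w)/2$ with $v,w\in B$. Pointwise, $1/\alpha = |u| \leq (|v|+|w|)/2 \leq 1/\alpha$, so the triangle inequality is saturated: $|v|=|w|=1/\alpha$ a.e.\ with matching signs, hence $v=w=u$, and $u$ is extremal. For $(\subseteq)$, suppose $u \in B$ with $\alpha|u|<1$ on a set of positive measure; pick $\delta>0$ and a measurable set $A_\delta$ of positive Lebesgue measure on which $\alpha|u|\leq 1-\delta$. Then $v:=u+(\delta/\alpha)\chi_{A_\delta}$ and $w:=u-(\delta/\alpha)\chi_{A_\delta}$ both lie in $B$, are distinct, and satisfy $u=(v+w)/2$, so $u \notin \operatorname{Ext}(B)$.

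For the identity $\mathcal{B}=B$ the inclusion $\mathcal{B}\subseteq B$ is immediate, since $B$ is weak* closed as a rescaled closed dual unit ball. For the reverse inclusion I would construct, for each $u\in B$, a \emph{bang-bang} approximation $\{v_n\}\subset \operatorname{Ext}(B)$ with $v_n \weakstar u$ in $L^\infty(\Omega)$. Concretely, partition $\Omega$ into measurable cells $\{Q_i^n\}_i$ of diameter at most $1/n$, and on each cell split $Q_i^n$ into disjoint measurable pieces $Q_i^{n,+}$ and $Q_i^{n,-}$ whose Lebesgue measures are chosen so that $v_n := (1/\alpha)(\chi_{Q_i^{n,+}} - \chi_{Q_i^{n,-}})$ has the same integral over $Q_i^n$ as $u$. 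Because $|u|\leq 1/\alpha$, the required proportion of $Q_i^{n,+}$ in $Q_i^n$ lies in $[0,1]$, and a measurable subset of $Q_i^n$ with the prescribed measure can be exhibited either by a standard slicing argument or by invoking Lyapunov's convexity theorem on the non-atomic restriction of Lebesgue measure to $Q_i^n$. By construction $\alpha|v_n|=1$ a.e., so $v_n\in \operatorname{Ext}(B)$.

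To verify $v_n \weakstar u$, I would exploit that $\|v_n\|_\infty \leq 1/\alpha$ uniformly, so by a standard density argument it suffices to show $\int_\Omega p(v_n - u)\,\mathrm{d}x \to 0$ for every $p \in C_c(\Omega)$. For such $p$, the matching cell averages $\int_{Q_i^n} v_n = \int_{Q_i^n} u$ allow replacing $p$ by $p-p(x_i^n)$ on each cell for any chosen $x_i^n \in Q_i^n$; summing over $i$ and using uniform continuity of $p$ together with $|v_n - u| \leq 2/\alpha$ and $|\Omega|<\infty$ then yields the claim. The main technical obstacle is the measure-theoretic construction of the splittings $Q_i^{n,\pm}$ and the verification that the resulting sequence converges weak*; the extremal-point characterization itself is entirely elementary.
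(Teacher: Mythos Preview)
Your proof is correct. The characterization of $\operatorname{Ext}(B)$ via the two elementary inclusions is fine; in particular, checking extremality only at the midpoint $s=1/2$ is enough, and your saturation argument for $(\supseteq)$ is clean.

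For $\mathcal{B}=B$, your route differs from the paper's. The paper proceeds in two steps: it first invokes the classical Riemann--Lebesgue-type oscillation lemma (periodic $\{-1,1\}$-valued functions $h^\lambda(k\,\cdot)$ weak*-converge to their mean $1-2\lambda$, citing Ball--Murat) to show that every \emph{constant} in $[-1/\alpha,1/\alpha]$, and hence every piecewise constant on a fine dyadic partition, lies in $\overline{\operatorname{Ext}(B)}^*$; it then uses weak* density of such piecewise constants in $B$. Your construction is one-step: you build the bang-bang approximation of $u$ directly by matching cell averages, using non-atomicity of Lebesgue measure (Lyapunov or a slicing argument) to produce the splittings $Q_i^{n,\pm}$. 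Both arguments are standard and of comparable length; yours is arguably more self-contained since it avoids the external oscillation lemma, while the paper's has the advantage of reducing to a quotable result. Your verification of weak* convergence via the density of $C_c(\Omega)$ in $L^1(\Omega)$ and the cell-average matching is correct and is exactly the right way to close the argument.
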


\begin{proposition} \label{prop:Optimalitymineffort}
An element~$\bar{u}\in L^\infty(\Omega)$ is a minimizer of~\eqref{def:propmineff} if and only if the associated dual variable $\bar{p}:=-K_*\nabla F(K\bar{u})\in L^1(\Omega)$ satisfies
\begin{align} \label{eq:optimalitymineff}
\int_{\Omega} |\bar{p}(x)|~\de x \leq \alpha \,, \quad  \int_{\Omega} \bar{p}(x) \,\bar{u}(x)~\de x=\alpha \|\bar u\|_{\infty}\,.
\end{align} 
In particular, if~$\bar{u}\neq 0$ is a solution to~\eqref{def:propmineff}, then we have
\begin{align} \label{eq:representationlinf}
\int_{\Omega} |\bar{p}(x)|~\de x = \alpha, \quad    \bar{u}(x) \in \begin{cases}
    \{\bar{\lambda}\} & \text{if } \, \bar{p}(x)>0\,,  \\
    \{-\bar{\lambda}\} & \text{if } \,\bar{p}(x)<0\,, \\
    [-\bar\lambda, \bar\lambda] & \text{else}\,,
    \end{cases}
\end{align}
for some~$\bar{\lambda} > 0$ and~a.e.~$x \in \Omega$.
\end{proposition}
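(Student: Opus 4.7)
The plan is to derive the optimality conditions \eqref{eq:optimalitymineff} by directly specializing Proposition~\ref{prop:existu} to the minimum effort setting, and then to extract the structural information \eqref{eq:representationlinf} from the equality case of H\"older's inequality.

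First I would apply Proposition~\ref{prop:existu} with $\mathcal{G}(u) = \alpha \|u\|_\infty$. This yields that $\bar{u}$ is a minimizer of \eqref{def:propmineff} if and only if
\begin{equation*}
\int_\Omega \bar{p}(x)\bar{u}(x)\,\de x = \alpha \|\bar{u}\|_\infty, \qquad \max_{v \in B}\, \int_\Omega \bar{p}(x) v(x)\,\de x \leq 1,
\end{equation*}
where $B = \{ v \in L^\infty(\Omega) \,:\, \alpha \|v\|_\infty \leq 1\}$. The first condition already coincides with the second equality in \eqref{eq:optimalitymineff}, so it remains to rewrite the linear maximization. Exploiting the duality $L^1(\Omega)^* \simeq L^\infty(\Omega)$, one has
\begin{equation*}
\max_{v \in B}\, \int_\Omega \bar{p}(x) v(x)\,\de x = \frac{1}{\alpha}\,\|\bar{p}\|_{L^1(\Omega)},
\end{equation*}
so the inequality above is equivalent to $\|\bar{p}\|_{L^1(\Omega)} \leq \alpha$, giving the first part of \eqref{eq:optimalitymineff}. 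Existence of $v \in B$ attaining the supremum is guaranteed by choosing $v(x) = (1/\alpha)\,\sgn(\bar{p}(x))$, so the \emph{max} is justified (we could also invoke Lemma~\ref{lem:existoflinearized} together with Proposition~\ref{prop:extremalsoflinf}, but the direct argument is shorter).

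Next, for the ``in particular'' part, suppose $\bar{u} \neq 0$, so that $\bar{\lambda} := \|\bar{u}\|_\infty > 0$. Combining the second equality in \eqref{eq:optimalitymineff} with H\"older's inequality and the bound $\|\bar{p}\|_{L^1(\Omega)} \leq \alpha$, one obtains the chain
\begin{equation*}
\alpha \bar{\lambda} = \int_\Omega \bar{p}\,\bar{u}\,\de x \leq \int_\Omega |\bar{p}|\,|\bar{u}|\,\de x \leq \bar{\lambda}\,\|\bar{p}\|_{L^1(\Omega)} \leq \alpha \bar{\lambda}.
\end{equation*}
All inequalities must therefore be equalities. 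The last one gives $\|\bar{p}\|_{L^1(\Omega)} = \alpha$. The middle equality forces $|\bar{p}(x)|\,(\bar{\lambda} - |\bar{u}(x)|) = 0$ for a.e.~$x\in\Omega$, i.e.~$\bar{p}(x) = 0$ whenever $|\bar{u}(x)| < \bar{\lambda}$. The first equality yields $\bar{p}(x)\bar{u}(x) = |\bar{p}(x)|\,|\bar{u}(x)|$ a.e., which combined with the previous identity implies that on $\{\bar{p} > 0\}$ one has $\bar{u}(x) = \bar{\lambda}$ and on $\{\bar{p} < 0\}$ one has $\bar{u}(x) = -\bar{\lambda}$. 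On $\{\bar{p} = 0\}$ only the \emph{a priori} bound $|\bar{u}(x)|\leq \bar{\lambda}$ survives, which gives exactly \eqref{eq:representationlinf}.

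No substantial obstacle is expected here: the whole argument is a specialization of Proposition~\ref{prop:existu} together with the classical $(L^1,L^\infty)$ duality and its equality cases. The mildly delicate point is to handle the set $\{\bar{p} = 0\}$ correctly in the final structural characterization, making sure to state that $\bar{u}$ is only constrained by $|\bar{u}|\leq \bar{\lambda}$ there (as written in \eqref{eq:representationlinf}), rather than being uniquely determined.
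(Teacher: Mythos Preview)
Your proposal is correct and follows essentially the same approach as the paper: apply Proposition~\ref{prop:existu}, identify $\max_{v\in B}\langle \bar p,v\rangle$ with $\alpha^{-1}\|\bar p\|_{L^1(\Omega)}$ via the $(L^1,L^\infty)$ duality, and then read off \eqref{eq:representationlinf} from the equality chain $\alpha\bar\lambda=\int_\Omega \bar p\,\bar u\,\de x\le \bar\lambda\|\bar p\|_{L^1}\le \alpha\bar\lambda$. Your equality-case analysis is slightly more explicit than the paper's, but the argument is the same.
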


Now, let~$u_k$ denote the $k$-th iterate in Algorithm~\ref{alg:gcg} and denote by~$p_k=-K_* \nabla F(K u_k)$ the current dual variable. As in Proposition~\ref{prop:Optimalitymineffort}, we readily verify
\begin{align*}
 \max_{v \in \Ext B} \ \langle p_k, v  \rangle=\max_{v \in  B} \ \langle p_k, v  \rangle    =\alpha^{-1} \int_{\Omega} |p_k(x)|~\de x\,,
\end{align*}
where the maximum is realized for~$v^u_k= \operatorname{sign}(p_k)$. Hence, realizing Step 2 in Algorithm~\ref{alg:gcg} only requires the computation of the sign of~$p_k$. Next, assume that there is~$C>0$ with:
\begin{enumerate}[label=\textnormal{(E\arabic*)}]
\item \label{ass:linfinity1} For all~$y \in Y$ we have~$\|K_* y\|_\infty \leq C \|y\|_Y$.
\item\label{ass:linfinity2} Denoting by~$\mathcal{L}$ the $d$-dimensional Lebesgue measure, there holds
\begin{align*}
  \mathcal{L}\left( \left\{|\bar{p}|\leq \eps \right\} \right) := \mathcal{L} \left( \left\{\,x\in\Omega\, \colon \,-\eps \leq \bar{p}(x)\leq \eps\,\right\} \right) \leq C \eps \,, \quad \text{ for all } \,\, \e>0 \,.
\end{align*}
\end{enumerate}
We remark that Assumption~\ref{ass:linfinity2} has already been considered in the context of bang-bang optimal control, i.e. the $L^\infty$-norm constrained setting, see, e.g., \cite{cww}. Such assumption in particular implies that $\mathcal{L}(\{|\bar p|=0\})= 0$, and thus
\begin{equation} \label{eq:mineff_min}
\bar u := \frac{\bar{\lambda}}{\alpha} \left(  \rchi_{\{\bar p >0\}} - \rchi_{\{\bar p <0\}} \right)\,,
\end{equation}
for some~$\bar{\lambda} \geq 0$, thanks to Proposition~\ref{prop:Optimalitymineffort}.
Under these prerequisites, we show in the next theorem that Assumptions~$\ref{ass:fast:F2}$ and~$\ref{ass:fast:F5}$ are satisfied for the choice of~$g(u,v) \coloneqq \|u-v\|_1$ and~$\bar{U}_1 \coloneqq B$. 
\begin{theorem} \label{thm:ass3inLinf}
Let Assumptions~\ref{ass:linfinity1} and~\ref{ass:linfinity2} hold. Then we have
\begin{align}  \label{eq:characoflinmineff}
    \argmax_{v \in \Ext B} \ \langle \bar{p},v  \rangle=\argmax_{v \in  B} \ \langle \bar{p},v  \rangle= \{\alpha^{-1}\operatorname{sign}(\bar{p})\}\,.
\end{align}
Moreover, setting~$\bar{u}=\alpha^{-1} \operatorname{sign}(\bar{p})$, there holds
\begin{align*}
    \|K(u-\bar{u})\|_Y \leq \tau \|u-\bar{u}\|_1, \quad 1- \langle \bar{p}, u \rangle \geq \kappa \|u-\bar{u}\|^2_1 \,,
\end{align*}
for all~$u \in B$, and some~$\tau, \kappa >0$.
\end{theorem}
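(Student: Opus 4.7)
The first claim is that $\alpha^{-1}\operatorname{sign}(\bar p)$ is the unique maximizer both over $B$ and over $\ext(B)$. By the duality pairing, for every $v \in B$ we have
\[
\langle \bar p, v \rangle = \int_\Omega \bar p(x) v(x)\,\de x \leq \|v\|_\infty \int_\Omega |\bar p(x)|\,\de x \leq \alpha^{-1} \|\bar p\|_1,
\]
and equality is attained at $v^\star := \alpha^{-1}\operatorname{sign}(\bar p)$, which lies in $\ext(B)$ by Proposition~\ref{prop:extremalsoflinf} (the value on the null set $\{\bar p = 0\}$ being irrelevant). For uniqueness, any maximizer $v \in B$ satisfies $\bar p(x) v(x) = \alpha^{-1}|\bar p(x)|$ a.e.; on $\{\bar p \neq 0\}$ this forces $v = \alpha^{-1}\operatorname{sign}(\bar p)$, while $\mathcal{L}(\{\bar p = 0\}) = 0$ follows from \ref{ass:linfinity2} by sending $\eps \to 0$. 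This gives \eqref{eq:characoflinmineff}.

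For the Lipschitz bound, I would use the representation $\|K z\|_Y = \sup_{\|y\|_Y\leq 1}\langle K_* y, z\rangle$ together with Hölder's inequality: for $z = u-\bar u \in L^\infty(\Omega)$ and $y \in Y$ with $\|y\|_Y \leq 1$,
\[
|\langle K_* y, u-\bar u\rangle| \leq \|K_* y\|_\infty \|u-\bar u\|_1 \leq C\|u-\bar u\|_1
\]
by \ref{ass:linfinity1}. Taking the supremum over $y$ yields $\|K(u-\bar u)\|_Y \leq \tau \|u-\bar u\|_1$ with $\tau = C$.

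The main step is the quadratic growth. Since $\bar u(x)\bar p(x) = \alpha^{-1}|\bar p(x)|$ a.e., one verifies that on $\{\bar p > 0\}$, $\bar u = \alpha^{-1} \geq u$, while on $\{\bar p < 0\}$, $\bar u = -\alpha^{-1} \leq u$; in both cases $\bar p(x)(\bar u(x)-u(x)) = |\bar p(x)| |\bar u(x)-u(x)|$. Combining this with $\mathcal{L}(\{\bar p = 0\}) = 0$, I get
\[
1-\langle \bar p,u\rangle = \langle \bar p, \bar u - u\rangle = \int_\Omega |\bar p(x)|\, |\bar u(x)-u(x)|\,\de x.
\]
Now the key trick is to split the integral at a threshold $\eps>0$, using \ref{ass:linfinity2} on the small-$|\bar p|$ region together with $\|u-\bar u\|_\infty \leq 2\alpha^{-1}$:
\[
\int |\bar p||\bar u-u| \geq \eps\int_{\{|\bar p|>\eps\}}|\bar u-u| \geq \eps\|u-\bar u\|_1 - \eps \cdot 2\alpha^{-1}\cdot C\eps\, .
\]
Optimizing over $\eps$ via $\eps^* = \alpha\|u-\bar u\|_1/(4C)$ yields the desired bound with $\kappa = \alpha/(8C)$.

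The only delicate point is the quadratic-growth step, and the crux is precisely the use of \ref{ass:linfinity2}: without a control on the sublevel sets of $|\bar p|$, the integral $\int |\bar p||\bar u - u|$ cannot be bounded below by any positive power of $\|u-\bar u\|_1$ (one could put all of $u-\bar u$ where $\bar p$ is nearly zero). The linear growth $\mathcal{L}(\{|\bar p|\leq \eps\}) \leq C\eps$ is exactly what balances the two terms in the split-and-optimize argument and produces a quadratic bound.
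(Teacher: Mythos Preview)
Your proposal is correct and matches the paper's approach: the paper notes that \eqref{eq:characoflinmineff} follows from $\mathcal{L}(\{\bar p=0\})=0$, deduces the Lipschitz bound directly from \ref{ass:linfinity1}, and defers the quadratic growth to \cite[Proposition~2.7]{cww}, whose proof is precisely the split-at-threshold-and-optimize argument you wrote out. Your detailed computation of $\kappa=\alpha/(8C)$ is correct; the only cosmetic remark is that the identity $1-\langle\bar p,u\rangle=\langle\bar p,\bar u-u\rangle$ uses $\|\bar p\|_1=\alpha$ (i.e., $\bar u\neq 0$), which is implicit in the setting of Assumption~\ref{ass:fast:F2}, and in any case the inequality $1-\langle\bar p,u\rangle\geq\langle\bar p,\bar u-u\rangle$ suffices for the argument.
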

\begin{proof}
The statement in~\eqref{eq:characoflinmineff} follows immediately from~ $ \mathcal{L}(\{|\bar p|=0\})= 0$. The Lipschitz estimate on~$K$ is immediately deduced from Assumption~$\ref{ass:linfinity1}$, while the quadratic growth estimate can be established as in~\cite[Proposition~2.7]{cww}.
\end{proof}
Therefore, Assumptions \ref{ass:linfinity1}-\ref{ass:linfinity2} guarantee fast convergence for Algorithm \ref{alg:gcg}, provided the operators $F$ and $K$ are chosen to satisfy \ref{ass:fast:F1},\ref{ass:fast:F3}, and \ref{ass:fast:F4} holds for the unique minimizer of \eqref{def:propmineff}.

\subsection{Dynamic inverse problems regularized with the Benamou-Brenier energy} \label{sec:dynamic_inverse}
Consider the space $M(\Om)$ of Radon measures on the open bounded domain $\Om \subset \R^n$, $n \geq 1$, and denote by  
$X := [0,1] \times \Omega$ the time-space domain.
The aim of dynamic inverse problems is to reconstruct a time-dependent curve of measures $t \mapsto \rho_t$ with 
$\rho_t  \in M(\Om)$ and $t \in [0,1]$, starting 
from noisy ill-posed measurements $K \rho$, where $\rho=\de t \otimes \rho_t \in M(X)$ and $K \colon M(X) \to Y$ is a linear operator, with $Y$ a Hilbert space. In \cite{bredies2020optimal} it has been proposed to reconstruct the dynamic data $\rho$ by solving a variational inverse problem regularized with a coercive version of the Benamou-Brenier energy $J_{\alpha,\beta}$, allowing for a correlation between the measurements at different time instants (see also \cite{brune20104d}). Such variational models have been applied, for instance, to dynamic cell imaging in PET \cite{schmitzer2019dynamic}, to 4d image reconstruction in nanoscopy \cite{brune20104d} and to particle image velocimetry methods \cite{saumier2015optimal}. A general variant of the problem considered in \cite{bredies2020optimal} can be formulated in our setting by considering
\begin{align}\label{eq:bb}
\min_{(\rho,m) \in \M } \ F(K\rho) + J_{\alpha,\beta}(\rho,m)\,,
\end{align}
where $\M:= M(X) \times M(X; \R^d)$, $K : M(X) \rightarrow Y$ is weak*-to-weak continuous and such that \ref{ass:func3} holds, while $F : Y \rightarrow \R$ is a convex fidelity term satisfying \ref{ass:func1}. 
The Benamou-Brenier energy \cite{bb} can be regarded as a dynamic version of optimal transport, and is defined as follows. Setting 
$\Phi \colon \R \times \R^d \to [0,\infty]$ as
\begin{equation} \label{intro Psi}
\Phi(t,x):=\frac{|x|^2}{2t} \,\,\, \text{ if }\, t>0\,, \quad  
\Phi(t,x):= 0 \,\,\, \text{ if }\, t=|x|=0\,, 
\end{equation}
and $\Phi:=\infty$ otherwise, 
the Benamou-Brenier energy is defined for $(\rho,m)  \in \M $ by the formula
\begin{equation} \label{intro:BB_2}
J (\rho,m):= \int_X \Phi \left( \frac{\de\rho}{\de\sigma}, \frac{\de m}{\de\sigma} \right) \,\de \sigma \,,
\end{equation}
where $\sigma \in M^+(X)$ is an arbitrary measure such that $\rho,|m| \ll \sigma$. We refer the reader to \cite{ags,sant} for more details. Following \cite{bredies2020optimal}, the regularizer $J_{\alpha,\beta}$ is then defined by
\begin{equation} \label{intro:BB_coercive}
J_{\alpha,\beta} (\rho,m) := \beta J(\rho,m) + \alpha \nor{\rho}_{M(X)}+ I_{\mathcal{D}}(\rho,m)\,,
\end{equation}
where $\alpha,\beta >0$, and $\mathcal{D}$ is the set of pairs $(\rho,m) \in \M$ satisfying the continuity equation $\partial_t \rho + \div m = 0$ in $X$ in the weak sense. 
The functional $J_{\alpha,\beta}$ is convex, weak* lower semicontinuous, positively one-homogeneous and satisfies \ref{ass:func2}, see~\cite[Lemma 4]{extremal}. Denote by $B$ the unit ball of $J_{\alpha,\beta}$. In \cite[Theorem 6]{extremal} it has been shown that 
\[
\operatorname{Ext}(B)=\{(0,0)\} \cup \mathfrak{C}_{\alpha,\beta}\,,
\]
where $\mathfrak{C}_{\alpha,\beta}$ is the set of measures concentrated on absolutely continuous curves in $\Om$, i.e., pairs $(\rho_\gamma,m_\gamma)$ satisfying
\begin{align}\label{eq:extben}
\rho_\gamma = a_\gamma \, \de t \otimes \delta_{\gamma(t)}, \quad m_\gamma = \dot{\gamma}(t) \rho_\gamma \,, \quad a_\gamma = \left( \frac{\beta}{2} \int_0^1|\dot{\gamma}(t)|^2\, \de t + \alpha \right)^{-1}\,,
\end{align}
for some $\gamma \colon [0,1] \rightarrow \Omega$ an absolutely continuous curve with weak derivative in $L^2$. 
Therefore, it is possible to apply the FC-GCG method of Algorithm \ref{alg:gcg} for computing solutions to  \eqref{eq:bb}. In this setting the iterates are of the form $u_k=(\rho_k,m_k)$ with 
\[
\rho_k= \sum_{i=1}^{N_k} \lambda_i^k \rho_{\gamma_i^k}\,, \quad  m_k = \sum_{i=1}^{N_k} \lambda_i^k m_{\gamma_i^k} \,,
\]
with $\lambda_i^k \geq 0$ and $\gamma_i^k \colon [0,1] \to \Omega$ absolutely continuous. 
Moreover it can be shown, see \cite{DGCG}, that Step~$2$ in Algorithm \ref{alg:gcg} is equivalent to solving
\begin{align}\label{eq:insertionBB}
    \hat{\gamma} \in \argmax_{\gamma \colon [0,1] \to \Omega} \ a_\gamma\int_0^1 p_k(t, \gamma(t))\, \de t\,,
\end{align}
where $p_k =   - K_* \nabla F(K \rho_k) \in C(X)$ is the dual variable at the $k$-th iteration. The new point inserted is then $\widehat{v}_k=(\rho_{\hat\gamma},m_{\hat\gamma})$. 
Thanks to the validity of Assumption \ref{ass:func1}-\ref{ass:func3}, Theorem \ref{thm:convofcondu} guarantees the sublinear convergence of Algorithm \ref{alg:gcg}. On the other hand, the verification of hypotheses \ref{ass:fast:F1}-\ref{ass:fast:F5}, necessary for ensuring fast convergence of Algorithm~\ref{alg:gcg}, is non-trivial and is left to future work. 
We remark that an implementable version of Algorithm~\ref{alg:gcg} for solving \eqref{eq:bb} under specific choices of the fidelity term $F$ and the operator $K$ has been recently proposed in 
\cite{DGCG} (see also \cite{duval2021dynamical}). We refer to these papers for more details about the practical implementation and the modifications needed to deal with time dependent measurement operators. 
Similarly to \cite{DGCG}, solving \eqref{eq:descr_insertion} amounts to computing an absolutely continuous curve $\gamma$ by solving \eqref{eq:insertionBB} at every iteration of the algorithm. This is a challenging non-concave variational problem in the space of curves. In \cite{DGCG}, the authors proposed to solve \eqref{eq:insertionBB} by a multistart gradient descent approach in the space of curves. The initialization curves are chosen to be a linear interpolation of randomly generated points $\{x^h\}$ in $\Omega$. Moreover several heuristic rules are employed to optimally select $\{x^h\}$ and to reduce the computational time of the multistart gradient descent routine. Together with further acceleration strategies, this method is shown to be computationally feasible and very accurate for the task of tracking several dynamic sources in presence of high noise and severe spatial undersampling.  
In \cite{duval2021dynamical}, the authors proposed to speed up the algorithm in \cite{DGCG} by considering inexact subproblems \eqref{eq:descr_insertion} and solving them using known algorithms for computing shortest paths on directed acyclic graphs.

\section{A lifting to a space of measures} \label{sec:lifting}

Our approach to proving Theorem~\ref{thm:convofcondu} and Theorem~\ref{thm:fastconvpreview} relies on the observation of~\textit{Choquet's Theorem}. This classical result allows us to prove that for every~$u \in \dom(\G)$ there exists a positive measure~$\mu$ concentrated on~$\Ext B$ such that~$\G(u)=\|\mu\|_{M(\mathcal{B})}$ and
\begin{align} \label{eq:represalgo}
\langle p, u \rangle= \int_{\mathcal{B}} \langle p, v \rangle~\mathrm{d} \mu(v) \quad \text{ for all }~~~ p \in \Cc\,,
\end{align}
see Proposition \ref{prop:I}.
Motivated by this observation, we study the auxiliary problem \eqref{def:sparseprob}, which we remind the reader is of the form
\[
\inf_{\mu \in M^+(\mathcal{B})} \left\lbrack F(\mathcal{K}\mu) +\|\mu\|_{M(\mathcal{B})} \right \rbrack \,,
\]
where the infimum is taken over~$M^+(\Bb)$, the cone of positive measures on~$\Bb$. Note that the forward operator is replaced by a ``lifted'' mapping~$\K \colon M(\Bb) \to Y$ which satisfies~$\K\mu=Ku$ whenever~\eqref{eq:represalgo} holds for~$\mu \in M^+(\Bb)$ and~$u\in \M$, see Proposition \ref{prop:def_K}. Moreover, the role of the non-smooth regularizer $\G$ is now played by the total variation norm. It turns out (see Section~\ref{subsec:eqproblem}) that problems~\eqref{def:minprob} and~\eqref{def:sparseprob} are equivalent in the sense that every minimizer to~\eqref{def:minprob} can be converted to a solution of~\eqref{def:sparseprob} (and vice versa). Subsequently, in Section~\ref{subsec:PDAP}, we propose an extension of the Primal-Dual-Active-Point method from~\cite{pieper} to compute a solution of~\eqref{def:sparseprob}. This is described in Algorithm \ref{alg:pdap}. Finally, using the results of Section~\ref{subsec:eqproblem}, we show that Algorithm \ref{alg:pdap} and Algorithm~\ref{alg:gcg} are equivalent, setting the ground to prove Theorem~\ref{thm:convofcondu} and Theorem~\ref{thm:fastconvpreview} by means of convergence results for Algorithm \ref{alg:pdap}.

\subsection{Measure theory notation} \label{subsec:measure_notation}
Let $C(\mathcal{B})$ denote the vector space of real valued bounded continuous functions over $\mathcal{B}$, which we equip with the supremum norm
\[
\|P\|_{C(\mathcal{B})}:=\max_{u \in \mathcal{B}}|P(u)|\,,
\]
making it a Banach space. %
 Following the definitions of \cite{afp}, we denote by $\Pi$ the $\sigma$-algebra of Borel sets on~$\mathcal{B}$ formed with respect to the topology induced by~$d_{\mathcal{B}}$. A finite Radon measure on $\Bb$ is a $\sigma$-additive map $\mu \colon \Pi \to \R$. We call~$\mu$ positive if~$\mu(\Pi)\subset [0,\infty)$. Given a finite Radon measure $\mu$ its total variation measure $|\mu|$ is defined by 
\[
|\mu|(E):=\sup \left.\left\{ \sum_{i=1}^\infty |\mu(E_i)| \, \right\vert \, \bigcup_{i=1}^\infty E_i = E, \,\, E_i \in \Pi \, \text{ pairwise disjoint} \right\}  \,, 
\]
for all $E \in \Pi$. 
Note that $|\mu|$ is always a positive finite Radon measure.  The set of finite Radon measures over $\Bb$ is a vector space denoted by $M(\Bb)$, which becomes a Banach space when endowed with the total variation norm  
\[
\|\mu\|_{M(\mathcal{B})}:=|\mu|(\mathcal{B})\,.
\]
We recall that $M(\Bb)$ is the dual space of $C(\Bb)$ with respect to $\norm{\cdot}_{C(\Bb)}$. The duality pairing will be denoted by $\sm{\cdot}{\cdot}$. 
The set~$M^+(\mathcal{B})$ of all positive finite Radon measures on~$\mathcal{B}$ forms a weak* closed cone. For $\mu \in M(\mathcal{B})$ and $E \in \Pi$, we say that $\mu$ is concentrated on $E$ if $\mu (\mathcal{B} \setminus E)=0$. For a set $E' \in \Pi$ we define the restriction of $\mu$ to $E'$ as the measure $\mu \zak E'$ such that $\mu \zak E'(E)=\mu(E' \cap E)$ for all $E \in \Pi$. Finally, the support of a measure $\mu \in M(\mathcal{B})$, denoted by $\supp \mu$, is the closure of the set of all points 
$v \in \mathcal{B}$ such that $|\mu|(U)>0 $ for all neighbourhoods $U$ of $v$.

\subsection{An equivalent problem} \label{subsec:eqproblem}

We require the following definition.

\begin{definition} \label{def:represenation}
We say that a measure~$\mu \in M^+(\mathcal{B})$ \textit{represents} $u \in \M$ if 
\begin{equation} \label{eq:choquet}
\langle p ,u \rangle=\int_{\mathcal{B}}  \langle p ,v \rangle \, \mathrm{d}\mu(v)\,, \, \,\,\, \text{ for all } \,\,\, p \in \Cc \,.
\end{equation}
An element $u \in \M$ such that \eqref{eq:choquet} holds is also called the \textit{(weak) barycenter} of $\mu$ in $\Bb$.
\end{definition}

It turns out that each~$u \in \dom(\G)$ is the barycenter of at least one measure~$\mu \in M^+(\Bb)$. %
 
\begin{proposition} \label{prop:I}
There exists a surjective linear map $\I \colon M^+(\mathcal{B}) \to  \operatorname{dom}(\G)$ such that
\begin{equation} \label{def:I}
\langle p, \I(\mu) \rangle = \int_{\mathcal{B}} \langle p, v \rangle~\de\mu (v) \,, \, \,\,\, \text{ for all } \,\,\, p \in \Cc \,.
\end{equation}
In particular, $\mu$ represents $\I(\mu)$ in the sense of \eqref{eq:choquet} and it holds:
\begin{itemize}
\item[i)]    For any $\mu \in  M^+(\mathcal{B})$ we have $ \G(\I(\mu)) \leq \nor{\mu}_{M(\mathcal{B})}$.
\item[ii)] For any $u \in \operatorname{dom}(\G)$ there exists $\mu \in M^+(\mathcal{B})$ concentrated on $\Ext{B}$ such that
\[
u=\I(\mu) \quad \text{ and } \quad \G(u) = \nor{\mu}_{M(\mathcal{B})}.
\]
\item[iii)] $\mathcal{I}$ is weak*-to-weak* continuous. 
\end{itemize}
\end{proposition}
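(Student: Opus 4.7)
The plan is to construct $\I$ via a duality argument, establish iii) immediately from the definition, obtain ii) (hence surjectivity) via a metrizable version of Choquet's theorem, and then deduce i) by approximation.

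\textbf{Step 1 (Construction of $\I$ and property iii)).} The set $\Bb$, being weak* compact in $\M$, is norm-bounded by the uniform boundedness principle: $M := \sup_{v \in \Bb}\mnorm{v}<\infty$. Hence, for every $\mu \in M^+(\Bb)$, the linear map
\[
p \mapsto \int_\Bb \langle p, v\rangle \, \de\mu(v)
\]
is continuous on $\Cc$ with norm bounded by $M\|\mu\|_{M(\Bb)}$, so the identification $\M\simeq \Cc^*$ produces a unique element $\I(\mu)\in\M$ satisfying \eqref{def:I}. Linearity of $\I$ in $\mu$ is immediate. For iii), fix $p\in\Cc$. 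The map $v \mapsto \langle p, v\rangle$ is weak*-continuous on $\M$, hence $d_\Bb$-continuous on $\Bb$. Weak* convergence $\mu_n \weakstar \mu$ in $M(\Bb)$ tests against $C(\Bb)$, so $\langle p,\I(\mu_n)\rangle\to \langle p,\I(\mu)\rangle$, yielding $\I(\mu_n)\weakstar\I(\mu)$ in $\M$.

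\textbf{Step 2 (Property ii) via Choquet).} Since $\Cc$ is separable, $B$ is a weak* compact, convex, and metrizable subset of the locally convex space $(\M,\text{weak}^*)$; in particular $\ext(B)$ is a Borel subset of $B$. Take $u\in\dom(\G)$; the case $u=0$ is trivial with $\mu=0$, so assume $u\neq 0$, and set $\tilde u := u/\G(u)\in B$. The metrizable form of Choquet's theorem yields a Borel probability measure $\nu$ on $B$ concentrated on $\ext(B)$ with
\[
\langle p,\tilde u\rangle = \int_{B}\langle p,v\rangle \, \de\nu(v), \qquad p\in\Cc.
\]
Because $\ext(B)\subseteq \Bb$, $\nu$ restricts to a Borel measure on $\Bb$; setting $\mu:=\G(u)\,\nu\in M^+(\Bb)$ gives a measure concentrated on $\ext(B)$ with $\I(\mu)=u$ and $\|\mu\|_{M(\Bb)}=\G(u)$. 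This proves ii), and surjectivity of $\I:M^+(\Bb)\to\dom(\G)$ follows at once.

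\textbf{Step 3 (Property i) via atomic approximation).} Given $\mu\in M^+(\Bb)$, use that $(\Bb,d_\Bb)$ is a compact metric space to approximate $\mu$ by finitely supported atomic measures. Specifically, for each $n\in\N$, a Borel partition $\{E_i^n\}_{i=1}^{k_n}$ of $\Bb$ with $d_\Bb$-diameter at most $1/n$, together with any choice $v_i^n\in E_i^n$, defines
\[
\mu_n := \sum_{i=1}^{k_n}\lambda_i^n\, \delta_{v_i^n}, \qquad \lambda_i^n := \mu(E_i^n)\ge 0,
\]
for which $\|\mu_n\|_{M(\Bb)}=\|\mu\|_{M(\Bb)}$ and $\mu_n\weakstar\mu$ by uniform continuity of elements of $C(\Bb)$. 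For each $n$, $\I(\mu_n)=\sum_i \lambda_i^n v_i^n$ is a conic combination of elements of $\Bb\subseteq B$, so convexity and one-homogeneity of $\G$ together with $\G(v_i^n)\le 1$ yield $\G(\I(\mu_n))\le \sum_i \lambda_i^n = \|\mu_n\|_{M(\Bb)}$. Combining iii) with the weak* lower semicontinuity of $\G$ (Assumption~\ref{ass:func2}) gives $\G(\I(\mu))\le \liminf_n \G(\I(\mu_n))\le \|\mu\|_{M(\Bb)}$, which establishes i) and simultaneously confirms $\I(\mu)\in\dom(\G)$.

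The main obstacle lies in invoking Choquet's theorem with the representing measure genuinely supported on $\ext(B)$ rather than merely on $\overline{\ext(B)}^*$: this requires the metrizability of $B$, which is guaranteed by the separability hypothesis on $\Cc$. The atomic approximation argument and the continuity in iii) are comparatively routine once the metric structure of $\Bb$ is in hand.
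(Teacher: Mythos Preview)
Your proof is correct. Steps 1 and 2 are essentially identical to the paper's argument: the paper phrases the construction of $\I$ in terms of the Gelfand integral (citing \cite[Thm~11.52]{aliprantis}), but this amounts to exactly your duality construction, and the proofs of iii) and ii) coincide with yours.

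The genuine difference is in the proof of i). The paper invokes a general fact about weak* barycenters, namely that the Gelfand integral of the identity map lies in the weak* closed convex hull of the integrand's range (\cite[Thm~11.54]{aliprantis}); since $\overline{\operatorname{conv}(\Bb)}^*\subset B$, one obtains $\I(\mu)/\|\mu\|_{M(\Bb)}\in B$ directly, and i) follows in one line. Your atomic approximation argument proves the same inclusion from scratch: you build finitely supported measures $\mu_n\weakstar\mu$ with equal mass, bound $\G(\I(\mu_n))$ by convexity and one-homogeneity, and pass to the limit using iii) and the weak* lower semicontinuity of~$\G$. Your route is more self-contained and makes explicit use of the metric structure on~$\Bb$ and of property iii), whereas the paper's route is shorter but relies on an external reference. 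Both are perfectly sound; your reordering (establishing iii) before i)) is in fact needed for your argument to close.
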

For a proof of the above statement we refer the reader to Appendix \ref{app:proofs_sec1}. Here we only mention that existence of the map $\I$ is easily obtained by the theory of weak* integration, while surjectivity is a consequence of the classic Choquet's Theorem \ref{thm:real_choquet}.

Thus, instead of solving~\eqref{def:minprob} directly we can equivalently determine a measure~$\bar{\mu} \in M^+(\Bb)$ which represents any of its minimizers. Again, this can be done by solving a suitable minimization problem. To make this idea more rigorous we argue the existence
of a linear continuous operator $\mathcal{K} \colon M(\mathcal{B}) \to Y$ which agrees with $K$ on measures representing points of $\mathcal{M}$. Again, we postpone the proof of such statement to Appendix \ref{app:proofs_sec1}. 
\begin{proposition} \label{prop:def_K}
There exists a linear continuous operator $\mathcal{K} \colon M(\mathcal{B}) \to Y$ such that
\begin{equation} \label{eq:def_K}
(\mathcal{K}\mu, y )_Y = \int_{\mathcal{B}}  (Kv,y)_Y \, \de\mu(v) \,, \, \,\,\, \text{ for all } \,\,\, y \in Y\,,\,\, \mu \in M(\mathcal{B})\,.
\end{equation}
Moreover $\mathcal{K}$ satisfies the following properties:
\begin{itemize}
\item[i)] The norm of $\mathcal{K}$ is such that
\[
\nor{\mathcal{K}}_{\mathcal{L}(M(\mathcal{B}),Y)} \leq C \|K\|_{\mathcal{L}(\M,Y)}\,,  \quad C:=\sup_{v \in \mathcal{B}} \,\norm{v}_{\M} \,.
\]
\item[ii)] For every $\mu\in M^+(\mathcal{B})$ representing $u \in \mathcal{M}$, there holds $\mathcal{K} \mu = Ku$.
\item[iii)] $\mathcal{K}$ is weak*-to-strong continuous.
\item[iv)] $\mathcal{K}$ is the adjoint operator of $\mathcal{K}_* \colon Y \to C(\mathcal{B})$, where
\[
\lbrack \mathcal{K}_* y \rbrack (v) := \langle K_*y,v\rangle \quad \text{ for all } \quad  v \in \mathcal{B}\,.
\]
\end{itemize}
If in addition $Y$ is separable, then \eqref{eq:def_K} holds in the strong sense, that is, 
\begin{equation} \label{eq:def_K_strong}
\mathcal{K}\mu = \int_{\mathcal{B}}  Kv \, \de\mu(v) \,, 
\end{equation}
for all $\mu \in M(\mathcal{B})$, where the right hand side integral is in the Bochner sense. 
\end{proposition}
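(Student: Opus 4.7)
The plan is to \emph{define} $\mathcal{K}$ as the Banach-space adjoint of the candidate pre-adjoint from property (iv), so that (iv) is built in and the remaining properties (i)--(iii) can be extracted afterwards. Concretely, I would first verify that $\mathcal{K}_* \colon Y \to C(\mathcal{B})$, $[\mathcal{K}_* y](v) := \langle K_* y, v\rangle$, is well defined: the functional $v \mapsto \langle K_* y, v\rangle$ is weak$^*$ continuous on $\mathcal{M}$, hence $d_{\mathcal{B}}$-continuous on $\mathcal{B}$, and
\[
\norm{\mathcal{K}_* y}_{C(\mathcal{B})} \leq \cnorm{K_* y} \sup_{v \in \mathcal{B}} \mnorm{v} \leq C \, \norm{K_*}\, \ynorm{y}\,.
\]
Using the identification $C(\mathcal{B})^* \simeq M(\mathcal{B})$, I would set $\mathcal{K} := (\mathcal{K}_*)^*$; testing against $y \in Y$ then gives
\[
(\mathcal{K}\mu, y)_Y = \sm{\mu}{\mathcal{K}_* y} = \int_{\mathcal{B}} \langle K_* y, v \rangle \,\de\mu(v) = \int_{\mathcal{B}} (Kv, y)_Y\, \de\mu(v)\,,
\]
which is exactly \eqref{eq:def_K}. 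Property (i) drops out by taking the supremum over $\ynorm{y} \leq 1$, while (ii) follows by testing the representation identity \eqref{eq:choquet} with $K_* y$ and invoking the Riesz representation in $Y$.

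The step I expect to be the main obstacle is (iii), since weak$^*$ convergence in the non-reflexive space $M(\mathcal{B})$ does not in itself yield any strong compactness of the image in $Y$. My plan is to prove the stronger statement that $\mathcal{K}_*$ is a \emph{compact} operator. This then implies, by a standard duality argument, that $\mathcal{K} = (\mathcal{K}_*)^*$ sends weak$^*$-convergent sequences in $M(\mathcal{B})$ to strongly convergent sequences in $Y$: for $\mu_n \weakstar \mu$, covering the relatively compact set $\mathcal{K}_*(\{\ynorm{y}\leq 1\})$ by finitely many $C(\mathcal{B})$-balls of radius $\varepsilon$ around functions $f_1,\ldots,f_N$ yields
\[
\|\mathcal{K}\mu_n - \mathcal{K}\mu\|_Y = \sup_{\ynorm{y}\leq 1} |\sm{\mu_n -\mu}{\mathcal{K}_* y}| \leq \max_{i} |\sm{\mu_n-\mu}{f_i}| + \varepsilon \sup_n \|\mu_n - \mu\|_{M(\mathcal{B})}\,,
\]
which gives the desired strong convergence. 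To show the compactness of $\mathcal{K}_*$ itself, given a bounded sequence $\{y_n\} \subset Y$ I would invoke Arzel\`a--Ascoli on the compact metric space $(\mathcal{B}, d_{\mathcal{B}})$: pointwise boundedness is already in hand, and for equicontinuity the estimate
\[
|\mathcal{K}_* y_n(v') - \mathcal{K}_* y_n(v)| = |(Kv' - Kv, y_n)_Y| \leq \sup_n \ynorm{y_n}\, \ynorm{Kv' - Kv}
\]
combined with $d_\mathcal{B}$-to-$\|\cdot\|_Y$ continuity of $v \mapsto Kv$ on $\mathcal{B}\subset\dom(\G)$ (Assumption \ref{ass:func3}), which is in fact \emph{uniform} by compactness of $\mathcal{B}$, yields uniform equicontinuity of the family.

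For the final Bochner representation \eqref{eq:def_K_strong} under separability of $Y$, continuity of $v \mapsto Kv$ from the compact metric space $(\mathcal{B}, d_{\mathcal{B}})$ into the separable $Y$ yields strong measurability, and since $K(\mathcal{B})$ is compact in $Y$ one has $\int_{\mathcal{B}} \ynorm{Kv}\, \de|\mu|(v) \leq \sup_{v\in\mathcal{B}} \ynorm{Kv}\, \|\mu\|_{M(\mathcal{B})} < \infty$, giving Bochner integrability. Pairing the resulting Bochner integral with arbitrary $y \in Y$ produces precisely \eqref{eq:def_K}, so the two elements of $Y$ coincide by the Riesz representation, completing the proof.
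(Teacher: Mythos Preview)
Your proof is correct and takes a genuinely different route from the paper, most visibly in the construction of $\mathcal{K}$ and in the proof of (iii).

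The paper defines $\mathcal{K}\mu$ pointwise: for each fixed $\mu$ it introduces the linear functional $T_\mu(y)=\int_{\mathcal{B}}(Kv,y)_Y\,\de\mu(v)$, bounds it, and appeals to the Riesz representation in $Y$ to produce $\mathcal{K}\mu$; property (iv) is then checked afterwards. You instead start from $\mathcal{K}_*$ and set $\mathcal{K}=(\mathcal{K}_*)^*$, which makes (iv) automatic. The two constructions are of course equivalent, but yours packages things more cleanly.

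The substantive difference is in (iii). The paper first shows $\mathcal{K}\mu_k\rightharpoonup\mathcal{K}\mu$ and then proves $\|\mathcal{K}\mu_k\|_Y\to\|\mathcal{K}\mu\|_Y$ via the identity $\|\mathcal{K}\mu_k\|_Y^2=\int_{\mathcal{B}\times\mathcal{B}}(Kv,Kw)_Y\,\de(\mu_k\otimes\mu_k)$, using that $(v,w)\mapsto(Kv,Kw)_Y$ lies in $C(\mathcal{B}\times\mathcal{B})$ (by Assumption~\ref{ass:func3}) together with $\mu_k\otimes\mu_k\weakstar\mu\otimes\mu$; the Hilbert-space principle ``weak $+$ norm $=$ strong'' then finishes. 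Your Arzel\`a--Ascoli argument instead shows directly that $\mathcal{K}_*$ is compact, which is a cleaner structural statement: it uses Assumption~\ref{ass:func3} only through the uniform continuity of $v\mapsto Kv$ on the compact set $\mathcal{B}$, and the passage from compactness of $\mathcal{K}_*$ to weak*-to-strong continuity of $(\mathcal{K}_*)^*$ is a general Banach-space fact that does not rely on the inner-product structure (beyond the identification $Y^*\simeq Y$). Both arguments are short; yours is arguably more conceptual and would survive unchanged if $Y$ were merely a reflexive Banach space.
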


We are now in position to investigate the announced equivalence of \eqref{def:minprob} and the sparse minimization problem~\eqref{def:sparseprob}. To this end, define
\begin{align*}
 j(\mu):=  F(\mathcal{K}\mu) +\|\mu\|_{M(\mathcal{B})}\,,
\end{align*}
where $\mathcal{K} \colon M(\mathcal{B}) \to Y$ is the operator from Proposition \ref{prop:def_K}. The following theorem establishes existence of minimizers for \eqref{def:sparseprob} and clarifies the connection between~\eqref{def:minprob} and~\eqref{def:sparseprob}, which is given in terms of the map $\I$ introduced in Proposition \ref{prop:I}.

\begin{theorem} \label{prop:equivalence}
The functional $j$ is weak* lower semicontinuous, has weak* compact sublevels and \eqref{def:sparseprob} admits at least a solution. In addition, minimizers of \eqref{def:minprob} and \eqref{def:sparseprob} enjoy the following relationship: 
\begin{itemize}
\item[i)] If~$\bar{u}\in \M$ is a minimizer to~\eqref{def:minprob}, there exists a minimizer~$\bar{\mu} \in M^+(\mathcal{B}) $ to~\eqref{def:sparseprob} such that $\bar{u}=\I(\bar\mu)$. 
\item[ii)]    Vice versa, if~$\bar{\mu} \in M^+(\Bb)$ is an optimal solution to~\eqref{def:sparseprob} then~$\bar{u}:=\I(\bar{\mu})$ minimizes in~\eqref{def:minprob}. 
\end{itemize}
In particular we have that 
\begin{equation} \label{prop:equvalence:equal}
\min_{u \in \M} J(u) = \min_{\mu \in M^+(\Bb)} j(\mu) \,.
\end{equation}
\end{theorem}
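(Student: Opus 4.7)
My plan is to dispatch the three assertions by separately handling the structural properties of $j$ (weak* lower semicontinuity and compact sublevels), existence of minimizers of \eqref{def:sparseprob}, and finally the precise correspondence between minimizers via the map $\mathcal{I}$.

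First, I would establish that $j$ is weak* lower semicontinuous on $M(\mathcal{B})$. The term $\mu \mapsto \|\mu\|_{M(\mathcal{B})}$ is weak* lower semicontinuous as the supremum of weak* continuous linear functionals on the unit ball of $C(\mathcal{B})$. For the fidelity term, Proposition~\ref{prop:def_K}~iii) gives weak*-to-strong continuity of $\mathcal{K}$, and $F$ is continuous on $Y$ by \ref{ass:func1}, so $\mu \mapsto F(\mathcal{K}\mu)$ is even weak* continuous. Next, to obtain weak* compactness of sublevels, observe that $F$ is bounded below by some $-C$ (again \ref{ass:func1}), so $j(\mu) \leq \alpha$ forces $\|\mu\|_{M(\mathcal{B})} \leq \alpha + C$. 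Combining norm boundedness with the weak* closedness of the sublevel set and of the cone $M^+(\mathcal{B})$, and noting that $C(\mathcal{B})$ is separable because $(\mathcal{B},d_\mathcal{B})$ is a compact metric space, Banach--Alaoglu gives sequential weak* compactness. The direct method then yields existence of a minimizer to \eqref{def:sparseprob}.

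The heart of the argument is the double inequality between $\min J$ and $\min j$, and for this I exploit Propositions~\ref{prop:I} and~\ref{prop:def_K} symmetrically. Given any $\mu \in M^+(\mathcal{B})$, set $u := \mathcal{I}(\mu) \in \dom(\mathcal{G})$; then $Ku = \mathcal{K}\mu$ by Proposition~\ref{prop:def_K}~ii) and $\mathcal{G}(u) \leq \|\mu\|_{M(\mathcal{B})}$ by Proposition~\ref{prop:I}~i), so
\[
J(\mathcal{I}(\mu)) = F(Ku) + \mathcal{G}(u) \leq F(\mathcal{K}\mu) + \|\mu\|_{M(\mathcal{B})} = j(\mu).
\]
Conversely, for any $u \in \dom(\mathcal{G})$, Proposition~\ref{prop:I}~ii) produces a $\mu \in M^+(\mathcal{B})$ concentrated on $\ext(B)$ with $u = \mathcal{I}(\mu)$ and $\mathcal{G}(u) = \|\mu\|_{M(\mathcal{B})}$; the same computation now gives equality $j(\mu) = J(u)$. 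Taking infima over both sides establishes \eqref{prop:equvalence:equal}.

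The correspondence of minimizers follows immediately from the two displayed estimates. For~ii), if $\bar{\mu}$ solves \eqref{def:sparseprob}, set $\bar{u} := \mathcal{I}(\bar{\mu})$; then $J(\bar u) \leq j(\bar{\mu}) = \min j = \min J$ by \eqref{prop:equvalence:equal}, hence $\bar{u}$ is a minimizer of \eqref{def:minprob}. For~i), if $\bar u$ solves \eqref{def:minprob}, apply Proposition~\ref{prop:I}~ii) to obtain $\bar{\mu} \in M^+(\mathcal{B})$ with $\bar{u} = \mathcal{I}(\bar{\mu})$ and $\mathcal{G}(\bar{u}) = \|\bar{\mu}\|_{M(\mathcal{B})}$; then $j(\bar{\mu}) = J(\bar u) = \min J = \min j$, so $\bar{\mu}$ is optimal. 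The only mildly delicate point is that the correspondence in~i) is a priori one-to-many (a given $\bar u$ admits many representing measures), but only the existence of one optimal lift is asserted, and Proposition~\ref{prop:I}~ii) provides exactly such a lift with matching norm, which is the key ingredient. I expect no serious obstacle: the work has essentially been front-loaded into Propositions~\ref{prop:I} and~\ref{prop:def_K}, and what remains is to assemble the pieces carefully.
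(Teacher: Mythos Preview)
Your proposal is correct and follows essentially the same approach as the paper: both arguments assemble weak* lower semicontinuity of $j$ from the weak*-to-strong continuity of $\mathcal{K}$ and continuity of $F$, obtain compact sublevels from the lower bound on $F$, and then derive the minimizer correspondence and \eqref{prop:equvalence:equal} by combining the inequality $J(\mathcal{I}(\mu)) \leq j(\mu)$ from Proposition~\ref{prop:I}~i) and Proposition~\ref{prop:def_K}~ii) with the equality case furnished by Proposition~\ref{prop:I}~ii). The only cosmetic difference is the order of presentation: you first establish \eqref{prop:equvalence:equal} and then deduce i) and ii), whereas the paper proves i) and ii) directly and reads off \eqref{prop:equvalence:equal} afterwards.
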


\begin{proof}
First recall that $\mathcal{K}$ is weak*-to-strong continuous thanks to Proposition \ref{prop:def_K}. Given that $F$ is continuous, we then infer weak* lower semicontinuity of $j$. As $F$ is bounded from below, we immediately have that $j$ is bounded from below and its sublevels are weak* compact, concluding the existence of minimizers to \eqref{def:sparseprob} by the direct method.
 
We pass to the proof of i). Assume that $\bar{u}$ is a minimizer to \eqref{def:minprob}, so that $\bar{u} \in \operatorname{dom}(\G)$. According to ii) in Proposition~\ref{prop:I} there exists~$\bar{\mu} \in M^+(\mathcal{B})$ such that $\I(\bar \mu)=\bar{u}$ and $\nor{\bar\mu}_{M(\mathcal{B})}=\G(\bar u)$. Let~$\mu \in M^+(\mathcal{B})$ be arbitrary and set~$u:=\I(\mu)$. The same proposition yields that $u \in \operatorname{dom} (\G)$, $\mu$ represents $u$ and $\G(u) \leq \nor{\mu}_{M(\mathcal{B})}$. Finally, point ii) in Proposition \ref{prop:def_K} guarantees $\mathcal{K}\mu = K u$ and $\mathcal{K}\bar \mu = K \bar u$.  Thus
\begin{equation} \label{eq:equivalence:1}
j(\bar{\mu})=J(\bar{u}) \leq J(u) \leq j(\mu)\,,
\end{equation}
where the first inequality follows from the optimality of~$\bar{u}$. This proves i).
Second, let~$\bar{\mu} \in M^+(\mathcal{B})$ be a solution to~\eqref{def:sparseprob} and~$\bar{u}:=\I(\bar{\mu})$. Thus, by Proposition \ref{prop:I}, we have that $\bar u \in \operatorname{dom}(\G)$, $\bar{\mu}$ represents $\bar u$ and $\G(\bar u) \leq \nor{\bar \mu}_{M(\mathcal{B})}$. Moreover, let~$u \in \operatorname{dom}(\G)$. By point ii) in Proposition \ref{prop:I}, we get $\mu \in M^+(\mathcal{B})$ representing~$u$ and such that~$\|\mu\|_{M(\mathcal{B})}=\G(u)$. Again, $\mathcal{K}\mu = K u$, $\mathcal{K}\bar \mu = K \bar u$ and we conclude the proof of ii) noting that
\begin{equation} \label{eq:equivalence:2}
J(\bar{u}) \leq j(\bar{\mu}) \leq j(\mu)=J(u)\,.
\end{equation}
The final part of the statement follows from \eqref{eq:equivalence:1}-\eqref{eq:equivalence:2}.
\end{proof}

\subsection{Optimality conditions} \label{subsec:optimality}
 
In this section we establish the relationship between the dual variables of the problems \eqref{def:minprob} and \eqref{def:sparseprob}. 
Moreover we characterize optimality conditions for \eqref{def:sparseprob}.

\begin{proposition} \label{prop:existofextrem}
Let~$\mu \in M^+(\mathcal{B})$ be given and set $u:=\I( \mu)$. Define the corresponding dual variables 
$P:=-\mathcal{K}_*\nabla F(\mathcal{K}\mu)\in C(\mathcal{B})$ and $p:=-K_* \nabla F(K  u) \in \Cc$.
Then
\begin{equation}\label{eq:dual_var_rel}
P(v) = \langle p, v \rangle   \,\,\, \text{ for all } \,\,\, v \in \mathcal{B} \,.
\end{equation}
Moreover, there exists~$\bar{v} \in \operatorname{Ext}(B)$ such that 
\begin{equation} \label{eq:existofextrem}
P(\bar{v})=\max_{v \in \mathcal{B}} P(v) = \max_{v \in \mathcal{B}} \, \langle p, v \rangle\,.
\end{equation}
\end{proposition}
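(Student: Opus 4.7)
The plan is to prove the two assertions in sequence, first the pointwise identity \eqref{eq:dual_var_rel} by unfolding definitions, and then the attainment statement \eqref{eq:existofextrem} by combining this identity with a weak* density/compactness argument based on Bauer's maximum principle.

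For the identity \eqref{eq:dual_var_rel}, the strategy is simply to trace through the definitions of $P$, $p$, and the lifted operators. The crucial ingredient is Proposition~\ref{prop:def_K}\,(ii), which yields $\mathcal{K}\mu = Ku$ whenever $\mu \in M^+(\mathcal{B})$ represents $u$ (which is our case by Proposition~\ref{prop:I} since $u = \mathcal{I}(\mu)$). Consequently $\nabla F(\mathcal{K}\mu) = \nabla F(Ku)$. I would then apply Proposition~\ref{prop:def_K}\,(iv), which provides the explicit formula $[\mathcal{K}_* y](v) = \langle K_* y, v \rangle$ for every $v \in \mathcal{B}$, to obtain
\begin{equation*}
P(v) = -[\mathcal{K}_* \nabla F(\mathcal{K}\mu)](v) = -\langle K_* \nabla F(Ku), v \rangle = \langle p, v \rangle.
\end{equation*}
This step is essentially mechanical once the characterizations from Proposition~\ref{prop:def_K} are in hand.

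For \eqref{eq:existofextrem}, the second equality is immediate from the pointwise identity just established. For the attainment at an element of $\operatorname{Ext}(B)$, I would argue as follows. The functional $v \mapsto \langle p, v \rangle$ is linear and weak* continuous on $\mathcal{M}$. By Assumption~\ref{ass:func2}, the set $B = S^-(\mathcal{G},1)$ is convex and weak* compact, so by Bauer's maximum principle the linear (hence convex) weak* continuous functional $\langle p, \cdot\rangle$ attains its maximum over $B$ at some extremal point $\bar v \in \operatorname{Ext}(B)$. Since $B$ is weak* closed and contains $\operatorname{Ext}(B)$, we have the chain of inclusions $\operatorname{Ext}(B) \subset \mathcal{B} = \overline{\operatorname{Ext}(B)}^* \subset B$, so
\begin{equation*}
\langle p, \bar v \rangle = \sup_{v \in B}\langle p, v \rangle \geq \sup_{v \in \mathcal{B}}\langle p, v \rangle \geq \sup_{v \in \operatorname{Ext}(B)}\langle p, v \rangle \geq \langle p, \bar v \rangle,
\end{equation*}
so equality holds throughout and $\bar v$ realizes the maximum over $\mathcal{B}$ as well. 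Combining with \eqref{eq:dual_var_rel} yields $P(\bar v) = \max_{v \in \mathcal{B}} P(v) = \max_{v\in \mathcal{B}}\langle p,v\rangle$, as claimed. (Alternatively, since this exact statement that the linear problem on $\operatorname{Ext}(B)$ is well-posed is the content of Lemma~\ref{lem:existoflinearized}, one can simply cite it.)

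The most delicate point is ensuring that the maximum, a priori guaranteed to exist only on the weak* compact sets $B$ or $\mathcal{B}$, is actually realized at a genuine extremal point rather than a weak* limit of such. Bauer's principle applied to the convex weak* compact set $B$ sidesteps this issue directly; the rest is bookkeeping with the inclusions and the weak* density of $\operatorname{Ext}(B)$ in $\mathcal{B}$.
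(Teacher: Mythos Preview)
Your proposal is correct and follows essentially the same approach as the paper. For \eqref{eq:dual_var_rel} the paper also reduces to $\mathcal{K}\mu=Ku$ via Proposition~\ref{prop:def_K}\,(ii) and then uses the formula in~(iv), passing through the identity $\mathcal{K}\delta_v=Kv$; for \eqref{eq:existofextrem} the paper simply cites Lemma~\ref{lem:existoflinearized}, whose proof is precisely the Bauer-type argument you spell out.
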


\begin{proof}
Let $v \in \mathcal{B}$ be arbitrary. It is immediate to check that $\delta_{v} \in M^+(\mathcal{B})$ represents $v$ in the sense of \eqref{eq:choquet}. Moreover $\mu$ represents $u$ by Proposition \ref{prop:I}. Applying point ii) in Proposition \ref{prop:def_K} yields $\mathcal{K}\delta_{v}= K  v$ and $\mathcal{K} \mu= K  u$. Thus, by iv) in Proposition \ref{prop:def_K},
\[
\begin{aligned}
\langle p,v \rangle & = -\langle K_*\nabla F(Ku),v \rangle  =-( \nabla F(Ku),Kv)_Y
\\ & = -( \nabla F(\mathcal{K}\mu),\mathcal{K}\delta_{v})_Y 
 = -\sm{\mathcal{K}_* \nabla F(\mathcal{K}\mu)}{\delta_{v}}= P(v)\,,
\end{aligned}
\]
yielding \eqref{eq:dual_var_rel}.  The statement of \eqref{eq:existofextrem} now follows from Lemma~\ref{lem:existoflinearized}.
\end{proof}

\begin{theorem} \label{thm:optcondition}
A measure~$\bar{\mu} \in M^+(\mathcal{B})$ is a solution of~\eqref{def:sparseprob} if and only if the dual variable $\bar{P}:=- \mathcal{K}_*\nabla F(\mathcal{K} \bar{\mu}) \in C(\mathcal{B})$ satisfies
\begin{align}\label{eq:optconditionstatement}
\bar{P}(v) \leq 1  \,\,\, \text{ for all } \,\, v \in \mathcal{B}\,, \quad  \sm{\bar{P}}{ \bar{\mu}}= \|\bar{\mu}\|_{M(\mathcal{B})}\,.
\end{align}
\end{theorem}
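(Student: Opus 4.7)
The plan is to establish this result by mimicking the proof of Proposition~\ref{prop:existu}, suitably adapted to the measure-theoretic setting and accounting for the fact that the minimization in~\eqref{def:sparseprob} is constrained to the cone $M^+(\mathcal{B})$ rather than to an entire Banach space.

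First, I would observe that the smooth part $\mu \mapsto F(\mathcal{K}\mu)$ is convex and G\^{a}teaux differentiable on $M(\mathcal{B})$, with derivative at $\bar\mu$ given by $\nu \mapsto \sm{\mathcal{K}_* \nabla F(\mathcal{K}\bar\mu)}{\nu} = -\sm{\bar{P}}{\nu}$, where the identification uses part iv) of Proposition~\ref{prop:def_K}. Combined with the convexity of both $\|\cdot\|_{M(\mathcal{B})}$ and the cone $M^+(\mathcal{B})$, a standard convex-analysis argument (as in the proof of Proposition~\ref{prop:existu}) shows that $\bar\mu \in M^+(\mathcal{B})$ minimizes $j$ over $M^+(\mathcal{B})$ if and only if the variational inequality
\[
\sm{\bar{P}}{\mu - \bar\mu} \leq \|\mu\|_{M(\mathcal{B})} - \|\bar\mu\|_{M(\mathcal{B})} \qquad \text{for all } \mu \in M^+(\mathcal{B})
\]
is satisfied.

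Second, I would establish the equivalence of this variational inequality with the two conditions in~\eqref{eq:optconditionstatement}. For the forward direction, testing with $\mu = 0$ immediately yields $\sm{\bar{P}}{\bar\mu} \geq \|\bar\mu\|_{M(\mathcal{B})}$; next, testing with $\mu = \lambda \delta_v$ for arbitrary $v \in \mathcal{B}$ and $\lambda > 0$ and letting $\lambda \to \infty$ forces the pointwise bound $\bar{P}(v) \leq 1$ on $\mathcal{B}$. Integrating this bound against the positive measure $\bar\mu$ yields the reverse inequality $\sm{\bar{P}}{\bar\mu} \leq \bar\mu(\mathcal{B}) = \|\bar\mu\|_{M(\mathcal{B})}$, so the second condition in~\eqref{eq:optconditionstatement} holds with equality. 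For the converse, assuming~\eqref{eq:optconditionstatement}, integrating $\bar{P} \leq 1$ against any $\mu \in M^+(\mathcal{B})$ gives $\sm{\bar{P}}{\mu} \leq \|\mu\|_{M(\mathcal{B})}$, which together with $\sm{\bar{P}}{\bar\mu} = \|\bar\mu\|_{M(\mathcal{B})}$ recovers the variational inequality.

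I expect no serious obstacle here. The only mild subtlety is that, since the minimization is constrained to the cone $M^+(\mathcal{B})$ and not to the whole space, the variational inequality need not be valid for signed test measures; this is precisely what forces the use of the two specific test choices $\mu = 0$ and $\mu = \lambda \delta_v$ above, in contrast to the proof of Proposition~\ref{prop:existu}, where the one-homogeneity of $\mathcal{G}$ together with testing over the entire Banach space directly gave the analogous pair of conditions.
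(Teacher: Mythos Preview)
Your proposal is correct and follows essentially the same approach as the paper. The paper packages the cone constraint into the one-homogeneous functional $\Phi(\mu)=\|\mu\|_{M(\mathcal{B})}+I_{M^+(\mathcal{B})}(\mu)$ and then invokes the standard subdifferential characterization $\bar P\in\partial\Phi(\bar\mu)$, leaving the equivalence with~\eqref{eq:optconditionstatement} as ``easily checked''; your argument works directly with the constrained variational inequality and spells out exactly that check via the tests $\mu=0$ and $\mu=\lambda\delta_v$, which is the same content made explicit.
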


\begin{proof}
Define the mapping~$\Phi \colon M(\Bb) \to [0,+\infty]$ by
$\Phi(\mu)= \|\mu\|_{M(\Bb)}+I_{M^+(\Bb)}(\mu)$ where $I_{M^+(\Bb)}$ denotes the convex indicator function of~$M^+(\Bb)$. Note that the map~$f(\mu):=F(\mathcal{K}\mu)$ is G\^{a}teaux-differentiable with
$f'(\mu)(\nu)= \langle \K_* \nabla F(\K\mu), \nu \rangle$ for all $\mu,\nu \in \mathcal{M}(\mathcal{B})$. Then,
by standard subdifferential calculus \cite{brediesbook}, we have that~$\bar{\mu}$ is a solution to~\eqref{def:sparseprob} if and only if the dual variable~$\bar{P}$ satisfies $\bar{P} \in \partial \Phi(\bar{\mu})$  where $\partial \Phi(\bar{\mu})$ is the convex subdifferential of~$\Phi$ at~$\bar{\mu}$. Since~$\Phi$ is positively~$1$-homogeneous, it can be easily checked that this inclusion is equivalent to~\eqref{eq:optconditionstatement}.
\end{proof}

\subsection{Uniqueness of solutions}

As a consequence of Theorem \ref{prop:equivalence} and Proposition \ref{prop:uniqueu} we have the following uniqueness result for solutions to \eqref{def:sparseprob}, under the stronger 
Assumptions~\ref{ass:fast:F2}-\ref{ass:fast:F3}. 

\begin{proposition} \label{prop:convimpliconmu}
Let Assumptions~\ref{ass:func1}-\ref{ass:func3} and \ref{ass:fast:F2}-\ref{ass:fast:F3} hold. Then~\eqref{def:minprob} and~\eqref{def:sparseprob} admit unique solutions given by, respectively, 
\[
\optu = \sum^{N}_{i=1} \bar{\lambda}_i \bar{u}_i\,, \qquad \bar{\mu}= \sum^{N}_{i=1} \bar{\lambda}_i \delta_{\bar{u}_i}\,,
\]
where the points $\{\bar{u}_i\}_{i=1}^N \subset \operatorname{Ext}(B)$ are as in \ref{ass:fast:F2} and $\bar{\lambda}_i \geq 0$.  
\end{proposition}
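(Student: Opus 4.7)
The plan is to first produce the candidate measure $\bar{\mu} = \sum_{i=1}^N \bar{\lambda}_i \delta_{\bar{u}_i}$, verify by a direct computation that it minimizes \eqref{def:sparseprob}, and then use the optimality characterization of Theorem~\ref{thm:optcondition} together with the structural Assumptions~\ref{ass:fast:F2}--\ref{ass:fast:F3} to conclude that it is the only minimizer. The uniqueness and sparse form of $\bar{u}$ itself are already handed to us by Proposition~\ref{prop:uniqueu}, so the work is really on the measure side.

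For the existence step, I would set $\bar{\mu} := \sum_{i=1}^N \bar{\lambda}_i \delta_{\bar{u}_i}$ and check directly from \eqref{def:I} that $\I(\bar{\mu}) = \sum \bar{\lambda}_i \bar{u}_i = \bar{u}$, and from the definition that $\|\bar{\mu}\|_{M(\Bb)} = \sum \bar{\lambda}_i$. Because Assumption~\ref{ass:fast:F2} forces $\langle \bar{p}, \bar{u}_i\rangle = 1$, the first order condition $\langle \bar{p}, \bar{u}\rangle = \G(\bar{u})$ from Proposition~\ref{prop:existu} yields $\G(\bar{u}) = \sum \bar{\lambda}_i = \|\bar{\mu}\|_{M(\Bb)}$. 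Together with $\K\bar{\mu} = K\bar{u}$ (point~ii of Proposition~\ref{prop:def_K}) this gives $j(\bar{\mu}) = J(\bar{u})$, so by \eqref{prop:equvalence:equal} the measure $\bar{\mu}$ is optimal for \eqref{def:sparseprob}.

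For uniqueness, let $\hat{\mu} \in M^+(\Bb)$ be any other minimizer of \eqref{def:sparseprob}. Theorem~\ref{prop:equivalence}.ii says $\I(\hat{\mu})$ solves \eqref{def:minprob}, and by Proposition~\ref{prop:uniqueu} this forces $\I(\hat{\mu}) = \bar{u}$; hence $\K\hat{\mu} = K\bar{u} = \bar{y}$ and the associated dual variable $\hat{P} := -\K_*\nabla F(\K\hat{\mu})$ coincides with $\bar{P} := -\K_*\nabla F(\bar{y})$, which by \eqref{eq:dual_var_rel} acts as $\bar{P}(v) = \langle \bar{p}, v\rangle$ on $\Bb$. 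The key step is now to apply Theorem~\ref{thm:optcondition}: complementary slackness gives $\langle \bar{P}, \hat{\mu}\rangle = \|\hat{\mu}\|_{M(\Bb)}$, while $\bar{P}(v) \le 1$ on $\Bb$ and $\hat{\mu} \ge 0$ imply that $\hat{\mu}$ must be concentrated on the set $\{v \in \Bb : \bar{P}(v) = 1\}$, which by Assumption~\ref{ass:fast:F2} equals $\{\bar{u}_i\}_{i=1}^N$. Consequently $\hat{\mu} = \sum_{i=1}^N \hat{\lambda}_i \delta_{\bar{u}_i}$ for some $\hat{\lambda}_i \ge 0$.

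To close the argument I would apply $\I$ and then $K$ to the identity $\I(\hat{\mu}) = \bar{u} = \I(\bar{\mu})$, which yields $\sum_{i=1}^N (\hat{\lambda}_i - \bar{\lambda}_i) K\bar{u}_i = 0$. The linear independence of $\{K\bar{u}_i\}_{i=1}^N$ from Assumption~\ref{ass:fast:F3} then forces $\hat{\lambda}_i = \bar{\lambda}_i$ for each $i$, so $\hat{\mu} = \bar{\mu}$. I expect the only mildly delicate point to be the concentration of $\hat{\mu}$ on the finite contact set $\{\bar{u}_i\}_{i=1}^N$ extracted from the complementary slackness relation; everything else is a routine bookkeeping of the representation map $\I$ and the predual operator $\K_*$.
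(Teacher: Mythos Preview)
Your proposal is correct and follows essentially the same route as the paper: localize the support of any minimizer via the optimality conditions of Theorem~\ref{thm:optcondition} and Assumption~\ref{ass:fast:F2}, then match coefficients using $\I(\hat\mu)=\bar u$ and the linear independence in~\ref{ass:fast:F3}. The only cosmetic difference is that the paper skips your explicit construction of the candidate $\bar\mu$ and instead starts directly from an arbitrary minimizer (whose existence is granted by Theorem~\ref{prop:equivalence}), showing it must equal $\sum_i \bar\lambda_i\delta_{\bar u_i}$.
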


\begin{proof}
By Proposition~\ref{prop:uniqueu} we have that \eqref{def:minprob} admits a unique solution $\bar u$, which is of the form $\optu = \sum^{N}_{i=1} \bar{\lambda}_i \bar{u}_i$ for some $\bar{\lambda}_i \geq 0$. Let $\bar \mu \in M^+(\Bb)$ be a solution to \eqref{def:sparseprob}, which exists by Theorem~\ref{prop:equivalence}.
From the optimality conditions \eqref{eq:optconditionstatement} in  Theorem~\ref{thm:optcondition}, one can easily verify that 
\[
\supp \bar \mu \subset \{ v \in \Bb \; | \; \bar P(v)=1\}\,.
\]
Hence, by Assumption~\ref{ass:fast:F2} and Proposition \ref{prop:existofextrem}, there exist $\bar{\sigma}_i \geq 0$ such that  $\bar \mu =\sum^N_{i=1}\bar{ \sigma}_i \delta_{\bar{u}_i}$. By \eqref{def:I} we have  $\mathcal{I}( \delta_{u} )=u$ for all $u \in \Bb$. As $\mathcal{I}$ is linear, we then conclude $\mathcal{I}(\bar \mu)=\sum_{i=1}^N \bar{\sigma}_i  \bar{u}_i$. On the other hand, by Theorem~\ref{prop:equivalence}~ii), we know that 
$\mathcal{I}(\bar{\mu})$ minimizes in \eqref{def:minprob}. 
Thus $\mathcal{I}(\bar{\mu})=\bar{u}$, given that $\bar{u}$ is the unique solution of \eqref{def:minprob}. 
We have then shown
$\sum_{i=1}^N (\bar{\lambda}_i - \bar{\sigma}_i) \bar{u}_i = 0$.
By applying the linear operator $K$ to such identity, and invoking \ref{ass:fast:F3}, we infer $\bar{\lambda}_i = \bar{\sigma}_i$. Therefore $\bar{\mu}=\sum_{i=1}^N \bar{\lambda}_i \delta_{\bar{u}_i}$. As $\bar{\mu}$ is an arbitrary minimizer of \eqref{def:sparseprob}, the thesis is achieved.    
\end{proof}

\subsection{\texorpdfstring{A Primal-Dual-Active-Point method for $(\mathcal{P}_{M^+})$}{A Primal-Dual-Active-Point method for P\_M+}} \label{subsec:PDAP}
In the following we describe a variant of the Primal-Dual-Active-Point strategy (PDAP) from~\cite{pieper} for the solution of~\eqref{def:sparseprob}. 
The latter is a fully-corrective version of a generalized conditional gradient method (also known as Frank-Wolfe algorithm) for solving convex minimization problems over spaces of measures supported on subsets of the euclidean space. In this section we generalize such procedure to~\eqref{def:sparseprob} and discuss its connection to Algorithm~\ref{alg:gcg}.
Similarly to~\cite{pieper}, our proposed PDAP method alternates between the update of an active set~$\mathcal{A}^\mu_k=\{u^k_i\}^{N_k}_{i=1}$ contained in $\Ext B$, and of a sparse iterate~$\mu_k \in M^+(\mathcal{B})$ supported on~$\mathcal{A}^\mu_k$, i.e., 
\begin{equation} \label{eq:crnt_itrt}
\mu_k = \sum_{i=1}^{N_k} \lambda_{i}^k \, \delta_{u^k_i}\,,
\end{equation}
for some $\lambda_i^k \geq 0$. 
We now provide a short description of the individual steps of this method and we summarize them in Algorithm~\ref{alg:pdap} below.
Given the current iterate~$\mu_k$ of the form \eqref{eq:crnt_itrt}, we first compute the corresponding dual variable~$P_k=-\K_* \nabla F(\K\mu_k) \in C(\Bb)$ and enrich the active set~$\mathcal{A}^\mu_k$ by adding to it a global maximizer~$\{\widehat{v}^{\mu}_k\}$ of~$P_k$ over~$\Ext B$, i.e., we set
\begin{align*}
\mathcal{A}^{\mu,+}_k =\{u^k_i\}^{N_k}_{i=1} \cup \{\widehat{v}^{\mu}_k\}\,, \quad ~\widehat{v}^\mu_k \in \argmax_{v \in \Ext B } P_k (v)\,.
\end{align*}
Using Proposition~\ref{prop:existofextrem} we note that this update step is equivalent to maximizing a linear functional over~$\Ext B$. This is the content of the next lemma, whose proof is an immediate consequence of Proposition~\ref{prop:existofextrem}, and is hence omitted. 
\begin{lemma} \label{lem:linearizedmeasure}
Let~$\mu_k$ be as in \eqref{eq:crnt_itrt} and set~$u_k = \I(\mu_k)$. Define~$P_k:=-\K_* \nabla F(\K \mu_k) \in C(\Bb)$ and~$p_k:= -K_* \nabla F(Ku_k) \in \Cc$. Then, there holds
\begin{align*}
\argmax_{v \in \Ext B} P_k(v)= \argmax_{v \in \Ext B}\, \langle p_k, v \rangle\,.
\end{align*}
\end{lemma}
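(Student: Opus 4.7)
The strategy is simply to invoke Proposition~\ref{prop:existofextrem}. The measure $\mu_k$ lies in $M^+(\Bb)$ and, by construction, $u_k = \I(\mu_k)$, so $\mu_k$ represents $u_k$ in the sense of Definition~\ref{def:represenation}. Applying Proposition~\ref{prop:existofextrem} to the pair $(\mu_k, u_k)$ immediately yields the pointwise identity
\[
P_k(v) = \langle p_k, v \rangle \quad \text{for all } v \in \Bb,
\]
since the dual variables constructed in that proposition are exactly $P_k$ and $p_k$.

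Because $\Ext B \subseteq \Bb$, the two real-valued functions $P_k$ and $v \mapsto \langle p_k, v \rangle$ therefore agree on the entire set $\Ext B$. Consequently, any element of $\Ext B$ that maximizes one of them also maximizes the other, which gives the claimed equality of the argmax sets. I do not foresee any obstacle here: the whole content of the lemma is already encoded in the first conclusion of Proposition~\ref{prop:existofextrem}, and the statement merely repackages that identity in the form needed to identify the insertion step of Algorithm~\ref{alg:pdap} with the corresponding step of Algorithm~\ref{alg:gcg}.
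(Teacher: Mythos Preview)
Your proof is correct and takes essentially the same approach as the paper, which simply states that the lemma is an immediate consequence of Proposition~\ref{prop:existofextrem} and omits the details. Your argument spells out precisely that consequence: the identity $P_k(v)=\langle p_k,v\rangle$ on all of $\Bb$ forces the two argmax sets over $\Ext B\subset\Bb$ to coincide.
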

Subsequently, setting~$N^+_k \coloneqq N_k+1$ and~$u^k_{N^+_k} \coloneqq \widehat{v}^\mu_k$, we find the next iterate~$\mu_{k+1}$ by solving 
\begin{align} \label{def:pdapsubprob}
\min_{\mu \in M^+(\mathcal{A}^{\mu,+}_k)} \ \left \lbrack F(\mathcal{K}\mu) +\|\mu\|_{M(\mathcal{B})} \right \rbrack,
\end{align}
where the whole cone~$M^+(\Bb)$ is replaced by the restricted subset
\begin{align*}
M^+(\mathcal{A}^{\mu,+}_k):= \left.\left\{\,\sum^{N^+_k}_{i=1} \lambda_i \delta_{u^k_i}\;\right\vert\;\lambda \in \R_+^{N^+_k}\,\right\} \subset M^+(\Bb)\,,
\end{align*}
see Step $5$ of Algorithm \ref{alg:pdap}. The following lemma compares the update obtained by solving \eqref{def:pdapsubprob} to the finite dimensional minimization problem \eqref{eq:subprobcoeffs} in Step $5$ of Algorithm~\ref{alg:gcg}.
\begin{lemma}\label{lem:finitedimonmeasures} 
A measure~$\widehat{\mu} \in M^+(\mathcal{A}^{\mu,+}_k)$ is a solution to~\eqref{def:pdapsubprob}
if and only if
\[
\widehat{\mu}=\sum^{N^+_k}_{i=1} \widehat{\lambda}_i  \delta_{u^k_i} \,,
\]
where~$\widehat{\lambda}\in \R_+^{N^+_k}$ is a minimizer of the finite dimensional minimization problem \eqref{eq:subprobcoeffs}.
\end{lemma}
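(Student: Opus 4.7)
The plan is to reduce problem~\eqref{def:pdapsubprob} to~\eqref{eq:subprobcoeffs} by exploiting the fact that the feasible set~$M^+(\mathcal{A}^{\mu,+}_k)$ is, by its very definition, parametrized bijectively by the nonnegative cone~$\R_+^{N^+_k}$ via the map~$\lambda \mapsto \sum_{i=1}^{N^+_k} \lambda_i \delta_{u^k_i}$. Thus the whole argument amounts to showing that along this parametrization the two objective functionals coincide.

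First, I would fix~$\mu \in M^+(\mathcal{A}^{\mu,+}_k)$ and write it as~$\mu = \sum_{i=1}^{N^+_k}\lambda_i \delta_{u^k_i}$ with~$\lambda \in \R_+^{N^+_k}$ (this representation is unique up to a possible convention if some~$u^k_i$ coincide, but either way the objective depends only on the induced measure). Since~$\lambda_i \geq 0$, the atoms~$\delta_{u^k_i}$ are mutually singular up to merging, and the total variation norm reduces to~$\|\mu\|_{M(\mathcal{B})} = \sum_{i=1}^{N^+_k}\lambda_i$. For the fidelity term I would use the defining identity~\eqref{eq:def_K} in Proposition~\ref{prop:def_K}: testing against an arbitrary~$y \in Y$ yields
\begin{equation*}
(\mathcal{K}\mu, y)_Y \;=\; \int_{\mathcal{B}} (Kv,y)_Y \, \de\mu(v) \;=\; \sum_{i=1}^{N^+_k}\lambda_i\,(Ku^k_i,y)_Y \;=\; \Bigl(\sum_{i=1}^{N^+_k}\lambda_i K u^k_i,\, y\Bigr)_Y,
\end{equation*}
and hence~$\mathcal{K}\mu = \sum_{i=1}^{N^+_k}\lambda_i K u^k_i$ in~$Y$.

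Combining the two identities, the objective of~\eqref{def:pdapsubprob} evaluated at~$\mu$ equals the objective of~\eqref{eq:subprobcoeffs} evaluated at~$\lambda$. Since this identification is a bijection between~$M^+(\mathcal{A}^{\mu,+}_k)$ and~$\R_+^{N^+_k}$ that preserves the objective values pointwise, the set of minimizers of the two problems are in one-to-one correspondence, which is exactly the claimed equivalence. No serious obstacle is expected here; the only subtlety worth flagging is that if the~$u^k_i$ are not assumed distinct, one must either interpret~$\R_+^{N^+_k}$ modulo the collapse of coefficients with the same atom, or simply observe that both the total variation norm and~$\mathcal{K}\mu$ depend only on the cumulative weights, so both formulations yield the same infimum.
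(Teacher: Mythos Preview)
Your proposal is correct and follows essentially the same approach as the paper: both arguments identify $M^+(\mathcal{A}^{\mu,+}_k)$ with $\R_+^{N^+_k}$ via $\lambda \mapsto \sum_i \lambda_i \delta_{u^k_i}$ and verify that the objective of~\eqref{def:pdapsubprob} coincides with that of~\eqref{eq:subprobcoeffs} under this parametrization, using $\mathcal{K}\delta_{u^k_i}=K u^k_i$ (the paper obtains this via $u^k_i=\mathcal{I}(\delta_{u^k_i})$ and Proposition~\ref{prop:def_K}~ii), you via~\eqref{eq:def_K} directly) together with $\|\mu\|_{M(\mathcal{B})}=\sum_i \lambda_i$. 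Your remark on possibly repeated atoms is a harmless addition; the paper simply notes that ``there exists at least one'' representing $\lambda^\mu$ and proceeds since the objective depends only on the measure.
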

\begin{proof}
Let~$\mu\in M^+(\mathcal{A}^{\mu,+}_k)$, so that 
there exists at least one~$\lambda^\mu \in \R_+^{N^+_k}$ such that~$\mu=\sum^{N^+_k}_{i=1} \lambda^\mu_i  \delta_{u^k_i}$. Noting that~$u^k_i =\I(\delta_{u^k_i})$, from Proposition~\ref{prop:def_K} we get that 
$\K \delta_{u^k_i}=K u^k_i$. Therefore
\begin{align*}
j(\mu)= F(\K \mu)+\|\mu\|_{M(\Bb)}=F\left(\sum^{N^+_k}_{i=1} \lambda^\mu_iKu^k_i\right)+\sum^{N^+_k}_{i=1} \lambda^\mu_i \,,
\end{align*}
from which the characterization of minimizers~$\widehat{\mu}$ to~\eqref{def:pdapsubprob} readily follows.
\end{proof}
Finally, see Step $6$ of Algorithm \ref{alg:pdap}, the active set is truncated by choosing~$\mathcal{A}^{\mu}_{k+1}$ as the~\textit{support} of~$\mu_{k+1}$, that is,
\begin{align*}
 \mathcal{A}^{\mu}_{k+1}:= \supp \mu_{k+1} 
 = \mathcal{A}^{\mu,+}_k \setminus \left\{ \,u^k_i \in \mathcal{A}^{\mu,+}_k \;|\;\lambda^{k+1}_i =0\,\right\}.
\end{align*}
The method is summarized in Algorithm~\ref{alg:pdap}.
\begin{algorithm}[tbh]\caption{PDAP for~\eqref{def:sparseprob}}
\begin{algorithmic} \label{alg:pdap}
\STATE 1. Let~$\mu_0= \sum^{N_0}_{i=1} \lambda^0_i \delta_{u^0_i}$,~$\lambda^0_i >0$,~$\mathcal{A}^\mu_0= \{u^0_i\}^{N_0}_{i=1}\subset \Ext B$.
\FOR{$k=0,1,2,\dots$}
\STATE 2. Given~$\mathcal{A}^\mu_k=\{u^k_i\}^{N_k}_{i=1} \subset \operatorname{Ext}(B) $ and~$\mu_k \in M^+(\mathcal{A}^\mu_k)$, calculate $P_k$ and $\widehat{v}^\mu_k$ with
\[
P_k=-\K_*\nabla F(\K \mu_k)\,, \quad \widehat{v}^\mu_k \in \argmax_{v\in \operatorname{Ext}(B)} P_k(v) \,.
\]
\IF{$P_k(\widehat{v}^\mu_k)\leq 1~\text{and}~k \geq 1$ %
}
\STATE 3. Terminate with~$\bar{\mu}= \mu_k$ a minimizer to \eqref{def:sparseprob}.
\ENDIF
\STATE 4. Update~$N^+_k=N_k +1$,~$u^k_{N^+_k}=\widehat{v}^\mu_k$ and~$\mathcal{A}^{\mu,+}_k= \mathcal{A}^\mu_k \cup \{\widehat{v}^\mu_k\}$.
\STATE 5. Determine~$\mu_{k+1}$ with
\[
\mu_{k+1} \in \argmin_{\mu \in M^+(\mathcal{A}^{\mu,+}_k)} \left \lbrack F(\mathcal{K}\mu) +\|\mu\|_{M(\mathcal{B})} \right \rbrack.
\]
\STATE 6. Update
\begin{align*}
\mathcal{A}^\mu_{k+1}= \supp \mu_{k+1} 
\end{align*}
and set~$N_{k+1}=\# \mathcal{A}^{\mu}_{k+1}$.
\ENDFOR
\end{algorithmic}
\end{algorithm}

As for Algorithm~\ref{alg:gcg}, we define the residuals associated with the iterates~$\mu_k$ of Algorithm~\ref{alg:pdap} by
\begin{align} \label{eq:residualmu}
r_j(\mu_k) \coloneqq j(\mu_k)-\min_{\mu \in M^+(\Bb)} j(\mu)\,.
\end{align}
Note that due to Theorem \ref{prop:equivalence}, such residuals can be written as 
\[
r_j(\mu_k) = j(\mu_k)-\min_{u \in \M} J(u)\,.
\] 
Summarizing the previous observations, we conclude the equivalence between Algorithm~\ref{alg:gcg} and Algorithm~\ref{alg:pdap} as stated in the next theorem.
\begin{theorem} \label{thm:eqofpdapandgcg}
Let~$\mathcal{A}^\mu_k=\{u^k_i\}^{N_k}_{i=1}$ and~$\mu_k \in M^+(\mathcal{A}^\mu_k)$ be given. Set~$\mathcal{A}^u_k \coloneqq \mathcal{A}^\mu_k $ and~$u_k \coloneqq \I(\mu_k)$. Then, the update Steps from $2$ to $6$ in Algorithm~\ref{alg:gcg} and~Algorithm~\ref{alg:pdap} can be realized such that
\[
\widehat{v}^u_k=\widehat{v}^\mu_k \,, \,\,\,u_{k+1}= \I(\mu_{k+1})\,, \,\,\, \mathcal{A}^u_{k+1} = \mathcal{A}^\mu_{k+1}. 
\]
In particular, if~$\{u_k\}_k$ and~$\{\mathcal{A}^u_k\}_k$ are sequences of iterates and active sets generated by Algorithm~\ref{alg:gcg} and we have
\begin{align*}
u_0= \I(\mu_0 )\,, \  ~\mathcal{A}^u_0=\mathcal{A}^{\mu}_0 \,,
\end{align*}
then there exist sequences~$\{\mu_k\}_k$ and~$\{\mathcal{A}^\mu_k\}_k$ generated by Algorithm~\ref{alg:pdap} such that
\begin{align}\label{eq:induction}
u_k= \I(\mu_k )\,, \  ~\mathcal{A}^u_k=\mathcal{A}^{\mu}_k\,, \ ~\widehat{v}^u_k=\widehat{v}^{\mu}_k
\end{align}
and it holds
\begin{align}\label{eq:resinequ}
0 \leq r_J(u_k) \leq r_j(\mu_k) \quad \text{for all}~ k \in \N\,,
\end{align}
where the residuals~$r_J(u_k)$ and~$r_j(\mu_k)$ are defined in~\eqref{def:residualu} and~\eqref{eq:residualmu}, respectively.
\end{theorem}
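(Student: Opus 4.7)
The plan is to establish the single-step correspondence between Algorithms~\ref{alg:gcg} and~\ref{alg:pdap} directly from the lemmas already proved, then to iterate and finally to relate the residuals. So assume $\mathcal{A}^u_k = \mathcal{A}^\mu_k = \{u^k_i\}_{i=1}^{N_k}$ and $u_k = \I(\mu_k)$. I would first invoke Lemma~\ref{lem:linearizedmeasure} to identify the set of maximizers on $\Ext(B)$ of $P_k = -\K_*\nabla F(\K\mu_k)$ with that of $\langle p_k,\cdot\rangle$ for $p_k = -K_*\nabla F(Ku_k)$. Since both argmaxes coincide, a common selection $\widehat{v}^u_k = \widehat{v}^\mu_k$ is possible, yielding $\mathcal{A}^{u,+}_k = \mathcal{A}^{\mu,+}_k$. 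Moreover, since Proposition~\ref{prop:existofextrem} gives $P_k(\widehat{v}^\mu_k) = \langle p_k, \widehat{v}^u_k\rangle$, the stopping criteria trigger simultaneously, so either both algorithms terminate at step $k$ or both proceed.

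Next I would compare the update in Step~5. By Lemma~\ref{lem:finitedimonmeasures}, the solutions of the measure subproblem~\eqref{def:pdapsubprob} are in bijection with the minimizers of the finite-dimensional problem~\eqref{eq:subprobcoeffs}, via $\mu_{k+1}= \sum_{i=1}^{N_k^+}\lambda^{k+1}_i \delta_{u_i^k}$. Picking the same coefficients $\lambda^{k+1}$ in both algorithms, applying $\I$ (linear by Proposition~\ref{prop:I}) and using $\I(\delta_{u_i^k}) = u_i^k$ yields $\I(\mu_{k+1}) = \sum_i \lambda^{k+1}_i u^k_i = u_{k+1}$. Finally, the pruning of Step~6 in both algorithms retains exactly the indices with $\lambda^{k+1}_i\neq 0$ (since $\supp\mu_{k+1}$ is precisely this set of atoms), so $\mathcal{A}^u_{k+1}= \mathcal{A}^{\mu}_{k+1}$. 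An obvious induction then delivers~\eqref{eq:induction} from the prescribed initialization.

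For the residual inequality, I would use that, by Proposition~\ref{prop:I}(i) and Proposition~\ref{prop:def_K}(ii), $u_k = \I(\mu_k)$ implies $Ku_k = \K\mu_k$ and $\mathcal{G}(u_k) \leq \|\mu_k\|_{M(\mathcal{B})}$, whence
\begin{equation*}
J(u_k) = F(Ku_k) + \mathcal{G}(u_k) \leq F(\K\mu_k) + \|\mu_k\|_{M(\mathcal{B})} = j(\mu_k).
\end{equation*}
Combining this with the equality of the minimal values in~\eqref{prop:equvalence:equal} from Theorem~\ref{prop:equivalence} yields $0 \leq r_J(u_k) \leq r_j(\mu_k)$, which is~\eqref{eq:resinequ}.

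There is no real obstacle: the result is essentially a bookkeeping statement that is ready to be assembled once the constituent lemmas are invoked in the right order. The only point where one must be careful is that Step~2 of the two algorithms requires the \emph{same} selection of maximizer (which is allowed because the argmax sets agree via Lemma~\ref{lem:linearizedmeasure}), and that Step~5 requires the \emph{same} choice of optimal coefficients $\lambda^{k+1}$ (which is allowed because both finite-dimensional problems are literally the same minimization over $\mathbb{R}_+^{N_k^+}$ by Lemma~\ref{lem:finitedimonmeasures}).
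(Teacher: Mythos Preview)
Your proposal is correct and follows essentially the same approach as the paper: invoke Lemma~\ref{lem:linearizedmeasure} to synchronize Step~2, Lemma~\ref{lem:finitedimonmeasures} to synchronize Step~5 via a common choice of coefficients~$\lambda^{k+1}$, identify the pruned active sets, induct, and then combine Proposition~\ref{prop:I}(i), Proposition~\ref{prop:def_K}(ii) and~\eqref{prop:equvalence:equal} for the residual inequality. Your additional remark that the stopping criteria trigger simultaneously is a minor bonus not spelled out in the paper's proof but entirely in the same spirit.
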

\begin{proof}
By Lemma~\ref{lem:linearizedmeasure} we can choose~$\widehat{v}^u_k= \widehat{v}^\mu_k$. Thus~$\mathcal{A}^{u,+}_k=\mathcal{A}^{\mu,+}_k=\{u^k_i\}^{N^+_k}_{i=1}$. Next, in Step $5$ of Algorithm~\ref{alg:gcg} we compute a solution~$\lambda^{k+1} \in \R_+^{N^+_k}$ to
\begin{align*}
\min_{\lambda\in \R_+^{N^+_k}} \left \lbrack F\left(\sum^{N_k^+}_{i=1} \lambda_i Ku^k_i\right)+\sum^{N_k^+}_{i=1} \lambda_i \right \rbrack
\end{align*}
and we update $u_k$ as~$u_{k+1}=\sum^{N^+_k}_{i=1} \lambda^{k+1}_i u^k_i$. Therefore, defining $\mu_{k+1}=\sum^{N^+_k}_{i=1} \lambda^{k+1}_i \delta_{u^k_i}$ it holds that $u_{k+1}=\I(\mu_{k+1})$ and~$\mu_{k+1}$ is a solution to~\eqref{def:pdapsubprob} by Lemma~\ref{lem:finitedimonmeasures}. Moreover, we note that
\begin{align*}
\mathcal{A}^\mu_{k+1}= \supp \mu_{k+1}= \mathcal{A}^{u,+}_k \setminus \{\,u^k_i\;|\;\lambda^{k+1}_i=0\,\}=\mathcal{A}^u_{k+1}\,.
\end{align*}
Concerning the claim in \eqref{eq:induction}, given 
$u_k=\sum_{i=1}^{N^+_k} \lambda_i^k u_i^k$ and $\mathcal{A}^u_k$, we set $\mu_k=\sum_{i=1}^{N^+_k} \lambda_i^k \delta_{u_i^k}$ and $\mathcal{A}^\mu_k=\mathcal{A}^u_k$. Therefore  
\eqref{eq:induction} follows by the first part of the statement and an induction argument. Finally, 
as $\mathcal{I}(\mu_k)=u_k$, by Proposition~\ref{prop:I}~$i)$ and Proposition~\ref{prop:def_K}~$ii)$ we have $\G(u_k) \leq \norm{\mu_k}_{M(\mathcal{B})}$ and $\K\mu_k=Ku_k$. 
Therefore $J(u_k) \leq j(\mu_k)$, and 
\eqref{eq:resinequ} follows by \eqref{prop:equvalence:equal}.
\end{proof}

\section{Convergence analysis}\label{sec:convergence}

We are now prepared to prove Theorem~\ref{thm:convofcondu} and Theorem~\ref{thm:fastconvpreview}. For this purpose we rely on Theorem~\ref{thm:eqofpdapandgcg}, which states that the FC-GCG method from Algorithm~\ref{alg:gcg} converges at least as fast as the PDAP method in Algorithm~\ref{alg:pdap}, thanks to the estimate \eqref{eq:resinequ}. In particular, by Step~5 in Algorithm~\ref{alg:pdap}, we have that the iterates of FC-GCG and PDAP satisfy
\begin{align} \label{eq:ineqchain}
    r_J(u_k) \leq r_j(\mu_k)  \leq r_j(\mu) \quad \text{for all }~ \mu \in M^+(\mathcal{A}^{\mu}_k)\,, \, k \in \N \,.
\end{align}
In the following we use \eqref{eq:ineqchain}, as well as specific choices of the measure~$\mu$ in the upper bound, to prove Theorems~\ref{thm:convofcondu} and Theorem~\ref{thm:fastconvpreview}.
Specifically, the proof of Theorem~\ref{thm:convofcondu} is carried out in Section~\ref{subsec:worstpdap}. The proof of 
Theorem~\ref{thm:fastconvpreview}, which is more technical, is conducted 
in Section~\ref{subsec:fastconvproof}, after establishing some preliminary results in Section~\ref{subsec.auxiliary}.

For the remainder of the paper we silently assume that Algorithm~\ref{alg:pdap} does not stop after a finite number of iterations, and generates a sequence~$\{\mu_k\}_k$ in $M^+(\mathcal{B})$. Dropping superscripts, we denote the $k$-th active set, iterate, dual variable, candidate point computed in Step 2, and enlarged active set by, respectively, 
\begin{equation} \label{eq:sec6_notations}
\begin{gathered}
 \mathcal{A}_k  = \{ u_i^k\}_{i=1}^{N_k},   \quad \mu_k  = \sum_{i=1}^{N_k} \lambda_i^k \delta_{u_i^k} \,, \\   
 P_k  = -\K_* \nabla F(\K \mu_k) \in C(\mathcal{B})\,,  \quad \widehat{v}_k \in \argmax_{v \in \mathcal{B}}  \,
 P_k(v) \,, \quad \mathcal{A}_k^+:=\mathcal{A}_k \cup \{\widehat{v}_k\} \,,
 \end{gathered}
\end{equation}
where we recall that $\lambda_i^k > 0$ and $u_i^k, \widehat{v}_k  \in \operatorname{Ext}(B)$.

\subsection{Worst-case convergence rate} \label{subsec:worstpdap}
We first argue that Algorithm~\ref{alg:pdap} converges at least sublinearly.
To start, define the sublevel set
\begin{equation} \label{def:set_initial}
E_{\mu_0}:=\left\{\,\mu \in M^+(\mathcal{B})\; | \;j(\mu) \leq j(\mu_0)\,\right\}\,.
\end{equation}
By Theorem \ref{prop:equivalence}, we have that~$E_{\mu_0}$ is weak* compact. Let~$M_0>0$ be an arbitrary but fixed upper bound on the norm of elements in~$E_{\mu_0}$ and consider the norm constrained problem
\begin{align}\label{def:constprob}
\min_{\mu \in M^+(\mathcal{B})} j(\mu) \quad \text{ s.t. } \quad \|\mu\|_{M(\mathcal{B})}\leq M_0\,. \tag{$\widehat{P}_{M^+}$}
\end{align}
Clearly, by definition of $M_0$, the additional norm constraint does not change the set of global minimizers. The following proposition relates $\widehat v_k$  %
to a particular conditional gradient descent direction~$\eta_k$ for \eqref{def:constprob}.

\begin{proposition} \label{prop:linprob}
Define~$\eta_k \in M^+(\Bb)$ as
\begin{align*}
\eta_k:=
\begin{cases}
0 & \text{ if } \, P_k(\widehat{v}_k) <1\,, \\
M_0\, \delta_{\widehat{v}_k} & \text{ otherwise.} 
\end{cases}
\end{align*}
Then,~${\eta}_k$ is a minimizer of the partially linearized problem
\begin{align} \label{def:linprob}
\min_{\substack{\eta \in M^+(\mathcal{B}), \\ \|\eta\|_{M(\mathcal{B})}\leq M_0 }} \lbrack -\sm{P_k}{\eta} +\|\eta\|_{M(\mathcal{B})} \rbrack \,.
\end{align}
Moreover, we have
\begin{align*}
r_j(\mu_{k+1}) \leq r_j((1-s)\mu_k+s \eta_k) \quad \text{for all}~ s \in [0,1]\,.
\end{align*}
\end{proposition}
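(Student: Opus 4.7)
The proof naturally divides into the two claims. For the first, the plan is to derive a pointwise lower bound on the objective in \eqref{def:linprob}. Since $P_k(v) \leq P_k(\widehat{v}_k)$ for every $v \in \mathcal{B}$ and $\eta \in M^+(\mathcal{B})$ is positive, integration yields
\[
\sm{P_k}{\eta} = \int_{\mathcal{B}} P_k(v)\,\de\eta(v) \leq P_k(\widehat{v}_k)\,\|\eta\|_{M(\mathcal{B})}\,,
\]
so the objective in \eqref{def:linprob} is bounded below by $\bigl(1 - P_k(\widehat{v}_k)\bigr)\|\eta\|_{M(\mathcal{B})}$. From here I would simply split on the sign of $1 - P_k(\widehat{v}_k)$: if $P_k(\widehat{v}_k) < 1$ the lower bound is nonnegative and attained by $\eta = 0$; if $P_k(\widehat{v}_k) \geq 1$ it is nonpositive, so the infimum is driven by pushing $\|\eta\|_{M(\mathcal{B})}$ up to the constraint $M_0$ and by concentrating the mass where $P_k$ attains its maximum. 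In both cases the $\eta_k$ prescribed in the statement saturates the bound, hence is a minimizer.

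For the second claim, the key observation is simply that $(1-s)\mu_k + s\eta_k \in M^+(\mathcal{A}_k^+)$ for every $s \in [0,1]$. Indeed, $\supp \mu_k \subset \mathcal{A}_k \subset \mathcal{A}_k^+$, while $\eta_k$ is either $0$ or $M_0 \delta_{\widehat{v}_k}$ and $\widehat{v}_k \in \mathcal{A}_k^+$; since $M^+(\mathcal{A}_k^+)$ is a convex cone, convex combinations with $s \in [0,1]$ stay inside it. By the defining property of $\mu_{k+1}$ in Step~5 of Algorithm~\ref{alg:pdap} as a minimizer of $j$ on $M^+(\mathcal{A}_k^+)$, one obtains
\[
j(\mu_{k+1}) \leq j\bigl((1-s)\mu_k + s\eta_k\bigr) \quad \text{for all}~s \in [0,1]\,,
\]
and subtracting $\min_{\mu \in M^+(\mathcal{B})} j(\mu)$ from both sides gives the claimed inequality on residuals.

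There is no real obstacle in the argument; it is essentially a bookkeeping lemma, and the only subtle point is noticing that the norm constraint $\|\eta\|_{M(\mathcal{B})} \leq M_0$ in \eqref{def:linprob} is active precisely when $P_k(\widehat{v}_k) \geq 1$ and inactive otherwise, both regimes being absorbed into the piecewise definition of $\eta_k$. The value of the statement lies in the fact that comparing $r_j(\mu_{k+1})$ to $r_j((1-s)\mu_k + s\eta_k)$ for cleverly chosen $s$ will be the starting point of the quantitative convergence estimates that establish Theorem~\ref{thm:convofcondu}.
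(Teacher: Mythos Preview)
Your proof is correct and follows essentially the same route as the paper: bound the objective below by $(1-P_k(\widehat{v}_k))\|\eta\|_{M(\mathcal{B})}$ using positivity of $\eta$ and maximality of $\widehat{v}_k$, split on the sign of $1-P_k(\widehat{v}_k)$, and for the second claim use that $(1-s)\mu_k+s\eta_k\in M^+(\mathcal{A}_k^+)$ together with the optimality of $\mu_{k+1}$ in Step~5. Your write-up is in fact slightly more detailed than the paper's terse version.
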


\begin{proof}
Since we are testing against positive measures, we can estimate
\[
\begin{aligned}
\min_{\substack{\eta \in M^+(\mathcal{B}), \\ \|\eta\|_{M(\mathcal{B})}\leq M_0 }} \lbrack -\sm{ P_k}{ \eta} +\|\eta\|_{M(\mathcal{B})} \rbrack & \geq
\min_{\substack{\eta \in M^+(\mathcal{B}), \\ \|\eta\|_{M(\mathcal{B})}\leq M_0 }} \lbrack (1-\max_{v \in \mathcal{B}} P(v))\|\eta\|_{M(\mathcal{B})} \rbrack \\
&=\begin{cases}
0 & \text{ if } \, P(\widehat{v}_k) <1\,, \\
(1-\max_{v \in \mathcal{B}} P(v)) M_0 & \text{ otherwise,} 
\end{cases}
\\&=  -\sm{ P_k}{ \eta_k} +\|\eta_k\|_{M(\mathcal{B})} \,.
\end{aligned}
\]
The proof is finished noting that~$(1-s)\mu_k+s \eta_k$ belongs to $M^+(\mathcal{A}_k^+)$, and that $\mu_{k+1}$ solves \eqref{def:pdapsubprob}.
\end{proof}

In particular, Proposition~\ref{prop:linprob} shows that, in each iteration, Algorithm~\ref{alg:pdap} achieves at least as much descent as a conditional gradient update. Such observation allows us to prove sublinear convergence for Algorithm~\ref{alg:pdap} using known convergence results for conditional gradient methods in general Banach space (see Theorem \ref{thm:convofcond} below).
Finally, the combination of Theorem~\ref{thm:eqofpdapandgcg}  with Theorem \ref{thm:convofcond} yields the convergence of Algorithm~\ref{alg:gcg}.

\begin{theorem} \label{thm:convofcond}
Let \ref{ass:func1}-\ref{ass:func3} in Assumption \ref{ass:functions} hold. 
Then, the sequence $\{j(\mu_k)\}_k$ is monotone decreasing,~$\mu_k \in E_{\mu_0}$, and there exists a constant $c>0$ such that
\begin{equation} \label{thm:convofcond:2}
r_j(\mu_k) \leq c \, \frac{1}{k+1} \quad \text{ for all } \,\, k \in \N\,,
\end{equation}
where $r_j$ is defined at \eqref{eq:residualmu}. 
The sequence~$\{\mu_k\}_{k}$ admits at least one weak* accumulation point and each such point is a solution to~\eqref{def:sparseprob}. If the solution~$\bar{\mu}$ to~\eqref{def:sparseprob} is unique, we have~$\mu_k \weakstar \bar{\mu}$ for the whole sequence. 
\end{theorem}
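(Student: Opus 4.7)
The plan is to exploit Proposition~\ref{prop:linprob}, which reduces the analysis of the fully-corrective step to that of a standard generalized conditional gradient update along $\eta_k$. All subsequent estimates will be based on the chain $r_j(\mu_{k+1}) \le r_j((1-s)\mu_k + s\eta_k)$ for arbitrary $s \in [0,1]$.

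\textbf{Step 1 (Monotonicity and boundedness).} Since $\mu_k \in M^+(\mathcal{A}_k^+)$ is feasible for the subproblem defining $\mu_{k+1}$ in Step~5 of Algorithm~\ref{alg:pdap}, we get $j(\mu_{k+1}) \le j(\mu_k)$ directly. By induction $\mu_k \in E_{\mu_0}$, so in particular $\|\mu_k\|_{M(\mathcal{B})} \le M_0$.

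\textbf{Step 2 (A descent lemma on $E_{\mu_0}$).} Because $\mathcal{K}$ is weak*-to-strong continuous (Proposition~\ref{prop:def_K}) and $E_{\mu_0}$ is weak* compact (Theorem~\ref{prop:equivalence}), the image $\mathcal{K}(E_{\mu_0}) \subset Y$ is norm compact. By Assumption~\ref{ass:func1}, $\nabla F$ is Lipschitz on this set with some constant $L$. Setting $D := 2\|\mathcal{K}\|_{\mathcal{L}(M(\mathcal{B}),Y)} M_0$ and $\mu_s := (1-s)\mu_k + s\eta_k$, the standard quadratic upper bound for smooth convex functions combined with the convexity of $\|\cdot\|_{M(\mathcal{B})}$ (and the positivity of $\mu_k,\eta_k$) yields
\[
j(\mu_s) \le j(\mu_k) - s\bigl[\langle P_k, \eta_k - \mu_k\rangle + \|\mu_k\|_{M(\mathcal{B})} - \|\eta_k\|_{M(\mathcal{B})}\bigr] + \tfrac{LD^2}{2} s^2.
\]

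\textbf{Step 3 (Linearization gap bounds the residual).} Let $\bar\mu$ minimize \eqref{def:sparseprob}. Since $\bar\mu \in E_{\mu_0}$, we have $\|\bar\mu\|_{M(\mathcal{B})} \le M_0$, so $\bar\mu$ is feasible for \eqref{def:linprob}. Optimality of $\eta_k$ there implies $-\langle P_k,\eta_k\rangle + \|\eta_k\|_{M(\mathcal{B})} \le -\langle P_k,\bar\mu\rangle + \|\bar\mu\|_{M(\mathcal{B})}$. Combined with the convexity inequality
\[
r_j(\mu_k) \le \langle P_k, \bar\mu - \mu_k\rangle + \|\mu_k\|_{M(\mathcal{B})} - \|\bar\mu\|_{M(\mathcal{B})},
\]
obtained from $F(\mathcal{K}\bar\mu) \ge F(\mathcal{K}\mu_k) + (\nabla F(\mathcal{K}\mu_k), \mathcal{K}(\bar\mu-\mu_k))_Y$ and $P_k = -\mathcal{K}_*\nabla F(\mathcal{K}\mu_k)$, this produces
\[
r_j(\mu_k) \le \langle P_k,\eta_k - \mu_k\rangle + \|\mu_k\|_{M(\mathcal{B})} - \|\eta_k\|_{M(\mathcal{B})}.
\]

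\textbf{Step 4 (Recursion and rate).} Plugging the bound from Step~3 into the inequality from Step~2 and using Proposition~\ref{prop:linprob} gives
\[
r_j(\mu_{k+1}) \le (1-s)\, r_j(\mu_k) + \tfrac{LD^2}{2}\, s^2 \quad \text{for every } s \in [0,1].
\]
This is the classical Frank--Wolfe recursion: a routine induction with the choice $s = 2/(k+2)$ (or cited directly from, e.g., \cite{jaggiwolfe}) produces $r_j(\mu_k) \le c/(k+1)$ for a constant $c$ depending only on $L$, $D$ and $j(\mu_0) - \min j$.

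\textbf{Step 5 (Accumulation points and uniqueness).} The sequence $\{\mu_k\}$ lies in the weak* compact set $E_{\mu_0}$, so it admits weak* accumulation points. For any such limit $\tilde\mu$, the weak* lower semicontinuity of $j$ (Theorem~\ref{prop:equivalence}) together with $r_j(\mu_k) \to 0$ gives $j(\tilde\mu) \le \liminf_n j(\mu_{k_n}) = \min j$, so $\tilde\mu$ solves \eqref{def:sparseprob}. If the minimizer $\bar\mu$ is unique, every subsequence has a further weak* convergent subsequence with the same limit $\bar\mu$, whence $\mu_k \weakstar \bar\mu$ for the whole sequence. Finally, the assertions for Algorithm~\ref{alg:gcg} in Theorem~\ref{thm:convofcondu} follow from the equivalence in Theorem~\ref{thm:eqofpdapandgcg} together with the estimate \eqref{eq:resinequ}, and the fact that $\mathcal{I}$ is weak*-to-weak* continuous.

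The only delicate point is aligning the nonsmooth total variation terms in Steps~2 and~3 so that the same quantity $\langle P_k,\eta_k-\mu_k\rangle + \|\mu_k\|_{M(\mathcal{B})} - \|\eta_k\|_{M(\mathcal{B})}$ appears on both sides; once this cancellation is set up, everything reduces to the standard Frank--Wolfe bookkeeping.
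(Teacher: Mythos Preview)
Your argument is correct and follows the same strategy as the paper: establish monotonicity, verify that $\nabla(F\circ\mathcal{K})$ is Lipschitz on a suitable compact set, and then run the standard Frank--Wolfe/GCG recursion via Proposition~\ref{prop:linprob}. The paper compresses your Steps~2--5 into a single citation of \cite[Theorem~6.14]{daniel_thesis}; you have simply written out those details, so the approaches coincide.

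One small technical slip worth fixing: in Step~2 you take the Lipschitz constant $L$ on $\mathcal{K}(E_{\mu_0})$, but the descent lemma is applied along the segment from $\mu_k$ to $\eta_k$, and $\eta_k=M_0\delta_{\widehat v_k}$ (hence $\mu_s$) need not lie in $E_{\mu_0}$. Replace $E_{\mu_0}$ by the weak* compact set $\{\mu\in M^+(\mathcal{B}):\|\mu\|_{M(\mathcal{B})}\le M_0\}$; its image under $\mathcal{K}$ is norm compact and contains every $\mathcal{K}\mu_s$, so Assumption~\ref{ass:func1} still supplies the required Lipschitz constant. Also, your closing remarks about Theorem~\ref{thm:convofcondu} belong to that separate proof, not here.
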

\begin{proof}
Since~$\mu_{k+1}$ is a solution to~\eqref{def:pdapsubprob} we clearly have $j(\mu_{k+1})\leq j(\mu_{k}) \leq j(\mu_{0})$.
Thus~$\{j(\mu_k)\}_k$ is monotone decreasing and~$\mu_k \in E_{\mu_0}$.
Now, we show that ~$\nabla(F \circ \K)$ is Lipschitz continuous on~$E_{\mu_0}$. Indeed, since~$E_{\mu_0}$ is weak* compact and~$\K$ is weak*-to-strong continuous, see Proposition~\ref{prop:def_K}~iii), the image set
$\K E_{\mu_0}=\left\{\,\K \mu\; |\;\mu \in E_{\mu_0}\,\right\}$
is compact in $Y$.
Using Assumption~\ref{ass:func1}, we have that $\nabla F$ is Lipschitz continuous on $\K E_{\mu_0}$ for some constant~$L_{\mu_0}>0$. Hence
\begin{align*}
\|\K_*(\nabla F(\K \mu_1)-\nabla F(\K \mu_2))\|_{C(\Bb)} &\leq C\|K\|_{\mathcal{L}(\M,Y)} \ynorm{\nabla F(\K \mu_1)-\nabla F(\K \mu_2)} \\ &\leq   L_{\mu_0} C \|K\|_{\mathcal{L}(\M,Y)} \ynorm{\K(\mu_1-\mu_2)} \\
& \leq L_{\mu_0}C^2 \|K\|^2_{\mathcal{L}(\M,Y)} \|\mu_1-\mu_2 \|_{M(\Bb)}
\end{align*}
for all~$\mu_1,\mu_2 \in E_{\mu_0}$, where $C$ is the constant from Proposition~\ref{prop:def_K}~i). %
The claimed convergence statement now follows from~\cite[Theorem 6.14]{daniel_thesis}. 
\end{proof}
\begin{proof}
[Proof of Theorem~\ref{thm:convofcondu}]
Assume that Algorithm~\ref{alg:gcg} does not converge after finitely many steps and generates a sequence~$\{u_k\}_k$. According to Theorem~\ref{thm:eqofpdapandgcg} there exists a sequence~$\{\mu_k\}_k$ generated by Algorithm~\ref{alg:pdap} with~$u_k= \I(\mu_k)$. Invoking Theorem~\ref{thm:eqofpdapandgcg} as well as Theorem~\ref{thm:convofcond} yields
\begin{align*}
 r_J(u_k) \leq r_j(\mu_k) \leq c \, \frac{1}{k+1} \quad \text{ for all } \,\, k \in \N\,.
\end{align*}
In particular Theorem~\ref{thm:convofcond} implies that, up to subsequences, $\mu_k \weakstar \bar \mu$ with $\bar \mu$ solution of \eqref{def:sparseprob}. Since
$\mathcal{I}$ is weak*-to-weak* continuous by Proposition~\ref{prop:I}~iii), and since $u_k = \mathcal{I}(\mu_k)$, we infer $u_k \weakstar \bar{u}$ with 
$\bar u := \mathcal{I}(\bar\mu)$. As $\bar\mu$ minimizes in \eqref{def:sparseprob}, by Theorem~\ref{prop:equivalence}~ii), we infer that $\bar u$ minimizes in \eqref{def:minprob}.
The rest of the statement follows by a similar argument. 
\end{proof}

\begin{remark}
In the next section, devoted to the proof of the linear convergence of Algorithm~\ref{alg:gcg}, the global sublinear convergence provided by Theorem~\ref{thm:convofcondu} plays an important role. Indeed, since the claimed linear convergence in Theorem \ref{thm:fastconvpreview} is only asymptotic, i.e., it relies on the iterates~$u_k$ being sufficiently close, in the weak* sense, to a solution $\bar u$ of \eqref{def:minprob}, the application of Theorem~\ref{thm:convofcondu} is a necessary starting point. Our proof of Theorem \ref{thm:fastconvpreview} relies on the lifting strategy of Section~\ref{sec:lifting}, the interpretation of PDAP as a monotone, fully-corrective GCG method for~\eqref{def:sparseprob}, as well as known convergence results for this algorithm. A similar identification for Algorithm~\ref{alg:gcg} is~\textit{not} directly possible. In fact, since in general we only have~$\mathcal{G}(u_k) \leq\kappa_{\mathcal{A}_k}(u_k)$, the residuals~$\{r_J(u_k)\}_k$ in Algorithm~\ref{alg:gcg} are not necessarily monotone.  
While we believe that a proof of Theorem~\ref{thm:convofcondu} which does not rely on~\eqref{def:sparseprob} is possible, this is, to the best of our knowledge, non-standard and would require further work. In particular, we point out that known results of~\cite{yu} for GCG methods with gauge-like regularizers seem not to be applicable since~$\mathcal{B}$ is, generally, not strongly compact. As such, the lifting strategy provides an elegant way to circumvent these additional arguments.   
\end{remark}

\subsection{Fast convergence and proof of Theorem \ref{thm:fastconvpreview}} \label{subsec.auxiliary}

In this section we further investigate the convergence behaviour of the iterates~$\{\mu_k\}_k$ of Algorithm~\ref{alg:pdap}, but now under the premise of Assumptions~\ref{ass:fast:F1}-\ref{ass:fast:F5}. 
The goal is to show an improved linear local convergence rate for Algorithm~\ref{alg:pdap}, see Theorem \ref{thm:fast_pdap} below. Thanks to this result, and to Theorem \ref{thm:eqofpdapandgcg}, we will then be able to prove linear convergence for the FC-GCG method of Algorithm~\ref{alg:gcg}, as stated in Theorem~\ref{thm:fastconvpreview}. As the proofs are quite technical, after establishing some notations we give a detailed summary of the results.

\subsubsection{Notation}
We employ the notation at \eqref{eq:sec6_notations} for $\mu_k$, $\mathcal{A}_k$, $\mathcal{A}_k^+$, $\widehat{v}_k$, $P_k$. Further, denote by
\[
\bar{u}=\sum_{i=1}^N \bar{\lambda}_i \bar{u}_i \,, \quad
\bar{\mu} = \sum_{i=1}^N \bar{\lambda}_i \delta_{\bar{u}_i} \,, \quad \bar{\lambda}_i>0\,, \quad \bar{u}_i \in \operatorname{Ext}(B)\,,
\]
the unique solutions to \eqref{def:minprob} and \eqref{def:sparseprob}, respectively. The existence and uniqueness are guaranteed by 
Proposition~\ref{prop:convimpliconmu}, which holds since we 
are assuming \ref{ass:fast:F2}-\ref{ass:fast:F3}, while $\bar{\lambda}_i >0$ by the non-degeneracy Assumption~\ref{ass:fast:F4}. 
Since \eqref{def:sparseprob} has a unique solution and 
the prerequisites of Theorem~\ref{thm:convofcond} are fulfilled, we have the convergences 
\begin{equation} \label{eq:whole_sequence}
\mu_k \weakstar \bar{\mu} \quad \text{weakly* in} \quad M(\mathcal{B})\,, \quad r_j(\mu_k) \to 0 \,,
\end{equation}
as $k \to \infty$, along the whole sequence. 
Also, we denote by
\[
\bar{\mathcal{A}}:= \{ \bar{u}_i\}_{i=1}^N\,, \quad 
  \bar{p}:= -K_* \nabla F(K \bar{u}) \in \Cc \,, \quad
\bar{P}:= -\K_* \nabla F(\K \bar{\mu}) \in C(\mathcal{B})\,,
\]
the set of optimal extremal points and the optimal dual variables associated to $\bar{u}$ and $\bar{\mu}$, respectively. 
Moreover, let $\{\bar{U}_i\}_{i=1}^N$ be the pairwise disjoint $d_\Bb$-closed neighborhoods of $\{\bar u_i\}_{i=1}^N$, 
as in Assumption~\ref{ass:fast:F5}, and set $U_i:=\bar{U}_i \cap \ext (B)$.
Finally, the observations are denoted by
\[
y_k:= \K \mu_k \,, \quad \bar{y}:=K\bar{u} = \K \bar{\mu} \,,
\]
where the equality $K\bar{u} = \K \bar{\mu}$ follows from Proposition~\ref{prop:def_K}~ii).

\subsubsection{Summary of results}
Our aim is to show the existence of $\zeta \in [3/4,1)$ and $c>0$ such that 
\begin{equation} \label{eq:summary_rate}
r_j(\mu_k) \leq c \, \zeta^k\,,
\end{equation}
for all $k$ sufficiently large, see Theorem~\ref{thm:fast_pdap} below. To obtain \eqref{eq:summary_rate} it will be sufficient, for fixed $k$, to construct a measure $\mu^+ \in M(\mathcal{A}^+_k)$ such that
\begin{align*}
r_j(\mu_{k+1}) \leq r_j(\mu) \leq \zeta r_j(\mu_k)\,.   %
\end{align*}
 We will choose $\mu:=\mu_k^s$ as the convex combination
\begin{equation} \label{eq:summary_combination}
\mu_k^s:=\mu_k + s (\widehat{\mu}_k - \mu_k), 
\end{equation}
for some $s \in [0,1]$, 
where the surrogate sequence $\widehat{\mu}_k$ is obtained by suitably modifying $\mu_k$ in a neighbourhood of the inserted point $\widehat{v}_k$. More in detail, we proceed as follows:

\begin{itemize}

    \item[i)] We start with some preparatory results in Sections~\ref{subsec:iterates},  \ref{subsec:dual_variables},~\ref{subsec:conv_dual}. After, in Section~\ref{subsec:asymptotic}, we show that the active sets~$\mathcal{A}_k$ cluster around~$\bar{\mathcal{A}}$, in the sense that $\mathcal{A}_k \subset \cup_{i=1}^N U_i$ for $k$ sufficiently large. Moreover, every point in~$\bar{\mathcal{A}}$ is approximated by at least one in~$\mathcal{A}_k$. This is a consequence of the uniform convergence of the dual variables $P_k$ to the optimal dual variable $\bar P$, see Section~\ref{subsec:conv_dual},  as well as the isolation of the maximizers of~$\bar{P}$, see Assumption~\ref{ass:fast:F2}. 
 
    \item[ii)] In Section~\ref{subsec:distance}, we quantify the distance between $\mathcal{A}_k^+$ and $\bar{\mathcal{A}}$ in terms of the function $g$ in Assumption~\ref{ass:fast:F5}, see Proposition~\ref{lem:estonallpoints}. This is a crucial part of our analysis, and also the first point where the growth estimates in~\ref{ass:fast:F5} come into play. Precisely, we show that $\mathcal{A}_k \cap U_i$ approaches $\bar{u}_i$ in the sense that 
    \begin{equation} \label{eq:summary_1}
        \int_{U_i} g(v,\bar{u}_i)~\de \mu_k (v)  \lesssim r_j(\mu_k)^{1/2} \,, \quad \text{ for all } \,\, i=1,\ldots,N\,.
    \end{equation}
    In addition, we prove that the inserted point $\widehat{v}_k$ at Step~2 is close to $\bar{\mathcal{A}}$, in the sense that there exists an index $\hat{\imath}_k \in \{1,\ldots,N\}$ such that $\widehat{v}_k \in U_{\hat{\imath}_k}$ and
    \begin{equation} \label{eq:summary_2}
    g(\widehat{v}_k,\bar{u}_{\hat{\imath}_k}) \lesssim r_j(\mu_k)^{1/2}\,. 
    \end{equation}
    
    \item[iii)] In Section~\ref{subsec:surrogate} we introduce the surrogate sequence 
    \[
    \widehat{\mu}_k := 
\mu_k(\bar{U}_{\hat{\imath}_k}) \delta_{\widehat{v}_k} + \sum^N_{i=1,~i\neq \hat{\imath}_k} \mu_k \zak \bar{U}_{i}\,,
    \]
    obtained by modifying $\mu_k$ in the neighbourhood $\bar{U}_{\hat{\imath}_k}$ of the inserted point $\widehat{v}_k$. 
    In Lemma~\ref{lem:estforwide} we prove that $\widehat{\mu}_k \weakstar \bar{\mu}$, as well as the following key estimate
    \begin{equation} \label{eq:summary_3}
    \norm{\K( \widehat{\mu}_k - \mu_k  )}_Y \lesssim r_j(\mu_k)^{1/2} \,.
    \end{equation}
    This estimate relies on \eqref{eq:summary_1}-\eqref{eq:summary_2}, and thus inherently requires Assumption~\ref{ass:fast:F5}.

     \item[iv)] In Section~\ref{subsec:fastconvpdap} we  employ~\eqref{eq:summary_3} to prove the estimate 
     \begin{equation} \label{eq:summary_growth}
     r_j(\mu_{k+1}) \leq r_j(\mu_k^s) \leq \zeta r_j(\mu_k) \,,
     \end{equation}
     for some $\zeta \in [3/4,1)$ and a suitable step size $s \in (0,1)$, 
     where $\mu_k^s$ is defined in \eqref{eq:summary_combination}. %
     From~\eqref{eq:summary_growth} we then obtain the local linear convergence rate for PDAP, see Theorem~\ref{thm:fast_pdap}.

     \item[v)] In Section~\ref{subsec:fastconvproof} we finally prove Theorem~\ref{thm:fastconvpreview}. Thanks to the above analysis, the proof is a simple consequence of the linear convergence rate for PDAP established in Theorem~\ref{thm:fast_pdap}, and of the link between PDAP and FC-GCG granted by the lifting strategy, see Theorem~\ref{thm:eqofpdapandgcg}.
\end{itemize}

\begin{remark}
The proof strategy described above is inspired by~\cite{pieper}, %
where the authors propose a method in the spirit of Algorithm~\ref{alg:pdap} for solving a minimization problem in the space of measures supported on a compact set in~$\R^n$.
However, the proofs in our setting often require novel techniques compared to the ones in~\cite{pieper}. Loosely speaking, this can be attributed to the fact that the metric space~$(\Bb,d_{\mathcal{B}})$ does not posses an obvious geometric structure. For example, in the euclidean setting of~\cite{pieper}, the estimates for the distance between~$\bar{\mathcal{A}}$ and~$\mathcal{A}_k$, or~$\widehat{v}_k$, respectively, rely on the convexity of euclidean balls, as well as on the higher order differentiability of the dual variable. Such strategy does not extend to our setting, where~$d_\mathcal{B}$-neighborhoods are generally non-convex and no apparent differentiable structure is available. The difference between both approaches shows most prominently in the key estimate for~$\mathcal{K}(\widehat{\mu}_k-\mu_k)$ anticipated in~\eqref{eq:summary_3}. %
In contrast, the fast convergence result in \cite{pieper} relies %
on
\begin{equation} \label{eq:euclidean_estimate}
\|\mathcal{K}(\widehat{\mu}_k-\mu_k)\|_Y \lesssim (P_k(\widehat{v}_k)-1)^{1/2}\,,
\end{equation}
see~\cite[Lemma~5.15]{pieper}. With the notation of the current paper, estimate \eqref{eq:euclidean_estimate} is obtained from a \textit{perturbed quadratic growth condition} of the form
\begin{align*}
    \sm{P_k}{ \widehat{v}_k-u} \geq \frac{\kappa}{2} \, g(u, \widehat{v}_k)^2
\end{align*}
in the vicinity of~$\widehat{v}_k$, the derivation of which relies on higher order differentiability of the dual variable. In general though, the quadratic growth condition in Assumption~\ref{ass:fast:F5} is \textit{not} stable with respect to perturbations of the dual variable. Thus, such arguments cannot be applied in our setting.  
\end{remark}

\subsubsection{Properties of iterates} \label{subsec:iterates}
Since~$\bar{\mu} \neq 0$ and $\norm{\cdot}_{M(\mathcal{B})}$ is weak* lower semicontinuous, from \eqref{eq:whole_sequence} we conclude the existence of $M \in \N$ such that 
\begin{equation} \label{eq:non_empty}
\mu_k \neq 0\,, \quad \mathcal{A}_k \neq \emptyset\,, \quad \text{ for all } \, k \geq M \,.
\end{equation}
Also, as a consequence of Theorem~\ref{thm:convofcond}
\begin{equation} \label{eq:bound_norm_mu_k}
\{\mu_k\}_k \subset  E_{\mu_0}\, , \qquad \sup_{k \in \N} \ \norm{\mu_k}_{M(\mathcal{B})} \leq M_0\,,
\end{equation}
where $E_{\mu_0}$ is the weak* compact set in \eqref{def:set_initial}, and $M_0>0$ is an upper bound on the norm of elements in $E_{\mu_0}$. In particular
$\bar\mu \in  E_{\mu_0}$ and $\norm{\bar\mu}_{M(\mathcal{B})} \leq M_0$.

\subsubsection{Properties of dual variables} \label{subsec:dual_variables}
Recall that $\bar{P}$ and $\bar{p}$ are related by
$\bar{P}(v) = \langle  \bar{p}, v\rangle$  for all $v \in \mathcal{B}$, thanks to Proposition \ref{prop:existofextrem}. In particular, Assumption~\ref{ass:fast:F2} reads
\begin{equation} \label{eq:P_equals_1}
\argmax_{v \in \mathcal{B}} \bar{P}(v)
=  \{ v \in \mathcal{B} \; | \; \bar{P}(v) = \langle  \bar{p}, v\rangle =1 \} = \bar{\mathcal{A}}\,.
\end{equation}
Similarly, we can reformulate Assumption~\ref{ass:fast:F5} and Remark \ref{rem:wlog} in terms of $\bar{P}$ to obtain 
\begin{align} \label{eq:isolbarP}
 \bar{P}(v) \leq 1- \sigma \quad \text{ for all } \,\, v \in \mathcal{B} \setminus \bigcup^N_{i=1} \bar{U}_i \,, \\
 \label{eq:quadgrowthbarP}
 \bar{P}(v) \leq 1 - \kappa g(v,\bar{u}_i)^2 \quad \text{ for all } \,\, v \in U_i \,,
\end{align}
where we remind the reader that 
$g\colon \ext(B)\times \ext(B)\to [0,\infty)$, and
$\kappa,\sigma >0$ are constants. 
The dual variables~$P_k$ satisfy the following optimality conditions. For a proof, see Appendix~\ref{sec:app_optcondpdapsub}.

\begin{proposition} \label{prop:optcondpdapsub}
Let $M$ be as in \eqref{eq:non_empty}. Then, for all~$k \geq M$ we have
\begin{align*}
P_k=1 \, \text{ on } \, \mathcal{A}_k \,,  \quad ~\sm{P_k}{\mu_k}=\|\mu_k\|_{M(\Bb)}\,, 
\quad \max_{v \in \Bb} P_k(v) \geq 1 \,.
\end{align*}
 \end{proposition}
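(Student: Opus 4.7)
The plan is to exploit the fact that $\mu_k$, for $k\geq 1$, is obtained by Step 5 of Algorithm~\ref{alg:pdap} as a minimizer of the fully-corrective subproblem \eqref{def:pdapsubprob} over the finite-dimensional cone $M^+(\mathcal{A}_{k-1}^+)$, and then to read off optimality conditions. For $k \geq M$ the assumption $\mu_k \neq 0$, $\mathcal{A}_k \neq \emptyset$ from \eqref{eq:non_empty} ensures that the statements are non-vacuous.

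First I would invoke Lemma~\ref{lem:finitedimonmeasures} to translate the subproblem on $M^+(\mathcal{A}_{k-1}^+)$ into the finite-dimensional convex minimization problem \eqref{eq:subprobcoeffs} with variables $\lambda \in \R_+^{N_{k-1}^+}$, whose minimizer is the coefficient vector $\lambda^k$ of $\mu_k = \sum_i \lambda_i^k \delta_{u_i^{k-1}}$. The partial derivatives of the objective read
\[
\partial_{\lambda_i}\Bigl[F\Bigl(\sum_j \lambda_j K u_j^{k-1}\Bigr) + \sum_j \lambda_j\Bigr] = (\nabla F(\mathcal{K}\mu_k), K u_i^{k-1})_Y + 1 = 1 - P_k(u_i^{k-1}),
\]
where I used $\mathcal{K}\mu_k = \sum_j \lambda_j^k K u_j^{k-1}$ together with Proposition~\ref{prop:existofextrem} (and Proposition~\ref{prop:def_K}~iv)) to identify $P_k(v)=\langle -K_*\nabla F(\mathcal{K}\mu_k), v\rangle$ on $\Bb$. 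The standard KKT conditions for non-negativity constrained convex minimization then give
\[
P_k(u_i^{k-1}) \leq 1 \,\,\, \text{ for all } i, \qquad P_k(u_i^{k-1}) = 1 \,\,\, \text{ whenever } \lambda_i^k > 0.
\]

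Since by Step 6 of Algorithm~\ref{alg:pdap} $\mathcal{A}_k = \supp \mu_k = \{u_i^{k-1} \colon \lambda_i^k > 0\}$, the second KKT relation yields $P_k \equiv 1$ on $\mathcal{A}_k$. Integrating then gives
\[
\sm{P_k}{\mu_k} = \sum_{i} \lambda_i^k P_k(u_i^{k-1}) = \sum_i \lambda_i^k = \|\mu_k\|_{M(\Bb)},
\]
using $\lambda_i^k\geq 0$ and positivity of $\mu_k$. Finally, since $\mathcal{A}_k \neq \emptyset$ for $k \geq M$, any $u \in \mathcal{A}_k \subset \operatorname{Ext}(B)\subset \Bb$ satisfies $P_k(u)=1$, whence $\max_{v\in\Bb} P_k(v) \geq 1$.

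I do not expect any real obstacle here: the proof is essentially a transcription of the finite-dimensional KKT conditions for \eqref{eq:subprobcoeffs} back into the language of measures and dual variables on $\Bb$. The only point requiring a little care is the identification of the gradient of $F\circ \mathcal{K}$ with $P_k$ on the atoms of $\mu_k$, which is precisely what Propositions~\ref{prop:def_K} and~\ref{prop:existofextrem} are designed to deliver.
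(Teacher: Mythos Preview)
Your proof is correct and follows essentially the same approach as the paper: derive the KKT conditions for the finite-dimensional subproblem~\eqref{eq:subprobcoeffs}, observe that the strict positivity of the retained coefficients forces $P_k=1$ on $\mathcal{A}_k$, and read off the remaining claims. The only cosmetic difference is that the paper indexes directly over the pruned set $\mathcal{A}_k$ (using that the retained $\lambda^k_i$ still minimize the reduced problem), whereas you work over $\mathcal{A}_{k-1}^+$ and then invoke Step~6; both routes are equivalent.
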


\subsubsection{Convergence of dual variables and observations}
\label{subsec:conv_dual}
Due to the strong convexity of~$F$ around~$\bar{y}$, see Assumption~\ref{ass:fast:F1}, the worst-case convergence guarantee of Theorem~\ref{thm:convofcond} also carries over to the observations $y_k$ and the dual variables $P_k$, as stated in the following proposition. For a proof, we refer the reader to Appendix~\ref{sec:app_eststates}.

\begin{proposition} \label{prop:eststates}
There exist $M \in \N$ and $c>0$ such that, for all $k \geq M$, there holds
\begin{equation} \label{prop:eststates:1}
\ynorm{y_k-\bar{y}}+ \ynorm{\nabla F(y_k)-\nabla F(\bar{y})}+ \|P_k-\bar{P}\|_{C(\mathcal{B})} \leq c\sqrt{r_j(\mu_k)}\,.
\end{equation}
In particular, we have~$y_k \rightarrow \bar{y} $ in~$Y$ and~$P_k \rightarrow \bar{P}$ in~$C(\Bb)$.
\end{proposition}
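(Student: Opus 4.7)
The plan is to derive a quadratic lower bound
\[
r_j(\mu_k) \geq \frac{\theta}{2}\|y_k - \bar y\|_Y^2
\]
from the strong convexity of $F$ near $\bar y$ provided by \ref{ass:fast:F1}, and then propagate this to the remaining quantities via Lipschitz continuity of $\nabla F$ on compact sets and boundedness of $\K_*$.

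First, I would ensure that the iterates $y_k$ eventually sit inside the strong-convexity neighborhood $\mathcal{N}(\bar y)$: Theorem~\ref{thm:convofcond} together with the uniqueness of the minimizer of \eqref{def:sparseprob} (Proposition~\ref{prop:convimpliconmu}) gives $\mu_k \weakstar \bar\mu$ along the whole sequence, and the weak*-to-strong continuity of $\K$ from Proposition~\ref{prop:def_K}~iii) upgrades this to $y_k = \K\mu_k \to \K \bar\mu = \bar y$ strongly in $Y$. Hence $y_k \in \mathcal{N}(\bar y)$ for all $k \geq M$ with some $M \in \N$.

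Then, strong convexity of $F$ at $\bar y$ gives
\[
F(y_k) - F(\bar y) - (\nabla F(\bar y), y_k - \bar y)_Y \geq \frac{\theta}{2}\|y_k - \bar y\|_Y^2 \,.
\]
I would rewrite the linear term via Proposition~\ref{prop:def_K}~iv) as $\sm{\K_* \nabla F(\bar y)}{\mu_k - \bar \mu} = -\sm{\bar P}{\mu_k - \bar \mu}$, and add $\|\mu_k\|_{M(\mathcal{B})} - \|\bar\mu\|_{M(\mathcal{B})}$ to both sides to reassemble $r_j(\mu_k)$ on the left. The optimality conditions from Theorem~\ref{thm:optcondition} (namely $\sm{\bar P}{\bar\mu} = \|\bar\mu\|_{M(\mathcal{B})}$ and $\bar P \leq 1$ on $\mathcal{B}$), combined with the positivity $\mu_k \in M^+(\mathcal{B})$, ensure that the residual measure contribution $(\|\mu_k\|_{M(\mathcal{B})} - \sm{\bar P}{\mu_k}) - (\|\bar\mu\|_{M(\mathcal{B})} - \sm{\bar P}{\bar\mu})$ is nonnegative, and the claimed quadratic lower bound on $r_j(\mu_k)$ follows, yielding $\|y_k - \bar y\|_Y \lesssim \sqrt{r_j(\mu_k)}$.

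Finally, since $\{\mu_k\}_k \subset E_{\mu_0}$ and $E_{\mu_0}$ is weak* compact, the set $\K(E_{\mu_0}) \cup \{\bar y\}$ is compact in $Y$; by \ref{ass:func1}, $\nabla F$ is Lipschitz on it with some constant $L>0$, giving $\|\nabla F(y_k) - \nabla F(\bar y)\|_Y \leq L\|y_k - \bar y\|_Y$. Boundedness of $\K_*$ (Proposition~\ref{prop:def_K}) then yields $\|P_k - \bar P\|_{C(\mathcal{B})} = \|\K_*(\nabla F(\bar y) - \nabla F(y_k))\|_{C(\mathcal{B})} \leq \|\K_*\|\, \|\nabla F(y_k) - \nabla F(\bar y)\|_Y$. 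Summing the three estimates proves the inequality, and the ``in particular'' statement follows from $r_j(\mu_k) \to 0$. The only nontrivial step is the quadratic lower bound on $r_j(\mu_k)$; the remaining ingredients are continuity arguments already recorded in the paper.
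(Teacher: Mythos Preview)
Your proposal is correct and follows essentially the same route as the paper's proof: first use $\mu_k \weakstar \bar\mu$ and weak*-to-strong continuity of $\K$ to place $y_k$ in $\mathcal{N}(\bar y)$, then combine strong convexity with the optimality conditions~\eqref{eq:optconditionstatement} to obtain $r_j(\mu_k) \geq (\theta/2)\|y_k-\bar y\|_Y^2$, and finally propagate via Lipschitz continuity of $\nabla F$ on the compact set $\K E_{\mu_0}$ and boundedness of $\K_*$. The only cosmetic difference is that the paper first reduces everything to the estimate on $\|y_k-\bar y\|_Y$ and then proves that, whereas you do the reduction at the end; also note that $\bar y = \K\bar\mu \in \K E_{\mu_0}$ already, so your union with $\{\bar y\}$ is unnecessary.
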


\subsubsection{Asymptotic behavior of $\mathcal{A}_k$} \label{subsec:asymptotic}
In the following we show that active sets~$\mathcal{A}_k$ cluster around~$\bar{\mathcal{A}}$. Specifically, for $k$ sufficiently large, we have $\mathcal{A}_k \subset \cup_{i=1}^N U_i$, with $\mathcal{A}_k \cap U_i \neq \emptyset$ for all $i=1,\ldots,N$. 
 
\begin{proposition} \label{prop:nonempty}
There exists $M \in \N$ such that for all~$k \geq M$ we have
\begin{gather}
 P_k(v) \leq 1-\sigma/2 \quad \text{ for all } \,\, v \in \mathcal{B} \setminus \bigcup^N_{i=1} \bar{U}_i\,,  \label{eq:localization1}\\
 \mathcal{A}_k \subset \bigcup^N_{i=1} U_i\label{eq:localization2}\,,
\end{gather}
where $\sigma>0$ is the constant in \eqref{eq:isolbarP}. 
Moreover, for all $k \geq M$ and $i=1,\ldots,N$,  
\begin{equation} \label{eq:non_empty_inline}
\lim_{k \to \infty} \mu_k (\bar{U}_i) = \bar{\lambda}_i\,, \qquad \mathcal{A}_k \cap U_i \neq \emptyset \,.
\end{equation}
\end{proposition}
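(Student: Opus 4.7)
The plan is to establish the four assertions in the stated order, each building on the preceding ones. The central tool will be the uniform convergence $P_k \to \bar{P}$ in $C(\mathcal{B})$ supplied by Proposition~\ref{prop:eststates}, combined with the separation property \eqref{eq:isolbarP} of $\bar{P}$.

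For \eqref{eq:localization1}, I would pick $M_1 \in \mathbb{N}$ so large that $\|P_k - \bar{P}\|_{C(\mathcal{B})} \leq \sigma/2$ for all $k \geq M_1$ (possible by Proposition~\ref{prop:eststates}) and combine this with $\bar{P}(v) \leq 1 - \sigma$ on $\mathcal{B} \setminus \bigcup_{i=1}^N \bar{U}_i$ from \eqref{eq:isolbarP}. To obtain \eqref{eq:localization2} I would then invoke Proposition~\ref{prop:optcondpdapsub}, which forces $P_k(v) = 1$ for every $v \in \mathcal{A}_k$ and every $k$ past the threshold $M$ in \eqref{eq:non_empty}; since $1 > 1 - \sigma/2$, \eqref{eq:localization1} rules out $v \in \mathcal{B} \setminus \bigcup_i \bar{U}_i$, and $\mathcal{A}_k \subset \operatorname{Ext}(B)$ then yields $\mathcal{A}_k \subset \bigcup_i U_i$.

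For the limit $\mu_k(\bar{U}_i) \to \bar{\lambda}_i$ the plan is to apply a Portmanteau-type argument to the weak* convergent sequence $\mu_k \weakstar \bar{\mu}$ from \eqref{eq:whole_sequence} on the compact metric space $(\mathcal{B}, d_\mathcal{B})$. The key verification is that each $\bar{U}_i$ is a $\bar{\mu}$-continuity set: because $\bar{U}_i$ is a $d_\mathcal{B}$-closed neighborhood of $\bar{u}_i$, we have $\bar{u}_i \in \operatorname{int}(\bar{U}_i)$, and pairwise disjointness of $\{\bar{U}_j\}_j$ ensures $\bar{u}_j \notin \bar{U}_i$ for $j \neq i$. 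Since $\bar{\mu} = \sum_j \bar{\lambda}_j \delta_{\bar{u}_j}$ is concentrated on $\bar{\mathcal{A}}$, this gives $\bar{\mu}(\partial \bar{U}_i) = 0$, whence Portmanteau yields $\mu_k(\bar{U}_i) \to \bar{\lambda}_i$. Nonemptiness of $\mathcal{A}_k \cap U_i$ will then follow at once: by Assumption~\ref{ass:fast:F4} we have $\bar{\lambda}_i > 0$, so $\mu_k(\bar{U}_i)$ is strictly positive for $k$ large, and since $\supp \mu_k = \mathcal{A}_k \subset \bigcup_j U_j$ with pairwise disjoint $\bar{U}_j$, this mass must be carried by atoms in $\mathcal{A}_k \cap U_i$. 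The argument is largely bookkeeping; the only mildly subtle step, which I expect to be the main technical (rather than conceptual) point, is the Portmanteau verification, relying on the geometric observation that each $\bar{u}_i$ sits in the interior of its closed neighborhood.
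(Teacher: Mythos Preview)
Your proof is correct. For \eqref{eq:localization1}, \eqref{eq:localization2}, and the final nonemptiness claim, your argument coincides with the paper's. The only genuine difference lies in how you establish $\mu_k(\bar{U}_i) \to \bar{\lambda}_i$: you invoke the Portmanteau theorem after verifying $\bar{\mu}(\partial \bar{U}_i)=0$, whereas the paper avoids Portmanteau by constructing, via Urysohn's lemma, a continuous $\varphi_i \colon \mathcal{B}\to[0,1]$ with $\varphi_i=1$ on $\bar{U}_i$ and $\varphi_i=0$ on the remaining $\bar{U}_j$, and then uses the already-established inclusion \eqref{eq:localization2} to identify $\sm{\varphi_i}{\mu_k}=\mu_k(\bar{U}_i)$ exactly. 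The paper's route is slightly more self-contained (it only needs weak* convergence against a single test function and reuses \eqref{eq:localization2}), while yours is more direct and does not rely on \eqref{eq:localization2} for this step; both are standard and equally valid here since $(\mathcal{B},d_\mathcal{B})$ is compact and $\mu_k(\mathcal{B})\to\bar{\mu}(\mathcal{B})$ follows from testing against $1\in C(\mathcal{B})$.
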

 
\begin{proof}
As $P_k \to \bar{P}$ uniformly by
Proposition~\ref{prop:eststates}, we deduce the existence of~$M \in \N$ such that, for all $k \geq M$, it holds~$ \|P_k-\bar{P}\|_{C(\mathcal{B})} \leq \sigma/2$. As a consequence, for all $v \in \mathcal{B} \setminus \bigcup^N_{i=1} \bar{U}_i$ and $k$ sufficiently large we have
\begin{align*}
P_k(v) & =\bar{P}(v)+P_k(v)-\bar{P}(v) \\
& \leq 1- \sigma + \|P_k-\bar{P}\|_{C(\mathcal{B})}  \leq 1-\sigma/2\,,
\end{align*}
where we used~\eqref{eq:isolbarP}.
This proves \eqref{eq:localization1}. Now, recall that $P_k=1$ on  $\mathcal{A}_{k}$ by Proposition~\ref{prop:optcondpdapsub}. Therefore $\mathcal{A}_k \subset \cup_{i=1}^N \bar{U}_i$, otherwise \eqref{eq:localization1} would yield a contradiction. Since by construction $\mathcal{A}_k \subset \ext(B)$ and $U_i:=\bar{U}_i \cap \ext(B)$, we conclude \eqref{eq:localization2}. 
Consider now an arbitrary but fixed index~$l$ in $\{1,\dots,N\}$. Recall that
the sets~$\bar{U}_i$ are pairwise disjoint and~$d_{\mathcal{B}}$-closed. Therefore we can apply Urysohn's lemma to obtain a $d_{\mathcal{B}}$-continuous function~$\varphi_l \colon \mathcal{B} \to [0,1]$ such that~$\varphi_l=1$ in $\bar{U}_l$, and~$\varphi_l=0$ in $\bar{U}_i$,~$i\neq l$. Recall that $\mu_k \weakstar \mu$ along the whole sequence by \eqref{eq:whole_sequence}.  
Therefore, since $\bar \mu = \sum_{i=1}^N \bar \lambda_i \delta_{\bar u_i}$, we get
\begin{align*}
\bar{\lambda}_l= \sm{\varphi_l}{ \bar{\mu}}= \lim_{k\rightarrow \infty } \sm{ \varphi_l}{ \mu_k}= \lim_{k\rightarrow \infty} \mu_k(\bar{U}_l)\,,
\end{align*}
where in the last equality we used \eqref{eq:localization2}.
Finally, since~$\bar{\lambda}_l>0$, from the above convergence we get~$\mu_k(\bar{U}_l)>0$ for $k$ sufficiently large. Recalling that $\mathcal{A}_k \subset \ext(B)$ and that $U_l:=\bar{U}_l \cap \ext(B)$, we then infer $\mu_k(U_l)>0$ for $k$ sufficiently large, concluding~$\mathcal{A}_k \cap U_l \neq 0$. 
\end{proof}

\subsubsection{Distance between $\mathcal{A}_k^+$ and $\bar{\mathcal{A}}$} \label{subsec:distance}

In the following proposition we use the previous results to quantify the distance between~the active set~$\mathcal{A}_k$ and the set of optimal extremal points $\bar{\mathcal{A}}$, see \eqref{lem:estonallpoints:1} below. 
We also provide an estimate for the distance of~$\widehat{v}_k$ to the closest element in $\bar{\mathcal{A}}$, see \eqref{eq:estoonnewpoint}. We remark that this is the first point 
in which the estimates of Assumption~\ref{ass:fast:F5} are employed.

\begin{proposition} \label{lem:estonallpoints}
There exist $M \in \N$ and a constant $c>0$ such that for all $k \geq M$ and $i=1,\dots,N$ it holds
\begin{equation} \label{lem:estonallpoints:1}
\sum_{v  \in \mathcal{A}_k \cap  U_i } \mu_k(\{v\}) g(v,\bar{u}_i) \leq c \sqrt{r_j(\mu_{k})}\,.
\end{equation}
Moreover, for each $k \geq M$ there is an index~$\hat{\imath}_k \in\{1,\dots,N\}$ such that $\widehat{v}_k \in U_{\hat{\imath}_k}$ and
\begin{align} \label{eq:estoonnewpoint}
g(\widehat{v}_k,\bar{u}_{\hat{\imath}_k}) \leq c \sqrt{r_j(\mu_k)}\,.
\end{align}
\end{proposition}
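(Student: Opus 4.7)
The plan is to prove the two estimates separately. The key observation is that the residual $r_j(\mu_k)$ controls the integrated dual gap $\int_{\mathcal{B}}(1-\bar P)\,d\mu_k$ linearly, rather than only through its square root as one might naively expect from Proposition~\ref{prop:eststates}; this linear control is what ultimately yields the $r_j(\mu_k)^{1/2}$ rate in both estimates.

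For \eqref{lem:estonallpoints:1}, I would first use $\bar P = -\mathcal{K}_*\nabla F(\bar y)$ and $\mathcal{K}\mu_k = y_k$ to deduce $\sm{\bar P}{\mu_k} = -(\nabla F(\bar y), y_k)_Y$, and similarly $\sm{\bar P}{\bar\mu} = \|\bar\mu\|_{M(\mathcal{B})}$ by the optimality condition of Theorem~\ref{thm:optcondition}. These identities rearrange the residual as
\[
r_j(\mu_k) = \bigl[F(y_k) - F(\bar y) - (\nabla F(\bar y), y_k - \bar y)_Y\bigr] + \bigl[\|\mu_k\|_{M(\mathcal{B})} - \sm{\bar P}{\mu_k}\bigr],
\]
in which both brackets are non-negative---the first by convexity of $F$, the second because $\bar P \le 1$ and $\mu_k \ge 0$. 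Since the second bracket equals $\int_{\mathcal{B}}(1-\bar P)\,d\mu_k$, this gives $\int_{\mathcal{B}}(1-\bar P)\,d\mu_k \le r_j(\mu_k)$. For $k$ large, Proposition~\ref{prop:nonempty} places $\mathcal{A}_k$ inside $\bigcup_i U_i$, so applying the quadratic growth \eqref{eq:quadgrowthbarP} on each $U_i$ yields
\[
\kappa \sum_{v \in \mathcal{A}_k \cap U_i} \mu_k(\{v\})\, g(v,\bar u_i)^2 \;\le\; \int_{U_i}(1-\bar P)\,d\mu_k \;\le\; r_j(\mu_k),
\]
and Cauchy--Schwarz combined with $\mu_k(U_i) \le \|\mu_k\|_{M(\mathcal{B})} \le M_0$ from \eqref{eq:bound_norm_mu_k} then delivers \eqref{lem:estonallpoints:1}.

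For \eqref{eq:estoonnewpoint}, the existence of $\hat\imath_k$ with $\widehat v_k \in U_{\hat\imath_k}$ follows by combining $P_k(\widehat v_k) = \max_{\mathcal{B}} P_k \ge 1$ (Proposition~\ref{prop:optcondpdapsub}) with the uniform convergence $\|P_k - \bar P\|_{C(\mathcal{B})} \to 0$ and the isolation estimate \eqref{eq:isolbarP}; once $\widehat v_k$ lies in some $\bar U_i$, membership in $U_i$ follows from $\widehat v_k \in \ext(B)$. For the quantitative rate I would avoid the naive bound $1 - \bar P(\widehat v_k) \le \|P_k - \bar P\|_{C(\mathcal{B})}$, which only yields $g(\widehat v_k, \bar u_{\hat\imath_k}) \lesssim r_j(\mu_k)^{1/4}$. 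Instead, using the maximality $P_k(\widehat v_k) \ge P_k(\bar u_{\hat\imath_k})$ together with $\bar P(\bar u_{\hat\imath_k}) = 1$, one obtains
\[
1 - \bar P(\widehat v_k) \;\le\; (P_k - \bar P)(\widehat v_k) - (P_k - \bar P)(\bar u_{\hat\imath_k}) \;=\; \bigl(\nabla F(\bar y) - \nabla F(y_k),\, K(\widehat v_k - \bar u_{\hat\imath_k})\bigr)_Y,
\]
where the last equality uses the identity $(P_k - \bar P)(v) = (\nabla F(\bar y) - \nabla F(y_k), Kv)_Y$ implied by Proposition~\ref{prop:def_K}. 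The Lipschitz part of \ref{ass:fast:F5} then bounds $\|K(\widehat v_k - \bar u_{\hat\imath_k})\|_Y$ by $\tau g(\widehat v_k, \bar u_{\hat\imath_k})$, while Proposition~\ref{prop:eststates} gives $\|\nabla F(y_k) - \nabla F(\bar y)\|_Y \lesssim \sqrt{r_j(\mu_k)}$, so that $1 - \bar P(\widehat v_k) \lesssim \sqrt{r_j(\mu_k)}\, g(\widehat v_k, \bar u_{\hat\imath_k})$. Inserting the lower bound $1 - \bar P(\widehat v_k) \ge \kappa g(\widehat v_k,\bar u_{\hat\imath_k})^2$ from \eqref{eq:quadgrowthbarP} and dividing by $g(\widehat v_k,\bar u_{\hat\imath_k})$ (the estimate being trivial when this factor vanishes) produces \eqref{eq:estoonnewpoint}.

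The main obstacle is precisely the sharp estimate \eqref{eq:estoonnewpoint}: the $\sqrt{r_j(\mu_k)}$ rate hinges on $\widehat v_k$ being a \emph{maximizer} of $P_k$, which allows comparison with the nearby optimal atom $\bar u_{\hat\imath_k}$ and converts the pointwise dual-variable gap into an inner product featuring the small factor $K(\widehat v_k - \bar u_{\hat\imath_k})$. It is the interplay between the Lipschitz continuity and the quadratic-growth part of \ref{ass:fast:F5} that absorbs one power of $g$ and produces the advertised rate.
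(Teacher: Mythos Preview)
Your proposal is correct and follows essentially the same approach as the paper's proof. The only cosmetic difference is that you write the residual as an exact Bregman-type decomposition with two non-negative summands, whereas the paper obtains the same inequality $r_j(\mu_k)\ge \|\mu_k\|_{M(\mathcal B)}-\sm{\bar P}{\mu_k}$ directly via convexity of $F$ and the optimality condition; the subsequent use of the quadratic growth, Cauchy--Schwarz/Jensen with the mass bound $\mu_k(\bar U_i)\le M_0$, and the maximality trick $P_k(\widehat v_k)\ge P_k(\bar u_{\hat\imath_k})$ for~\eqref{eq:estoonnewpoint} are identical.
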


\begin{proof}
By minimality of $\bar{\mu}$ in \eqref{def:sparseprob} and convexity of $F$ we obtain
\begin{equation}\label{eq:lem:estonallpoints:1}
\begin{aligned} 
r_j(\mu_k) & = F(\K \mu_k) - F(\K \bar \mu) + \nor{\mu_k}_{M(\Bb)} - \nor{\bar\mu}_{M(\Bb)} \\ 
&  \geq ( \nabla F (\K \bar \mu), \K \mu_k-\K \bar \mu )_Y + \nor{\mu_k}_{M(\Bb)} - \nor{\bar\mu}_{M(\Bb)} \\
&  = \sm{ \bar{P}}{\bar{\mu}-\mu_k } + \nor{\mu_k}_{M(\Bb)} - \nor{\bar\mu}_{M(\Bb)} =\|\mu_k\|_{M(\mathcal{B})} -\sm{ \bar{P}}{\mu_k}\,,
\end{aligned}
\end{equation}
where we used the optimality condition~\eqref{eq:optconditionstatement} in the last line. Fix $i_0\in\{1,\dots,N\}$ and an arbitrary~$k\in\N$ large enough such that all previous results in this section hold. We will show \eqref{lem:estonallpoints:1} for the index $i_0$. 
By \eqref{eq:localization2} and definition of $\mu_k$ we obtain
\begin{gather} \label{eq:lem:estonallpoints:2}
\nor{\mu_k}_{M(\Bb)} = \sum^N_{i=1} \sum_{v  \in \mathcal{A}_k \cap  U_i } \mu_k(\{v\}) \,, \\
\sm{ \bar{P}}{\mu_k} 
 = \sum^N_{i=1} \sum_{v  \in \mathcal{A}_k \cap  U_i } \mu_k(\{v\}) \, \bar{P}(v)\,. \label{eq:lem:estonallpoints:3}
\end{gather}
Putting together \eqref{eq:lem:estonallpoints:1}-\eqref{eq:lem:estonallpoints:3}, and using~\eqref{eq:quadgrowthbarP}, that is, \ref{ass:fast:F5}, we estimate 
\begin{align*}
r_j(\mu_k) & %
 \geq  \sum^N_{i=1} \sum_{v  \in \mathcal{A}_k \cap  U_i } \mu_k(\{v\})(1 - \bar{P}(v)) \\
 & \geq \kappa \sum^N_{i=1} \sum_{v  \in \mathcal{A}_k \cap  U_i } \mu_k(\{v\})g (v,\bar{u}_i)^2 
 \geq \kappa \sum_{v  \in \mathcal{A}_k \cap  U_{i_0} } \mu_k(\{v\})g (v,\bar{u}_{i_0})^2 \,.
\end{align*}
Since $\mathcal{A}_k \cap U_{i_0} \neq \emptyset$ by Proposition \ref{prop:nonempty}, we have $\mu_k(\bar{U}_{i_0})= \sum_{v \in \mathcal{A}_k \cap U_{i_0}} \mu_k(\{v\})>0$. Due to the convexity of~$(\cdot)^2$, we conclude
\begin{equation} \label{eq:lem:estonallpoints:4}
\begin{aligned} 
r_j(\mu_k) & \geq \kappa  \sum_{v  \in \mathcal{A}_k \cap  U_{i_0} } \mu_k(\{v\})g (v,\bar{u}_{i_0})^2 \\
& \geq \frac{\kappa}{\mu_k(\bar{U}_{i_0})} \left(  \sum_{v  \in \mathcal{A}_k \cap  U_{i_0} } \mu_k(\{v\})g (v,\bar{u}_{i_0}) \right)^2 \,.
\end{aligned}
\end{equation}
Recall~$\mu_k(\bar{U}_{i_0}) \leq M_0$ by \eqref{eq:bound_norm_mu_k}. Estimate \eqref{lem:estonallpoints:1} now follows by \eqref{eq:lem:estonallpoints:4} with $c:=\sqrt{M_0/\kappa}$.

We now show~\eqref{eq:estoonnewpoint}. Note that by Proposition~\ref{prop:optcondpdapsub} we have  $P_k(\widehat{v}_k)=\max_{v \in \Bb} P_k(v) \geq 1$ for all~$k\in\N$ large enough. Therefore $\widehat{v}_k \in \cup_{i=1}^N \bar{U}_i$ by \eqref{eq:localization1}. Recalling that $\widehat{v}_k \in \operatorname{Ext}(B)$ and that $U_i:=\bar{U}_i \cap \operatorname{Ext}(B)$
are pairwise disjoint, we deduce that $\widehat{v}_k \in U_{\hat{\imath}_k}$ for some unique index $\hat{\imath}_k$ in $\{1,\dots,N\}$.
Utilizing~\eqref{eq:quadgrowthbarP}, i.e.~\ref{ass:fast:F5}, and the fact that $\bar{P}(\bar{u}_{\hat{\imath}_k})=1$ by \eqref{eq:P_equals_1}, we estimate
\begin{equation} \label{eq:estnewpoint1}
\begin{aligned}
\kappa\, g(\widehat{v}_k,\bar{u}_{\hat{\imath}_k})^2 & \leq 
1-\bar{P}(\widehat{v}_k) =
\bar{P}(\bar{u}_{\hat{\imath}_k})-\bar{P}(\widehat{v}_k) \\
& \leq \bar{P}(\bar{u}_{\hat{\imath}_k})-\bar{P}(\widehat{v}_k) +P_k(\widehat{v}_k)-P_k(\bar{u}_{\hat{\imath}_k})\,,
\end{aligned}
\end{equation}
where in the last line we used that $P_k(\widehat{v}_k) = \max_{v \in  \Bb} P_k(v)$. Using Assumption~\ref{ass:fast:F5} as well as Proposition \ref{prop:existofextrem}, the right-hand side in \eqref{eq:estnewpoint1} is further bounded by 
\begin{equation} \label{eq:estnewpoint2}
\begin{aligned}
\bar{P}(\bar{u}_{\hat{\imath}_k})-\bar{P}(\widehat{v}_k) & +P_k(v_k)-P_k(\bar{u}_{\hat{\imath}_k}) \\ & = \langle \bar{p}-p_k,  \bar{u}_{\hat{\imath}_k} - \widehat{v}_k \rangle
\\ &= (\nabla F(\bar{y})-\nabla F(y_k),K(\widehat{v}_k - \bar{u}_{\hat{\imath}_k}))_Y
\\ &\leq \ynorm{\nabla F(\bar{y})-\nabla F(y_k)}\, \ynorm{K(\widehat{v}_k - \bar{u}_{\hat{\imath}_k})} \\& \leq \tau \ynorm{\nabla F(\bar{y})-\nabla F(y_k)}\, g(\widehat{v}_k,\bar{u}_{\hat{\imath}_k}) \,,
\end{aligned}
\end{equation}
where $\tau>0$ is the constant from \eqref{eq:quadgrowth}, which does not depend on $k$.
Finally, using \eqref{eq:estnewpoint1} and \eqref{eq:estnewpoint2} together with Proposition~\ref{prop:eststates}, we conclude \eqref{eq:estoonnewpoint}. 
\end{proof}

\subsubsection{Surrogate sequence}
\label{subsec:surrogate}

Let $M \in\N$ be sufficiently large so that all the above results hold. For $k \geq M$ we denote by $\hat{\imath}_k  \in\{1,\dots,N\}$ the index from Proposition~\ref{lem:estonallpoints}. Starting from the sequence $\{\mu_k\}_k$ generated by Algorithm \ref{alg:pdap}, we define the  \emph{surrogate sequence} $\{\widehat \mu_k\}_k$ in $M^+(\mathcal{B})$ by setting
\begin{equation}  \label{def:surrogate}
\widehat{\mu}_k := 
\mu_k(\bar{U}_{\hat{\imath}_k}) \delta_{\widehat{v}_k} + \sum^N_{i=1,~i\neq \hat{\imath}_k} \mu_k \zak \bar{U}_{i}\,,
\end{equation}
where $\mu_k \zak \bar{U}_{i}$ is defined as in Section~\ref{subsec:measure_notation}. 
Notice that $\widehat{\mu}_k$ is just a local modification of $\mu_k$ around $\widehat{v}_k$. 
In the following lemma we investigate the properties of $\widehat{\mu}_k$. Most importantly, 
we establish the weak* convergence of~$\widehat{\mu}_k$ towards~$\bar{\mu}$, and the crucial estimate \eqref{eq:summary_3}.  %

\begin{lemma} \label{lem:estforwide}
For all $k \geq M$ there holds
\begin{gather} \label{eq:samelinear}
\|\mu_k\|_{M(\mathcal{B})}=\|\widehat{\mu}_k\|_{M(\mathcal{B})} \,, \\ 
\label{eq:samelinear:1}
\sm{ {P}_k}{\widehat{\mu}_k-\mu_k}= \mu_k(\bar{U}_{\hat{\imath}_k})(P_k(\widehat{v}_k)-1)\,, \\
 \label{lem:estforwide:1}
\ynorm{\mathcal{K}(\widehat{\mu}_k-\mu_k)} \leq c \sqrt{r_j(\mu_k)}\,,
\end{gather}
where $c>0$ does not depend on $k$. Moreover, as $k \to \infty$,  we have
\[
 \mathcal{K}\widehat{\mu}_k \to \mathcal{K}\bar{\mu} \,\, \text{ strongly in } \, Y \,, \,\, \quad j(\widehat{\mu}_k) \to j(\bar \mu)  \, , \,\,  \quad \widehat{\mu}_k \weakstar \bar \mu   \,.
\]
In particular,~$\widehat{\mu}_k \in E_{\mu_0}$ for all~$k\in\N$ large enough, where $E_{\mu_0}$ is the set in \eqref{def:set_initial}.
\end{lemma}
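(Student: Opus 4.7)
The plan is to exploit the explicit atomic structure of $\mu_k$ and $\widehat\mu_k$ together with the Lipschitz and quadratic growth bounds from Assumption~\ref{ass:fast:F5}, via the anchor point $\bar u_{\hat\imath_k}$ supplied by Proposition~\ref{lem:estonallpoints}.

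For \eqref{eq:samelinear} and \eqref{eq:samelinear:1}, I would first note that, by Proposition~\ref{prop:nonempty}, for $k$ sufficiently large $\mathcal{A}_k \subset \bigcup_{i=1}^N U_i$, so that $\mu_k = \sum_{i=1}^N \mu_k \zak \bar{U}_i$ with pairwise disjoint restrictions; reading off \eqref{def:surrogate} then gives
\[
\|\widehat\mu_k\|_{M(\mathcal{B})} = \mu_k(\bar U_{\hat\imath_k}) + \sum_{i \neq \hat\imath_k} \mu_k(\bar U_i) = \|\mu_k\|_{M(\mathcal{B})}.
\]
For \eqref{eq:samelinear:1}, a direct computation yields
\[
\sm{P_k}{\widehat\mu_k - \mu_k} = \mu_k(\bar U_{\hat\imath_k})\, P_k(\widehat v_k) - \int_{\bar U_{\hat\imath_k}} P_k \, \de \mu_k,
\]
and since $\mu_k \zak \bar U_{\hat\imath_k}$ is supported on $\mathcal{A}_k$ where $P_k \equiv 1$ by Proposition~\ref{prop:optcondpdapsub}, the integral equals $\mu_k(\bar U_{\hat\imath_k})$, which gives the claim.

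The crucial step, and the main obstacle, is the estimate \eqref{lem:estforwide:1}. I would write
\[
\widehat\mu_k - \mu_k = \sum_{u \in \mathcal{A}_k \cap U_{\hat\imath_k}} \mu_k(\{u\}) \, (\delta_{\widehat v_k} - \delta_u),
\]
use $\mathcal{K}\delta_v = Kv$ for $v \in \mathcal{B}$ (Proposition~\ref{prop:def_K}~ii)), and apply the triangle inequality with the anchor $\bar u_{\hat\imath_k}$:
\[
\|K(\widehat v_k - u)\|_Y \leq \|K(\widehat v_k - \bar u_{\hat\imath_k})\|_Y + \|K(u - \bar u_{\hat\imath_k})\|_Y \leq \tau\, g(\widehat v_k, \bar u_{\hat\imath_k}) + \tau\, g(u, \bar u_{\hat\imath_k}),
\]
where both bounds come from the Lipschitz estimate in Assumption~\ref{ass:fast:F5}. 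This is precisely the point where the localization provided by Proposition~\ref{lem:estonallpoints}, which simultaneously places $\widehat v_k$ and the atoms of $\mu_k \zak \bar U_{\hat\imath_k}$ into $U_{\hat\imath_k}$, becomes essential: without a common anchor in $\bar{\mathcal{A}}$ the Lipschitz bound in \ref{ass:fast:F5} could not be applied. Summing against the weights $\mu_k(\{u\})$, using $\mu_k(\bar U_{\hat\imath_k}) \leq M_0$, and invoking \eqref{lem:estonallpoints:1} together with \eqref{eq:estoonnewpoint} then yields $\ynorm{\mathcal{K}(\widehat\mu_k - \mu_k)} \leq c \sqrt{r_j(\mu_k)}$ for some $c>0$ independent of $k$.

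The remaining convergence claims follow as corollaries. Combining \eqref{lem:estforwide:1} with the strong convergence $\mathcal{K}\mu_k \to \mathcal{K}\bar\mu$ from Proposition~\ref{prop:eststates} gives $\mathcal{K}\widehat\mu_k \to \mathcal{K}\bar\mu$ in $Y$, hence $F(\mathcal{K}\widehat\mu_k) \to F(\mathcal{K}\bar\mu)$ by continuity of $F$; combined with \eqref{eq:samelinear} and the identity $\|\mu_k\|_{M(\mathcal{B})} = \sum_i \mu_k(\bar U_i) \to \sum_i \bar\lambda_i = \|\bar\mu\|_{M(\mathcal{B})}$ from Proposition~\ref{prop:nonempty}, this yields $j(\widehat\mu_k) \to j(\bar\mu)$. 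A standard subsequence argument then produces the weak* convergence: any weak* accumulation point $\nu$ of the bounded positive sequence $\{\widehat\mu_k\}_k$ lies in $M^+(\mathcal{B})$ and satisfies $j(\nu) \leq \liminf_k j(\widehat\mu_k) = j(\bar\mu) = \min j$ by weak* lower semicontinuity (Theorem~\ref{prop:equivalence}), hence $\nu = \bar\mu$ by uniqueness of the minimizer (Proposition~\ref{prop:convimpliconmu}). Finally, under the standing assumption that Algorithm~\ref{alg:pdap} does not terminate we have $j(\bar\mu) < j(\mu_0)$, so $j(\widehat\mu_k) < j(\mu_0)$ for $k$ large, placing $\widehat\mu_k$ in $E_{\mu_0}$.
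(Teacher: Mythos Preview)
Your proof is correct and follows essentially the same route as the paper: the atomic decomposition of $\widehat\mu_k-\mu_k$, the triangle-inequality pivot through the anchor $\bar u_{\hat\imath_k}$ combined with the Lipschitz bound in \ref{ass:fast:F5}, and the subsequence/uniqueness argument for weak* convergence all match the paper's proof. The only cosmetic difference is in deducing $j(\widehat\mu_k)\to j(\bar\mu)$: you argue directly via $\|\mu_k\|_{M(\mathcal{B})}\to\|\bar\mu\|_{M(\mathcal{B})}$ from Proposition~\ref{prop:nonempty}, while the paper sandwiches $r_j(\widehat\mu_k)$ between $0$ and $r_j(\mu_k)+|F(\mathcal{K}\widehat\mu_k)-F(\mathcal{K}\mu_k)|$; both are equally valid.
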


\begin{proof}
Recall that $\widehat{v}_k \in U_{\hat{\imath}_k}$ by Proposition \ref{lem:estonallpoints}. Using \eqref{eq:localization2}, the definition of $\widehat \mu_k$, and the fact that the sets $U_i$ are pairwise disjoint, it is immediate to check that \eqref{eq:samelinear} holds.
Noting that 
\begin{equation} \label{eq:subtraction}
\widehat{\mu}_k-\mu_k = \mu_k(\bar{U}_{\hat{\imath}_k}) \delta_{\widehat{v}_k} - \mu_k \zak \bar{U}_{\hat{\imath}_k}\,,
\end{equation}
we also obtain \eqref{eq:samelinear:1}, since
\begin{align*}
\sm{ P_k}{\widehat{\mu}_k-\mu_k} 
 & = \mu_k(\bar{U}_{\hat{\imath}_k}) P_k(\widehat{v}_k) - 
\int_{\bar{U}_{\hat{\imath}_k}} P_k(v)\,\de \mu_k(v) \\
 & =\mu_k(\bar{U}_{\hat{\imath}_k})(P_k (\widehat{v}_k)-1)\,,
\end{align*}
where in the last equality we used that $P_k=1$ on $\mathcal{A}_k = \supp \mu_k$ by Proposition~\ref{prop:optcondpdapsub}. We now show \eqref{lem:estforwide:1}. By \eqref{eq:localization2} and \eqref{eq:subtraction} we compute
\begin{align*}
\widehat \mu_k - \mu_k  = \sum_{ v \in \mathcal{A}_k \cap {U}_{\hat{\imath}_k}  } \mu_k(\{v\}) (\delta_{\widehat{v}_k} - \delta_v)\,.
\end{align*}
Note that $\mathcal{I}(\delta_{\widehat{v}_k} - \delta_v)=\widehat{v}_k -v$. Thus linearity of $\K$, Proposition~\ref{prop:def_K}~ii), and triangle inequality imply
\begin{align*}
\ynorm{\mathcal{K}(\widehat{\mu}_k-\mu_k)} \leq \sum_{ v \in \mathcal{A}_k \cap {U}_{\hat{\imath}_k}  } \mu_k(\{v\}) \ynorm{K(\widehat v_k-v)} \,.
\end{align*}
Recalling that $\widehat v_k \in {U}_{\hat{\imath}_k}$, by \ref{ass:fast:F5} and \eqref{lem:estonallpoints:1}, \eqref{eq:estoonnewpoint} we estimate
\begin{align*}
\sum_{ v \in \mathcal{A}_k \cap {U}_{\hat{\imath}_k}  }  \mu_k(\{v\}) &  \ynorm{K(\widehat{v}_k-v)}    \\ &
 \leq \sum_{ v \in \mathcal{A}_k \cap {U}_{\hat{\imath}_k}  } \mu_k(\{v\})(\ynorm{K(\widehat{v}_k-\bar{u}_{\hat{\imath}_k})} + \ynorm{K(v-\bar{u}_{\hat{\imath}_k})})  \\
& \leq 
\tau \sum_{ v \in \mathcal{A}_k  \cap {U}_{\hat{\imath}_k}  } \mu_k(\{v\}) (g(\widehat{v}_k,\bar{u}_{\hat{\imath}_k})+g(v,\bar{u}_{\hat{\imath}_k}))  \\
& \leq (\mu_k( {U}_{\hat{\imath}_k} ) + 1) \tau c \, \sqrt{r_j(\mu_k)} \leq ( M_0 + 1) \tau c \, \sqrt{r_j(\mu_k)}\,,
\end{align*}
where in the last inequality we used~\eqref{eq:bound_norm_mu_k}. This establishes \eqref{lem:estforwide:1}. 
 
 As for the remaining part of the statement, first recall that
  $r_j(\mu_k) \to 0$ and $\mu_k \weakstar \bar{\mu}$ along the whole sequence by~\eqref{eq:whole_sequence}. Since $\K$ is weak*-to-strong continuous, see Proposition~\ref{prop:def_K}~iii), we conclude that $\K\mu_k \to \K \bar\mu$ strongly in $Y$. Thus, $\K \widehat{\mu}_k \to \K \bar \mu$ from \eqref{lem:estforwide:1}. We now show that $j(\widehat{\mu}_k) \to j (\bar \mu)$. Recalling that $\|\widehat{\mu}_k\|_{M(\mathcal{B})}=\|\mu_k\|_{M(\mathcal{B})}$ by \eqref{eq:samelinear}, we obtain
\begin{equation}
\begin{aligned}
0 \leq r_j(\widehat{\mu}_k) & = F(\K \widehat{\mu}_k) + \nor{\mu_k}_{M(\Bb)} - j(\bar\mu) \\ & \leq r_j(\mu_k) +| F(\K \widehat{\mu}_k) - F(\K \mu_k)| \,. 
\end{aligned} \label{eq:estimate_j_proof}
\end{equation}
We have $r_j(\mu_k) \to 0$ by Theorem~\ref{thm:convofcond}, while $| F(\K \widehat{\mu}_k) - F(\K \mu_k)| \to 0$ since   $\K \widehat{\mu}_k, \K \mu_k \to \K\bar \mu$ and $F$ is continuous. 
Therefore the right hand side of \eqref{eq:estimate_j_proof} converges to zero, which implies $r_j(\widehat{\mu}_k) \to 0$, that is, $j(\widehat{\mu}_k) \to j (\bar \mu)$. We are left to show that $\widehat{\mu}_k \weakstar \bar{\mu}$. 
Indeed, we have shown that   
$\{\widehat{\mu}_k\}_k$ is a minimizing sequence for \eqref{def:sparseprob}. Since $j$ is weak* lower semicontinuous and has weak* compact sublevels (Theorem \ref{prop:equivalence}), we infer the existence of a subsequence $n_k$ and of $\hat{\mu} \in M^+(\mathcal{B})$ such that  $\mu_{n_k} \weakstar \hat\mu$, with $\hat\mu$ minimizer of $j$. By uniqueness of the minimizer, see Proposition \ref{prop:convimpliconmu}, we conclude that $\mu_{n_k} \weakstar \bar\mu$. Moreover, since the weak* limit does not depend on the subsequence $n_k$, we also infer $\mu_{k} \weakstar \bar\mu$. Finally, since $\mu_0$ is not a minimizer of $j$, we have $\widehat{\mu}_k \in E_{\mu_0}$ for $k$ sufficiently large.
\end{proof}

\subsubsection{Linear convergence of PDAP} \label{subsec:fastconvpdap}

Finally, we are in position to prove the linear convergence of Algorithm~\ref{alg:pdap}.

\begin{theorem} \label{thm:fast_pdap}
Assume \ref{ass:func1}-\ref{ass:func3} and \ref{ass:fast:F1}-\ref{ass:fast:F5}. Let~$\{\mu_k\}_k$ be generated by Algorithm~\ref{alg:pdap}. Then, we have $\mu_k \weakstar \bar{\mu}$ in~$M(\mathcal{B})$. Moreover, there is~$\bar{k} \in \N$ and~$\zeta \in [3/4,1)$ such that
\begin{equation} \label{eq:thm_fast_pdap}
r_j(\mu_{k+1}) \leq \zeta \, r_j(\mu_k) 
\end{equation}
for all~$k \geq \bar{k}$. In particular, there is~$c>0$ with
\begin{equation} \label{eq:thm_fast_pdap_1}
r_j(\mu_k) \leq c \, \zeta^k 
\end{equation}
for all~$k\in\N$ sufficiently large.
\end{theorem}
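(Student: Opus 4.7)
The weak* convergence $\mu_k \weakstar \bar{\mu}$ has already been recorded in~\eqref{eq:whole_sequence}, and the geometric decay \eqref{eq:thm_fast_pdap_1} follows from \eqref{eq:thm_fast_pdap} by a trivial induction on $k$. Hence the task reduces to establishing the one-step contraction~\eqref{eq:thm_fast_pdap} for all $k$ sufficiently large. The strategy is to use the surrogate sequence $\{\widehat{\mu}_k\}_k$ of~\eqref{def:surrogate} to construct explicit comparison measures in $M^+(\mathcal{A}_k^+)$, and then exploit the optimality of $\mu_{k+1}$ among such measures, following the blueprint sketched in item (iv) of the summary.

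For $s \in [0,1]$, set $\mu_k^s := (1-s)\mu_k + s\widehat{\mu}_k$. Since $\supp \widehat{\mu}_k \subset \mathcal{A}_k \cup \{\widehat{v}_k\} = \mathcal{A}_k^+$, we have $\mu_k^s \in M^+(\mathcal{A}_k^+)$, and the optimality of $\mu_{k+1}$ in Step~5 of Algorithm~\ref{alg:pdap} gives $r_j(\mu_{k+1}) \leq r_j(\mu_k^s)$. Using \eqref{eq:samelinear} to cancel the norm terms, and applying a descent-lemma-type bound for $F$—valid because $\nabla F$ is Lipschitz with some constant $L>0$ on the compact convex hull of $\K(E_{\mu_0}) \subset Y$ which contains the whole segment $[\K \mu_k, \K \mu_k^s]$—I obtain
\begin{equation*}
r_j(\mu_k^s) - r_j(\mu_k) \leq s \, (\nabla F(\K\mu_k), \K(\widehat{\mu}_k-\mu_k))_Y + \frac{Ls^2}{2}\|\K(\widehat{\mu}_k-\mu_k)\|_Y^2\,.
\end{equation*}
The first term equals $-s\,\sm{P_k}{\widehat{\mu}_k-\mu_k} = -s\,\mu_k(\bar U_{\hat{\imath}_k})(P_k(\widehat{v}_k)-1)$ by \eqref{eq:samelinear:1}, and the second is controlled by $\tfrac{L s^2}{2} c^2 r_j(\mu_k)$ thanks to~\eqref{lem:estforwide:1}.

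It remains to lower-bound $\mu_k(\bar U_{\hat{\imath}_k})(P_k(\widehat{v}_k)-1)$ in terms of $r_j(\mu_k)$. For the first factor, \eqref{eq:non_empty_inline} gives $\mu_k(\bar U_{\hat{\imath}_k}) \geq \tilde\lambda := \tfrac{1}{2}\min_i \bar\lambda_i > 0$ for all $k$ large. For the second, convexity of $F$ combined with $\sm{P_k}{\mu_k} = \|\mu_k\|_{M(\mathcal{B})}$ (Proposition~\ref{prop:optcondpdapsub}) and $\sm{P_k}{\bar{\mu}} \leq P_k(\widehat{v}_k)\|\bar\mu\|_{M(\mathcal{B})}$ yields the standard conditional-gradient dual-gap bound
\begin{equation*}
r_j(\mu_k) \leq \sm{P_k}{\bar\mu - \mu_k} + \|\mu_k\|_{M(\mathcal{B})} - \|\bar\mu\|_{M(\mathcal{B})} \leq \|\bar\mu\|_{M(\mathcal{B})}(P_k(\widehat{v}_k)-1)\,.
\end{equation*}
Combining these estimates, for every $s \in [0,1]$ and every $k$ large enough,
\begin{equation*}
r_j(\mu_k^s) \leq \left(1 - \frac{s\tilde\lambda}{\|\bar\mu\|_{M(\mathcal{B})}} + \frac{Lc^2 s^2}{2}\right) r_j(\mu_k)\,.
\end{equation*}
Optimizing the quadratic in $s$ (and clipping to $[0,1]$ if the minimizer exceeds $1$) produces a fixed $s_* \in (0,1]$ independent of $k$ for which the bracketed factor is strictly less than $1$; setting $\zeta := \max\{3/4,\, 1 - s_*\tilde\lambda/\|\bar\mu\|_{M(\mathcal{B})} + Lc^2 s_*^2/2\} \in [3/4,1)$ then yields \eqref{eq:thm_fast_pdap}. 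The only genuinely delicate step in this plan is the derivation of~\eqref{lem:estforwide:1}—i.e.\ Lemma~\ref{lem:estforwide}—which is already available and absorbs all the non-trivial use of Assumption~\ref{ass:fast:F5}; with it in hand, the proof of Theorem~\ref{thm:fast_pdap} reduces to the short chain of inequalities above.
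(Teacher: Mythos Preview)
Your proposal is correct and follows essentially the same approach as the paper's proof: construct the convex combination $\mu_k^s$, apply a descent lemma for $F\circ\K$ on $\K(E_{\mu_0})$, identify the linear term via~\eqref{eq:samelinear:1}, control the quadratic term by~\eqref{lem:estforwide:1}, lower-bound the linear coefficient using the dual-gap inequality together with~\eqref{eq:non_empty_inline}, and optimize in $s$. The only cosmetic difference is that you bound the dual gap by $\|\bar\mu\|_{M(\mathcal{B})}(P_k(\widehat v_k)-1)$ directly, whereas the paper routes through Proposition~\ref{prop:linprob} and uses the larger constant $M_0$; your version is slightly sharper but the argument is identical in structure.
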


\begin{proof}
The fact that $\mu_k \weakstar \bar{\mu}$ in~$M(\mathcal{B})$ along the whole sequence is already established in \eqref{eq:whole_sequence}. We have to show the improved convergence rate at \eqref{eq:thm_fast_pdap}. To this end, 
for a fixed $s\in[0,1]$ define
\[
\mu^s_k :=\mu_k+s(\widehat{\mu}_k-\mu_k) \,,
\] 
with $\widehat{\mu}_k$ as in Definition \eqref{def:surrogate}.
We will obtain \eqref{eq:thm_fast_pdap} by estimating the residual of $\mu^s_k$ and choosing an optimal value of $s$. We start by noting that $\mathcal{A}_k \subset \cup_{i=1}^N U_i$ by \eqref{eq:localization2}, with the sets $U_i$ pairwise disjoint. Therefore $\mu_k=\sum_{i=1}^N \mu_k \zak \bar{U}_i$ and
\[
\mu^s_k = \left[ \sum_{i=1, i \neq \hat{\imath}_k}^N \mu_k \zak \bar{U}_i \right] +
s \mu_k(\bar{U}_{\hat{\imath}_k}) \delta_{\widehat{v}_k} + (1-s)  \mu_k \zak \bar{U}_{\hat{\imath}_k}  \,.
\]
Since $\widehat{v}_k \in \bar{U}_{\hat{\imath}_k}$, 
from the above we deduce
\begin{align} \label{eq:mu_s_decomposition_0}
\nor{\mu_k^s}_{M(\Bb)}= \nor{\mu_k}_{M(\Bb)}\,.
\end{align}
Recall that $\mu_{k+1}$ is optimal in  \eqref{def:pdapsubprob} by definition of Algorithm \ref{alg:pdap}. As $\supp \mu^s_k \subset \mathcal{A}_k^+$, we infer
\begin{equation} \label{eq:prooffast:0}
j(\mu_{k+1})\leq j(\mu^s_{k}) \quad \text{ for all } \,\, s \in[0,1]\,.
\end{equation}
Next, we estimate the residual of $\mu^s_{k}$. By \eqref{eq:mu_s_decomposition_0} and regularity of $F$ we infer
\begin{equation} \label{eq:prooffast:1}
\begin{aligned}
j(\mu_k^s)-j(\mu_k) & = F(\K \mu_k + s ( \K \hat{\mu}_k -  \K \mu_k   )) -   F(\K \mu_k)\\
& \leq s (\nabla F(\K \mu_k), \K \widehat{\mu}_k - \K\mu_k)_Y  + R(\mu_k) \\
& =-s \sm{P_k}{\widehat{\mu}_k -\mu_k}  + R_s(\mu_k)\,,
\end{aligned}
\end{equation}
where $R_s(\mu_k)$ is a remainder defined by
\[
R_s(\mu_k):=\int_0^s (\nabla F(\K\mu_k^t) - \nabla F(\K\mu_k),\\K \widehat{\mu}_k - K\mu_k  )_Y~\de t \,.
\]
In order to estimate $R_s(\mu_k)$, first recall that $\{\mu_k\}_k \subset E_{\mu_0}$ by \eqref{eq:bound_norm_mu_k}. Moreover, by Lemma \ref{lem:estforwide}, we have $\widehat{\mu}_k \in E_{\mu_0}$ for $k$ sufficiently large. Therefore, as $E_{\mu_0}$ is convex, we also have $\mu^s_k \in E_{\mu_0}$ for all $s \in [0,1]$ and $k$ sufficiently large. 
Note that the set $\K E_{\mu_0}:=\{\,\K \mu \; | \; \mu \in E_{\mu_0}\,\}$ is compact, given that $\K$ is weak*-to-strong continuous, see~Proposition~\ref{prop:def_K}~iii). Denote by $L_{\mu_0}$ the Lipschitz constant of $\nabla F$ on the set $\K E_{\mu_0}$, which exists by Assumption~\ref{ass:func1}. 
By Cauchy-Schwarz we then estimate
\begin{equation} \label{eq:prooffast:2}
\begin{aligned}
|R_s(\mu_k)| & \leq \nor{ \K \widehat{\mu}_k - \K\mu_k}_Y  \int_0^s  \|\nabla F( \K\mu_k^t )- \nabla F(\K\mu_k) \|_Y~\de t \\
& \leq L_{\mu_0}  \nor{ \K \widehat{\mu}_k - \K\mu_k}_Y \, \int_0^s  \|\K \mu_k^t - \K\mu_k \|_Y~\de t  \\ & = s^2 \, \frac{ L_{\mu_0}}{2} \nor{ \K (\widehat{\mu}_k - \mu_k)}_Y^2 \,.
\end{aligned}
\end{equation}
By minimality of $\bar \mu$ and \eqref{eq:prooffast:1}-\eqref{eq:prooffast:2}, we infer
\begin{equation} \label{eq:prooffast:3}
\begin{aligned}
r_j(\mu^s_k) & =  j(\mu^s_k) - j(\bar \mu) \\
 & = j(\mu_k) - j(\bar \mu) -s \sm{P_k}{\widehat{\mu}_k -\mu_k}  + R_s(\mu_k) \\
& \leq r_j(\mu_k)-s  \sm{ P_k}{\widehat{\mu}_k-\mu_k } +s^2 \,\frac{L_{\mu_0}}{2} \ynorm{\mathcal{K}(\widehat{\mu}_k-\mu_k)}^2.
\end{aligned}
\end{equation}
Recall that by construction $P_k(\widehat{v}_k)= \max_{v \in \mathcal{B}} P_k(v)$. Moreover $\max_{v \in \mathcal{B}} P_k(v) \geq 1$ by Proposition \ref{prop:optcondpdapsub}. Thus
$P_k(\widehat{v}_k) \geq 1$ and we can apply Proposition \ref{prop:linprob} to infer that $M_0 \delta_{\widehat{v}_k}$ minimizes in \eqref{def:linprob}, that is,
\begin{equation} \label{eq:linprob_proof}
\sm{P_k}{\eta} - \norm{\eta}_{M(\mathcal{B})} \leq  
M_0 (P_k(\widehat{v}_k)-1)
\end{equation}
for all $\eta \in M^+(\mathcal{B})$ with $\norm{\eta}_{M(\mathcal{B})} \leq M_0$. 
We can now use convexity of $F$ to estimate
\[
\begin{aligned}
r_j(\mu_k) & = F(\K \mu_k)   - F(\K \bar\mu) + \norm{\mu_k}_{M(\mathcal{B})} - \norm{\bar\mu}_{M(\mathcal{B})} \\
& \leq (\nabla F(\K \mu_k), \K (\mu_k - \bar\mu))_Y +
\norm{\mu_k}_{M(\mathcal{B})} - \norm{\bar\mu}_{M(\mathcal{B})}  \\
& = \sm{P_k}{\bar\mu - \mu_k} + \norm{\mu_k}_{M(\mathcal{B})} - \norm{\bar\mu}_{M(\mathcal{B})} \\ 
 & = \sm{P_k}{\bar\mu }  - \norm{\bar\mu}_{M(\mathcal{B})}\,,
\end{aligned}
\]
where in the last equality we used Proposition~\ref{prop:optcondpdapsub}. Since $\|\bar \mu\|_{M(\mathcal{B})} \leq M_0$, see Section \ref{subsec:iterates}, we can apply \eqref{eq:linprob_proof} with $\eta=\bar\mu$ to obtain
\begin{align*}
r_j(\mu_k)  \leq M_0(P_k(\widehat{v}_k)-1) \,.
\end{align*}
By \eqref{eq:samelinear:1} we then infer
\begin{equation} \label{eq:prooffast:4}
\sm{ P_k}{\widehat{\mu}_k-\mu_k } = \mu_k(\bar{U}_{\hat{\imath}_k }) (P_k(\widehat{v}_k)-1) \geq \frac{\mu_k(\bar{U}_{\hat{\imath}_k })}{M_0}  \, r_j(\mu_k)\,.
\end{equation}
Using \eqref{eq:prooffast:4} and \eqref{lem:estforwide:1} in \eqref{eq:prooffast:3} yields
\begin{equation} \label{eq:prooffast:5}
r_j(\mu_k^s) \leq r_j(\mu_k)-s \, \frac{\mu_k(\bar{U}_{\hat{\imath}_k})}{M_0}\, r_j(\mu_k)+s^2 \,\frac{  L_{\mu_0} c_2}{2} \,r_j(\mu_k)\,,
\end{equation}
where $c_2>0$ is the square of the constant in \eqref{lem:estforwide:1}. Define the constant
\begin{align*}
c_1:=\min_{i=1,\dots,N} \frac{\bar{\mu}(\bar{U}_i)}{2M_0}=\min_{i=1,\dots,N} \frac{\bar{\lambda}_i}{2M_0}\,.
\end{align*}
Notice that $c_1>0$, since \ref{ass:fast:F4} holds. Moreover,~$c_1 \leq 1/2$ given that~$\norm{\bar \mu}_{M(\mathcal{B})} \leq M_0$. Invoking \eqref{eq:non_empty_inline} we conclude that
\[
\frac{\mu_k(\bar{U}_{\hat{\imath}_k})}{M_0} \geq \min_{i=1,\ldots,N}
\frac{\mu_k(\bar{U}_i)}{M_0} \to \min_{i=1,\ldots,N}
\frac{\bar \lambda_i}{M_0} = 2c_1\,, \,\, \, \text{ as } \,\, k \to \infty\,,
\] 
which, together with \eqref{eq:prooffast:5} yields
\begin{align*}
r_j(\mu^s_k)\leq \left(1-sc_1+s^2 \,\frac{L_{\mu_0}c_2}{2}\right) r_j(\mu_k)\,, 
\end{align*}
for all $k$ sufficiently large and all $s \in [0,1]$. Subtracting $j(\bar \mu)$ from both sides of \eqref{eq:prooffast:0} yields
\begin{equation} \label{eq:prooffast:6}
r_j(\mu_{k+1}) \leq \f(s) r_j(\mu_k)  \,, \qquad \f(s):=1-sc_1+s^2 \,\frac{L_{\mu_0}c_2}{2}\,,
\end{equation}
for all $k$ sufficiently large and all $s \in [0,1]$. 
It is immediate to check that
\[
\min_{s \in [0,1]} \f(s) = 
\begin{cases}
1- \displaystyle c_1 + \frac{L_{\mu_0}c_2}{2} & \text{ if } \, c_1 >L_{\mu_0} c_2 \,,\\
1-\displaystyle \frac{c_1^2}{2L_{\mu_0} c_2}  & \text{ if } \, c_1 \leq L_{\mu_0} c_2 \,. \\
\end{cases}
\]
Notice that $\min_{s \in [0,1]} \f(s) \leq \zeta$, where
\begin{align*}
\zeta:= 1-\frac{c_1}{2} \min \left\{1,\frac{c_1}{L_{\mu_0} c_2}\right\}\,, \,\,\,\,\,\, \frac{3}{4} \leq \zeta <1 \,. 
\end{align*}
Thus, from \eqref{eq:prooffast:6} we obtain an integer $\bar k \in \N$ such that $r_j(\mu_{k+1}) \leq \zeta \, r_j(\mu_k)$ for all $k \geq \bar k$, 
establishing \eqref{eq:thm_fast_pdap}. In particular, 
$r_j(\mu_k) \leq r_{j}(\mu_{\bar k}) \zeta^{k-\bar k}$ for all $k \geq \bar k$, and the final thesis \eqref{eq:thm_fast_pdap_1} follows by setting $c:=r_{j}(\mu_{\bar k}) \zeta^{-\bar{k}}>0$.  
\end{proof}

\subsubsection{Proof of Theorem \ref{thm:fastconvpreview}} \label{subsec:fastconvproof} 
 
 Assume that Algorithm~\ref{alg:gcg} does not converge after finitely many steps and generates a sequence~$\{u_k\}_k$ in $\M$. By Proposition \ref{prop:convimpliconmu} both problems \eqref{def:minprob} and \eqref{def:sparseprob} admit a unique solution, given by $\bar{u}=\sum_{i=1}^N \bar{\lambda}_i \bar{u}_i$ and $\bar{\mu}=\sum_{i=1}^N \bar{\lambda}_i \delta_{\bar{u}_i}$, respectively. 
 Thanks to Theorem~\ref{thm:eqofpdapandgcg}, there exists a sequence~$\{\mu_k\}_k$ generated by Algorithm~\ref{alg:pdap} with~$u_k= \I(\mu_k)$. Invoking Theorem~\ref{thm:eqofpdapandgcg} as well as Theorem~\ref{thm:fast_pdap} yields $r_J(u_k) \leq r_j(\mu_k) \leq c \, \zeta^k$, for all $k$ sufficiently large, where $c>0$ and $\zeta \in [3/4,1)$. This shows \eqref{thm:convofcond:1rate}. 
In addition, Theorem~\ref{thm:fast_pdap} ensures that $\mu_k \weakstar \bar \mu$ along the whole sequence. 
Recalling that $\mathcal{I}$ is weak*-to-weak* continuous by Proposition~\ref{prop:I}~iii), we infer $u_k \weakstar \mathcal{I}(\bar{\mu})$. Since $\bar\mu$ minimizes in \eqref{def:sparseprob}, by Theorem~\ref{prop:equivalence}~ii), we infer that $\mathcal{I}(\bar \mu)$ minimizes in \eqref{def:minprob}. As the minimizer is unique, we conclude $\bar u = \mathcal{I}(\bar \mu)$.

\section{Conclusions}
We have introduced a fully-corrective generalized conditional gradient method (FC-GCG) to solve a class of non-smooth minimization problems in Banach spaces. For such algorithm we provided a global sublinear rate of convergence under the mild Assumptions~\ref{ass:func1}-\ref{ass:func3}, as well as a local linear convergence rate under the Assumptions \ref{ass:fast:F1}-\ref{ass:fast:F5}. Several examples of applications were considered, showing that it is possible to formulate a problem-dependent natural set of assumptions which is easy to verify and implies Assumptions \ref{ass:fast:F1}-\ref{ass:fast:F5}, thus ensuring linear convergence of our method. We demonstrate numerically the fast convergence for our first example, showing that, compared to standard conditional gradient methods (GCG), our algorithm exhibits a vastly improved rate of convergence. %
Additionally, we have discussed in details the computational burden of each presented example, and we have described viable strategies to solve the (partially) linearized problem \eqref{eq:updateAkintro}.

As in the euclidean case of~\cite{pieper}, we believe that all results in the present paper are transferable to non-smooth regularizers of the form~$\Phi(\mathcal{G}(\cdot))$ where~$\mathcal{G}$ is as in Assumptions~\ref{ass:func1}-\ref{ass:func3}, and~$\Phi$ is a suitable convex, monotone increasing function,  e.g., $\Phi(\mathcal{G}(u))=(1/2)\mathcal{G}(u)^2$. This generalization was omitted in the present paper for the sake of readability.

 \section*{Acknowledgements}
 
KB and SF gratefully acknowledge support by the Christian Doppler Research Association (CDG) and Austrian Science Fund (FWF) through the Partnership in Research
project PIR-27 ``Mathematical methods for motion-aware medical imaging'' and project P 29192 ``Regularization graphs for variational imaging''.
MC is supported by the Royal Society (Newton International Fellowship NIF\textbackslash R1\textbackslash 192048 Minimal partitions as a robustness boost for neural network classifiers). The Institute of Mathematics and Scientific Computing, to which KB and SF are affiliated, is a member of NAWI Graz (\texttt{https://www.nawigraz.at/en/}). The authors  KB and SF are further members of/associated with BioTechMed Graz (\texttt{https://biotechmedgraz.at/en/}).

\bibliography{bibliography}
 
\bibliographystyle{plain}

\appendix

\section{Complements to Section \ref{sec:condgrad}} \label{append:A1}

In this section we state and prove two results. The first concerns well-posedness for the minimization problem  at~\eqref{eq:descr_insertion}, while the second shows 
equivalence, in a suitable sense, of~\eqref{eq:subprobalgo} and~\eqref{eq:subprobcoeffs}.

\begin{lemma} \label{lem:existoflinearized}
Let~$p \in \Cc$ be given. Then, there exists~$ \bar v \in \operatorname{Ext}(B)$ with
\begin{align*}
\langle p, \bar v \rangle= \max_{v \in \mathcal{B}}\, \langle p, v \rangle = \max_{v \in B} \, \langle p, v \rangle\,.
\end{align*}
\end{lemma}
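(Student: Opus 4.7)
The statement is essentially a variant of the Bauer maximum principle applied to the weak* compact convex set $B$ and the weak* continuous linear functional $u \mapsto \langle p, u\rangle$. I will carry out three short steps.

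First, I would record the topological facts needed: under Assumption~\ref{ass:func2} the sublevel set $B = S^-(\G,1)$ is weak* compact, it is convex and nonempty (it contains $0$ since $\G(0)=0$), and the map $v \mapsto \langle p, v\rangle$ is weak*-to-$\R$ continuous because $p \in \Cc$ and $\M = \Cc^*$. Consequently, this linear functional attains its maximum on $B$; call this maximal value $m := \max_{v \in B}\langle p, v\rangle$ and note the set of maximizers $M := \{v \in B : \langle p,v\rangle = m\}$ is a nonempty, weak* closed, convex face of $B$, hence weak* compact and convex.

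Second, by the Krein--Milman theorem applied to the compact convex set $M$, we have $\operatorname{Ext}(M) \neq \emptyset$. I would then invoke the standard fact that, because $M$ is a face of $B$, every extreme point of $M$ is also an extreme point of $B$, so $\operatorname{Ext}(M) \subseteq \operatorname{Ext}(B)$. Picking any $\bar v \in \operatorname{Ext}(M)$ gives $\bar v \in \operatorname{Ext}(B)$ with $\langle p, \bar v\rangle = m = \max_{v\in B}\langle p, v\rangle$.

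Third, it remains to identify this with $\max_{v \in \mathcal{B}}\langle p, v\rangle$. Since $\operatorname{Ext}(B) \subset B$ and $B$ is weak* closed, we have $\mathcal{B} = \overline{\operatorname{Ext}(B)}^* \subset B$, so $\max_{v \in \mathcal{B}}\langle p, v\rangle \leq m$. On the other hand $\bar v \in \operatorname{Ext}(B) \subset \mathcal{B}$, and hence $\max_{v \in \mathcal{B}}\langle p, v\rangle \geq \langle p,\bar v\rangle = m$. The chain of equalities in the statement follows.

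The only nontrivial ingredient is the face argument in the second step; everything else is a direct consequence of the weak* compactness of $B$ and continuity of the linear functional. I do not anticipate any real obstacle.
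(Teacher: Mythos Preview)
Your proposal is correct and follows essentially the same approach as the paper: the paper also passes to the weak* compact convex set of maximizers $H$, applies Krein--Milman to obtain an extreme point of $H$, and then verifies directly the face property (that $\bar v = s v_1 + (1-s) v_2$ with $v_1,v_2 \in B$ forces $v_1,v_2 \in H$) to conclude $\bar v \in \operatorname{Ext}(B)$, finishing via the inclusions $\operatorname{Ext}(B) \subset \mathcal{B} \subset B$. The only cosmetic difference is that you name $M$ a ``face'' and cite the standard fact, whereas the paper spells out the two-line argument; the content is identical.
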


\begin{proof}
In order to show the claimed result we first prove that the maximization problem
\begin{equation} \label{eq:existofextrem_aux}
\max_{v \in B} \, \langle p, v \rangle
\end{equation}
admits a solution $\bar v \in \Ext{B}$. This statement is classical, but for the reader's convenience we produce a proof. Let $v^*$ be a maximizer of \eqref{eq:existofextrem_aux}, which exists since $v \mapsto  \langle p, v \rangle$ is weak* continuous and $B$ is weak* compact. Consider the set
    $H:=\{\,v \in B \; | \; \langle p, v \rangle=\langle p, v^* \rangle\,\}$. Then $\Ext{H} \neq \emptyset$ by Krein-Milman's Theorem, since $H\subset B$ is convex, non-empty, weak* closed and thus weak* compact. Thus, let $\bar v \in \ext(H)$ be given.  We claim that $\bar v \in \Ext{B}$. Indeed, assume that $\bar v = s v_1 + (1-s)v_2$ for $v_1,v_2 \in B$, $s \in (0,1)$. Notice that $v_1,v_2 \in H$, otherwise we would have $ \langle p, \bar v \rangle <  \langle p, v^* \rangle$, contradicting the maximality of~$\bar{v}$. As $\bar v \in \Ext{H}$ we then have $\bar v=v_1=v_2$, showing that $\bar{v} \in \Ext{B}$. The claimed statement now follows from $\Ext{B} \subset \mathcal{B} \subset B$ ending the proof. 
\end{proof}

\begin{proposition}\label{prop:eqgauge}
Let $\mathcal{A}^{u,+}_k=\{u^k_i\}^{N^+_k}_{i=1} \subset \ext(B)$ be given.
If~$\widehat{\lambda} \in  \R_+^{N^+_k}$ solves~\eqref{eq:subprobcoeffs}, then
$\widehat{u}=\sum^{N^+_k}_{i=1} \widehat{\lambda}_i u^k_i$
solves \eqref{eq:subprobalgo}. Conversely, if~$\widehat{u}$ solves~\eqref{eq:subprobalgo}, then there exists a solution~$\widehat{\lambda} \in \R_+^{N^+_k}$ to \eqref{eq:subprobcoeffs} such that
$\widehat{u}= \sum^{N^+_k}_{i=1} \widehat{\lambda}_i u^k_i$.
\end{proposition}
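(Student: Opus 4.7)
The plan is to prove both implications by comparing the objective values of the two problems using the defining minimality property of the gauge $\kappa_{\mathcal{A}^{u,+}_k}$. The core observation is that, for any $\lambda \in \R^{N^+_k}_+$ and $u := \sum_{i=1}^{N^+_k} \lambda_i u^k_i$, the gauge admits the one-sided estimate
\[
\kappa_{\mathcal{A}^{u,+}_k}(u) \leq \sum_{i=1}^{N^+_k} \lambda_i\,,
\]
with equality achieved by at least one minimizing representation, since the $\ell^1$-norm is lower semicontinuous and $\R_+$ is closed. This estimate is the only non-trivial fact needed.

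For the first implication, I would take $\hat\lambda \in \R^{N^+_k}_+$ a solution to \eqref{eq:subprobcoeffs} and set $\hat u := \sum_{i=1}^{N^+_k} \hat\lambda_i u^k_i \in \operatorname{cone}(\mathcal{A}^{u,+}_k)$. For an arbitrary competitor $u \in \operatorname{cone}(\mathcal{A}^{u,+}_k)$, I would invoke the existence of an optimal representation, i.e.\ $\lambda^* \in \R^{N^+_k}_+$ with $u = \sum_{i=1}^{N^+_k} \lambda^*_i u^k_i$ and $\kappa_{\mathcal{A}^{u,+}_k}(u) = \sum_{i=1}^{N^+_k} \lambda^*_i$. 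Optimality of $\hat\lambda$ in \eqref{eq:subprobcoeffs} then gives
\[
F(K\hat u) + \sum_{i=1}^{N^+_k} \hat\lambda_i \leq F(Ku) + \sum_{i=1}^{N^+_k} \lambda^*_i = F(Ku) + \kappa_{\mathcal{A}^{u,+}_k}(u)\,,
\]
and combining this with $\kappa_{\mathcal{A}^{u,+}_k}(\hat u) \leq \sum_{i=1}^{N^+_k} \hat\lambda_i$ yields the desired inequality for \eqref{eq:subprobalgo}.

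For the converse, assume $\hat u$ solves \eqref{eq:subprobalgo}. Select $\hat\lambda \in \R^{N^+_k}_+$ realizing the gauge, i.e.\ $\hat u = \sum_{i=1}^{N^+_k} \hat\lambda_i u^k_i$ with $\kappa_{\mathcal{A}^{u,+}_k}(\hat u) = \sum_{i=1}^{N^+_k}\hat\lambda_i$. For any test vector $\lambda \in \R^{N^+_k}_+$, I would set $u := \sum_{i=1}^{N^+_k} \lambda_i u^k_i \in \operatorname{cone}(\mathcal{A}^{u,+}_k)$ and chain the inequalities
\[
F(K\hat u) + \sum_{i=1}^{N^+_k}\hat\lambda_i = F(K\hat u) + \kappa_{\mathcal{A}^{u,+}_k}(\hat u) \leq F(Ku) + \kappa_{\mathcal{A}^{u,+}_k}(u) \leq F(Ku) + \sum_{i=1}^{N^+_k}\lambda_i\,,
\]
where the first inequality uses the optimality of $\hat u$ in \eqref{eq:subprobalgo} and the second uses the gauge bound for the specific representation $\lambda$. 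Since $Ku = \sum_{i=1}^{N^+_k} \lambda_i Ku^k_i$ by linearity of $K$, this shows $\hat\lambda$ solves \eqref{eq:subprobcoeffs}.

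There is no real obstacle here: the argument is purely a bookkeeping exercise around the two defining properties of the gauge (upper bound by any representation, attainment of the infimum on some minimizing representation). The only mildly delicate point is to make sure the optimal representation used in the converse direction exists, but this is guaranteed by the existence of the minimum in the definition of $\kappa_{\mathcal{A}^{u,+}_k}$ already recorded in the paper.
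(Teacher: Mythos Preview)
Your proof is correct and follows essentially the same approach as the paper's: both directions rely on the gauge bound $\kappa_{\mathcal{A}^{u,+}_k}(u) \leq \sum_i \lambda_i$ for any representation, together with the attainment of this infimum by some minimizing $\lambda$, and the chain of inequalities you write is identical to the paper's.
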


\begin{proof}
Let~$\widehat{\lambda} \in \R_+^{N^+_k}$ be a solution to~\eqref{eq:subprobcoeffs}. Set~$\widehat{u}=\sum^{N^+_k}_{i=1} \widehat{\lambda}_i u^k_i$ and fix an arbitrary~$u \in \operatorname{cone}(\mathcal{A}^{u,+}_k)$. Then there exists~$\lambda^u \in \R_+^{N^+_k}$ such that
$u= \sum^{N^+_k}_{i=1} \lambda^u_i u^k_i$ with $\sum^{N^+_k}_{i=1} \lambda^u_i = \kappa_{\mathcal{A}^+_k}(u)$.
We estimate
\begin{align*}
F(K\widehat{u})+ \kappa_{\mathcal{A}^+_k}(\widehat{u}) & \leq F\left(\sum^{N^+_k}_{i=1} \widehat{\lambda}_i K u^k_i\right)+ \sum^{N^+_k}_{i=1} \widehat{\lambda}_i  \\
& \leq F\left(\sum^{N^+_k}_{i=1} \lambda^u_i K u^k_i\right)+ \sum^{N^+_k}_{i=1} \lambda^u_i = F(Ku)+\kappa_{\mathcal{A}^+_k}(u)\,,
\end{align*}
showing that~$\widehat{u}$ is a solution to~\eqref{eq:subprobalgo}. 
Conversely, let~$\widehat{u}$ be a minimizer to~\eqref{eq:subprobalgo}. Since~$\widehat{u} \in \operatorname{cone}(\mathcal{A}^{u,+}_k)$, there exists~$\widehat{\lambda} \in \R_+^{N^+_k}$ with
$\widehat{u}= \sum^{N^+_k}_{i=1} \widehat{\lambda}_i u^k_i$, $\sum^{N^+_k}_{i=1} \widehat{\lambda}_i = \kappa_{\mathcal{A}^{u,+}_k}(\widehat{u})$. 
Now let~$\lambda \in \R_+^{N^+_k}$ be given and set~$u^\lambda= \sum^{N^+_k}_{i=1} \lambda_i u^k_i$. From the optimality of~$\widehat{u}$ and the definition of the gauge function we get
\begin{align*}
F\left(\sum^{N^+_k}_{i=1} \widehat{\lambda}_i K u^k_i\right)+ \sum^{N^+_k}_{i=1} \widehat{\lambda}_i  & \leq F(Ku^\lambda)+ \kappa_{\mathcal{A}^{u,+}_k}(u^\lambda) \\
 & \leq F\left(\sum^{N^+_k}_{i=1} \lambda_i K u^k_i\right)+ \sum^{N^+_k}_{i=1} \lambda_i\,.
\end{align*}
Thus,~$\widehat{\lambda}$ is a minimizer of~\eqref{eq:subprobcoeffs}. 
\end{proof}

\section{Complements to Section \ref{sec:examples}}\label{app:guidingexample}

Here we collect the technical statements and the proofs of Section \ref{sec:examples}. For sake of clarity we place the proofs of each example in different subsections.

\subsection{Sparse source identification (Section \ref{subsec:guideex})}

The next lemma justifies the well-posedness of the definition of $K_*$.
\begin{lemma}
\label{lem:regadjointheat}
Let~$\varphi \in L^2(\Omega)$ be given and let~$z \in L^2(0,T;H^1_0(\Omega)) \cap H^1(0,T;H^{-1}(\Omega))$ the associated unique solution of \eqref{eq:heatadjoint}. Moreover, let~$\Omega_0$ be a subdomain with~$\bar{\Omega}_0 \subset \Omega$. Then, we have~$z(0) \in C_0(\Omega)\cap C^2(\Omega_0)$ and
\begin{align*}
\|z(0)\|_{\mathcal{C}}+ \|z(0)\|_{C^2(\Omega_0)} \leq c \|\varphi\|_{L^2(\Omega)}
\end{align*}
for some~$c>0$ independent of~$\varphi \in L^2(\Omega)$.
\end{lemma}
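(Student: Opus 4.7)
The plan is to view \eqref{eq:heatadjoint} as a forward heat equation with zero Dirichlet boundary data by the time reversal $s := T-t$, so that $\tilde z(s) := z(T-s)$ satisfies $\partial_s \tilde z - \Delta \tilde z = 0$, $\tilde z=0$ on $\partial\Omega$, and $\tilde z(0)=\varphi$. Then $z(0) = \tilde z(T) = e^{T \Delta_D} \varphi$, where $e^{t\Delta_D}$ denotes the Dirichlet heat semigroup on $L^2(\Omega)$. Since this semigroup is analytic, one has the standard smoothing estimate
\[
\|(-\Delta_D)^k e^{T\Delta_D}\varphi\|_{L^2(\Omega)} \leq c_k\, T^{-k}\,\|\varphi\|_{L^2(\Omega)} \quad \text{for every } k \in \N\,.
\]
Thus $z(0) \in D((-\Delta_D)^k)$ for every $k$, with norm controlled by $\|\varphi\|_{L^2(\Omega)}$.

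For the $C_0(\Omega)$-part, I would invoke the $H^2$-regularity of the Dirichlet Laplacian on convex polygonal planar domains (see, e.g., Grisvard): $D(-\Delta_D)\subset H^2(\Omega)\cap H^1_0(\Omega)$ with $\|u\|_{H^2(\Omega)} \lesssim \|\Delta u\|_{L^2(\Omega)} + \|u\|_{L^2(\Omega)}$. Since $\Omega\subset\R^2$, the Sobolev embedding $H^2(\Omega)\hookrightarrow C(\bar\Omega)$ applies, and the homogeneous boundary condition yields $z(0)\in C_0(\Omega)$ together with the estimate $\|z(0)\|_\mathcal{C} \leq c\, \|\varphi\|_{L^2(\Omega)}$.

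For the $C^2(\Omega_0)$-part, I would rely on interior regularity of the heat equation on any subdomain $\Omega_1$ with $\bar\Omega_0 \subset \Omega_1$ and $\bar\Omega_1 \subset \Omega$. Combining the above smoothing estimates (which give $z(0)\in D((-\Delta_D)^k)$ for arbitrary $k$) with the standard interior elliptic bootstrap, $\tilde z(T)$ belongs to $H^{2k}_{\operatorname{loc}}(\Omega)$ and choosing $k$ large enough so that $H^{2k}(\Omega_1)\hookrightarrow C^2(\bar\Omega_0)$ (two spatial dimensions, so any $k\geq 2$ suffices), one obtains $\|z(0)\|_{C^2(\Omega_0)} \leq c\, \|\varphi\|_{L^2(\Omega)}$ with a constant $c>0$ depending on $\Omega_0$, $\Omega$, and $T$ but not on $\varphi$.

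The main obstacle is simply bookkeeping the constants across the reduction and verifying the global $H^2$-regularity up to the boundary on a convex polygonal domain; the interior part is a routine application of analytic semigroup smoothing and Sobolev embeddings.
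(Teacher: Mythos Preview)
Your proposal is correct and follows essentially the same route as the paper, which simply cites \cite[Lemma~3.1]{leyk} for the parabolic smoothing estimates and then invokes the Sobolev embedding $H^2(\Omega)\cap H^1_0(\Omega)\hookrightarrow C_0(\Omega)$. You have effectively unpacked what that reference delivers: analytic semigroup smoothing into $D((-\Delta_D)^k)$, global $H^2$-regularity on the convex polygon for the $C_0$-part, and interior elliptic bootstrap for the $C^2(\Omega_0)$-part.
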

\begin{proof}
This follows from~\cite[Lemma 3.1]{leyk} and the Sobolev embedding~$H^2(\Omega) \cap H^1_0(\Omega) \hookrightarrow C_0(\Omega)$.
\end{proof}

\subsubsection{Proof of Lemma \ref{lem:quadgrowthmeasure}}\label{app:lem:quadgrowthmeasure}

We start by noting that~$\nabla \bar{z}(0)(\bar{x}_i)=0$ thanks to \ref{ass:guidingG1} and Proposition~\ref{prop:optheat}. Now fix~$x \in \Omega$. By Taylor's expansion we infer that
\begin{align*}
\beta-|\bar{z}(0)(x)| & = |\bar{z}(0)(\bar{x}_i)|-|\bar{z}(0)(x)|\\
& =- (1/2)\operatorname{sign}(\bar{z}(0)(\bar{x}_i))( (x-\bar{x}_i),\nabla^2 \bar{z}(0)(x_s) (x-\bar{x}_i))_{\R^2}\,,
\end{align*}
for some~$x_s=(1-s)\bar{x}_i+ sx$,~$s\in(0,1)$. Due to the continuity of~$\nabla^2 \bar{z}(0)$, see Lemma~\ref{lem:regadjointheat}, as well as Assumption $ \ref{ass:guidingG2}$, there exists~$R>0$ such that~$B_R(\bar{x}_i) \subset \Omega$ and
\begin{align*}
 \operatorname{sign}(\bar{z}(0)(\bar{x}_i))( (x-\bar{x}_i),\nabla^2 \bar{z}(0)(x_s) (x-\bar{x}_i))_{\R^2} \leq -(\gamma/2) |x-\bar{x}_i|^2 \,,
\end{align*}
for all $x \in B_R(\bar{x}_i)$. 
Combining both observations finally yields
\begin{align*}
\beta-|\bar{z}(0)(x)| \geq (\gamma/4)|x-\bar{x}_i|^2 \,,
\end{align*}
for all $x \in B_R(\bar{x}_i)$, showing \eqref{eq:heatest1}.
Moreover for every~$x \in B_R(\bar{x}_i) $ we readily verify
\begin{align*}
\ynorm{K(\delta_{x}-\delta_{\bar{x}_i})} & = \sup_{\ynorm{y}\leq 1} (y, K(\delta_{x}-\delta_{\bar{x}_i}))_Y \\ 
& = \sup_{\ynorm{y}\leq 1} \lbrack K_*y \rbrack(x)-\lbrack K_*y \rbrack(\bar{x}_i) \\ &\leq \sup_{\ynorm{y}\leq 1} \|{K_*y}\|_{\operatorname{Lip}(B_R(\bar{x}_i))} |x-\bar{x}_i| \\ & \leq \|K_*\|_{Y, \operatorname{Lip}(B_R(\bar{x}_i))} |x-\bar{x}_i|\,.
\end{align*}
To finish, $R>0$ can be chosen small enough to ensure
\[
\operatorname{sign}(\bar{z}(0)(x))=\operatorname{sign}(\bar{z}(0)(\bar{x}_i))\,,
\] 
for all~$x \in B_R(\bar{x}_i)$, due to~$\bar{z}(0)(\bar{x}_i)\neq 0$ and~$\bar{z}(0)\in C_0(\Omega)$. \qed

\subsubsection{Proof of Lemma 
\ref{lem:equivalenceofweaktog}}\label{app:lem:equivalenceofweaktog}

First assume that~$g(u_k,\bar{u}_i) \to 0$. Then, we have~$x_k \rightarrow \bar{x}_i$ in~$\Omega$ as well as~$\sigma_k \rightarrow \operatorname{sign}(\bar{z}(0)(\bar{x}_i))$. Consequently, for all $p \in C_0(\Omega)$ we have
\begin{align*}
\lim_{k \to \infty} \ \langle p,u_k  \rangle & = \lim_{k \to \infty} \ \sigma_k \beta^{-1} p(x_k) \\ 
& = \operatorname{sign}(\bar{z}(0)(\bar{x}_i)) \beta^{-1} p(\bar{x}_i)= \langle p, \bar{u}_i \rangle \,. 
\end{align*}
This implies~$u_k \weakstar \bar{u}_i$. For the other direction assume that $u_k \weakstar \bar{u}_i$. 
Note that from every subsequence of  $(\sigma_k, x_k)$ we can extract a further convergent subsequence still denoted by~$(\sigma_k, x_k)$, relabeling the indices. Let~$(\bar{\sigma}, \bar{x}) \in \{-1,+1\} \times \bar \Omega$ denote its limit. As~$\bar{x}\in \partial \Omega$ would imply~$u_k \weakstar 0 \neq \bar{u}_i$, we conclude~$\bar{x}\in \Omega$. Then, using again the weak* convergence of~$u_k$, we get
\begin{align*}
\bar{\sigma} \beta^{-1} p(\bar{x})= \operatorname{sign}(\bar{z}(0)(\bar{x}_i)) \beta^{-1} p(\bar{x}_i)  \,,
\end{align*}
for all $p \in C_0(\Omega)$, from which we immediately conclude~$\bar{\sigma}= \operatorname{sign}(\bar{z}(0)(\bar{x}_i))$ and~$\bar{x}=\bar{x}_i$. Since the initial subsequence was chosen arbitrary we get~$(\sigma_k, x_k)\rightarrow (\operatorname{sign}(\bar{z}(0)(\bar{x}_i)), \bar{x}_i) $ for the whole sequence and thus,~$g(u_k,\bar{u}_i) \to 0$. Finally, since~$\sigma_k \in \{-1,+1\}$ for all~$k \in\N$ and~$\sigma_k \rightarrow \operatorname{sign}(\bar{z}(0)(\bar{x}_i))$, we necessarily have~$\sigma_k=\operatorname{sign}(\bar{z}(0)(\bar{x}_i))$ for all~$k \in \N$ sufficiently large. \qed

\subsection{Rank-one matrix reconstruction by trace regularization (Section \ref{sec:rank-one})}

\subsubsection{Proof of Lemma \ref{lem:exttrace} }\label{app:lem:exttrace}

Recall that every extremal point~$\mathcal{U}$ of~$B$ necessarily satisfies~$\operatorname{Tr}(\mathcal{U})=\beta^{-1}$ or~$\operatorname{Tr}(\mathcal{U})=0$. We first focus on those extremal points with~$\operatorname{Tr}(\mathcal{U})=\beta^{-1}$.
According to Lidskii's theorem, for every~$\mathcal{U} \in B $ there holds~$\operatorname{Tr}(\mathcal{U})=\sum_{i \in \mathbb{N}} \sigma^\mathcal{U}_i$.
From these observations, we already see that every extremal point of~$B$ has at most rank one. Indeed, if~$\mathcal{U} \in B$,~$\operatorname{Tr}(\mathcal{U})=\beta^{-1}$, is at least of rank two, i.e.~$\sigma^\mathcal{U}_1 \geq \sigma^\mathcal{U}_2 >0$, then we can decompose~$\mathcal{U}$ as
\begin{align*}
    \mathcal{U} &= (\beta \sigma^\mathcal{U}_1) \beta^{-1} h^\mathcal{U}_1 \otimes h^\mathcal{U}_1+  \left(\beta \sum_{j \in \N, j \neq 1} \sigma^\mathcal{U}_j \right) \sum_{i \in \N, i \neq 1} \left(\beta \sum_{j \in \N, j \neq 1} \sigma^\mathcal{U}_j \right)^{-1} \sigma^\mathcal{U}_i  h^\mathcal{U}_i \otimes h^\mathcal{U}_i \\
    &= (\beta \sigma^\mathcal{U}_1) \beta^{-1} h^\mathcal{U}_1 \otimes h^\mathcal{U}_1+  (1-\beta \sigma^\mathcal{U}_1 ) \sum_{i \in \N, i \neq 1} \left(\beta \sum_{j \in \N, j \neq 1} \sigma^\mathcal{U}_j \right)^{-1} \sigma^\mathcal{U}_i  h^\mathcal{U}_i \otimes h^\mathcal{U}_i \\
    &:= (\beta \sigma^\mathcal{U}_1)\mathcal{U}_1+ (1-\beta \sigma^\mathcal{U}_1) \mathcal{U}_2
\end{align*}
where~$(\beta \sigma^\mathcal{U}_1) \in (0,1)$ and the second equality follows from
\begin{align*}
    \beta^{-1}= \operatorname{Tr}(\mathcal{U})= \sum_{i \in \N} \sigma^\mathcal{U}_i. 
\end{align*}
Noting that~$\mathcal{U}_1,\mathcal{U}_2 \in B$,
as well as~$\mathcal{U}_1 \neq \mathcal{U}_2$, we conclude that~$\mathcal{U}$ is not extremal. Hence, if~$\mathcal{U} \in B$,~$\mathcal{G}(\mathcal{U})=1$, is extremal then we have~$\mathcal{U}= \beta^{-1} h \otimes h $,~$\|h\|_H=1$. It remains to show that every operator of this form is actually an extremal point. For this purpose assume that there is $\mathcal{U}= \beta^{-1} h \otimes h $,~$\|h\|_H=1$, $s\in(0,1)$ as well as~$\mathcal{U}_1,\mathcal{U}_2 \in B$, with~$\mathcal{U}=(1-s)\mathcal{U}_1+s \mathcal{U}_2$. Then we also have
\begin{align} \label{eq:convex1}
    1= (1-s) \sum_{i\in \N} \beta \sigma^{\mathcal{U}_1}_i  (h^{\mathcal{U}_1}_i,h)^2_H+ s \sum_{i\in \N} \beta \sigma^{\mathcal{U}_2}_i  (h^{\mathcal{U}_2}_i,h)^2_H \,,
\end{align}
as well as
\begin{align}\label{eq:convex2}
    \max \left\{\sum_{i\in \N} \beta \sigma^{\mathcal{U}_1}_i, \sum_{i\in \N} \beta \sigma^{\mathcal{U}_2}_i\right\} \leq 1.
\end{align}
Moreover, we have~$(h^{\mathcal{U}_j}_i,h)^2_H \leq 1$ for all~$i \in \N$,~$j=1,2$. Together with~\eqref{eq:convex1} and~\eqref{eq:convex2}, this yields
\begin{align} \label{eq:convex3}
    1=\sum_{i\in \N} \beta \sigma^{\mathcal{U}_1}_i  (h^{\mathcal{U}_1}_i,h)^2_H= \sum_{i\in \N} \beta \sigma^{\mathcal{U}_2}_i  (h^{\mathcal{U}_2}_i,h)^2_H.
\end{align}
We will now prove that these observations imply~$\mathcal{U}_1=\beta^{-1}h \otimes h$, the proof for~$\mathcal{U}_2$ follows by the same steps.
Note that~$(h^{\mathcal{U}_1}_i,h)^2_H = 1$ holds if and only if we have~$h^{\mathcal{U}_1}_i=h$ or~$h^{\mathcal{U}_1}_i=-h$. Recalling that~$\{h^{\mathcal{U}_1}_i\}_{i \in \N }$ is an orthonormal set,~\eqref{eq:convex3} implies the existence of a unique index~$\bar{\imath}$ with 
\begin{align*}
    (h^{\mathcal{U}_1}_{\bar{\imath}},h)^2_H = 1 \quad \text{as well as}~(h^{\mathcal{U}_1}_{i},h)^2_H=0,~i \neq \bar{\imath}.
\end{align*}
Together with \eqref{eq:convex2} and~\eqref{eq:convex3}, we finally get
\begin{align*}
    1=\sum_{i\in \N} \beta \sigma^{\mathcal{U}_1}_i  (h^{\mathcal{U}_1}_i,h)^2_H= \beta \sigma^{\mathcal{U}_1}_{\bar{\imath}}
\end{align*}
as well as~$\sigma^{\mathcal{U}_1}_i=0$,~$i \neq \bar{\imath}$. Consequently, there holds
\begin{align*}
    \mathcal{U}_1= \sigma^{\mathcal{U}_1}_{\bar{\imath}} h^{\mathcal{U}_1}_{\bar{\imath}} \otimes h^{\mathcal{U}_1}_{\bar{\imath}}=\beta^{-1}h \otimes h.
\end{align*}
Repeating these arguments for~$\mathcal{U}_2$ implies that $\mathcal{U}$ is an extremal point.

Finally,~$\operatorname{Tr}(\mathcal{U})=0$ holds if and only if~$\mathcal{U}=0$. Assume that~$\mathcal{U} = 0$ is not extremal, i.e., there are operators~$\mathcal{U}_1, \mathcal{U}_2 \in B$,~$\mathcal{U}_1\neq \mathcal{U}_2$, as well as~$s \in (0,1)$ with~$0=(1-s)\mathcal{U}_1+ s \mathcal{U}_2$. Then, without loss of generality, we have~$\sigma^{\mathcal{U}_1}_1 >0$ and thus
\begin{align*}
    0= (h^{\mathcal{U}_1}_1, ((1-s)\mathcal{U}_1+ s \mathcal{U}_2) h^{\mathcal{U}_1}_1)_H \geq \sigma^{\mathcal{U}_1}_1>0
\end{align*}
yielding a contradiction. Hence~$0$ is an extremal point and the proof is finished.

It remains to characterize~$\mathcal{B}$. Since~$H$ is infinite dimensional, for every~$h\in H$,~$\|h\|_H \leq 1$, there exists a weakly convergent sequence~$\{h_k\}_k$ with limit~$h$ such that $\|h_k\|_H =1$ for all~$k$. Setting~$\mathcal{U}_k= \beta^{-1} h_k \otimes h_k$ and~$\mathcal{U}= \beta^{-1} h \otimes h$, we then also conclude
\begin{align*}
    \langle P, \mathcal{U}_k \rangle =\beta^{-1}(h_k,P h_k) \rightarrow \beta^{-1}(h,P h)=\langle P, \mathcal{U} \rangle 
\end{align*}
for all $P \in K(H)$, 
using compactness of $P$. Hence,~$\mathcal{U}_k \weakstar U $ and thus~$ \mathcal{U} \in \mathcal{B}$. For the converse inclusion, let~$\mathcal{U}_k= \beta^{-1} h_k \otimes h_k \in B$ with $\|h_k\|_H= 1$ denote a weak* convergent sequence with limit~$\mathcal{U}$. Then there exists a subsequence of~$\{h_k\}_k$, denoted by the same index, which converges weakly to some~$h \in H $,~$\|h\|_H \leq 1$. Repeating previous arguments and noting the uniqueness of weak* limits, we then arrive at~$\mathcal{\mathcal{U}}= \beta^{-1} h \otimes h$. \qed

\subsubsection{Proof of Proposition \ref{prop:opttrace}}\label{app:prop:opttrace}

The optimality condition in~\eqref{eq:optimalitytrace} follows immediately from Proposition~\ref{prop:existu}, the characterization of the extremal points as well as~$\sigma^{\bar{P}}_1= \sup_{\|h\|_H=1}(h, \bar{P}h)_H$. Now, let~$\bar{\mathcal{U}}$ be a minimizer of~\eqref{def:rankproblem} and assume that~$\sigma^{\bar{P}}_1 = \beta$. Then we have
\begin{align*}
    \operatorname{Tr}(\bar{P} \bar{\mathcal{U}})= \sum_{i \in \N}  \sigma^{\bar{\mathcal{U}}}_i (h^{\bar{\mathcal{U}}}_i,\bar{P} h^{\bar{\mathcal{U}}}_i)_H =\beta \operatorname{Tr}(\bar{\mathcal{U}})=\beta \sum_{i \in \N}  \sigma^{\bar{\mathcal{U}}}_i,
\end{align*}
as well as~$(h^{\bar{\mathcal{U}}}_i,\bar{P} h^{\bar{\mathcal{U}}}_i)_H \leq \beta$,~$i \in\N$. Consequently, we conclude that~$\sigma^{\bar{\mathcal{U}}}_i>0$ implies~$\bar{P} h^{\bar{\mathcal{U}}}_i=\beta h^{\bar{\mathcal{U}}}_i$,~i.e.,~$h^{\bar{\mathcal{U}}}_i$ is an eigenfunction for the leading eigenvalue of~$\bar{P}$.
Thus, since~$\{h^{\bar{\mathcal{U}}}_i\}_{i}$ form an ONB, and possibly after a change of basis,~$\bar{\mathcal{U}}$ is of the form~\eqref{eq:representationtrace}. \qed

\subsubsection{Proof of Theorem \ref{thm:rankprobquad}}\label{app:thm:rankprobquad}

The statement in~\eqref{eq:linearminoverrank}, the structure of~$\bar{\mathcal{U}}$ and its uniqueness as well as the Lipschitz result on~$K$ follow immediately from Assumption~\ref{ass:rankone}, the assumptions on~$K$ and the strict convexity of~$F$, see Assumption~\ref{ass:functions}. Let~$\mathcal{U} \in B$ with~$\operatorname{Tr}(\mathcal{U})=\beta^{-1}$ and $\mathcal{U} \geq 0$ be arbitrary but fixed. For the proof of the quadratic growth behavior we follow similar steps as in the finite dimensional setting, see~\cite[Lemma 4]{garber}. Setting~$\delta= \sigma^{\bar{P}}_1- \sigma^{\bar{P}}_2 >0$, we estimate
\begin{align*}
    1-\langle \bar{P}, \mathcal{U} \rangle &=1- \beta(h^{\bar{P}}_1 ,\mathcal{U} h^{\bar{P}}_1)_H-\sum_{i\in \N, i >1} \sigma^{\bar{P}}_i (h^{\bar{P}}_i ,\mathcal{U} h^{\bar{P}}_i)_H \\
    &  \geq 1-\beta\sum_{i\in \N}  (h^{\bar{P}}_i ,\mathcal{U} h^{\bar{P}}_i)_H +\delta  \sum_{i\in \N, i >1}  (h^{\bar{P}}_i ,\mathcal{U} h^{\bar{P}}_i)_H \\
    &= \delta \left( \beta^{-1}- (h^{\bar{P}}_1 ,\mathcal{U} h^{\bar{P}}_1)_H \right) =\delta \beta \left( \beta^{-2}-  (\mathcal{U}, \bar{\mathcal{U}}_1)_{\text{HS}} \right)
    \\&=\delta \beta \left( \frac{1}{2}\operatorname{Tr}(\mathcal{U})^2+\frac{1}{2} \operatorname{Tr}(\bar{\mathcal{U}}_1)^2-  (\mathcal{U}, \bar{\mathcal{U}}_1)_{\text{HS}} \right) \\
    & \geq \delta \beta \left(\frac{1}{2} \|\mathcal{U}\|_{\text{HS}}^2 +\frac{1}{2}\|\bar{\mathcal{U}}_1\|_{\text{HS}}^2-  (\mathcal{U}, \bar{\mathcal{U}}_1)_{\text{HS}} \right)\\& = (\delta \beta/2) \|\mathcal{U}-\bar{\mathcal{U}}_1\|^2_{\text{HS}},
\end{align*}
where we use~$\operatorname{Tr}(\mathcal{U}) \geq \|\mathcal{U}\|_{\text{HS}}$ for all~$\mathcal{U}\in B$ in the last inequality. This finishes the proof. \qed

\subsection{Minimum effort problems (Section \ref{subsec:mineffort})}

\subsubsection{Proof of Lemma \ref{prop:extremalsoflinf}}\label{app:prop:extremalsoflinf}

Without loss of generality assume $\alpha=1$. The characterization  \eqref{lem:extremalsoflinf:1} is classical \cite[page 144]{conway}. 
 Moreover the inclusion $\overline{\ext(B)}^* \subset B$ follows from weak* lower semi-continuity of the norm. Let us now prove that $B \subset \overline{\ext(B)}^*$. To this end, set $R:=(0,1)^d$ and $R_\lambda :=(0,\sqrt[d]{\lambda})^d$ for any $\lambda \in [0,1]$. Define the map $h^\lambda$ as $h^\lambda:=1$ in $R\setminus R_\lambda$, $h^\lambda:=-1$ in $R_\lambda$, and extend it $R$-periodically to the whole $\R^d$. For $k \in \N$, $x \in \R^d$, set $h^\lambda_k(x):=h^\lambda(kx)$. Then 
 \begin{equation} \label{ball-murat}
 h^\lambda_k \weakstar \int_R h^\lambda \, dx = 1-2\lambda \,\,\, \text{ weakly*  in } \,\, L^\infty(A)
 \end{equation}
 as $k \to \infty$, for all open, bounded and non-empty sets $A \subset \R^d$, see \cite[Lemma A.1]{ball_murat}. Let $\{Q_{n,j}\}_{j \in \N}$ be a family of pairwise disjoint and open $d$-cubes of side length $1/n$ covering $(\R \setminus (\mathbb{Z}/n))^d$. Define $\tilde{Q}_{n,j} := Q_{n,j} \cap \Om$ and let $J_n$ be the collection of indices $j \in \N$ such that $\tilde{Q}_{n,j} \neq \emptyset$. Let $\mathcal{P}_n$ be the set of maps $u \in B$ such that $u=c_{n,j}$ in $\tilde{Q}_{n,j}$ for some $c_{n,j} \in [-1,1]$ and for all $j \in J_n$. Fix $u \in \mathcal{P}_n$ and $n \in \N$. For every $j \in J_n$, $k \in \N$ define $u_k(x):=h_k^{\lambda_{n,j}}(x)$ for all $x \in \tilde{Q}_{n,j}$, where $\lambda_{n,j}:=(1-c_{n,j})/2$. Therefore $\{u_k\}_k \subset \ext(B)$. By \eqref{ball-murat} we have that $u_k \weakstar c_{n,j}$ weakly* in $L^\infty(\tilde{Q}_{n,j})$ as $k \to \infty$, for all $j \in J_n$. Thus  $u_k \weakstar u$ in $L^\infty(\Om)$. In particular we have shown that $\mathcal{P}_n \subset \overline{\ext(B)}^*$. Therefore $\mathcal{P} \subset \overline{\ext(B)}^*$, where $\mathcal{P}:=\cup_{n \in \N} \, \mathcal{P}_n$. As clearly $\overline{\mathcal{P} }^*=B$, the proof is concluded.   \qed

\subsubsection{Proof of Lemma \ref{prop:Optimalitymineffort}}\label{app:prop:Optimalitymineffort}

The optimality condition in~\eqref{def:propmineff} follows immediately from Proposition~\ref{prop:existu} by pointing out
\begin{align}
\int_{\Omega} |\bar{p}(x)|~\de x & = \alpha \,   \max_{v \in B }  \int_\Om \bar{p}(x) v(x)~\mathrm{d}x \\ 
& = \alpha \, \max_{v \in \Ext B } \ \langle \bar{p}, v \rangle \leq \alpha.
\end{align}
This also implies
\begin{align*}
    \alpha \|\bar{u}\|_\infty=\int_{\Omega} \bar{p}(x) \bar{u}(x)~\de x \leq \|\bar{u}\|_\infty\int_{\Omega} |\bar{p}(x)|~\de x \leq \alpha \|\bar{u}\|_\infty,
\end{align*}
hence, equality holds everywhere. If~$\bar{u} \neq 0$, this can only be the case if
\begin{align*}
    \int_{\Omega} |\bar{p}(x)|~\de x=\alpha \quad \text{and} \quad \frac{\bar{u}(x)}{\|\bar{u}\|_\infty} \in \begin{cases}
    \{1\} & \text{if } \, \bar{p}(x)>0  \\
    \{-1\} & \text{if } \,\bar{p}(x)<0 \\
    [-1, 1] & \text{else},
    \end{cases},
\end{align*}
for a.e.~$x \in\Omega$, yielding~\eqref{eq:representationlinf}.\qed

\section{Complements to Section \ref{sec:lifting} and Section \ref{sec:convergence}}

\subsection{Proofs of Section \ref{sec:lifting}} \label{app:proofs_sec1}
 
In this section we exhibit proofs for Propositions \ref{prop:I},  \ref{prop:def_K}. In order to define the map $\I$ at Proposition \ref{prop:I} we employ the classical Choquet's Theorem \cite[Page 14]{phelps}, which we recall for reader's convenience.  
 
\begin{theorem}[Choquet] \label{thm:real_choquet}
Let $X$ be a locally convex space and $K \subset X$ be a metrizable compact convex subset. For each $v_0 \in K$ there exists a probability measure $\mu$ over $X$ concentrated on $\Ext{K}$ which represents $v_0$, that is,
\[
T(v_0)= \int_{X} T(v) \, \mathrm{d}\mu(v) \,,
\]
for all $T$ in the topological dual of $X$. 
\end{theorem}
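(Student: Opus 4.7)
The plan is to produce the representing measure $\mu$ by a maximality argument on a partial order over probability measures, known as the \emph{Choquet order}. First I would set $P(K)$ to be the set of probability measures on $K$, a weak*-compact convex subset of $C(K)^*$ by Banach--Alaoglu (here metrizability of $K$ makes $C(K)$ separable). Define $\mu \succ \nu$ iff $\int_K f\, d\mu \geq \int_K f\, d\nu$ for every continuous convex $f \colon K \to \mathbb{R}$. This relation is a partial order on $P(K)$ (antisymmetry follows from the fact that continuous convex functions separate probability measures on a compact metrizable convex set, because their pairwise differences generate a dense subspace of $C(K)$). Note that if $\mu \succ \nu$ then $\mu$ and $\nu$ agree against all continuous affine functions (these are both convex and concave), so any $\mu \succ \delta_{v_0}$ represents $v_0$ in the sense of the theorem.

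Next I would apply Zorn's lemma to the non-empty sub-poset $S := \{\mu \in P(K) : \mu \succ \delta_{v_0}\}$. Every $\succ$-chain $\{\mu_i\}_{i \in I} \subset S$ admits an upper bound: by weak*-compactness of $P(K)$ it has an accumulation point $\mu^*$, and passing to the limit in $\int f \, d\mu_i \geq \int f \, d\mu_j$ (for $\mu_i \succ \mu_j$, $f$ continuous convex) shows $\mu^* \succ \mu_j$ for every $j$. Hence there exists a $\succ$-maximal $\mu \in S$, which in particular represents $v_0$.

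The remaining and deepest step is to show that any $\succ$-maximal $\mu$ is concentrated on $\Ext(K)$. Metrizability of $K$ is critical here: it guarantees that $\Ext(K)$ is a $G_\delta$-subset of $K$, hence Borel, so the conclusion even makes sense. For each continuous convex $f$ consider the concave upper envelope
\begin{equation*}
\hat f(x) := \inf\{ g(x) : g \text{ continuous affine on } K,\ g \geq f\}.
\end{equation*}
Two classical facts intervene. First (Mokobodzki): $\mu$ is $\succ$-maximal iff $\int_K \hat f \, d\mu = \int_K f \, d\mu$ for every continuous convex $f$, whose proof goes by testing against $g(x) := \tfrac12(f(x) + \hat f(x))$ and using a Hahn--Banach separation argument between the upper envelope and affine functions. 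Second (Bauer): $v \in \Ext(K)$ iff $f(v) = \hat f(v)$ for every continuous convex $f$. Combining $f \leq \hat f$ with Mokobodzki's characterization yields $\mu(\{f < \hat f\}) = 0$ for each such $f$, and using separability of $C(K)$ to select a countable dense family of continuous convex functions whose associated null sets cover $K \setminus \Ext(K)$ finishes the proof.

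The main obstacle will be the last paragraph: verifying Mokobodzki's equivalence and exhibiting an explicit countable separating family realizing Bauer's criterion. Both steps are where the geometric content of the theorem resides, and both require metrizability in an essential way (separability of $C(K)$ for the countable family; Borel measurability of $\Ext(K)$ for the very statement). Everything preceding is a soft Zorn/compactness argument.
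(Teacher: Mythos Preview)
The paper does not prove this theorem: it is stated in the appendix as a classical result, with a citation to Phelps~\cite[Page~14]{phelps}, and then invoked as a black box in the proof of Proposition~\ref{prop:I}. So there is no ``paper's own proof'' to compare against.

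Your outline is the standard proof of Choquet's theorem via the Choquet order, and the overall strategy is correct. A couple of minor points to tighten: in the Zorn step, a chain in $S$ need not be countable, so ``accumulation point'' should be replaced by a net argument (or by observing that the map $\mu \mapsto (\int f\,d\mu)_f$ into a product of intervals is monotone along the chain, hence has a limit in the weak* closure). Also, the claim that differences of continuous convex functions are dense in $C(K)$ deserves a one-line justification (they form a subspace containing the affine functions and closed under $\max$, so Stone--Weierstrass applies). With these clarifications the argument goes through and is exactly what one finds in Phelps.
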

 
We are now ready to prove Proposition \ref{prop:I}.
 
 \subsubsection{Proof of Proposition \ref{prop:I}}
Let $\mu \in M^+(\mathcal{B})$ and consider the embedding $f \colon \mathcal{B} \to \M$, $f(v):=v$.     We have that $f$ is weak* $\mu$-measurable, since $v \mapsto \langle p,v \rangle$ is weak* continuous and hence, $\mu$-measurable for all $p \in \Cc$. Moreover, $v \mapsto \langle p,v \rangle$ is $\mu$-integrable for each $p \in \Cc$, since 
\[
\begin{aligned}
\int_{\mathcal{B}} | \langle p,v \rangle|~\de\mu(v) & \leq  \nor{p}_{\Cc} \int_{\mathcal{B}} \nor{v}_{\M}~\de\mu(v) \\
&  \leq  \nor{p}_{\Cc}
\nor{\mu}_{M(\mathcal{B})} \max_{v \in \mathcal{B}} \nor{v}_{\mathcal{M}} \,,
\end{aligned}
\]
and the last term is finite, since $\mathcal{B}$ is norm bounded. Therefore, $f$ is Gelfand integrable over $\mathcal{B}$ (see \cite[Thm 11.52]{aliprantis}), that is, there exists a unique $u \in \mathcal{M}$ such that \eqref{eq:choquet} holds. We thus set $\I(\mu):=u$. Recalling that the weak* topology separates points, we immediately conclude that $\I$ is well defined and
\begin{equation*}\label{eq:linearityIapp}
\I(\lambda_1 \mu_1 + \lambda_2 \mu_2) = \lambda_1 \I(\mu_1) + \lambda_2 \I(\mu_2) \,,
\end{equation*}
for all $\lambda_1,\lambda_2 \in \R_{+}$, $\mu_1,\mu_2 \in M^+(\mathcal{B})$.  
We now show i). Notice that $\I(0)=0$ and the estimate holds trivially. If $\mu \neq 0$, we can apply Theorem 11.54 in \cite{aliprantis} to obtain 
\[
\frac{1}{\norm{\mu}_{M(\mathcal{B})}} \, \I(\mu) \in \overline{\operatorname{conv}(\mathcal{B})}^* \,.
\]
By recalling that $B$ is weak* compact and convex, we conclude that $\overline{\operatorname{conv}(\mathcal{B})}^* \subseteq B$. Thus, by 1-homogeneity of $\G$ we conclude $\G(\I(\mu)) \leq \norm{\mu}_{M(\mathcal{B})}$
 and, in particular, $\I(\mu) \in \operatorname{dom} \G$.
 
For the proof of ii), let $u \in \operatorname{dom} \G$ be fixed. If $\G(u)=0$ then $u=0$, and the measure $\mu =0$ satisfies the statement. Assume now $\G(u) > 0$. Apply Theorem \ref{thm:real_choquet} with $X=(\mathcal{M},weak^*)$, $K=B$, $v_0=u/\G(u)$ to obtain a probability measure $\tilde{\mu}$ over $X$ concentrated on $\Ext{B}$ and such that
\begin{equation} \label{eq:prop:I:proof}
\langle p , u \rangle  = \int_{\M} \langle p , v \rangle \G(u)~\de\tilde{\mu}(v)  \,,
\end{equation}
for all $p \in \Cc$, 
since $(\mathcal{M},weak^*)^*=\Cc$. Set $\mu:=\G(u) \tilde{\mu} \zak \mathcal{B}$. Since $\mathcal{B}$ is weak* closed, we have that $\mu \in M^+(\mathcal{B})$. Moreover, $\nor{\mu}_{M(\mathcal{B})}=\G(u)$, $\mu$ is concentrated on $\Ext{B}$ and it represents $u$. As the weak* topology separates points, from \eqref{eq:prop:I:proof} we conclude that $\I(\mu)=u$. This shows in particular that $\I$ is surjective, achieving the proof of ii).

Finally, $\mathcal{I}$ is weak*-to-weak* continuous: indeed if $\mu_k \weakstar \mu$ weakly* in $M(\mathcal{B})$, then by definition of weak* convergence we have $\sm{P}{\mu_k} \to \sm{P}{\mu}$ for all $P \in C(\mathcal{B})$. Note that the map $P(v)=\langle p, v \rangle$, $v \in \mathcal{B}$, $p \in \Cc$ belongs to $C(\mathcal{B})$. Thus, for all $p \in \Cc$,
\[
\int_{\mathcal{B}} \langle p, v \rangle  \, \mathrm{d}\mu_k(v)  \to \int_{\mathcal{B}} \langle p, v \rangle  \, \mathrm{d}\mu(v)  \,.
\]
Thanks to \eqref{def:I} the above reads $\mathcal{I}(\mu_k) \weakstar \mathcal{I}(\mu)$ weakly* in $\M$, concluding the proof. \qed \\

Finally, we prove Proposition \ref{prop:def_K}.
 
\subsubsection{Proof of Proposition \ref{prop:def_K}}
For $\mu \in M(\mathcal{B})$ define the functional $T_\mu \colon Y \to \R$ by setting
\[
T_\mu (y):= \int_{\mathcal{B}}  (Kx,y)_Y \, \mathrm{d}\mu(x)\,,
\]    
for all $y \in Y$. Notice that $T_\mu$ is well defined, since the map $v \mapsto (Kv,y)_Y= \langle K_*y, v \rangle$ is weak* continuous over $\M$, and hence $\mu$-measurable. It is clear that $T_\mu$ is linear. Moreover, $T_\mu$ is continuous. Indeed, it holds
\begin{align*}
\nor{T_\mu (y)} & \leq \int_{\mathcal{B}}  |(Kv,y)_Y | \, \mathrm{d}|\mu|(v) \\ & \leq 
\nor{K} \nor{y}_Y \,\sup_{v \in \Bb}\,\nor{v}_{\M} \int_{\mathcal{B}}  1 \, \mathrm{d}|\mu|(v) \\ & = C \nor{K} \nor{\mu}_{M(\mathcal{B})} \nor{y}_Y ,
\end{align*}

where we recalled that the constant $C$ is defined at i) and is finite, since $\mathcal{B}$ is norm bounded.  
Therefore, by Riesz's Theorem, there exists a unique element in $Y$ representing $T_\mu$, thus defining a map $\mathcal{K} \colon M(\mathcal{B}) \to Y$. In particular, $\mathcal{K}$ is linear and satisfies \eqref{eq:def_K}. Moreover, by \eqref{eq:def_K} we have
\[
\nor{\mathcal{K}\mu}_Y = \sup_{\nor{y}_Y \leq 1}  \nor{T_\mu (y)} \leq C \nor{K} \nor{\mu}_{M(\mathcal{B})},
\]  
by the above, so~$\mathcal{K}$ is bounded and~i) follows.
 
As for ii), assume that $\mu \in M^+(\mathcal{B})$ represents $u \in \mathcal{M}$. By \eqref{eq:choquet} and \eqref{eq:def_K} we have 
\begin{align*}
(Ku,y)_Y & = \langle K_*y, u \rangle = \int_{\mathcal{B}} \langle K_*y,v \rangle \, \mathrm{d} \mu(v) \\ & = 
\int_{\mathcal{B}}  (Kv,y)_Y \, \mathrm{d} \mu(v) = (\mathcal{K}\mu,y )_Y\,, 	
\end{align*}
which holds for all $y \in Y$, thus showing that $\mathcal{K}\mu=Ku$. 
 
In order to show iii), assume that $\mu_k \weakstar \mu$ in $M(\mathcal{B})$. First notice that by \eqref{eq:def_K},
\begin{align*}
\lim_{k\rightarrow \infty}(\mathcal{K} \mu_k,y )_Y & = \lim_{k\rightarrow \infty} \int_{\mathcal{B}} ( Kv,y)_Y \, \de \mu_k(v) \\ & =\int_{\mathcal{B}} ( Kv,y) \, \de \mu(v) =(\mathcal{K} \mu,y )_Y\,,
\end{align*}
for every~$y\in Y$, where the second equality follows because the map $v \mapsto (Kv,y)_Y$ is weak* continuous, as $K$ is weak*-to-weak continuous in $\mathcal{M}$, and hence, belongs to $C(\mathcal{B})$. 
Thus,~$\mathcal{K}\mu_k \rightharpoonup \mathcal{K} \mu $ in~$Y$. Moreover, by applying \eqref{eq:def_K} twice we see that
\begin{align}\label{eq:fubinistrong}
\|\mathcal{K}\mu_k\|^2_Y & = \int_{\mathcal{B}} (Kv, \mathcal{K}\mu_k)_Y~\de \mu_k(v) \\
& =\int_{\mathcal{B}} \int_{\mathcal{B}} (Kv, Kw)_Y~ \de \mu_k(w)\de \mu_k(v)\,.
\end{align}
Note that the function $(v,w) \mapsto (Kv, Kw)_Y$ is an element of~$C(\mathcal{B} \times \mathcal{B})$. Indeed, given a sequence $(v_k,w_k) \in  \mathcal{B} \times \mathcal{B}$ such that $(v_k,w_k) \weakstar (v,w)$ for $(v,w) \in \mathcal{B} \times \mathcal{B}$ we estimate
\begin{align*}
|(Kv_k, Kw_k)_Y & - (Kv, Kw)_Y|  \\ & \leq  C\|K\|_{\mathcal{L}(\M,Y)}(\|Kw_k - K w\|_Y +  \|Kv_k - K v\|_Y)\,,
\end{align*}
where $C$ is defined at i).
As, $v_k,w_k,v,w \in \mathcal{B}$, using the weak*-to-strong continuity of $K$ on ${\rm dom}(\mathcal{G})$, cf. Assumptions \ref{ass:func1}-\ref{ass:func3}, we conclude that $(Kv_k, Kw_k)_Y \rightarrow(Kv, Kw)_Y$ proving that $(v,w) \mapsto (Kv, Kw)_Y$ is an element of~$C(\mathcal{B} \times \mathcal{B})$. Moreover, we have $\mu_k \otimes \mu_k \weakstar \mu \otimes \mu$ in $M(\mathcal{B}\times \mathcal{B})$, which, combined with \eqref{eq:fubinistrong} and the fact that $(v,w) \mapsto (Kv, Kw)_Y \in C (\mathcal{B} \times \mathcal{B})$ implies the convergence $\|\mathcal{K}\mu_k\|_Y \to \|\mathcal{K}\mu\|_Y$. Combined with the weak convergence $\mathcal{K}\mu_k \weak \K \mu$ we finally conclude~$\mathcal{K}\mu_k \rightarrow  \mathcal{K}\mu$ in~$Y$, so that $\mathcal{K}$ is weak*-to-strong continuous. 
 
Last, note that the operator~$\mathcal{K}_*$ is well-defined, linear and continuous. For any~$y\in Y$ and~$\mu \in M(\mathcal{B})$ we obtain
\begin{align*}
\sm{\mathcal{K}_* y}{\mu} & = \int_{\mathcal{B}} \langle K_* y ,v \rangle~\de \mu(v) \\ 
& = \int_{\mathcal{B}} ( K v,y )_Y~\de \mu(v)= (\mathcal{K}\mu,y)_Y \,,
\end{align*}
where again, we used \eqref{eq:def_K}. This concludes the proof of iv).
 
Assume now that $Y$ is separable and fix $\mu \in M(\mathcal{B})$. Notice that $f \colon \mathcal{B} \to Y$ defined by $f(v):=Kv$ is weakly $\mu$-measurable, since the map
$v \mapsto (Kv,y)_Y = \langle K_*y , v \rangle$ is weak* continuous and hence, $\mu$-measurable, for each $y \in Y$ fixed. As $Y$ is separable, we also have that $f$ is essentially separably valued. Therefore, Pettis' Theorem (\cite[Sec II.1, Thm 2]{diestel}) implies that $f$ is strongly measurable with respect to $\mu$. Moreover 
\[
\int_{\mathcal{B}} \nor{Kv}_Y~\de\mu(v) \leq C \nor{K}_{\mathcal{L} (\M,Y)} \nor{\mu}_{M(\mathcal{B})}<\infty \,,  
\]
where $C$ is defined at i), showing that $f$ is Bochner integrable with respect to $\mu$. Hence, by \eqref{eq:def_K} and Hille's Theorem (\cite[Sec II.1, Thm 6]{diestel}), we infer
\[
(\mathcal{K}\mu,y)_Y = \int_{\mathcal{B}} (Kv,y)_Y~\de\mu(v) = \left(\int_{\mathcal{B}} Kv~\de\mu(v),y   \right)_Y  
\]
for all $y \in Y$, concluding \eqref{eq:def_K_strong} and the proof.  \qed

\subsection{Proofs of Section \ref{sec:convergence}}
\subsubsection{Proof of Proposition \ref{prop:optcondpdapsub}} \label{sec:app_optcondpdapsub}

Let $k \geq 1$. By construction,  $\lambda^k \in \R^{N_k}$ solves
\begin{align*}
\min_{\lambda \in \R_+^{N_k}} \left \lbrack F \left(\sum^{N_k}_{i=1} \lambda_i Ku^k_i \right)+ \sum^{N_k}_{i=1} \lambda_i \right \rbrack .
\end{align*}
Deriving the first order necessary optimality conditions for this problem, we obtain that
\begin{align*}
P_k(u^k_i)  \leq 1 \quad \text{ for all } \,\,   u^k_i \in \mathcal{A}_k\,, \quad ~\sum^{N_k}_{i=1} \lambda^k_i P_k(u^k_i)= \sum^{N_k}_{i=1} \lambda^k_i\,.
\end{align*}
Since~$\lambda^k_i >0$ by construction, we deduce that~$P_k(u^k_i)=1$ for~$i=1,\dots,N_k$. In particular,
\begin{align*}
\sm{P_k}{\mu_k}= \sum^{N_k}_{i=1} \lambda^k_i P_k(u^k_i)= \sum^{N_k}_{i=1} \lambda^k_i= \|\mu_k\|_{M(\Bb)}\,.
\end{align*}
Finally, $\max_{v \in \Bb} P_k(v) \geq \max_{v \in \mathcal{A}_k} P_k(v) = 1$, concluding.

\subsubsection{Proof of Proposition \ref{prop:eststates}} 
\label{sec:app_eststates}

Note that~$\{\mu_k\}_k \subset E_{\mu_0}$ by  \eqref{eq:bound_norm_mu_k}. Arguing as in the proof of Theorem~\ref{thm:convofcond}, we get 
\begin{align*}
\ynorm{\nabla F(y_k)-\nabla F(\bar{y})} & + \|P_k-\bar{P}\|_{C(\mathcal{B})} 
 \\ & \leq L_{\mu_0} (1+C\|K\|_{\mathcal{L}(\M,Y)}) \ynorm{y_k- \bar{y}}\,,
\end{align*}
where we recall that $L_{\mu_0}>0$ is the Lipschitz constant of $\nabla F$ in $\K E_{\mu_0}$, and $C>0$ is the constant in Proposition~\ref{prop:def_K}~i).
Hence, it suffices to prove \eqref{prop:eststates:1} for~$\|y_k- \bar{y}\|_Y$.
Let~$\mathcal{N}(\bar{y})\subset Y$ and~$\theta >0$ denote the neighbourhood and constant from Assumption~\ref{ass:fast:F1}, respectively.
Recall that $\mu_k \weakstar\bar{\mu}$ along the whole sequence by \eqref{eq:whole_sequence}. By weak*-to-strong continuity of $\K$, see Proposition~\ref{prop:def_K}~iii), we then conclude $y_k \rightarrow \bar y$ in $Y$. Thus, there exist $M \in \N$ such that $y_k \in \mathcal{N}(\bar{y})$ for all $k\geq M$. Using the strong convexity of $F$ in $\mathcal{N}(\bar y)$, we estimate
\begin{align*}
j(\mu_k) & = F(y_k) + \|\mu_k\|_{M(\mathcal{B})}\\
&  \geq F(\bar{y})  +  \theta \|y_k-\bar{y}\|_Y^2/2 + (\nabla F(\bar{y}), \mathcal{K}\mu_k - \mathcal{K}\bar{\mu} )_Y + \|\mu_k\|_{M(\mathcal{B})} \\
& = j(\bar{\mu}) +  \theta \|y_k-\bar{\mu}\|_Y^2/2 + \sm{\bar{P}}{ \bar{\mu} - \mu_k} + \|\mu_k\|_{M(\mathcal{B})} - \|\bar{\mu}\| _{M(\mathcal{B})}\\
& \geq  j(\bar{\mu})+ \theta \|y_k-\bar{y}\|_Y^2/2\,,
\end{align*}
where we used~\eqref{eq:optconditionstatement} in the final inequality. Thus \eqref{prop:eststates:1} follows by rearranging the terms in the above estimate, recalling that $r_j(\mu_k)=j(\mu_k) - j(\bar{\mu})$. The final part of the statement holds since $r_j(\mu_k) \to 0$ by Theorem~\ref{thm:convofcond}.

\end{document}